\definecolor{gr}{rgb}   {0.,   0.69,   0.23 }
\definecolor{bl}{rgb}   {0.,   0.5,   1. }
\definecolor{mg}{rgb}   {0.85,  0.,    0.85}
\definecolor{yl}{rgb}   {0.8,  0.7,   0.}
\definecolor{or}{rgb}  {0.7,0.2,0.2}
\newtheorem{theorem}{Theorem} [section]
\newtheorem{lemma}[theorem]{Lemma}
\newtheorem{proposition}[theorem]{Proposition}
\newtheorem{remark}[theorem]{Remark}
\newtheorem{example}[theorem]{Example}
\newtheorem{definition}[theorem]{Definition}
\newtheorem{corollary}[theorem]{Corollary}
\newcommand{\dr}{\theta}
\newcommand{\Dr}{\Theta}
\newcommand{\I}{\hspace{0.5mm}\text{I}\hspace{0.5mm}}
\newcommand{\II}{\text{I \hspace{-2.8mm} I} }
\newcommand{\noi}{\noindent}
\newcommand{\Z}{\mathbb{Z}}
\newcommand{\R}{\mathbb{R}}
\newcommand{\C}{\mathbb{C}}
\newcommand{\T}{\mathbb{T}}
\newcommand{\bul}{\bullet}
\newcommand{\cK}{\mathcal{Q}}
\newcommand{\bv}{\big\vert}
\newcommand{\Gdl}{\mathcal{G}_{\dl} }
\newcommand{\mubo}{\mu_{\infty} }
\newcommand{\mukdv}{\wt \mu_{0} }
\newcommand{\too}{\longrightarrow}
\newcommand{\BO}{\text{\rm BO} }
\newcommand{\KDV}{\text{\rm KdV} }
\newcommand{\W}{\mathcal{W}}
\newcommand{\U}{\mathcal{U}}
\newcommand{\Ha}{\mathbb{H}_a}
\let\Re=\undefined\DeclareMathOperator*{\Re}{Re}
\let\Im=\undefined\DeclareMathOperator*{\Im}{Im}
\let\P= \undefined
\newcommand{\P}{\mathbf{P}}
\newcommand{\E}{\mathbb{E}}
\newcommand{\EE}{\mathcal{E}}
\renewcommand{\L}{\mathcal{L}}
\newcommand{\RR}{\mathcal{R}}
\newcommand{\CC}{\mathcal{C}}
\newcommand{\D}{\mathcal{D}}
\newcommand{\J}{\mathcal{J}}
\newcommand{\GG}{\mathcal{G}}
\newcommand{\F}{\mathcal{F}}
\newcommand{\al}{\alpha}
\newcommand{\be}{\beta}
\newcommand{\dl}{\delta}
\newcommand{\nb}{\nabla}
\newcommand{\eps}{\varepsilon}
\newcommand{\g}{\gamma}
\newcommand{\G}{\Gamma}
\newcommand{\ld}{\lambda}
\newcommand{\s}{\sigma}
\newcommand{\Si}{\Sigma}
\newcommand{\ft}{\widehat}
\newcommand{\wt}{\widetilde}
\newcommand{\cj}{\overline}
\newcommand{\dx}{\partial_x}
\newcommand{\dt}{\partial_t}
\newcommand{\dd}{\partial}
\newcommand{\ta}{\theta}
\renewcommand{\l}{\ell}
\renewcommand{\o}{\omega}
\renewcommand{\O}{\Omega}
\newcommand{\les}{\lesssim}
\newcommand{\ges}{\gtrsim}
\newcommand{\jb}[1]
{\langle #1 \rangle}
\newcommand{\jbb}[1]
{\big\langle #1 \big\rangle}
\renewcommand{\b}{\beta}
\newcommand{\ind}{\mathbf 1}
\newcommand{\PP}{\mathbb{P}}
\newcommand{\M}{\mathcal{M}}
\def\e{\varepsilon}
\newcommand{\N}{\mathbb{N}}
\newtheorem*{ackno}{Acknowledgements}
\newcommand{\Pk}{\rho_\dl}
\renewcommand{\H}{\mathcal{H}}
\newcommand{\dtv}{d_{\rm TV}} 
\newcommand{\dlp}{d_{\rm LP}} 
\newcommand{\dhh}{d_{\rm H}} 
\newcommand{\dkl}{d_{\rm KL}} 
\newcommand{\dkf}{d_{\rm KF}} 
\def\sgn{\textup{sgn}}
\newcommand{\low}{\textup{low}}
\newcommand{\high}{\textup{high}}
\newcommand{\Id}{\textup{Id}}
\numberwithin{equation}{section}
\numberwithin{theorem}{section}
\begin{document}
\baselineskip = 14pt

\title[On the deep-water and shallow-water limits of the ILW equation]
{On the deep-water and shallow-water limits  of \\
the intermediate long wave
equation \\ from a statistical viewpoint}

\author[G.~Li, T.~Oh, and G.~Zheng]
{Guopeng Li, Tadahiro Oh, and Guangqu Zheng}

 \address{
Guopeng Li, 
School of Mathematics and Statistics, Beijing Institute of Technology,
Beijing 100081, China\\
and
School of Mathematics\\
The University of Edinburgh\\
and The Maxwell Institute for the Mathematical Sciences\\
James Clerk Maxwell Building\\
The King's Buildings\\
Peter Guthrie Tait Road\\
Edinburgh\\ 
EH9 3FD\\
 United Kingdom}

\email{guopeng.li@bit.edu.cn}

\address{
Tadahiro Oh, 
School of Mathematics\\
The University of Edinburgh\\
and The Maxwell Institute for the Mathematical Sciences\\
James Clerk Maxwell Building\\
The King's Buildings\\
Peter Guthrie Tait Road\\
Edinburgh\\ 
EH9 3FD\\
 United Kingdom,
 and
 School of Mathematics and Statistics, Beijing Institute of Technology,
Beijing 100081, China
}

\email{hiro.oh@ed.ac.uk}
\address{
Guangqu Zheng, School of Mathematics\\
The University of Edinburgh\\
and The Maxwell Institute for the Mathematical Sciences\\
James Clerk Maxwell Building\\
The King's Buildings\\
Peter Guthrie Tait Road\\
Edinburgh\\ 
EH9 3FD\\
 United Kingdom, 
 Department of Mathematical Sciences\\
University of Liverpool\\
Mathematical Sciences Building\\ 
Liverpool, L69 7ZL
United Kingdom, 
and
Department of Mathematics and Statistics, Boston University, 665 Commonwealth Avenue, Boston, MA 02215, USA
 }

\email{gzheng90@bu.edu}

%
%
\subjclass[2020]{35Q35, 60F15, 60H30}

\keywords{intermediate long wave  equation; 
Benjamin-Ono equation;  Korteweg-de Vries  equation; Gibbs measure.
}

\begin{abstract}
We study convergence problems for the intermediate long wave equation (ILW), with the depth parameter $\dl > 0$, 
in the deep-water limit ($\dl \to \infty$) and the shallow-water limit ($\dl \to 0$) from a statistical
point of view. 
In particular, we establish convergence
of invariant Gibbs dynamics for ILW 
in both the deep-water and shallow-water limits.
For this purpose, we first construct
the Gibbs measures for 
 ILW, $0 < \dl < \infty$.
 As they are  supported on distributions, 
  a renormalization is required.
With the Wick renormalization, we carry out the construction of 
the Gibbs measures  for ILW.
We then   prove that
 the Gibbs measures for ILW converge in total variation to that for the
Benjamin-Ono equation (BO)
in the deep-water limit ($\dl \to \infty$).
In the shallow-water regime, after applying a scaling transformation, we prove that, 
as $\dl \to 0$,  the  Gibbs measures for the scaled ILW
converge weakly to that for the  Korteweg-de Vries  
equation (KdV).
We point out that this second result is of particular interest since
the Gibbs measures for the scaled ILW and KdV are mutually singular
(whereas
the Gibbs measures for ILW and BO are equivalent).

 In terms of dynamics, we use a compactness
argument to construct invariant Gibbs dynamics for ILW 
(without uniqueness).
Furthermore, we show that, by extracting a sequence~$\dl_m$, 
this invariant Gibbs dynamics for  ILW 
converges to that for BO in the deep-water limit ($\dl_m \to \infty$)
 and to that for KdV (after
the scaling) in the shallow-water limit ($\dl_m \to 0$), respectively. 

Lastly, we point out that our results also apply to
the generalized ILW equation in the defocusing case, 
converging to the generalized BO in the deep-water limit
and to the generalized KdV in the shallow-water limit.
In the non-defocusing case, however, 
our results can not be extended  to a nonlinearity with a higher power
due to the non-normalizability of the corresponding Gibbs measures.

\end{abstract}

%
\maketitle

\tableofcontents

\baselineskip = 14pt

\section{Introduction}

\label{SEC:1}

\subsection{Intermediate long wave equation}
\label{SUBSEC:1.1}

In this paper, we study 
the  intermediate long wave equation (ILW) 
on the circle $\T = \R/(2\pi \Z)$:
\begin{align}
\begin{cases}
\dt u -
 \Gdl \partial_x^2 u  =  \dx (u^2)    \\ 
u|_{t = 0} = u_0,
\end{cases}
\qquad ( t, x) \in \R \times \T.
\label{ILW1}
\end{align}

\noi
The equation \eqref{ILW1}, also known 
as the finite-depth fluid equation, 
 models 
the internal wave propagation of the interface 
in a stratified fluid of finite depth $\dl > 0$, 
and the unknown $u :\R\times \T \to \R$
denotes the amplitude of the internal wave at the interface.
See also Remark~\ref{REM:Q0}.
The dispersion operator $\Gdl $ characterizes
 the phase speed and
it is defined  as the following Fourier multiplier operator:
\begin{align}
\ft{\Gdl f}(n)  =  -  i \Big( \coth(\dl n )-\frac{1}{\dl  n} \Big) \ft f(n)\,, 
 \hspace{1mm} 
\quad n \in\Z, 
       \label{OP}
\end{align}

\noi
where  $\coth$ denotes the usual hyperbolic cotangent function:
\begin{align*}
\coth(x ) = \frac{   e^x + e^{-x}  }{ e^x - e^{-x}  }
= \frac{e^{2x}+1}{e^{2x}-1}, \quad x \in \R\setminus\{0\}
\end{align*}

\noi
with the convention $\coth(\dl n) - \frac{1}{\dl n} = 0$ for $n=0$;
see \eqref{tan1}.
See \eqref{FT1} below for our convention of the (spatial) Fourier transform. 
 ILW \eqref{ILW1} is an important physical model,  
 providing 
 a natural connection between 
the deep-water regime (= the Benjamin-Ono regime) and the shallow-water regime
(= the KdV regime).
As such, it has been studied extensively from both the applied and theoretical points of view.
See, for example,  
a recent book  \cite[Chapter~3]{KS2022} by  Klein and Saut
for an overview of the subject
and the references therein.
See also a survey \cite{Saut2019}.
These two references indicate that the rigorous mathematical study of 
ILW is still widely  open. 
In particular, one of the fundamental, but challenging questions
is the convergence properties
of ILW in the deep-water limit (as
the depth parameter $\dl$ tending to $\infty$)
and in the shallow-water limit 
(as $\dl \to 0$).
In this paper, we make the first study 
on this convergence issue of ILW
from a statistical viewpoint.

 \begin{remark}\label{REM:Q0}\rm
 In \cite{KKD}, the equation for the motion of the internal wave in a finite depth fluid was 
 derived with two depth parameters $\dl_j$, $j = 1, 2$, 
 where $\dl_1$ and $\dl_2$ represent the depths
 of the upper and lower fluids, respectively, 
 and is given by 
 \begin{align}
 \dt u -
c_1  \mathcal{G}_{\dl_1} \partial_x^2 u - c_2  \mathcal{G}_{\dl_2} \partial_x^2 u  =  \dx (u^2) .  
\label{ILW2}
\end{align}

 \noi
 See (25a)-(25b) and (35a)-(35b) in \cite{KKD}.
 In \cite[VI Summary]{KKD}, the authors
 proposed a special case of interest
 when the internal wave is located halfway between
 the upper and lower fluid boundaries, 
 namely, $\dl_1 = \dl_2$.
 In this case, by setting $\dl = \dl_1 + \dl_2 = 2\dl_1$, 
 the equation~\eqref{ILW2} 
 reduces to the ILW equation \eqref{ILW1}
 (up to some inessential multiplicative constants).
 We also point out that 
 by taking $\dl_1 \to 0$ while keeping $\dl_2$ fixed
 (or by taking $\dl_2 \to 0$ while keeping $\dl_1$ fixed), 
 we also see that the equation \eqref{ILW2}
 reduces to the ILW equation \eqref{ILW1}.

 \end{remark}

\subsection{Deep-water and shallow-water limits of the generalized ILW}
\label{SUBSEC:1.2}

In the following, we consider  the generalized intermediate long wave   equation 
(gILW)
on $\T$:
\begin{align}
\begin{cases}
\dt u -
 \Gdl \partial_x^2 u 
           =  \dx (u^k)    \\ 
u|_{t = 0} = u_0,
\end{cases}
\qquad ( t, x) \in \R \times \T,
\label{gILW1}
\end{align}

\noi
where $k \geq 2$ is an integer.
When $k = 2$,  
the equation \eqref{gILW1}  corresponds to
ILW \eqref{ILW1}, while, when $k = 3$,  it is known as 
 the modified ILW equation.
The equation  \eqref{gILW1} 
can be written in the following Hamiltonian formulation:
\begin{align*}
\dt u = \dx \frac{d E_\dl(u)}{du} ,
\end{align*}

\noi
where $E_\dl(u)$ is  the Hamiltonian (= energy) given by 
\begin{align} \label{Hu}
E_\dl(u) =
\frac 12   \int_\T  u \Gdl \dx u   dx
+ \frac{1}{k+1} \int_\T u^{k+1} dx.
\end{align}

\noi
In particular, $E_\dl(u)$ is conserved under the dynamics of \eqref{gILW1}.
Moreover, it is easy to check that the following two quantities are conserved
under the gILW dynamics:
\begin{align*}
\text{mean: } \int_\T u  dx
\qquad \text{and}\qquad 
\text{mass: } M(u) = \int_\T u^2  dx.
\end{align*}

\noi
We also point out that 
ILW ($k = 2$) is known to be completely integrable.
We, however, do not make use of the completely integrable structure
in the following.
See Remarks~\ref{REM:XX1} and~\ref{REM:int}.

For simplicity of the presentation, we impose
the mean-zero condition on the initial condition $u_0$, 
 namely, 
$\int_\T u_0 dx = 0$, 
in the remaining part of the paper.
In  view of the conservation of the (spatial) mean,  
this implies that 
 the solution $u(t)$ has mean zero as long as it exists. 
In other words, defining the Fourier coefficient $\ft f(n)$
by 
\begin{align}
\ft f(n) = \F(f)(n) = \int_\T f(x) e^{- inx} dx,
\label{FT1}
\end{align}

\noi
we will work with real-valued functions of the form:\footnote{Hereafter, we may drop
the harmless factor $2\pi$.}
\begin{align}
f(x)         = \frac{1}{2\pi}   \sum_{n \in \Z^\ast} \ft{f} (n) e_n(x)   , 
\label{FT2}
\end{align}

\noi
where  $e_n(x) = e^{inx}$ and $\Z^\ast = \Z \setminus \{0\}$.

Our main goal is to study 
the deep-water limit ($\dl \to \infty$) and the shallow-water limit ($\dl \to 0$)
of solutions to gILW \eqref{gILW1} from a statistical viewpoint.
In particular, 
we study the convergence problem with rough and random initial data, 
more precisely, with the Gibbs measure initial data.
In the next subsection, we provide a detailed discussion on (the construction 
and convergence of) Gibbs measures.
In the deterministic setting, 
the  convergence problem of gILW in both the deep-water and shallow-water limits 
has been  studied in 
\cite{ABFS, GW, HW, MPS, Gli, Gli2}, 
providing rigorous mathematical support
of the numerical study performed in   \cite{KKD}.
We point out that the recent work~\cite{Gli, Gli2} by the first author
is the only convergence result of gILW  on the circle $\T$.
In the following, 
let us   briefly go over the
formal derivation of the limiting equation 
in each of the deep-water and shallow-water limits.
With a slight abuse of notation, we
set
\begin{align}
\ft \Gdl(n) 
 =    - i   \Big( \coth(\dl n)-\frac{1}{\dl n} \Big) .
\label{Gdx1}
\end{align}

\medskip

\noi
$\bullet$ 
\textbf{Deep-water limit} 
($\delta \to \infty$). \rule[-2.5mm]{0mm}{0mm}
\\
\indent
In this case, an elementary computation shows that
\begin{align} \label{lim0}
\lim_{\dl\to\infty}\ft \Gdl(n)
= - i \sgn(n) 
\end{align}
 
\noi
for any $n \in \Z$.
Indeed,  defining $q_\dl(n)$ by\footnote{While it is not needed in the mean-zero case, 
we may  set
$q_\dl(0) = 0$ by continuity.}
\begin{align}
q_{\dl}(n) 
= 
|n| +   \frac{1}{  \dl } -  n  \coth(\dl n) , 
\label{PO1a}
\end{align}

\noi
 one may easily verify that 
\begin{align}
\label{BOp}
0\leq q_{\dl}(n) = q_\dl(-n) \leq \frac{2}{ \dl} 
\end{align}
for any $ n\in\Z$; 
see Lemma 4.1 in \cite{ABFS}.
In fact, \eqref{BOp} holds with the right-hand side
replaced by $\frac 1\dl$;
see Remark \ref{REM:q1} below.
%

The limit \eqref{lim0} indicates that, 
in the deep-water limit, namely, as $\dl \to \infty$, 
the  gILW equation \eqref{gILW1}
converges to 
the following generalized Benjamin-Ono equation (gBO) on $\T$:
\begin{equation}
    \partial_t u     -  \mathcal{H} (\partial_x^2 u)
    =  \dx(u^k)   ,
\label{BO}
\end{equation}

\noi
where 
$\mathcal{H}$ is the Hilbert transform 
defined
by
\[
\ft{\H f}(n)=-i \sgn(n)\ft f(n) .
\]

\noi
Formally speaking, 
by  recasting \eqref{gILW1} as
\begin{equation}
\label{PE1}
\partial_t u - \H  (\partial_x^2 u) +
\cK_\dl \dx u
=  \dx(u^k),
\end{equation}

\noi
where  $\cK_\dl = (\H-  \Gdl )\dx $ is defined 
as a Fourier multiplier operator with symbol
 $ q_{\dl}$ in \eqref{PO1a}.
 Then, the bound \eqref{BOp} shows
 that $\cK_\dl$ tends to $0$ in a suitable sense, 
 thus yielding 
 the formal convergence of \eqref{PE1}
 (and hence of \eqref{gILW1})
 to gBO \eqref{BO} as $\dl \to \infty$.
 In proving rigorous convergence, 
 one indeed 
needs to show that 
$\cK_\dl \dx$ tends to $0$ in a suitable sense
(instead of $\cK_\dl$),
and thus, 
in view of the bound~\eqref{BOp}, 
it indicates that 
 in the deep-water regime $\dl \gg1 $,  
long waves (with relatively small frequencies $|n| \ll \dl$)
``well approximate''
  long waves of infinitely  deep water ($\dl = \infty$).

\medskip

\noi
$\bullet$
\textbf{Shallow-water limit} ($\dl\to 0$).\rule[-2.5mm]{0mm}{0mm}
\\
\indent
A direct computation shows that, 
for $n\in\Z^\ast$, we have 
\begin{align}
\begin{aligned}
 \ft{\Gdl \dx^2 u}(n) 
& =  i \Big( \coth( \delta n )-\frac{1}{ \delta n} \Big)
  n^2 \ft{u}(n) \\
&  =  i \frac{ \dl}{3} n^3 \ft{u}(n) + o(1), 
 \end{aligned} \label{asy1}
 \end{align}

\noi
as $\dl \to 0$.
The identity
 \eqref{asy1} follows from the following 
 identity with  
 $x=\dl n$:\footnote{The limiting behavior \eqref{tan1} also follows from the Taylor
 expansion of the hyperbolic cotangent function.}
\begin{align}
\coth(x) - \frac{1}{x} = \frac{x(e^{2x}-1) - (e^{2x}- 2x -1)   }{x(e^{2x}-1 )} 
=  \frac{x}{3} + o(1), 
\label{tan1}
\end{align}

\noi
 as $x\to 0$, 
 which can be verified by  using   the Taylor expansion:
$e^{2x} = 1 + 2x + \sum_{k=2}^\infty (2x)^k/k!$.

The identity \eqref{asy1} shows that, 
the dispersion in \eqref{gILW1} disappears as $\dl \to 0$, 
formally yielding the inviscid Burgers equation in the limit
(when $k = 2$).
In order to circumvent this issue, we introduce the following scaling transformation
for each $\dl > 0$, \cite{ABFS}:
\noi
\begin{align}
v(t,x) = (\tfrac{3}{ \dl})^{\frac{1}{k-1}} u\big(\tfrac{3}{ \dl} t,x\big),
\label{trans}
\end{align}

\noi
which leads to the following scaled gILW equation:
\begin{equation}
\label{gILW3}
\dt v   -  \frac{3}{ \dl}  \Gdl   \dx^2 v= \dx(v^k) .
\end{equation}

\noi
Namely, $v$ is a solution to the scaled gILW \eqref{gILW3}
(with the scaled initial data)
if and only if~$u$ is a solution to the original gILW \eqref{gILW1}.
Note that the scaled gILW \eqref{gILW3}
is a Hamiltonian PDE with the Hamiltonian:
\begin{align}
\EE_\dl(v) =  \frac{3}{2\dl} 
\int_\T v \Gdl\dx v dx + \frac{1}{k+1} \int_\T v^{k+1}dx \,,
\label{Hv}
\end{align}

\noi
which differs from the Hamiltonian $E_\dl(u)$ in \eqref{Hu} by a 
{\it divergent} multiplicative constant in the kinetic part
(= the quadratic part) of the Hamiltonian.
In view of~\eqref{asy1},
the scaled gILW~\eqref{gILW3} formally converges to the  following 
generalized KdV equation (gKdV) on $\T$:
\begin{align}
\dt v + \dx^3 v = \dx(v^k) . 
\label{KDV}
\end{align}

From the physical point of view,
the scaling transformation \eqref{trans}
is a very natural operation to perform, when $k = 2$.
The ILW equation \eqref{ILW1} describes the motion
of the fluid interface in a stratified fluid of depth $\dl > 0$,
where $u$ denotes the amplitude of the internal wave at the interface.
As $\dl \to 0$, the entire fluid depth tends to $0$
and, in particular, 
the amplitude of the internal wave at the interface is $O(\dl)$,  which also  tends to $0$, 
in the physical model.
Hence, if we want to observe any meaningful limiting behavior, 
we need to magnify the fluid motion by a factor $\sim \frac 1\dl$, which
is exactly 
what the scaling transformation \eqref{trans} does when $k = 2$.
We also point out that studying the convergence problem
for the scaled ILW~\eqref{gILW3} (with $k = 2$)
with $O(1)$ initial data means that we are indeed studying
the original ILW \eqref{ILW1} with $O(\dl)$ initial data, 
which is consistent with the physical viewpoint
explained above.

\medskip

As mentioned above, in the deterministic setting, 
the convergence problem of the gILW dynamics 
(and the scaled gILW dynamics, respectively) 
to  the gBO dynamics (and to the gKdV dynamics, respectively) has been studied 
   in \cite{ABFS,GW,HW,Gli, Gli2, CLOP}. 
 These works studied the convergence issue from a 
 {\it microscopic
 viewpoint} in the sense that
 convergence was established
 for {\it each fixed}   initial data $u|_{t = 0} = u_0$ to gILW \eqref{gILW1}
(or  each fixed   initial data $v|_{t = 0} =  v_0$ 
to the scaled gILW~\eqref{gILW3}).
In the present work, 
we study the convergence problem from a {\it macroscopic viewpoint}.
Namely,
rather than considering the limiting behavior of
a single trajectory,
we study the limiting behavior of solutions
as a statistical ensemble.
Such an approach is of fundamental importance in statistical mechanics, 
where 
one replaces  ``the study of the microscopic dynamical trajectory of an individual macroscopic system by the study of appropriate ensembles or probability measures on the phase space of the system''~\cite{LRS}.
In the present work,  we in particular  study convergence of the dynamics 
at the Gibbs equilibrium 
for the gILW equation~\eqref{gILW3}
in both the deep-water and shallow-water limits.
From the physical point of view, 
it is quite natural to study the fluid motion as a statistical ensemble,  
since one is often interested in a prediction of typical behavior
of the fluid.
From the theoretical point of view, 
it is an interesting and challenging question to study 
 convergence of 
invariant Gibbs dynamics
associated with the gILW equation \eqref{gILW3}, 
in particular due to the low regularity of the support of the Gibbs measures. 

\medskip

Our strategy for establishing convergence
of invariant Gibbs dynamics for the (scaled) gILW  consists of the following three steps.
For simplicity, we only discuss the deep-water limit in the following, 
where we treat the original gILW \eqref{gILW1}
(rather than the scaled gILW~\eqref{gILW3}
relevant in the shallow-water limit), unless we need to make a specific 
point in the shallow-water limit.

In the following, we will restrict our attention
to (i)~$k = 2$, corresponding to ILW \eqref{ILW1}, 
and (ii)~$k \in 2 \N + 1$ in 
\eqref{gILW1}, corresponding to the defocusing case.
This restriction comes from the Gibbs measure construction.
See Remark \ref{REM:Q2}
for a discussion on the general  focusing\footnote{Strictly speaking, 
the case (iii)~even $k \ge 4$ is {\it non-defocusing}, not focusing.
For simplicity, however, we may refer to the non-defocusing case as focusing in the remaining part of the paper.} case,
namely,  either for (iii)~even $k \ge 4$
or (iv)~$k \in 2\N+ 1$ with the focusing sign:
\begin{align}
\dt u -
 \Gdl \partial_x^2 u 
=  - \dx (u^k)   .
\label{gILW7}
\end{align}


\smallskip

\noi
$\bullet$ {\bf Step 1:}
{\bf Construction and convergence of the Gibbs measures.}\rule[-2.5mm]{0mm}{0mm}
\\
\indent
For each finite $\dl > 0$,  
we  first  construct  a Gibbs measure $\rho_\dl$
for  gILW \eqref{gILW1}
with the Hamiltonians $E_\dl(u)$ in \eqref{Hu}, 
formally written as\footnote{Henceforth,  constants such as $Z_\dl$ denote
various normalizing constants, which may be different line by line.}
\begin{align}
\begin{split}
\rho_\dl (du)
& = Z_\dl^{-1} e^{-E_\dl(u)} du \\
&= Z_\dl^{-1} e^{-\frac 1{k+1} \int_\T u^{k+1} dx} 
e^{-\frac 12 \int_\T u\Gdl \dx  u dx} du.
\end{split}
\label{Gibbs1}
\end{align}

\noi
The expression \eqref{Gibbs1} is merely formal
and we aim to construct $\rho_\dl$
as a weighted Gaussian measure
with the base Gaussian measure given 
by 
\begin{align}
\mu_\dl(du) =  Z_\dl^{-1} e^{-\frac 12 \int_\T u\Gdl \dx  u dx } du.
\label{Gibbs2}
\end{align}

\noi
See the next subsection for a precise definition of $\mu_\dl$.
For each $\dl > 0$, the Gaussian measure 
$\mu_\dl$ is supported on distributions $\mathcal D'(\T)$ of negative regularity
and thus the potential energy 
$\int_\T u^{k+1} dx$ in \eqref{Gibbs1} is divergent.
In order to overcome this issue, we introduce a renormalization
on the potential energy, 
just as in the construction of the $\Phi^{k+1}_2$-measures \cite{Simon, GJ, DPT1, OTh1}.
When $k = 2$, the potential energy is not sign-definite, causing a further problem.
By following the work~\cite{Tz10}, we overcome this issue by introducing a Wick-ordered $L^2$-cutoff.
See Subsection~\ref{SUBSEC:Gibbs}
for a further discussion.

Once the Gibbs measure $\rho_\dl$ is constructed for each $\dl > 0$, 
we then proceed to prove convergence of the Gibbs measures $\rho_\dl$
for gILW \eqref{gILW3} to the Gibbs measure $\rho_\BO$ for gBO~\eqref{BO}
in the deep-water limit ($\dl \to \infty$).
This step involves establishing the $L^p$-integrability bound
on the densities, {\it uniformly in $\dl \gg 1$.}
We point out that, for each $\dl \gg1$, the base Gaussian measure $\mu_\dl$
is different and thus an extra care is needed in discussing what we mean by the ``density''.
See Section \ref{SEC:Gibbs1} for further details.

In order to study the shallow-water limit, 
we need to consider the scaled gILW \eqref{gILW3}
with the Hamiltonian
$\EE_\dl(v)$ in \eqref{Hv}.
This leads to the construction of the following Gibbs measure:
\begin{align}
\begin{split}
\wt \rho_\dl (dv)
& =  Z_\dl^{-1} e^{-\EE_\dl(v)} dv \\
&= Z_\dl^{-1} e^{-\frac 1{k+1} \int_\T v^{k+1} dx} 
e^{-\frac 3{2\dl} \int_\T v\Gdl \dx  v dx} dv.
\end{split}
\label{Gibbs3}
\end{align}

\noi
For each fixed $\dl> 0$, 
we construct the Gibbs measure $\wt \rho_\dl$
as a weighted Gaussian measure with the base Gaussian measure $\wt \mu_\dl$ given by 
\begin{align}
   \wt{\mu}_\dl(dv) =  Z_\dl^{-1} e^{   - \frac {3}{2\dl} \int_\T v\Gdl \dx  v dx } dv.
\label{Gibbs4}
\end{align}

\noi
The construction of the Gibbs measure $\wt \rho_\dl$, $\dl > 0$, 
follows exactly the same lines as that for the Gibbs measure $\rho_\dl$ in \eqref{Gibbs1}.
There is, however, a crucial difference in the shallow-water limit
in establishing convergence of the Gibbs measures $\wt \rho_\dl$, $\dl \ll 1$, 
for the scaled gILW \eqref{gILW3}
to the Gibbs measure $\rho_{\KDV}$ for gKdV \eqref{KDV}.
More precisely, it turns out that 
 the Gibbs measures $\wt \rho_\dl$, $\dl > 0$, for the scaled gILW \eqref{gILW3} and 
$\rho_{\KDV}$ for gKdV \eqref{KDV}
are mutually singular and the mode of convergence of $\wt \rho_\dl$ to $\rho_{\KDV}$ is weaker
(than that in the deep-water limit).

This first step is one of the main novelties of the paper, where we establish a uniform bound
on the densities (with respect to the underlying probability measure $\PP$).

\medskip

\noi
$\bullet$ {\bf Step 2:}
{\bf Construction of invariant Gibbs dynamics for the (scaled) gILW.}\rule[-2.5mm]{0mm}{0mm}
\\
\indent
In this second step, we construct dynamics for gILW \eqref{gILW1}
at the Gibbs equilibrium constructed in Step 1.
This step follows the compactness argument introduced by 
Burq, Thomann, and Tzvetkov \cite{BTT1} in the context of dispersive PDEs.
See \cite{AC, DPD} for the first instance of this argument in the context of
fluid.  See also \cite{OTh1, ORT}.
Due to the use of the compactness argument, 
the dynamics constructed in this step lacks a uniqueness statement.

\medskip

\noi
$\bullet$ {\bf Step 3:}
{\bf Convergence of the (scaled) gILW dynamics at the Gibbs equilibrium.}\rule[-2.5mm]{0mm}{0mm}
\\
\indent
This last step essentially follows from the previous two steps together with the triangle inequality.
In Step~2, we construct limiting Gibbs dynamics as a limit of
the frequency-truncated dynamics (via the compactness argument mentioned above).
In this last step, we characterize the convergence established in Step 2
in the L\'evy-Prokhorov metric and 
conclude the desired convergence of the dynamics at the Gibbs equilibrium
for the (scaled) gILW to that for gBO  (or to gKdV)
via a diagonal argument.
The use of the L\'evy-Prokhorov metric in this context is new
as far as our knowledge is concerned.

\begin{remark}\rm

There is also a slightly different formulation for the  ILW equation;
 see   \cite[p.\,211]{AS}. 
In this formulation, the generalized ILW equation on $\T$ reads as 
\begin{align}
\dt u -
\Big(1+\frac{1}{\dl}\Big)
 \Gdl \partial_x^2 u 
           =  \dx (u^k)    
\label{gILW4}
\end{align}

\noi
with the Hamiltonian $\wt E_\dl(u)$ given by 
\begin{align*} 
\wt E_\dl(u) =
\frac{\dl + 1}{2\dl}  \int_\T  u \Gdl \dx u   dx
+ \frac{1}{k+1} \int_\T u^{k+1} dx.
\end{align*}

\noi
In taking $\dl \to \infty$, 
we formally have
\begin{align*}
\dt u - \Gdl  \dx^2 u= \dx(u^k)  + O(\dl^{-1}),
\end{align*}

\noi
which indicates that the same convergence result holds for 
this version \eqref{gILW4} of gILW
in the deep-water limit.
On the other hand, in the shallow-water regime, 
in view of \eqref{asy1}, 
the equation \eqref{gILW4} can be formally written as
\begin{align*}
\dt u - \frac 1\dl \Gdl  \dx^2 u = \dx(u^k)  + O(\dl),
\end{align*}

\noi
which indicates convergence of \eqref{gILW4}  to the following gKdV:
\begin{align}
\dt u + \frac 13 \dx^3 u = \dx(u^k) 
\label{KDV2}
\end{align}

\noi
{\it without} any scaling transformation.
Indeed, in the shallow-water limit, 
a slight modification of our argument shows that an analogue
of our main result holds
for the version \eqref{gILW4} converging to gKdV  \eqref{KDV2}
in the shallow-water limit.
On the one hand,  the formulation~\eqref{gILW4} may seem to be a convenient
model since it does not require a scaling transformation in the shallow-water limit.
On the other hand, 
it does not seem to reflect the physical behavior in the shallow-water regime
(where the entire depth and thus the amplitude $u$ are $O(\dl)$).

\end{remark}

\subsection{Construction and convergence of Gibbs measures}\label{SUBSEC:Gibbs}

Consider  a finite-dimensional Hamiltonian flow on $\R^{2n}$:
\begin{align} \label{HR2}
\dt p_j = \frac{\partial H}{\partial q_j} 
\qquad
\text{and}
\qquad 
\dt q_j = - \frac{\partial H}{\partial p_j}
\end{align}

\noi
with Hamiltonian 
\[
H (p, q)= H(p_1, \cdots, p_n, q_1, \cdots, q_n).
\]

\noi
The classical 
Liouville's theorem states that the Lebesgue measure
$dp dq = \prod_{j = 1}^n dp_j dq_j$
on~$\mathbb{R}^{2n}$ is invariant under the dynamics \eqref{HR2}.
Then, together with the conservation of the Hamiltonian $H(p, q)$, 
we see that 
  the Gibbs measure
$Z^{-1}e^{- H(p, q)} dpdq $ is invariant 
under the dynamics of~\eqref{HR2}.
By drawing an analogy, we may hope to construct invariant Gibbs dynamics
for Hamiltonian PDEs.
This program was initiated by the seminal works 
by Lebowitz, Rose, and Speer \cite{LRS}
and 
Bourgain \cite{BO94, BO96}, 
leading to the construction of invariant Gibbs dynamics
as well  as probabilistic well-posedness.
See also \cite{Fried, Zhid, McKean}.
This subject has been increasingly more popular over the last fifteen years;
see, for example, survey papers~\cite{OHRIMS, BOP4}.

Our first main goal is to construct 
Gibbs measures for  gILW \eqref{gILW1}
(and the scaled gILW~\eqref{gILW3}).
For this purpose, let us first 
go over the known results in
the limiting cases $\dl =0$ and $\dl = \infty$.

\medskip

\noi
$\bullet$
{\bf Construction of  Gibbs measures 
for gKdV on $\T$.} \rule[-2.5mm]{0mm}{0mm}
\\
\indent
This corresponds to the shallow-water limit ($\dl = 0$) in our problem.
Consider gKdV \eqref{KDV} posed on the circle
with the Hamiltonian $\EE_0(u)$:
\begin{align*}
\EE_0(v) = 
 \frac{1}{2} 
\int_\T (\dx v)^2  dx + \frac{1}{k+1} \int_\T v^{k+1}dx, 
\end{align*}

\noi
which, in view of \eqref{asy1}, 
 is a formal limit of $\EE_\dl(v)$ in \eqref{Hv} as $\dl \to 0$.
The Gibbs measure $\rho_{\KDV}$ for gKdV is formally given 
by 
\begin{align}
\begin{split}
\rho_{\KDV}(dv)
& = Z_0^{-1} e^{-\EE_0(v)} dv \\
&= Z_0^{-1} e^{-\frac 1{k+1} \int_\T v^{k+1} dx} 
e^{-\frac 12 \int_\T (\dx v)^2  dx} dv.
\end{split}
\label{Gibbs5}
\end{align}

\noi
The Gibbs measure $\rho_{\KDV}$ can be constructed 
as a weighted Gaussian measure with the base Gaussian measure given by 
the periodic Wiener measure $\wt \mu_0$ (restricted to mean-zero functions):
\begin{align}
 \mukdv(dv) =  Z_0^{-1} e^{-\frac 12 \int_\T (\dx v)^2  dx } dv.
\label{Gibbs6}
\end{align}

\noi
More precisely, the periodic Wiener measure $\mukdv$
is defined as the induced probability measure under the map:\footnote{Note that $X_\KDV$ is nothing but the Brownian loop on $\T$
(with the zero spatial mean).}
\begin{align}
\omega \in \O \longmapsto
X_\KDV(\o) = \frac 1{2\pi} \sum_{n \in \Z^*} \frac{g_n(\o)}{ | n | }e_n,
\label{Gibbs7}
\end{align}

\noi
where $e_n(x) = e^{inx}$ and $\{g_n\}_{n \in \Z^*}$ 
is a sequence of independent standard\footnote{By convention, 
we assume that  $g_n$
has mean 0 and variance $2\pi$, $n \in \Z^*$.
See \eqref{Gibbs8} below.} complex-valued Gaussian random variables
on a probability space $(\O, \F, \PP)$
conditioned that $g_{-n} = \cj {g_n}$, $n \in \Z^*$.
Indeed, by Plancherel's theorem (see \eqref{FT1} and \eqref{FT2}
for our convention of the Fourier transform), we have
\begin{align*}
\int_\T (\dx v)^2 dx 
= \frac 1{2\pi} \sum_{n \in \Z^*}n^2 |\ft v(n)|^2
= \frac 1 \pi \sum_{n \in \N} n^2 |\ft v(n)|^2, 
\end{align*}

\noi
where the second equality follows from the fact that $v$ is real-valued, 
i.e.~$\ft v(-n) = \cj{\ft v(n)}$.
This shows that we formally have
\begin{align}
\begin{split}
e^{-\frac 12 \int_\T (\dx v)^2  dx } dv
& \sim  \prod_{n \in \N} e^{-\frac 1{2\pi} n^2 |\ft v(n)|^2} d \ft v(n)
 \sim \prod_{n \in \N} e^{-\frac 1{2\pi} |g_n|^2 } d g_n\\
&   \sim \bigg(\prod_{n \in \N} e^{-\frac 1{2\pi} (\Re g_n)^2 } d \Re g_n\bigg)
\bigg(\prod_{n \in \N} e^{-\frac 1{2\pi} (\Im g_n)^2 } d \Im g_n\bigg)
\end{split}
\label{Gibbs8}
\end{align}

\noi
in the limiting sense with the identification $\ft v(n) = \frac{g_n}{|n|}$.
This shows that $\Re g_n$ and $\Im g_n$ 
are given by mean-zero Gaussian random variables with variance $\pi$.
Hence, $g_n = \Re g_n + i \Im g_n$ has variance $2\pi$.

It is easy to show that the support of $\mukdv$
is 
contained $H^{\frac 12 - \eps}(\T) \setminus H^{\frac 12} (\T)$
for any $\eps > 0$.
By  Khintchine's inequality, one may also show that 
 the support of $\wt \mu_0$
is indeed contained in $W^{\frac 12 - \eps, \infty}(\T)$. 
See, for example,  \cite{Benyi}
for a further discussion on the regularity of the Brownian loop $X_\KDV$ in \eqref{Gibbs7}.
Hence, in the defocusing case, namely, 
when $k \in 2\N + 1$, 
the density $e^{-\frac 1{k+1} \int_\T v^{k+1} dx} $ in \eqref{Gibbs5}
with respect to 
$\wt \mu_0$
satisfies 
$ 0 <  e^{-\frac 1{k+1} \int_\T v^{k+1} dx} \le 1$, 
almost surely, which is in particular integrable with respect to $\wt \mu_0$.
This shows that 
 the Gibbs measure $\rho_{\KDV}$
can be  realized as 
a weighted $\wt \mu_0$:
\begin{align}
\rho_{\KDV}(dv)
&= Z_0^{-1} e^{-\frac 1{k+1} \int_\T v^{k+1} dx} d\wt \mu_0(v)
\label{Gibbs5a}
\end{align}

\noi
 in this case.

In the focusing case, 
namely, when $k \in 2\N$
or when the potential energy $\frac 1{k+1} \int_\T v^{k+1} dx$ in \eqref{Gibbs5a}
comes with the $+$ sign, 
the situation is completely different, 
since, in this case, the density is no longer integrable
with respect to the base Gaussian measure $\wt \mu_0$.
In the seminal work \cite{LRS}, 
Lebowitz, Rose, and Speer proposed to consider the Gibbs measure with an $L^2$-cutoff:
\begin{align}
\rho_{\KDV}(dv)
&= Z_0^{-1} \ind_{\{ \int_\T v^2 dx \le K\}} e^{-\frac 1{k+1} \int_\T v^{k+1} dx} 
d\mukdv (v)
\label{Gibbs9}
\end{align}

\noi
for $k \in 2\N$ in the non-defocusing case, and more generally in the focusing case:
\begin{align}
\rho_{\KDV}(dv)
&= Z_0^{-1} \ind_{\{ \int_\T v^2 dx \le K\}} e^{\frac 1{k+1} \int_\T |v|^{k+1} dx} 
d\mukdv (v)
\label{Gibbs10}
\end{align}

\noi
for any real number $k > 1$.
In \cite{LRS, BO94}, it was shown that, when $k < 5$,  
the Gibbs measures $\rho_{\KDV}$ in \eqref{Gibbs9}
and \eqref{Gibbs10}
can be constructed as a probability measure
for any $K > 0$, 
while it is not normalizable for any cutoff size when $k > 5$.
The situation at the critical case\footnote{From a PDE point of view,
this criticality corresponds to the so-called $L^2$-criticality (or mass-criticality), 
while,  from the viewpoint of mathematical physics, 
this criticality corresponds to 
the phase transitions for (non-)normalizability of the focusing Gibbs measure.
Here, the phases transitions are two-fold: normalizability for $k < 5$ and non-normalizability for $k \ge 5$.
Also, when $k = 5$, normalizability below or at the critical mass and non-normalizability 
above the critical mass.}
 $k = 5$ (for~\eqref{Gibbs10}) is more subtle.
Note that  the critical value $k = 5$ corresponds
 to the smallest power of the nonlinearity,
 where the focusing gKdV (namely, \eqref{KDV} with the $-$ on the nonlinearity) on the real line
 possesses finite-time blowup
solutions \cite{MM00, Me01}.
The Gibbs measure construction when $k = 5$
remained a challenging open problem for  thirty years
and it 
was completed only recently
in the work \cite{OST22} by Sosoe, Tolomeo, and the second author;
when $k = 5$, the focusing Gibbs measure in \eqref{Gibbs10}
can be constructed
if and only if the cutoff size $K$ is less than or equal to the
mass of the so-called ground state on the real line.
See~\cite{OST22} for a further discussion on this issue.

As we see below, in the non-defocusing case, 
only the $k = 2$ case is relevant to us.
In this case, the Gibbs measure for KdV relevant to us
is given by\footnote{Hereafter, we use a continuous cutoff function $\chi_K$
as in \cite{Tz10}.} 
\begin{align}
\rho_{\KDV}(dv)
&= Z_0^{-1} 
\chi_K\bigg(\int_\T v^2 dx - 2\pi  \s_\KDV\bigg) e^{-\frac 1{3} \int_\T v^{3} dx} 
d\mukdv (v), 
\label{Gibbs11}
\end{align}

\noi
where 
$\chi_K: \R \to  [0,1]$
is  a continuous function 
such that $\chi_K(x) = 1$ for $|x| \le K$
and $\chi_K(x) = 0$ for $|x| \ge 2K$.

See Theorem \ref{THM:Gibbs2} below.
Here,  $\s_\KDV$ denotes the variance of $X_\KDV(x)$
in \eqref{Gibbs7} given by 
\begin{align}
\s_\KDV = \E\big[X_\KDV^2(x)\big] = \frac{1}{4\pi^2} \sum_{n \in \Z^*} \frac{2\pi}{n^2}
= \frac{\pi}{6}, 
\label{s1}
\end{align}

\noi
which is independent of $x \in \T$ due to the translation invariant nature of the problem.

\medskip

\noi
$\bullet$
{\bf Construction of 
 Gibbs measures 
for gBO on $\T$.} 
\rule[-2.5mm]{0mm}{0mm}
\\
\indent
Next, we go over the (non-)construction 
of the Gibbs measures associated with gBO \eqref{BO},
which
corresponds to the deep-water limit ($\dl = \infty$) in our problem.
The Hamiltonian for gBO \eqref{BO} 
is given by 
\begin{align*}
E_\infty(u) = 
 \frac{1}{2} 
\int_\T u \H \dx u   dx + \frac{1}{k+1} \int_\T u^{k+1}dx, 
\end{align*}

\noi
which, in view of \eqref{Gdx1}, 
 is a formal limit of $E_\dl(u)$ in \eqref{Hu} as $\dl \to \infty$.
 Here, $\H$ denotes the Hilbert transform.
Then, the Gibbs measure $\rho_{\BO}$ for gBO is formally given 
by 
\begin{align*}
\rho_{\BO}(du)
& = Z_\infty^{-1} e^{-E_\infty(u)} du \\
&= Z_\infty^{-1} e^{-\frac 1{k+1} \int_\T u^{k+1} dx} 
e^{-\frac 12 \int_\T u \H \dx u  dx} du.
\end{align*}

\noi
As in the gKdV case, 
we first introduce the base Gaussian measure
$\mubo$ by 
\begin{align}
 \mubo(du) =  Z_\infty^{-1} e^{-\frac 12 \int_\T u \H \dx u  dx } du.
\label{Gbo1}
\end{align}

\noi
More precisely, the Gaussian measure  $\mubo$
is defined as the induced probability measure under the map:
\begin{align}
\o \in \O \longmapsto
 X_\BO(\o) = \frac 1{2\pi}
 \sum_{n \in \Z^*} \frac{g_n(\o)}{ | n |^\frac12}e_n, 
\label{Gbo2}
 \end{align}

\noi
where $\{g_n\}_{n \in \Z^*}$ is as in \eqref{Gibbs7}.
In this case, 
the support of $\mubo$ is contained in $H^{-\eps}(\T)\setminus L^2(\T)$
for any $\eps > 0$; see \eqref{GX3} below.
Namely, a typical element $u$ in the support of $\mubo$ is merely a distribution
and thus the potential energy is divergent in this case.

Let us first consider the defocusing case $k \in 2\N + 1$.
Noting that the Gaussian measure $\mubo$ is logarithmically correlated, 
by introducing a Wick renormalized power $\W(u^{k+1})$ (see~\eqref{WO1} below), 
Nelson's estimate allows us to  define the Gibbs measure $\rho_\BO$:
\begin{align*}
\rho_{\BO}(du)
&= Z_\infty^{-1} e^{-\frac 1{k+1} \int_\T \W(u^{k+1}) dx} 
d \mubo (u)
\end{align*}

\noi
as a limit of the frequency-truncated version, 
just as in the construction of the $\Phi^{k+1}_2$-measure~\cite{DPT1, OTh1};
see Theorem \ref{THM:Gibbs1} below.
See Subsection \ref{SUBSEC:2.2}
for a precise definition of the Wick power 
$\W(u^{k+1})$.

Let us now turn to the focusing case.
 When $k = 2$, 
 Tzvetkov~\cite{Tz10} constructed 
 the Gibbs measure for the Benjamin-Ono equation (BO) by introducing a Wick-ordered $L^2$-cutoff:
\begin{align}
\begin{split}
\rho_{\BO}(du)
&= Z_\infty^{-1} 
\chi_K\bigg( \int_\T \W(u^2) dx \bigg)e^{-\frac 1{3} \int_\T u^{3} dx} 
d \mubo(u).
\end{split}
\label{Gx3}
\end{align}

 \noi
 See  \cite{OST} for an alternative, concise proof.
 Note that under the mean-zero assumption, 
 there is no need to introduce a renormalization in this case.
 See Remark \ref{REM:k2}.
 
In  \cite{OST},  Seong, Tolomeo, and the second author showed that 
 the Gibbs measure for the focusing modified BO
(with $k = 3$):
\begin{align}
\begin{split}
\rho_{\BO}(du)
&= Z_\infty^{-1} 
\chi_K\bigg( \int_\T \W(u^2) dx \bigg)e^{\frac 1{k+1} \int_\T \W(u^{k+1}) dx} 
d \mubo(u)
\end{split}
\label{Gx4}
\end{align}
 
\noi
is not normalizable.
Their argument can also be adapted to show that 
the focusing Gibbs measure is not normalizable for any $k \geq 3$.
We mention the work 
 \cite{BS} by Brydges and Slade
 on a similar non-normalizability result (but with a completely different proof)
 in the context of the focusing $\Phi^4_2$-measure.
 See also Remark \ref{REM:Q2} below.
 
Lastly, we point out that, 
due to the use of the Wick renormalization, 
we can only consider integer values for $k$
in this case ($\dl = \infty$) and also in the intermediate case $0 < \dl < \infty$
which we will discuss next.

\medskip

\noi
$\bullet$
{\bf Construction of 
 Gibbs measures 
for gILW on $\T$.} 
\rule[-2.5mm]{0mm}{0mm}
\\
\indent
We finally discuss the construction of the Gibbs measure
for the (scaled) gILW.
Let us first consider the unscaled gILW \eqref{gILW1}
with the Hamiltonian $E_\dl(u)$ in \eqref{Hu}.
Fix $0 < \dl < \infty$.
Our first goal is to construct the Gibbs measure $\rho_\dl$
of the form \eqref{Gibbs1}.
Let $ \mu_\dl$ be 
the base Gaussian measure of the form  \eqref{Gibbs2}, 
which is nothing but the induced probability measure under the map:
\begin{align}
\o \in \O \longmapsto
 X_\dl(\o) =   \frac 1{2\pi}  \sum_{n\in\Z^\ast} \frac{g_n(\o) }{| K_\dl(n)|^{ \frac{1}{2} } }e_n.
\label{GG3}
 \end{align}

\noi
Here, $\{g_n\}_{n \in \Z^*}$ is as in \eqref{Gibbs7} and $K_\dl(n)$ is given by 
\begin{align}
K_\dl(n) := in \ft \Gdl(n)
=   n\coth( \delta n )-\frac{1}{ \delta} 
\label{GG4}
\end{align}

\noi
with $\ft \Gdl(n)$ as in \eqref{Gdx1}.
For each $n \in \Z^*$, we have $K_\dl(n) > 0$
and moreover, it follows from~\eqref{PO1a} and \eqref{BOp}
that 
\begin{align*}
K_\dl(n) = |n| + O\big(\tfrac 1\dl\big).
\end{align*}

\noi
See also Lemma \ref{LEM:p2} and Remark \ref{REM:q1}.
This asymptotics allows us to show that,
for any given $0 < \dl < \infty$, 
the Gaussian measures $\mu_\dl$ in \eqref{Gibbs2}
and $\mubo$ in \eqref{Gbo1} are equivalent.\footnote{Namely, mutually absolutely continuous.} 
See
Proposition~\ref{PROP:equiv}.
In particular, as in the $\dl = \infty$ case, 
the Gaussian measure $\mu_\dl$ is supported
on 
 $H^{-\eps}(\T)\setminus L^2(\T)$
for any $\eps > 0$ (see  \eqref{GX3} below)
and thus
we need to renormalize the potential energy.

Given $N \in \N$, 
let $\P_N$ be the Dirichlet projection onto the frequencies $\{|n|\leq N\}$
and set 
$  X_{\dl, N} := \P_N X_\dl $.
 Note that, for each fixed $\dl>0$ and  $x \in \T$,  
the random variable $X_{\dl, N}(x)$ is a real-valued, mean-zero  Gaussian 
random variable 
with variance
\begin{align}
\begin{split}
\s_{\dl, N}: \!& = \E \big[ X_{\dl, N}^2(x) \big]
= \frac{1}{4\pi^2} \sum_{0 < |n| \le N}   \frac{2\pi}{K_\dl(n)} \\
& \sim_\dl \log (N+1).
\end{split}
 \label{Wick1a}
\end{align}

\noi
Given an integer $k \ge 2$, 
we define the Wick ordered monomial  $\W(  X_{\dl, N}^k )
= \W_{\dl, N}(  X_{\dl, N}^k )$
by setting
\begin{align}
 \W(  X_{\dl, N}^k ) = H_k( X_{\dl, N} ;  \s_{\dl, N}   ),
 \label{WO1}
\end{align}

\noi
where $H_k(x; \s)$ is the Hermite polynomial of degree $k$; see Subsection \ref{SUBSEC:2.2}.
Then, $ \W(  X_{\dl, N}^k )$ converges, 
in $L^p(\O)$
for any finite $p \ge 1$
and also almost surely,   
 to a limit, denoted by 
$\W(  X_{\dl}^k )$, in $H^{-\eps}(\T)$
for any $\eps > 0$;
see Proposition~\ref{PROP:FN}.
In particular, the truncated renormalized
potential energy
$\int_\T \W(  X_{\dl, N}^{k+1} )dx$ converges, 
 in $L^p(\O)$
for any finite $p \ge 1$
and also almost surely,  to a limit denoted by 
$\int_\T \W(  X_{\dl}^{k+1} )dx$.

With $u_N = \P_N u$, 
we define the truncated Gibbs measure $\rho_{\dl, N}$
by\footnote{Here, with a slight abuse of notation, 
we use the notation $\W(u_N^{k+1})$ to mean $H_{k+1}(u_N; \s_{\dl, N})$.
In the following, we use the notation 
$\W(u_N^{k+1})$ with the understanding that there is the underlying Gaussian measure~$\mu_\dl$.
} 
\begin{align}
\rho_{\dl,  N} (du)
= Z_{\dl, N}^{-1} e^ {-\frac 1{k+1} \int_\T \W(u_N^{k+1})dx} 
d\mu_\dl(u).
\label{GG6}
\end{align}

\noi
We also define the truncated density $G_{\dl, N}(u) $ by 
\begin{align}
G_{\dl, N}(u) = e^{-\frac 1 {k+1}  \int_\T \W(u_N^{k+1})dx}
= e^{-\frac 1 {k+1}  \int_\T H_{k+1}(u_N ; \s_{\dl, N})dx}.
\label{WO2}
\end{align}

\noi
In view of the convergence of the truncated renormalized potential energy
mentioned above, 
we see that 
the truncated density $G_{\dl, N}$ converges
to the limiting density
\begin{align*}
G_{\dl}(u) = e^{-\frac 1 {k+1} \int_\T \W(u^{k+1})dx}
\end{align*}

\noi
 in probability with respect to $\mu_\dl$, as $N \to \infty$.
We now state the construction of the limiting Gibbs measure~$\rho_\dl$
and its convergence property in the deep-water limit.

\begin{theorem}\label{THM:Gibbs1}
Let $k \in 2\N + 1$. Then, the following statements hold.

\smallskip
\noi
\textup{(i)} Let $0 < \dl \le \infty$.
Then,  
for any finite $p \ge 1$, we have
\begin{align}
\lim_{N \to \infty} G_{\dl, N}(u) = G_\dl(u) 
\quad \text{in $L^p(\mu_\dl)$.}
\label{THM1a}
\end{align}

\noi
As a consequence, the truncated Gibbs measure $\rho_{\dl, N}$
in \eqref{GG6} converges, in the sense of~\eqref{THM1a}, 
to the limiting Gibbs measure $\rho_\dl$ given by 
\begin{align}
\begin{split}
\rho_\dl(du) 
& = Z_\dl^{-1} G_\dl(u) d\mu_\dl(u)\\
& = Z_\dl^{-1}e^{-\frac 1 {k+1} \int_\T \W(u^{k+1})dx} d\mu_\dl(u) .
\end{split}
\label{THM1b}
\end{align}

\noi
In particular, $\rho_{\dl, N}$ converges to 
$\rho_{\dl}$ in total variation.
The resulting Gibbs measure $\rho_\dl$ and the base Gaussian measure
$\mu_\dl$ are equivalent.

For $2\le \dl \le \infty$, 
 the rate of convergence \eqref{THM1a} is uniform 
 and thus the rate of convergence in total variation
 of $\rho_{\dl, N}$ to $\rho_\dl$ as $N \to \infty$ is uniform
 for $2 \le \dl \le \infty$.

\smallskip

\noi
\textup{(ii) (deep-water limit of the Gibbs measures).}
 Let $0 < \dl < \infty$. Then, the Gibbs measures $\rho_\dl$ for gILW \eqref{gILW1}
and $\rho_\BO = \rho_\infty$ for gBO \eqref{BO} constructed in Part \textup{(i)}
are equivalent.
Moreover, $\rho_\dl$ converges to $\rho_\BO$ in total variation,  as $\dl \to \infty$.

\smallskip

Furthermore, when $k =2$, 
by replacing
the truncated Gibbs measure $\rho_{\dl,  N}$ in \eqref{GG6}
by the truncated Gibbs measure with a Wick-ordered $L^2$-cutoff\textup{:}
\begin{align}
\begin{split}
\rho_{\dl, N}(du)
&= Z_{\dl, N}^{-1} 
\chi_K\bigg( \int_\T \W(u_N^2) dx \bigg)e^{-\frac 1{3} \int_\T u_N^{3} dx} 
d \mu_\dl(u), 
\end{split}
\label{THM1c}
\end{align}
 
\noi
the statements \textup{(i)} and \textup{(ii)} hold true for any fixed $K > 0$.
Namely, for each $0 < \dl \le \infty$,  
 the truncated Gibbs measure $\rho_{\dl, N}$ in \eqref{THM1c}
converges to the limiting Gibbs measure\textup{:}
\begin{align}
\begin{split}
\rho_{\dl}(du)
&= Z_{\dl}^{-1} \chi_K\bigg(\int_\T \W(u^2) dx \bigg) e^{-\frac 1{3} \int_\T u^{3} dx} 
d \mu_\dl(u)
\end{split}
\label{THM1d}
\end{align}

\noi
in the sense of the $L^p(\mu_\dl)$-convergence 
of the truncated densities as in \eqref{THM1a}.
Moreover, 
the resulting Gibbs measure $\rho_\dl$ in \eqref{THM1d} and the base Gaussian measure
endowed with the Wick-ordered $L^2$-cutoff
$\chi_K\big(\int_\T \W(u^2) dx \big) d \mu_\dl(u)$ are equivalent.
For $2\le \dl \le \infty$, 
 the rate of convergence in total variation
 of $\rho_{\dl, N}$ to $\rho_\dl$ as $N \to \infty$ is uniform
 for $2 \le \dl \le \infty$.

For  $0 < \dl < \infty$,   the Gibbs measures $\rho_\dl$ in \eqref{THM1d}
for ILW \eqref{ILW1}
and $\rho_\BO$ in \eqref{Gx3} for BO are equivalent
and, as $\dl \to \infty$, 
the Gibbs measure $\rho_\dl$ converges to $\rho_\BO$ in total variation.

\end{theorem}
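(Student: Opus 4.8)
The plan is to treat the defocusing case $k \in 2\N+1$ and the focusing $k=2$ case in parallel, the only difference being the cutoff function $\chi_K(\int_\T \W(u^2)dx)$ in the latter; since $0 \le \chi_K \le 1$, all the uniform integrability estimates go through verbatim with the extra factor inserted. For Part~(i), the heart of the matter is the statement \eqref{THM1a}: the $L^p(\mu_\dl)$-convergence of the truncated densities $G_{\dl,N}$ to $G_\dl$, \emph{with a rate uniform for $2 \le \dl \le \infty$}. First I would record the second-moment (and general $L^p$) bounds on the Wick powers: by Proposition~\ref{PROP:FN} (combined with the variance asymptotics \eqref{Wick1a} and the equivalence $K_\dl(n) = |n| + O(\tfrac1\dl)$ from Lemma~\ref{LEM:p2}), the truncated renormalized potential energies $\int_\T \W(u_N^{k+1})dx$ form a Cauchy sequence in $L^p(\mu_\dl)$; the key point to extract is that the Cauchy rate can be taken \emph{uniformly} over $2 \le \dl < \infty$, because on that range $K_\dl(n)$ is comparable to $|n|$ with constants independent of $\dl$, so the hypercontractivity/Wiener-chaos estimates produce $\dl$-independent constants. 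Then I would combine this $L^2$-type control with a uniform exponential integrability bound $\sup_{2\le\dl\le\infty}\E_{\mu_\dl}[G_\dl^p] < \infty$ — Nelson's estimate in the defocusing case (following \cite{DPT1, OTh1}), and in the $k=2$ focusing case Tzvetkov's argument \cite{Tz10} exploiting the Wick-ordered $L^2$-cutoff (again, uniform because the logarithmic divergence rate of $\s_{\dl,N}$ in \eqref{Wick1a} is $\dl$-uniform on $2\le\dl<\infty$) — and interpolate: writing $G_{\dl,N} - G_\dl = (e^{-\frac1{k+1}\int\W(u_N^{k+1})} - e^{-\frac1{k+1}\int\W(u^{k+1})})$, use $|e^{-a}-e^{-b}| \le |a-b|(e^{-a}+e^{-b})$ (with the obvious modification handling the sign-indefinite $k=2$ case via the cutoff) together with Hölder to reduce $\|G_{\dl,N}-G_\dl\|_{L^p(\mu_\dl)}$ to a product of the $L^{2p}$-Cauchy bound on the potential energies and the uniform $L^{2p}$-bound on the densities. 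This yields \eqref{THM1a} and its $\dl$-uniformity, hence convergence of $\rho_{\dl,N}$ to $\rho_\dl$ in total variation with uniform rate, and the equivalence $\rho_\dl \sim \mu_\dl$ follows since $G_\dl > 0$ $\mu_\dl$-a.s.\ and $G_\dl \in L^1(\mu_\dl)$ with $\E_{\mu_\dl}[G_\dl] > 0$ (the last by Fatou plus Jensen, or by the nondegeneracy inherent in Nelson's bound).

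For Part~(ii), the deep-water limit $\dl \to \infty$, the plan is to compare everything to the $\dl=\infty$ objects directly on the common probability space $(\O,\F,\PP)$. By Proposition~\ref{PROP:equiv} each $\mu_\dl$ is equivalent to $\mubo$, but rather than track Radon--Nikodym derivatives I would realize both $X_\dl$ and $X_\BO$ via the \emph{same} Gaussian sequence $\{g_n\}$ as in \eqref{GG3} and \eqref{Gbo2}, so that $X_\dl - X_\BO = \frac1{2\pi}\sum_{n\in\Z^*}(|K_\dl(n)|^{-1/2} - |n|^{-1/2})g_n e_n$. The bound $K_\dl(n) = |n| + O(\tfrac1\dl)$ gives $\big||K_\dl(n)|^{-1/2} - |n|^{-1/2}\big| \lesssim \dl^{-1}|n|^{-3/2}$, so $\E_\PP\|X_\dl - X_\BO\|_{H^{-\eps}}^2 \lesssim \dl^{-2} \sum_n |n|^{-3} \langle n\rangle^{-2\eps} \to 0$ as $\dl \to \infty$; in particular $X_\dl \to X_\BO$ in $L^2(\PP; H^{-\eps}(\T))$ and, along a subsequence, almost surely. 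Consequently the renormalized potential energies converge: $\int_\T \W(X_\dl^{k+1})dx \to \int_\T \W(X_\BO^{k+1})dx$ in $L^p(\PP)$ — here one must be slightly careful because the Wick renormalization uses the $\dl$-dependent variance $\s_{\dl,N}$, but $\s_{\dl,N} - \s_{\infty,N} = \frac1{2\pi}\sum_{0<|n|\le N}\big(\frac1{K_\dl(n)} - \frac1{|n|}\big) = O(\dl^{-1}\log N)$ uniformly, and more to the point the \emph{limits} satisfy $\s_{\dl,N} - \s_{\dl,M} \to \s_{\infty,N} - \s_{\infty,M}$ as $\dl\to\infty$, so the Hermite-polynomial renormalization is continuous in $\dl$ in the appropriate chaos norms; this is essentially the content of Proposition~\ref{PROP:FN} made $\dl$-uniform. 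Hence $G_\dl(X_\dl) \to G_\infty(X_\BO)$ in $L^p(\PP)$ (using again $|e^{-a}-e^{-b}| \le |a-b|(e^{-a}+e^{-b})$ and the uniform exponential integrability from Part~(i)), which upgrades to convergence of $\rho_\dl$ to $\rho_\BO$ in total variation by the general fact that if densities converge in $L^1$ then the induced measures converge in total variation. The equivalence of $\rho_\dl$ and $\rho_\BO$ for each fixed finite $\dl$ then follows by composing the three equivalences $\rho_\dl \sim \mu_\dl \sim \mubo \sim \rho_\BO$ (the middle one from Proposition~\ref{PROP:equiv}, the outer two from Part~(i) applied at $\dl$ and at $\infty$).

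The main obstacle I anticipate is \emph{uniformity in $\dl$} — both the $\dl$-uniform $L^p$-Cauchy bound for the Wick powers on $2\le\dl\le\infty$ (needed for the uniform total-variation rate in Part~(i)) and the quantitative $\dl\to\infty$ convergence of the renormalized potential energies in Part~(ii). The subtlety is that the renormalization constant $\s_{\dl,N}$ itself depends on $\dl$, so "the Wick power" is a $\dl$-dependent object and one cannot simply quote a single continuity statement; one has to check that the \emph{difference} of renormalization constants, $\s_{\dl,N} - \s_{\infty,N}$, converges (to a finite limit as $N\to\infty$, for fixed $\dl$, and to $0$ as $\dl\to\infty$, uniformly in $N$), which is where the precise asymptotics $K_\dl(n) = |n| + O(\tfrac1\dl)$ from Lemma~\ref{LEM:p2}/Remark~\ref{REM:q1} does the essential work. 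Everything else — the interpolation inequality for exponentials, Hölder, Fatou for the nondegeneracy of $Z_\dl$, and the passage from $L^1$-convergence of densities to total-variation convergence of measures — is routine once these uniform estimates are in hand. The focusing $k=2$ case requires, in addition, Tzvetkov's observation that the Wick-ordered $L^2$-cutoff tames the sign-indefinite cubic term, but since that cutoff factor is bounded by $1$ it does not interfere with any of the uniformity arguments above.
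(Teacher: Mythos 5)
Your Part (i) is essentially the paper's argument in substance: the $\dl$-uniform Cauchy estimates for the Wick powers (resting on the lower bound $K_\dl(n)\ge |n|-\tfrac12$ for $\dl\ge 2$, i.e.\ \eqref{Low_Kdl}) plus Nelson's estimate give the uniform $L^p$ bound on the truncated densities, and your pointwise inequality $|e^{-a}-e^{-b}|\le |a-b|(e^{-a}+e^{-b})$ with H\"older is a harmless quantitative variant of the paper's softer step (uniform convergence in probability via Lemma~\ref{LEM:KFC}, combined with the uniform $L^{2p}$ bound and Cauchy--Schwarz on the exceptional set); the passage to total variation of $\rho_{\dl,N}\to\rho_\dl$ is then legitimate because there the densities are taken with respect to the \emph{same} measure $\mu_\dl$. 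One inaccuracy: for $k=2$ your opening claim that the uniform integrability goes through ``verbatim'' because $0\le\chi_K\le 1$ is not right --- the cubic potential is sign-indefinite, the Nelson-type lower bound $H_{k+1}(x;\s)\ge -c\,\s^{\frac{k+1}{2}}$ is unavailable, and the Wick-ordered $L^2$-cutoff is the essential input rather than a harmless bounded factor. You later defer to Tzvetkov's argument, which the paper acknowledges is adaptable (it instead runs the Bou\'e--Dupuis variational argument of Propositions~\ref{PROP:V1}--\ref{PROP:V2}), but the $\dl$-uniformity of that adaptation is asserted, not checked.

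The genuine gap is the last step of Part (ii). You realize $X_\dl$ and $X_\BO$ through the same Gaussians, prove $G_\dl(X_\dl)\to G_\infty(X_\BO)$ in $L^p(\PP)$, and then invoke ``$L^1$-convergence of densities implies total-variation convergence of the measures.'' That fact applies only when the densities are taken with respect to a \emph{common} reference measure; here $G_\dl$ and $G_\infty$ are densities with respect to the two different Gaussian measures $\mu_\dl$ and $\mu_\infty$. Indeed,
\begin{align*}
\rho_\dl(A) - \rho_\BO(A)
 = Z_\dl^{-1}\E\big[ \ind_{\{X_\dl \in A\}}\, G_\dl(X_\dl)\big]
- Z_\infty^{-1}\E\big[ \ind_{\{X_\BO \in A\}}\, G_\infty(X_\BO)\big],
\end{align*}
and your $L^1(\PP)$-convergence only controls the part in which the two indicators agree; the leftover term $\E\big[(\ind_{\{X_\dl\in A\}} - \ind_{\{X_\BO\in A\}})\, G_\infty(X_\BO)\big]$ is not small merely because $X_\dl\to X_\BO$ almost surely in $H^{-\eps}$: indicators of Borel sets are not continuous, the coupled variables are almost surely distinct, and the supremum over $A$ of this term is precisely of the size of a (weighted) total-variation distance between $\mu_\dl$ and $\mu_\infty$, which your argument never addresses. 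This is exactly the obstruction the paper flags in the remark following Lemma~\ref{LEM:C1}. The missing ingredient is Proposition~\ref{PROP:equiv}\,(iii): $\mu_\dl\to\mu_\infty$ in Kullback--Leibler divergence, computed from the explicit product-form Radon--Nikodym derivative $\frac{d\mu_\dl}{d\mu_\infty}$ (the very computation you chose to avoid), hence $\dtv(\mu_\dl,\mu_\infty)\to 0$ by Pinsker's inequality. With that in hand, the paper closes the argument by the triangle inequality $\dtv(\rho_\dl,\rho_\BO)\le \dtv(\rho_\dl,\rho_{\dl,N})+\dtv(\rho_{\dl,N},\rho_{\infty,N})+\dtv(\rho_{\infty,N},\rho_\BO)$, using the $\dl$-uniform rate from Part (i) for the outer terms and, for the middle term at fixed $N$, the splitting into $\int \ind_A\,|G_{\dl,N}-G_{\infty,N}|\,d\mu_\infty$ (dominated convergence, using the uniform bound on the fixed-$N$ densities) plus $\int G_{\dl,N}\,\big|\frac{d\mu_\dl}{d\mu_\infty}-1\big|\,d\mu_\infty\les \dtv(\mu_\dl,\mu_\infty)$ via Scheff\'e's theorem. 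Without a proof that $\dtv(\mu_\dl,\mu_\infty)\to 0$, your Part (ii) does not close (your Part (i) and the composition $\rho_\dl\sim\mu_\dl\sim\mu_\infty\sim\rho_\BO$ for the equivalence statement are fine). A minor side remark: your estimate $\s_{\dl,N}-\s_{\infty,N}=O(\dl^{-1}\log N)$ understates the truth; the summands are $O(\dl^{-1}n^{-2})$, so the difference is $O(\dl^{-1})$ uniformly in $N$, which is what one actually wants when commuting the limits in $N$ and $\dl$.
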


Theorem \ref{THM:Gibbs1} 
provides the first result
on the construction of the Gibbs measures for the (generalized) ILW equation
and also for the defocusing gBO equation (for $k \ge 3$).

Given a parameter-dependent Hamiltonian dynamics, 
it is of significant physical interest to study convergence of 
the associated Gibbs measures, 
which may be viewed as the first step toward 
studying convergence of dynamics at the Gibbs equilibrium.
Theorem \ref{THM:Gibbs1} (and Theorem~\ref{THM:Gibbs2})
is the first such result 
for the (generalized) ILW equation, appearing in the study of fluids.
We also mention
a series of recent breakthrough results on 
the convergence of the Gibbs measures
for quantum many-body  systems 
to that for the nonlinear Schr\"odinger equation, 
led 
by two groups 
\cite{LNR}~(Lewin, Nam, Rougerie) and 
 \cite{FKSS}~(Fr\"olich, Knowles, Schlein, and Sohinger).
See these papers for the references therein.
While these works establish only the convergence
of the Gibbs measures, 
we also establish convergence of the corresponding dynamics;
see Theorems~\ref{THM:6} and~\ref{THM:5} below.

Fix $k \in 2\N + 1$.
For each fixed $0 < \dl \le \infty$, 
the construction of the Gibbs measure
(Theorem~\ref{THM:Gibbs1}\,(i))
follows from 
 a standard application of Nelson's estimate.
The main novelty is Part (ii) of Theorem \ref{THM:Gibbs1}.
 In order to prove convergence of $\rho_\dl$ in the deep-water limit, 
 we need to estimate the truncated densities
 $G_{\dl, N}(u)$, 
{\it uniformly in both $\dl \gg 1$ and $N \in \N$}.
 One subtle point is that for different values of $\dl \gg1$, 
 the base Gaussian measures $\mu_\dl$ are different.
 In order to overcome this issue, we indeed estimate
 $G_{\dl, N}(X_{\dl})$ in $L^p(\O)$, 
 uniformly in both $\dl \gg 1$ and $N \in \N$.
 Namely, we need to directly work with the probability measure $\PP$
 on $\O$.
  See Section \ref{SEC:Gibbs1} for details.
We point out that this uniform bound 
on the truncated densities  in $\dl \gg1$ and $N \in \N$
 also plays an important role in the dynamical part, 
 which we discuss in the next subsection.
Another key ingredient in establishing convergence of the Gibbs measures
is `strong' convergence of the base Gaussian measures $\mu_\dl$ (namely, convergence in the Kullback-Leibler divergence defined in \eqref{KL4};
see Proposition \ref{PROP:equiv}).

When $k = 2$, the problem is no longer defocusing
and thus Nelson's argument is not directly applicable.
While we could adapt the argument by Tzvetkov \cite{Tz10}
for the BO equation, 
we instead  use the variational approach
as in the work \cite{OST} by Seong, Tolomeo, and the second author, 
which provides a slightly simpler argument.

\begin{remark}\label{REM:k2}\rm
We point out that, when $k = 2$, 
there is no need for a renormalization.
Indeed, recalling that $H_3(x; \s) = x^3 - 3\s x$, under the mean-zero condition, we have
\[ \int_\T \W(u_N^3) dx = \int_\T u_N^3 dx - 3 \s_{\dl, N} \int_\T u_N dx
=  \int_\T u_N^3 dx, \]

\noi
showing that a renormalization is not necessary in the $k =2$ case.
The same comment applies to Theorem \ref{THM:Gibbs2}
in the shallow-water limit.

\end{remark}

Next, we  consider the scaled gILW \eqref{gILW3}
with the Hamiltonian $\EE_\dl(v)$ in \eqref{Hv}.
Let $k \in 2\N + 1$.
For each fixed finite $\dl > 0$, the construction of the Gibbs measure $\wt \rho_\dl$ 
in \eqref{Gibbs3}
follows exactly the same lines as above.
Define  
the base Gaussian measure $ \wt \mu_\dl$ in \eqref{Gibbs4}
as the induced probability measure under the map:
\begin{align}
\o \in \O \longmapsto
 \wt X_\dl(\o) =   \frac 1{2\pi}  \sum_{n\in\Z^\ast} \frac{g_n(\o)}{| L_\dl(n)|^{ \frac{1}{2} } }e_n, 
\label{GH3}
 \end{align}

\noi
\noi
where $\{g_n\}_{n \in \Z^*}$ is as in \eqref{Gibbs7} and $L_\dl(n)$ is given by 
\begin{align}
L_\dl(n) := \frac 3\dl K_\dl(n) 
=  \frac{3in}{\dl} \ft \Gdl(n)
=  \frac{3}{ \delta}   \Big( n\coth( \delta n )-\frac{1}{ \delta} \Big).
\label{GH4}
\end{align}

\noi
From \eqref{GG3}, \eqref{GH3}, and \eqref{GH4},  we have
\begin{align}
\wt X_\dl = \sqrt{\frac{\dl}{3}} X_\dl
\label{GH4a}
\end{align}

\noi
for any $0 < \dl < \infty$.
Hence, by setting $\wt X_{\dl, N} = \P_N \wt X_\dl$, 
it follows from \eqref{Wick1a} that 
\noi
\begin{align}
\begin{aligned}
 \wt \s_{\dl, N}: \!&= \E \big[ \wt X_{\dl, N}^2(x) \big] 
= \frac 1 {4\pi^2} \sum_{0 < |n| \le N}   \frac{2\pi}{L_\dl(n)} \\
& = \frac\dl 3 \s_{\dl, N} \sim_\dl  \log (N+1), 
\end{aligned} 
\label{Wick11a}
\end{align} 
 
\noi
where  $\s_{\dl, N}$ is as in \eqref{Wick1a}.

Given $N \in \N$, 
we define the truncated density $\wt G_{\dl, N}(u) $ by 
\begin{align*}
\wt G_{\dl, N}(v) = e^{-\frac 1 {k+1}  \int_\T \W(v_N^{k+1})dx}, 
\end{align*}

\noi
where $v_N  = \P_N v$
and 
\begin{align}
\W(v_N^{k+1}) = H_{k+1}(v_N; \wt \s_{\dl, N}).
\label{GH5a}
\end{align}
Then, 
we define the truncated Gibbs measure $\wt \rho_{\dl,  N}$
by 
\begin{align}
\wt \rho_{\dl,  N} (dv)
= Z_{\dl, N}^{-1} e^ {-\frac 1{k+1} \int_\T \W(v_N^{k+1})dx} 
d\wt \mu_\dl(v).
\label{GH6}
\end{align}

We now state our main result on 
convergence of the Gibbs measures in the shallow-water limit.
Due to the use of the Wick renormalization for $\dl > 0$, 
we need to consider a ``renormalized'' power even in the shallow-water limit ($\dl = 0$):
\begin{align}
\rho_{\KDV}(dv)
&= Z_0^{-1} e^{-\frac 1{k+1} \int_\T \W(v^{k+1}) dx} d\wt \mu_0(v), 
\label{Gibbs5b}
\end{align}

\noi
associated with the following gKdV:
\begin{align}
\dt v + \dx^3 v = \dx \W(v^k). 
\label{KDV3}
\end{align}

\noi
Here, $\W(v^{\l})$ is given by 
\begin{align}
\W(v^{\l}) = H_{\l}(v; \s_\KDV), 
\label{s2}
\end{align}

\noi
where $\s_\KDV$ is as in \eqref{s1}.
In particular, when $\dl = 0$, 
$\W(v^{\l})$ is nothing but the usual Hermite polynomial
of degree $\l$ with the  finite variance parameter $\s_\KDV$, 
which is well defined without any limiting procedure.

\begin{theorem}\label{THM:Gibbs2}
Let $k \in 2\N + 1$. Then, the following statements hold.

\smallskip
\noi
\textup{(i)} Let $0 < \dl <  \infty$.
Then,  
for any finite $p \ge 1$, we have
\begin{align}
\lim_{N \to \infty} \wt G_{\dl, N}(v) = \wt G_\dl(v) : = 
 e^ {-\frac 1{k+1} \int_\T \W(v^{k+1})dx} 
\quad \text{in $L^p(\wt \mu_\dl)$.}
\label{THM2a}
\end{align}

\noi
As a consequence, the truncated Gibbs measure $\wt \rho_{\dl, N}$
in \eqref{GH6}  converges, in the sense of~\eqref{THM2a}, 
to the limiting Gibbs measure $\wt \rho_\dl$ given by 
\begin{align}
\begin{split}
\wt \rho_\dl(dv) 
& = Z_\dl^{-1} \wt G_\dl(v) d\wt \mu_\dl(v)\\
& = Z_\dl^{-1}e^{-\frac 1 {k+1} \int_\T \W(v^{k+1})dx} d\wt \mu_\dl(v) .
\end{split}
\label{THM2b}
\end{align}

\noi
In particular, $\wt \rho_{\dl, N}$ converges to 
$\wt \rho_{\dl}$ in total variation.
The resulting Gibbs measure $\wt \rho_\dl$ and the base Gaussian measure
$\wt \mu_\dl$ are equivalent.

For $0 <  \dl \le 1$, 
 the rate of convergence \eqref{THM2a} is uniform 
 and thus the rate of convergence in total variation
 of $\wt \rho_{\dl, N}$ to $\wt \rho_\dl$ as $N \to \infty$ is uniform
 for $0<  \dl \le 1$.

\smallskip

\noi
\textup{(ii) (shallow-water limit of the Gibbs measures).} 
Let $0 < \dl < \infty$. Then, the Gibbs measures
 $\wt \rho_\dl$  
 for the scaled gILW \eqref{gILW3}
 constructed in Part \textup{(i)}
and $ \rho_\KDV$ in \eqref{Gibbs5b} 
for gKdV \eqref{KDV3}
are mutually singular.
As $\dl \to 0$,  however, $\wt \rho_\dl$ converges weakly to $\rho_\KDV$.

\smallskip

Furthermore, when $k =2$, 
by replacing
the truncated Gibbs measure $\wt \rho_{\dl,  N}$ in \eqref{GH6}
by the truncated Gibbs measure with a Wick-ordered $L^2$-cutoff\textup{:}
\begin{align}
\wt \rho_{\dl, N}(dv)
= Z_{\dl, N}^{-1} 
\chi_K\bigg( \int_\T \W(v_N^2) dx \bigg) e^{-\frac 1{3} \int_\T v_N^{3} dx} 
d \wt \mu_\dl(v), 
\label{THM2c}
\end{align}
 
\noi
the statements \textup{(i)} and \textup{(ii)} hold true
for any fixed $K > 0$.
Namely, for each $0 < \dl < \infty$,  
 the truncated Gibbs measure $\wt \rho_{\dl, N}$ in \eqref{THM2c}
converges to the limiting Gibbs measure\textup{:}
\begin{align}
\begin{split}
\wt \rho_{\dl}(dv)
&= Z_{\dl}^{-1} 
\chi_K\bigg( \int_\T \W(v^2) dx \bigg)e^{-\frac 1{3} \int_\T v^{3} dx} 
d \wt \mu_\dl(v)
\end{split}
\label{THM2d}
\end{align}

\noi
in the sense of the $L^p(\wt \mu_\dl)$-convergence 
of the truncated densities as in \eqref{THM2a}.
Moreover, 
the resulting Gibbs measure $\wt \rho_\dl$ in \eqref{THM2d} and the base Gaussian measure
endowed with the Wick-ordered $L^2$-cutoff
$\chi_K\big( \int_\T \W(v^2) dx \big)d \wt \mu_\dl (v)$ are equivalent.
For $0 <  \dl \le 1$, 
 the rate of convergence in total variation
 of $\wt \rho_{\dl, N}$ in \eqref{THM2c} to $\wt \rho_\dl$
 in \eqref{THM2d} as $N \to \infty$ is uniform
 for $0<  \dl \le 1$.

For  $0 < \dl < \infty$,   the Gibbs measures $\wt \rho_\dl$ in \eqref{THM2d}
for the scaled ILW \eqref{gILW3} \textup{(}with $k =2$\textup{)}
and $\rho_\KDV$ in \eqref{Gibbs11} for KdV 
\textup{(}with an $L^2$-cutoff\textup{)} are mutually singular.
As $\dl \to 0$, however, 
the Gibbs measure $\wt \rho_\dl$ converges weakly to $\rho_\KDV$ in \eqref{Gibbs11}.
\end{theorem}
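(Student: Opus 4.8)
The plan is to treat the four assertions in turn, reducing each to the analogous assertion for the unscaled problem (Theorem~\ref{THM:Gibbs1}) via the scaling identity \eqref{GH4a}, and then to isolate the genuinely new point, which is the mutual singularity together with weak convergence in the shallow-water limit. First I would dispose of Part~(i). By \eqref{GH4a} we have $\wt X_\dl = \sqrt{\dl/3}\, X_\dl$, and by \eqref{Wick11a} the variance parameters satisfy $\wt\s_{\dl,N} = \tfrac\dl3 \s_{\dl,N}$; since Hermite polynomials transform homogeneously, $H_{k+1}(\sqrt{\dl/3}\,x;\tfrac\dl3\s) = (\dl/3)^{(k+1)/2} H_{k+1}(x;\s)$, so the truncated renormalized potential energy for $\wt X_{\dl,N}$ equals $(\dl/3)^{(k+1)/2}$ times that for $X_{\dl,N}$. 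Hence $\wt G_{\dl,N}(\wt X_\dl) = G_{\dl,N}(X_\dl)$ with the exponent's constant $\tfrac1{k+1}$ replaced by $\tfrac1{k+1}(\dl/3)^{(k+1)/2}$, and the $L^p(\wt\mu_\dl)$-convergence in \eqref{THM2a}, the identification of the limit $\wt\rho_\dl$ in \eqref{THM2b}, the total-variation convergence, and the equivalence $\wt\rho_\dl \sim \wt\mu_\dl$ all follow verbatim from the proof of Theorem~\ref{THM:Gibbs1}\,(i). For $0<\dl\le 1$ the extra constant $(\dl/3)^{(k+1)/2}$ is bounded, so the Nelson-type estimate is uniform there, giving the stated uniform rate. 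The $k=2$ variant with the Wick-ordered $L^2$-cutoff \eqref{THM2c} is handled the same way, using that $\int_\T\W(\wt X_{\dl,N}^2)\,dx = \tfrac\dl3\int_\T\W(X_{\dl,N}^2)\,dx$ and that $\chi_K$ composed with multiplication by $\dl/3$ is again an admissible cutoff of the same type (a continuous $[0,1]$-valued bump), together with the variational argument of \cite{OST} as in Theorem~\ref{THM:Gibbs1}.

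The main work is Part~(ii). For weak convergence $\wt\rho_\dl \rightharpoonup \rho_\KDV$ as $\dl\to 0$, I would proceed in two stages. First, the base Gaussian measures: from \eqref{GH4}, $L_\dl(n) = \tfrac3\dl(n\coth(\dl n) - \tfrac1\dl)$, and the Taylor expansion \eqref{tan1}–\eqref{asy1} with $x=\dl n$ gives $L_\dl(n) \to n^2$ for each fixed $n\in\Z^\ast$, so $\wt X_\dl \to X_\KDV$ in law in $H^{\tfrac12-\eps}(\T)$ — indeed, writing $\wt X_\dl = \tfrac1{2\pi}\sum g_n/L_\dl(n)^{1/2} e_n$ and $X_\KDV = \tfrac1{2\pi}\sum g_n/|n| e_n$ on the same probability space, one checks $\E\|\wt X_\dl - X_\KDV\|_{H^{\tfrac12-\eps}}^2 \to 0$ using $|L_\dl(n)^{-1/2} - |n|^{-1}| \to 0$ pointwise together with the uniform bound $L_\dl(n) \gtrsim |n|$ (which follows from $K_\dl(n) = |n| + O(1/\dl)$ and $L_\dl(n) = \tfrac3\dl K_\dl(n)$, so $L_\dl(n) \ge \tfrac3\dl(|n| - \tfrac1\dl)\cdot$const and in any case $L_\dl(n)\ge cn^2$ uniformly for, say, $|n|\ge 1$ and $0<\dl\le 1$) and dominated convergence for the series. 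Second, the densities: one must show the renormalized potential energies converge, i.e. $\int_\T \W(\wt X_{\dl}^{k+1})\,dx \to \int_\T \W(X_\KDV^{k+1})\,dx$ in, say, $L^2(\O)$ as $\dl\to 0$. The subtlety is that the Wick constant $\wt\s_{\dl,N}=\tfrac\dl3\s_{\dl,N}$ is defined through a renormalization limit that is $\dl$-dependent and logarithmically divergent in $N$, whereas the limiting object uses the \emph{finite} constant $\s_\KDV=\pi/6$ from \eqref{s1}; I would show $\wt\s_{\dl,\infty}:=\lim_N \tfrac\dl3\s_{\dl,N}$ — wait, this limit diverges — so more precisely I would first pass to the limit in $N$ at fixed $\dl$ to obtain $\W(\wt X_\dl^{k+1})$ as in Proposition~\ref{PROP:FN}, then estimate the Fourier-side difference of these limiting Wick powers against $\W(X_\KDV^{k+1})$ using that the second-order Wiener chaos kernels built from $L_\dl(n)^{-1/2}$ converge (in the appropriate $\ell^2$ sense over frequency) to those built from $|n|^{-1}$, and that the logarithmic pieces cancel in the renormalized expression. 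Combining convergence of $\wt\mu_\dl$ with $L^1$-convergence of the (uniformly integrable, by the $\dl\le1$ uniform $L^p$ bound from Part~(i)) densities against a bounded continuous test functional yields $\int F\,d\wt\rho_\dl \to \int F\,d\rho_\KDV$. The $k=2$ case is easier since no renormalization survives (Remark~\ref{REM:k2}): there one only needs $\wt X_\dl\to X_\KDV$ in law and continuity/boundedness of $v\mapsto \chi_K(\int\W(v^2))e^{-\frac13\int v^3}$ on the relevant Sobolev space, plus uniform integrability.

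Finally, mutual singularity of $\wt\rho_\dl$ and $\rho_\KDV$ for each fixed $0<\dl<\infty$. Since both are equivalent to their respective base Gaussians ($\wt\rho_\dl\sim\wt\mu_\dl$ by Part~(i), and $\rho_\KDV\sim\wt\mu_0$ by \eqref{Gibbs5b} — for $k\in2\N+1$ the density is bounded, and for $k=2$ by the $L^2$-cutoff construction), it suffices to prove $\wt\mu_\dl \perp \wt\mu_0$. By the Feldman–H\'ajek dichotomy for Gaussian measures, two such diagonal Gaussians are either equivalent or mutually singular, and equivalence holds iff $\sum_{n\in\Z^\ast}\big(1 - |n|/L_\dl(n)^{1/2}\big)^2 \cdot(\text{appropriate factor}) <\infty$; more precisely the Hilbert–Schmidt criterion requires $\sum_n \big(\tfrac{L_\dl(n)}{n^2} - 1\big)^2 < \infty$. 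Now $L_\dl(n) = \tfrac3\dl(|n|\coth(\dl|n|) - \tfrac1\dl)$, and for $|n|$ large $\coth(\dl|n|) = 1 + O(e^{-2\dl|n|})$, so $L_\dl(n) = \tfrac3\dl|n| - \tfrac3{\dl^2} + O(|n|e^{-\dl|n|})$, whence $\tfrac{L_\dl(n)}{n^2} - 1 = \tfrac{3}{\dl|n|} - 1 + o(1)$, which does \emph{not} tend to $0$ — the ratio $L_\dl(n)/n^2 \sim 3/(\dl|n|) \to 0$, so the series diverges and indeed each summand fails to vanish. Hence $\wt\mu_\dl\perp\wt\mu_0$, and therefore $\wt\rho_\dl\perp\rho_\KDV$; the same chain of reasoning, via $\wt\rho_\dl\sim(\text{cutoff-weighted }\wt\mu_\dl)$ and $\rho_\KDV\sim(\text{cutoff-weighted }\wt\mu_0)$ in \eqref{Gibbs11}, covers the $k=2$ assertion. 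The expected main obstacle is the density-convergence part of weak convergence in Step~(ii): reconciling the $\dl$-dependent, $N$-divergent Wick renormalization of $\wt\rho_\dl$ with the finite Wick constant $\s_\KDV$ of $\rho_\KDV$ in a way that makes the limiting chaos expansions match requires care with the order of limits ($N\to\infty$ before $\dl\to 0$) and a quantitative comparison of the two families of covariance kernels; everything else reduces cleanly to already-proven statements.
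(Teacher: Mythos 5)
Your Part~(i) and your mutual-singularity argument are in substance the paper's own: the paper simply reruns the Section~\ref{SEC:Gibbs1} machinery with $L_\dl(n)$ in place of $K_\dl(n)$ (Propositions~\ref{PROP:HN}, \ref{PROP:Gibbs2}, \ref{PROP:VX1}, \ref{PROP:VX2}), which is exactly the computation your scaling identity \eqref{GH4a} repackages, and the singularity of $\wt \mu_\dl$ and $\wt \mu_0$ is proved precisely as you propose, via Kakutani/Feldman--H\'ajek (Lemma~\ref{LEM:Kaku}) and the fact that $L_\dl(n)/n^2 \not\to 1$ for fixed $\dl$ (Lemma~\ref{LEM:p1}\,(iv)), then transferred to the Gibbs measures through absolute continuity with respect to the base Gaussians. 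Where you genuinely diverge is the weak convergence $\wt\rho_\dl \rightharpoonup \rho_\KDV$: you propose to prove $\int_\T \W(\wt X_\dl^{k+1})\,dx \to \int_\T \W(X_\KDV^{k+1})\,dx$ in $L^2(\O)$ directly, noting that both limits are pure $(k+1)$-st chaos elements so the divergent Wick constants drop out and the comparison is an $\l^2$ comparison of the kernels $\prod_j L_\dl(n_j)^{-1/2}$ versus $\prod_j |n_j|^{-1}$ on $\{n_1+\cdots+n_{k+1}=0\}$; combined with the uniform-in-$\dl$ integrability of the densities this gives weak convergence in one shot. This is viable (pointwise convergence $L_\dl(n)\to n^2$, the uniform bound \eqref{Low_Ldl}, and summability of $\prod_j|n_j|^{-1}$ on the hyperplane yield dominated convergence of the kernels), and it is more direct than the paper's route, which never compares the untruncated Wick powers across $\dl$: the paper instead uses the uniform-in-$\dl$ rate of the $N\to\infty$ truncation error (Proposition~\ref{PROP:Gibbs2}, Remark~\ref{REM:K}), and for each fixed $N$ sends $\dl\to0$ using $\wt\s_{\dl,N}\to \s_{\KDV,N}$ (Lemma~\ref{LEM:C2}), the boundedness of the $\dl=0$ truncated density, and a portmanteau argument on sets with $\wt\mu_0$-null boundary.

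Two points in your write-up need repair. First, in Part~(i) the justification of uniformity for $0<\dl\le1$ (``the constant $(\dl/3)^{(k+1)/2}$ is bounded, so the Nelson estimate is uniform'') is insufficient as stated: Theorem~\ref{THM:Gibbs1}'s uniform estimates hold only for $2\le\dl\le\infty$, and the unscaled quantities degenerate for small $\dl$ (e.g.\ $\s_{\dl,N}\sim \dl^{-1}\log N$). The cancellation does work, because $(\dl/3)^{(k+1)/2} R_{\dl,N}(X_\dl) = \wt R_{\dl,N}(\wt X_\dl)$ identically, but then the uniform bounds must be proved for the \emph{scaled} objects, and they rest on the uniform lower bound $L_\dl(n)\gtrsim |n|$ of Lemma~\ref{LEM:p1}\,(iii) --- the ingredient the paper isolates --- not on boundedness of the constant; likewise for $k=2$ the cutoff rescales to size $3K/\dl$, so the variational argument must be run in the scaled variables. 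Second, for the $k=2$ weak convergence your appeal to ``continuity/boundedness of $v\mapsto \chi_K\big(\int_\T \W(v^2)dx\big)e^{-\frac13\int_\T v^3 dx}$ on the relevant Sobolev space'' is not available: neither $\int_\T \W(v^2)\,dx$ nor $\int_\T v^3\,dx$ is a continuous (or even pathwise defined) functional on $H^{-\eps}(\T)$, which is where $\wt\mu_\dl$ lives. You need the same chaos-level convergence in probability of these random variables that you describe in the defocusing case (or the paper's fixed-$N$ scheme), after which uniform integrability finishes the argument as before.
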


As compared to the deep-water limit ($\dl \to \infty$) studied in Theorem \ref{THM:Gibbs1}, 
we have an interesting phenomenon
in this shallow-water limit ($\dl \to 0$).
This is due to the fact that, while $L_\dl(n) \sim_\dl |n|$ for each $\dl > 0$, 
we have 
\begin{align*}
\lim_{\dl \to 0} L_\dl(n) = n^2
\end{align*}

\noi
for each $n \in \Z^*$.
See Lemma \ref{LEM:p1}.
This causes $\wt \mu_\dl$, $\dl > 0$, in \eqref{Gibbs4}
and the limiting Gaussian measure $\wt \mu_0$ in~\eqref{Gibbs6}
to be mutually singular.
(For each finite $\dl > 0$, the Gaussian measure 
$\wt \mu_\dl$ is supported on $H^{-\eps}(\T) \setminus L^2(\T)$, $\eps > 0$, 
whereas $\wt \mu_0$ is supported on 
$H^{\frac 12 -\eps}(\T) \setminus H^\frac 12 (\T)$, $\eps > 0$.)
In view of the equivalence of the Gibbs measures 
and the base Gaussian measures, 
the first claim in Theorem \ref{THM:Gibbs2}\,(ii) 
essentially follows from this observation.
Due to this mutual singularity, 
the mode of convergence of
the Gibbs measures  $\wt \rho_\dl$ to $\rho_\KDV$ in the shallow-water limit
is much weaker as compared to that in the deep-water limit
stated in Theorem \ref{THM:Gibbs1}\,(i).
See Section \ref{SEC:4} for details.

\begin{remark}\rm
Let $k \in 2\N+1$.
Then, the Gibbs measure 
 $\rho_\KDV$ in 
\eqref{Gibbs5b} 
for the gKdV equation is a well-defined probability measure on $H^{\frac 12 - \eps}(\T)\setminus H^\frac 12(\T)$, $\eps > 0$.
In view of \eqref{s2} with \eqref{s1}, 
we have
$0 < e^{-\frac 1{k+1} \int_\T \W(v^{k+1}) dx}
= e^{-\frac 1{k+1} \int_\T H_{k+1}(v; \s_\KDV) dx} \les 1$
on $H^{\frac 12 - \eps}(\T)$,
which is clearly integrable with respect to the base Gaussian measure $\wt \mu_0$
in \eqref{Gibbs6}.

\end{remark}

\begin{remark}\label{REM:Q2}
\rm
(i) As mentioned above, in \cite{OST}, 
Seong, Tolomeo, and the second author
proved non-normalizability of the Gibbs measure \eqref{Gx4}
(with $k = 3$)
for the focusing modified BO
(for any cutoff size $K > 0$ on the Wick-ordered $L^2$-cutoff). 
For each fixed $\dl > 0$, 
the  same argument allows us to prove  non-normalizability of 
the Gibbs measure (with $k = 3$):
\begin{align}
\begin{split}
\rho_{\dl}(du)
&= Z_\dl^{-1} 
\chi_K\bigg( \int_\T \W(u^2) dx \bigg) e^{\frac 1{k+1} \int_\T \W(u^{k+1}) dx} 
d  \mu_\dl(u)
\end{split}
\label{GH7}
\end{align}
 
\noi
for the focusing modified ILW equation \eqref{gILW7} with $k = 3$.
A straightforward modification of the argument in \cite{OST}
also yields non-normalizability of
the focusing\footnote{Recall our convention that by focusing, 
we also include the non-defocusing case, 
namely, \eqref{GH7} with $k \in 2\N$.} Gibbs measures 
\eqref{GH7} for any $k \ge 3$ and $0 < \dl \le \infty$.
For any $k \ge 3$ and $\dl > 0$, 
the same non-normalizability result also applies to the Gibbs measure:
\begin{align}
\wt \rho_{\dl}(dv)
= Z_\dl^{-1} 
\chi_K\bigg( \int_\T \W(v^2) dx \bigg) e^{\frac 1{k+1} \int_\T \W(v^{k+1}) dx} 
d  \wt \mu_\dl(v)
\label{GH8}
\end{align}

\noi
for the focusing scaled gILW (namely, \eqref{gILW3}
with the $-$ sign on the nonlinearity).

\smallskip

\noi
(ii)
In the shallow-water limit ($\dl = 0$), the  Gibbs measure $\rho_\KDV$
for the focusing gKdV (with an appropriate $L^2$-cutoff) 
exists up to the $L^2$-critical case ($k = 5$).
For each $\dl > 0$, however, the  Gibbs measure
for the focusing scaled gILW, $\dl > 0$, 
is not normalizable and thus it is not possible to study 
the convergence problem for the Gibbs measures (as well as 
dynamics at the Gibbs equilibrium) in this case. 
One possible approach may be to study convergence of the 
truncated Gibbs measure $\wt \rho_{\dl, N}$ in \eqref{GH6}
(with a Wick-ordered $L^2$-cutoff)
for the frequency-truncated scaled gILW
to the Gibbs measure $\rho_\KDV$ in \eqref{Gibbs5b}
for the focusing gKdV \eqref{KDV3}, 
by taking $N \to \infty$ and $\dl \to 0$
in a related manner.
The associated dynamical convergence  problem may be of interest as well.

\end{remark}

\subsection{Dynamical problem}

Our next goal is to study the associated dynamical problems.
More precisely, 
our goal is to  construct 
dynamics for the (scaled) gILW 
at the Gibbs equilibrium
and then to show that 
the invariant Gibbs dynamics
for the (scaled) gILW
converges to that for  gBO 
in the deep-water limit
(and for gKdV in the shallow-water limit, respectively)
in some appropriate sense.
In the following, for the sake of the presentation, 
we refer to the study of the original (unscaled) gILW equation (and the gBO equation)
for $0 < \dl \le \infty$ as the deep-water regime, 
and the study of the scaled gILW equation 
for $0 \le \dl < \infty$ (and the gKdV equation) as the shallow-water regime, 

Let us first consider 
 the deep-water regime.
 In Theorem \ref{THM:Gibbs1}, 
we constructed the Gibbs measure $\rho_\dl$ in~\eqref{THM1b}
associated with the following renormalized Hamiltonian:
\begin{align*} 
E_\dl(u) =
\frac 12   \int_\T  u \Gdl \dx u   dx
+ \frac{1}{k+1} \int_\T \W(u^{k+1}) dx, 
\end{align*}

\noi
when $k \in 2\N + 1$.
The corresponding Hamiltonian dynamics is formally given by
the following renormalized gILW:
\begin{align}
\dt u -
 \Gdl \partial_x^2 u 
           =  \dx \W(u^k)    , 
\label{WILW1}
\end{align}

\noi
which needs to be interpreted in a suitable limiting sense.
When $k = 2$, the measure construction does not require any renormalization (see Remark \ref{REM:k2})
and thus we study ILW~\eqref{ILW1}
as the corresponding dynamical problem.
As mentioned above, our first main goal is to construct dynamics
at the Gibbs equilibrium.
It is, however, a rather challenging problem
to construct strong solutions 
to these equations
with the Gibbsian initial data, 
even in a probabilistic sense.
This is mainly due to the low regularity 
(namely, $H^{-\eps}(\T) \setminus L^2(\T)$, $\eps > 0$) of the Gibbsian initial data
when $\dl > 0$.
In fact, for $0 < \dl \le \infty$, the only known case is 
for the Benjamin-Ono equation ($k = 2$
with $\dl = \infty$) by Deng~\cite{De15}, 
where he established deterministic local well-posedness
result in a space,  containing the support for the Gibbs measure, 
by a rather intricate argument
and then used Bourgain's invariant measure argument \cite{BO94}
to construct global-in-time dynamics at the Gibbs equilibrium.
By invariance, we mean that (with $\dl = \infty$ in the BO case)
\begin{align}
\rho_\dl    \big(\Phi_\dl(-t) A\big) =  \rho_\dl(A)
\label{1inv1}
\end{align}

\noi
for any measurable  set  $A\subset H^{-\eps}(\T)$ with some small  $\eps >  0$ 
and any $t \in \R$, 
where $\Phi_\dl(t)$ denotes the solution map:
\[
\Phi_\dl(t): u_0 \in H^{-\eps}(\T) \longmapsto 
u(t) = \Phi_\dl(t) u_0 \in  H^{-\eps}(\T), 
\]

\noi
satisfying the flow property
\begin{align}
\Phi_\dl(t_1 + t_2) =\Phi_\dl(t_1)\circ \Phi_\dl(t_2) 
\label{flow}
\end{align}

\noi
for any $t_1, t_2 \in \R$.
Here, we used $H^{-\eps}(\T)$ for simplicity but
it may be another Banach space, containing the support of the Gibbs measure
(as in \cite{De15}).
We also mention a recent work \cite{GKT}
on sharp global well-posedness of BO in almost critical spaces
$H^s(\T)$, $s > -\frac 12$, 
 based on the complete integrability of the equation.
When $0 < \dl < \infty$, 
the construction of strong solutions with the Gibbsian initial data
is widely open even for $k = 2$.
When $k \ge 3$, the difficulty of the problem
increases significantly and 
nothing is known up to date
for the renormalized gBO (with the Gibbs measure initial data):
\begin{align}
\dt u -
\H \partial_x^2 u 
=  \dx \W(u^k)    .
\label{BO1}
\end{align}

\noi
For example, when $k = 3$ corresponding 
to the (renormalized) modified BO equation (mBO), 
 the best known (deterministic) well-posedness result for mBO 
 is 
in $H^{\frac 12}(\T)$~\cite{GLM}, 
while
the scaling-critical space is $L^2(\T)$
and the support of the Gibbs measure is contained in $H^{-\eps}(\T)\setminus L^2(\T)$.
When 
 $0 < \dl < \infty$, we expect that the problem is much harder
due to a rather complicated, non-algebraic nature of the dispersion symbol (see~\eqref{OP}).

In this paper, 
we do not aim to construct strong solutions.
By a compactness argument, 
we instead
construct global-in-time dynamics of weak solutions 
at the Gibbs equilibrium (without uniqueness), 
including the gBO case ($\dl = \infty$).
In the deep-water limit, 
we also 
 show that
there exists a sequence $\{\dl_m\}_{m \in \N}$
of the depth parameters, tending to $\infty$, 
and 
 solutions, at the Gibbs equilibrium, 
to  the renormalized gILW~\eqref{WILW1}
with $\dl = \dl_m$, 
converging almost surely to 
solutions, at the Gibbs equilibrium,  to
the renormalized gBO~\eqref{BO1}.

\begin{theorem}
[deep-water regime]
\label{THM:6}

Let $k \in 2\N + 1$. Then, the following statements hold.

\smallskip
\noi
\textup{(i)}
Let $0 < \dl \le \infty$.
Then, there exists a set $\Si_\dl$ of full measure with respect to 
the Gibbs measure $\rho_\dl$ 
in~\eqref{THM1b}
constructed in Theorem \ref{THM:Gibbs1}
such that for every $u_0 \in \Si_\dl$, 
there exists a global-in-time solution 
$u \in C(\R; H^s(\T))$, $s < 0$,  to 
the renormalized gILW equation~\eqref{WILW1}
\textup{(}and to the renormalized gBO equation \eqref{BO1} when $\dl = \infty$\textup{)}
with \textup{(}mean-zero\textup{)} initial data $u|_{t = 0} = u_0$.
Moreover, for any $t \in \R$, 
the law of the solution $u(t)$ at time $t$ is given by 
the Gibbs measure $\rho_\dl$.

\smallskip
\noi
\textup{(ii)}
There exists an increasing sequence $\{\dl_m\}_{m \in \N} \subset \N$
tending to $\infty$
such that the following holds.

\begin{itemize}
\item
For each $m \in \N$, 
there exists a \textup{(}random\textup{)} global-in-time solution 
$u_{\dl_m} \in C(\R; H^s(\T))$, $s < 0$,  to 
the renormalized gILW equation~\eqref{WILW1}, 
with the depth parameter $\dl = \dl_m$, 
with the Gibbsian initial data distributed by the Gibbs measure $\rho_{\dl_m}$.
Moreover, for any $t \in \R$, 
the law of the solution $u_{\dl_m}(t)$ at time $t$ is given by 
the Gibbs measure $\rho_{\dl_m}$.

\smallskip

\item
As $m \to \infty$, 
$u_{\dl_m}$
converges almost surely in $C(\R; H^s(\T))$ to 
a \textup{(}random\textup{)} solution $u$ to the renormalized gBO equation~\eqref{BO1}.
Moreover, for any $t \in \R$, 
the law of the limiting solution $u(t)$ at time $t$ is given by 
the Gibbs measure $\rho_\BO = \rho_\infty$ in \eqref{THM1b}.

\end{itemize}

\smallskip

When $k = 2$, 
the statements \textup{(i)} and \textup{(ii)} hold true 
without any renormalization
\textup{(}but with the Gibbs measures $ \rho_{\dl_m}$
in~\eqref{THM1d}
and $\rho_\BO$ in \eqref{Gx3}\textup{)}.

\end{theorem}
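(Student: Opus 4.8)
The plan is to run the Burq--Thomann--Tzvetkov compactness scheme for the frequency-truncated renormalized gILW, with enough uniformity in the depth parameter $\dl$ to pass a second limit $\dl_m \to \infty$. First I would set up, for each fixed $\dl \in (0,\infty]$ and each $N \in \N$, the truncated dynamics
\begin{align*}
\dt u_N - \Gdl \dx^2 u_N = \P_N \dx \W\big((\P_N u_N)^k\big),
\qquad u_N|_{t=0} = u_0,
\end{align*}
(with $\Gdl$ replaced by $-\H\dx$ in the $\dl = \infty$ case). This is a Hamiltonian ODE on the low-frequency modes coupled with a linear flow on the high-frequency modes; it is globally well-posed on the support of $\mu_\dl$, preserves the truncated Gibbs measure $\rho_{\dl, N}$ of \eqref{GG6} (or \eqref{THM1c} when $k=2$) by Liouville's theorem together with conservation of the truncated Hamiltonian, and its initial data may be taken distributed by $\rho_{\dl,N}$. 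By Theorem \ref{THM:Gibbs1}\,(i) the measures $\rho_{\dl, N}$ converge in total variation to $\rho_\dl$ as $N \to \infty$, so by a standard argument (using the uniform $L^p(\mu_\dl)$-bound on the densities $G_{\dl,N}$) one gets uniform-in-$N$ moment bounds: for $\PP\otimes\mathbb{P}_{\rho_{\dl,N}}$-typical data, $\|u_N(t)\|_{H^{-\eps}}$ is controlled on compact time intervals with tails integrable uniformly in $N$. The next ingredient is a uniform-in-$N$ (and, crucially, uniform-in-$\dl$ for $\dl$ large) fractional-in-time regularity estimate: writing $u_N(t) = S_\dl(t) u_0 + \int_0^t S_\dl(t-t')\P_N\dx\W((\P_N u_N)^{k}) dt'$ with $S_\dl$ the linear propagator, the linear part is controlled in $C^\alpha_t H^{-\eps}_x$ by the symbol bound $|K_\dl(n)| \sim |n|$ (uniform for $\dl \ge 2$ by \eqref{BOp}), and the Duhamel part is controlled in some $C^\beta_t H^{-s}_x$ with $s > \eps$, using the almost-sure bounds on the Wick powers $\W(X_{\dl}^{j})$ from Proposition~\ref{PROP:FN} together with the uniform density bounds. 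This gives tightness in $C([-T,T]; H^{-s}_x)$ of the laws of $u_N$, uniformly in the relevant range of $\dl$.

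Next I would extract limits. Fixing $\dl$, tightness plus a diagonal extraction over $T \uparrow \infty$ and Skorokhod's representation theorem yields a subsequence $N_j$, a new probability space, and random variables $\wt u_{N_j} \to \wt u$ almost surely in $C(\R; H^{-s}_x)$, with $\wt u_{N_j}$ having the same law as $u_{N_j}$. One then checks that $\wt u$ solves the renormalized gILW \eqref{WILW1} in the distributional (weak) sense: the linear terms pass to the limit by continuity, and the nonlinear term $\P_{N_j}\dx\W((\P_{N_j}\wt u_{N_j})^k)$ converges to $\dx\W(\wt u^k)$ because $\wt u_{N_j} \to \wt u$ in $H^{-s}_x$ and the renormalized products $\W((\P_{N_j} v)^k)$ depend continuously (in probability, in $H^{-\eps}_x$) on $v$ along the Gibbs-distributed ensemble — this is where the convergence of Wick powers of $X_{\dl,N}$ to $X_\dl$ is used, upgraded from the Gaussian measure to the Gibbs measure by the density bounds. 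Invariance of $\rho_\dl$, i.e.~that $\Law(\wt u(t)) = \rho_\dl$ for all $t$, follows from $\Law(u_{N_j}(t)) = \rho_{\dl,N_j} \to \rho_\dl$ together with the almost-sure convergence, which proves part (i) of Theorem~\ref{THM:6}. This is the standard BTT argument and the $k=2$ case (no renormalization, Wick-ordered $L^2$-cutoff) is handled identically, using the corresponding measures \eqref{THM1d}, \eqref{Gx3}.

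For part (ii) the point is that all the estimates above can be made uniform in $\dl$ for $\dl \ge 2$: the density bounds are uniform by Theorem~\ref{THM:Gibbs1}\,(i) (``the rate of convergence in total variation of $\rho_{\dl,N}$ to $\rho_\dl$ as $N\to\infty$ is uniform for $2 \le \dl \le \infty$''), the symbol satisfies $|K_\dl(n) - |n|| \le 1/\dl \le 1$ uniformly, and the Wick-power bounds are uniform by Proposition~\ref{PROP:FN}. Hence the laws of the truncated solutions $u_{\dl, N}$ are tight in $C(\R; H^{-s}_x)$ jointly in $(\dl, N)$ with $\dl \ge 2$. I would then pick, for each $m \in \N$, $N = N(m)$ so large that $\rho_{m, N(m)}$ is within $2^{-m}$ in total variation of $\rho_m$ (possible by the uniform rate), set $\dl_m := m$, and couple everything on one probability space via Skorokhod; the diagonal sequence $u_{\dl_m} := \wt u_{\dl_m, N(m)}$ then has, by construction, $\Law(u_{\dl_m}(t)) = \rho_{\dl_m, N(m)}$, which is $2^{-m}$-close to $\rho_{\dl_m}$, and $\rho_{\dl_m} \to \rho_\BO$ in total variation by Theorem~\ref{THM:Gibbs1}\,(ii). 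By tightness and a further (diagonal, over $T\uparrow\infty$) extraction one obtains almost-sure convergence $u_{\dl_m}\to u$ in $C(\R; H^{-s}_x)$ along a subsequence; $u$ solves the renormalized gBO \eqref{BO1} because the ILW symbol $\ft\Gdl$ converges to $-i\sgn$ locally uniformly (cf.~\eqref{lim0}, \eqref{BOp}) so the operator $\Gdl\dx^2$ converges to $\H\dx^2$ against test functions, the nonlinearity passes to the limit as before, and $\Law(u(t)) = \lim_m \Law(u_{\dl_m}(t)) = \rho_\BO$. The main obstacle — and the place where the real work lies — is establishing the fractional-in-time regularity (equicontinuity) estimate for the Duhamel term \emph{with constants uniform in $\dl \gg 1$ and $N$}: one must exploit the $H^{-\eps}$-to-$H^{-s}$ smoothing of the nonlinear term together with the uniform moment bounds from Theorem~\ref{THM:Gibbs1}, controlling the renormalized nonlinearity $\dx\W(u_N^k)$ in a negative Sobolev space by a sum of Wiener chaoses whose bounds do not degenerate as $\dl\to\infty$; everything else (Prokhorov, Skorokhod, passing limits in the weak formulation, the diagonal argument) is by now routine.
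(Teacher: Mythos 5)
Your part (i) and the tightness scheme are essentially the paper's argument (the paper gets equicontinuity in time not through the Duhamel term but by bounding $\dt u$ in $L^p_T H^{s-2}_x$ directly from the equation, using $\|\Gdl\dx^2 f\|_{H^{s-2}}\le\|f\|_{H^s}$ and the invariance/density bounds, and then invoking the interpolation lemma of Burq--Thomann--Tzvetkov; your Duhamel route is workable but note that H\"older continuity in time of $S_\dl(t)u_0$ costs spatial regularity, so the linear part is not in $C^\al_t H^{-\eps}_x$ but in $C^\al_t H^{-\eps-2\al}_x$, which is harmless here). The genuine gap is in part (ii): you set $u_{\dl_m}:=\wt u_{\dl_m, N(m)}$, a solution of the \emph{truncated} equation \eqref{WILW2} whose time-$t$ law is $\rho_{\dl_m, N(m)}$. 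The theorem requires each $u_{\dl_m}$ to solve the un-truncated renormalized gILW \eqref{WILW1} with depth $\dl_m$ and to have law \emph{exactly} $\rho_{\dl_m}$ at every time; ``$2^{-m}$-close in total variation'' does not satisfy either requirement, and one cannot upgrade it, because on path space the laws $\nu_{\dl, N}=\rho_{\dl,N}\circ\Phi_{\dl,N}^{-1}$ converge to $\nu_\dl$ only weakly (along a subsequence, via Prokhorov), not in total variation, so the Skorokhod coupling of the truncated solutions gives no almost sure convergence statement for the genuine gILW Gibbs dynamics.

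The repair is the paper's two-stage diagonal argument at the level of path-space measures: for each integer $\dl$ first extract $N_j^{(\dl)}$ with $\nu_{\dl, N_j^{(\dl)}}\to\nu_\dl$ weakly (this is your part (i), giving exact solutions of \eqref{WILW1} with invariant law $\rho_\dl$), then choose a diagonal family with $\dlp\big(\nu_{\dl, N^{(\dl)}_{j(\dl)}},\nu_\dl\big)\le \tfrac1\dl$, use the joint (in $\dl$ and $N$) tightness to get a weakly convergent subsequence of the diagonal family, and conclude $\dlp(\nu_{\dl_m},\nu_\infty)\to 0$ by the triangle inequality for the L\'evy--Prokhorov metric. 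Skorokhod is then applied to the sequence $\{\nu_{\dl_m}\}_m$ itself, producing representatives $u_{\dl_m}$ with $\L(u_{\dl_m})=\nu_{\dl_m}$ converging almost surely to $u$ with $\L(u)=\nu_\infty$; and to verify that each $u_{\dl_m}$ actually solves \eqref{WILW1} one needs a further application of the Skorokhod representation theorem for every $m$ (re-running the part (i) identification on a new probability space), since membership in the solution set is a law-determined property but must be checked through the truncated approximations. Your identification of the limit $u$ as a gBO solution (symbol convergence via \eqref{BOp}, nonlinearity via the uniform bounds with a three-term decomposition through $F_M$) and $\L(u(t))=\rho_\BO$ is then as in the paper.
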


While our construction yields only weak solutions without uniqueness,
Theorem \ref{THM:6} (and Theorem \ref{THM:5})
is the first result on the construction of 
solutions with the Gibbsian initial data
for both the (generalized)
ILW equation ($k \ge 2$)  and the gBO equation ($k \ge 3$).
Furthermore, Theorem~\ref{THM:6} presents
the first convergence result for the (generalized) ILW equation
from a statistical viewpoint.
In Theorem \ref{THM:5} below, we state an analogous
result in the shallow-water regime.

In proving Theorem \ref{THM:6}, 
we employ the compactness approach 
used in \cite{BTT1, OTh1, ORT}, 
which in turn was motivated by 
the works
of Albeverio and Cruzeiro \cite{AC} and Da Prato and Debussche~\cite{DPD} in the study of fluids. 
Our strategy is to start with the frequency-truncated dynamics (say, when $k \in 2\N+1$):
\begin{align}
\dt u_{\dl, N} -
 \Gdl \partial_x^2 u_{\dl, N} 
           =  \dx \P_N \W((\P_N u_{\dl, N})^k),
\label{WILW2}
\end{align}

\noi
which preserves the truncated Gibbs measure $\rho_{\dl, N}$ in \eqref{GG6}.
By exploiting the invariance of the truncated Gibbs measures $\rho_{\dl, N}$, 
we establish  tightness (= compactness) of the pushforward measures $\nu_{\dl, N}$
(on space-time distributions)
of the truncated Gibbs measures under the truncated dynamics \eqref{WILW2},
which implies convergence in law (up to a subsequence)
of $\{ u_{\dl, N}\}_{N \in \N}$.
Then, for each fixed $\dl \gg1$, 
the Skorokhod representation  theorem (Lemma~\ref{LEM:Sk}) allows
us to prove almost sure convergence
of the solution $u_{\dl, N}$ to \eqref{WILW2} (after changes
of underlying probability spaces)
to a limit $u$, which satisfies the renormalized gILW \eqref{WILW1}
in the distributional sense.
This part follows from exactly the same argument as those in \cite{BTT1, OTh1, ORT}.
Due to the use of the compactness, 
we only obtain 
global existence of a solution $u$ to~\eqref{WILW1}
  without uniqueness.
The main ingredient in this step is the uniform bound on 
the truncated densities $\{G_{\dl, N}\}_{N \in \N}$
in \eqref{WO2}.  Here, we only need the uniformity in $N$
for each fixed $0 < \dl < \infty$, and it is with respect to the base Gaussian measure $\mu_\dl$
 in \eqref{Gibbs2}.

A new ingredient in showing
 convergence of the gILW dynamics \eqref{WILW1} to 
the gBO dynamics~\eqref{BO}
is the uniform integrability of
the truncated densities
in {\it both} $\dl \gg1 $ and $N \in \N$
established
in Theorem \ref{THM:Gibbs1}.
As mentioned above, for different values of $\dl \gg1$, 
 the base Gaussian measures $\mu_\dl$ are different
 and thus we need to work directly with  the underlying probability measure $\PP$
 on $\O$.
 This shows tightness of the probability measures $\{ \nu_{\dl, N}\}_{\dl \gg1, N \in \N}$ 
 constructed in the first step, in both $\dl \gg1$ and $N \in \N$.
Then, by the triangle inequality for 
the 
L\'evy-Prokhorov metric, which 
characterizes
weak convergence of probability measures
on a separable metric space, 
and a diagonal argument together with the
 Skorokhod representation  theorem (Lemma~\ref{LEM:Sk}), 
we extract a sequence $\{\dl_m\}_{m \in \N}$,
tending to $\infty$, 
such that the corresponding random variables $u_{\dl_m}$
converges almost surely to a limit~$u$.
In showing that $u_{\dl_m}$
indeed satisfies the renormalized gILW \eqref{WILW1}, 
we also need to apply 
the Skorokhod representation  theorem for each $m \in \N$.

\begin{remark}\rm

Our notion of solutions constructed in Theorem \ref{THM:6}
(and Theorem \ref{THM:5}) basically corresponds to that of martingale solutions studied in the field of stochastic PDEs. See, for example, \cite{DZ}.

\end{remark}

Next, we state our dynamical result in the shallow-water regime.
In this case, we study
the following renormalized scaled gILW:
\begin{equation}
\label{WILW3}
\dt v   -  \frac{3}{ \dl}  \Gdl   \dx^2 v= \dx\W(v^k) , 
\end{equation}

\noi
generated by  the renormalized Hamiltonian:
\begin{align*}
\EE_\dl(v) =  \frac{3}{2\dl} 
\int_\T v \Gdl\dx v dx + \frac{1}{k+1} \int_\T \W( v^{k+1})dx.
\end{align*}

\noi
As in the deep-water regime, 
we construct dynamics for \eqref{WILW3} as a limit of the frequency-truncated dynamics:
\begin{equation}
\label{WILW4}
\dt v_{\dl, N}   -  \frac{3}{ \dl}  \Gdl   \dx^2 v_{\dl, N}= \dx\P_N \W((\P_N v_{\dl, N})^k) .
\end{equation}

\begin{theorem}
[shallow-water regime]
\label{THM:5}

Let $k \in 2\N + 1$. Then, the following statements hold.

\smallskip
\noi
\textup{(i)}
Let $0 < \dl <  \infty$.
Then, there exists a set $\wt \Si_\dl$ of full measure with respect to 
the Gibbs measure $\wt \rho_\dl$ 
in \eqref{THM2b}
constructed in Theorem \ref{THM:Gibbs2}
such that for every $v_0 \in \wt \Si_\dl$, 
there exists a global-in-time solution 
$v \in C(\R; H^s(\T))$, $s < 0$,  to 
the renormalized scaled gILW equation~\eqref{WILW3}
with \textup{(}mean-zero\textup{)} initial data $v|_{t = 0} = v_0$.
Moreover, for any $t \in \R$, 
the law of the solution $v(t)$ at time $t$ is given by 
the Gibbs measure $\wt \rho_\dl$.

\smallskip
\noi
\textup{(ii)}
There exists a decreasing sequence $\{\dl_m\}_{m \in \N} \subset \R_+$
tending to $0$
such that the following holds.

\begin{itemize}
\item
For each $m \in \N$, 
there exists a \textup{(}random\textup{)} global-in-time solution 
$v_{\dl_m} \in C(\R; H^s(\T))$, $s < 0$,  to 
the renormalized scaled gILW equation~\eqref{WILW3},
with the depth parameter $\dl = \dl_m$, 
with the Gibbsian initial data distributed by the Gibbs measure $\wt \rho_{\dl_m}$.
Moreover, for any $t \in \R$, 
the law of the solution $v_{\dl_m}(t)$ at time $t$ is given by 
the Gibbs measure $\wt \rho_{\dl_m}$.

\smallskip

\item
As $m \to \infty$, 
$v_{\dl_m}$
converges almost surely in $C(\R; H^s(\T))$ to 
a \textup{(}random\textup{)} solution $v$ to the gKdV equation~\eqref{KDV3}.
Moreover, for any $t \in \R$, 
the law of the limiting solution $v(t)$ at time $t$ is given by 
the Gibbs measure $\rho_\KDV$ in \eqref{Gibbs5b}.

\end{itemize}

\smallskip

When $k = 2$, 
the statements \textup{(i)} and \textup{(ii)} hold true 
without any renormalization
\textup{(}but with the Gibbs measures $\wt \rho_\dl$ in \eqref{THM2d}
and $\rho_\KDV$ in \eqref{Gibbs11}
\textup{)}.

\end{theorem}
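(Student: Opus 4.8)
The plan is to follow the three-step scheme outlined in the introduction, running it in parallel with the deep-water argument of Theorem \ref{THM:6} but paying attention to the one genuinely new feature: in the shallow-water regime the base Gaussian measures $\wt\mu_\dl$ do \emph{not} converge to $\wt\mu_0$ (they are mutually singular with it), so the convergence of dynamics can only be established in a weak sense and the limiting object lives at a different regularity. For Part~(i), fix $0 < \dl < \infty$. I would start from the frequency-truncated renormalized scaled gILW \eqref{WILW4}, which is a finite-dimensional Hamiltonian ODE on $\P_N(\mathcal D'(\T))$ coupled with a linear flow on the high frequencies; by Liouville's theorem together with conservation of the truncated Hamiltonian $\EE_{\dl,N}$ and of the truncated (Wick-ordered, possibly $L^2$-cutoff) mass, the truncated Gibbs measure $\wt\rho_{\dl,N}$ in \eqref{GH6} (or \eqref{THM2c} when $k=2$) is invariant under \eqref{WILW4}. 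The uniform-in-$N$ integrability of the truncated densities $\wt G_{\dl,N}$ with respect to $\wt\mu_\dl$, which is exactly Theorem~\ref{THM:Gibbs2}\,(i), gives uniform moment bounds on $\int_\T \W(v_{\dl,N}^{k+1})\,dx$ along the flow; combined with the smoothing from the dispersion $\frac3\dl\Gdl\dx^2$ (whose symbol $-L_\dl(n)$ behaves like $|n|$ for fixed $\dl$, just as in the BO case), this yields a uniform-in-$N$ bound on a fractional time-derivative of $v_{\dl,N}$ in a negative Sobolev space, hence tightness of the laws $\wt\nu_{\dl,N}$ of $v_{\dl,N}$ on $C([-T,T];H^s(\T))$ for $s<0$. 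Invoking the Skorokhod representation theorem (Lemma~\ref{LEM:Sk}) on a new probability space, one extracts an almost surely convergent subsequence whose limit $v$ solves \eqref{WILW3} in the distributional sense and satisfies $\Law(v(t)) = \wt\rho_\dl$ for all $t$; a diagonalization in $T$ upgrades this to $C(\R;H^s(\T))$. This is word-for-word the argument of \cite{BTT1, OTh1, ORT}, and for $k=2$ no renormalization appears (Remark~\ref{REM:k2}), so one runs it with \eqref{THM2c}--\eqref{THM2d} instead.

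For Part~(ii), the plan is to make the family $\{v_{\dl,N}\}$ tight \emph{jointly} in $\dl\in(0,1]$ and $N\in\N$, and then to let $\dl\to 0$ along a subsequence. The key input is the second uniformity statement in Theorem~\ref{THM:Gibbs2}\,(i): for $0<\dl\le 1$ the rate of $L^p(\wt\mu_\dl)$-convergence of $\wt G_{\dl,N}$ is uniform in $\dl$, which (after transporting to the underlying probability space $(\O,\F,\PP)$ via \eqref{GH3}, so that we work with a $\dl$-independent Gaussian field rather than the mutually singular measures $\wt\mu_\dl$) gives uniform-in-$\dl$ moments of the densities and hence, via the same energy/commutator estimates as in Part~(i) but now keeping track of $\dl$, a uniform-in-$(\dl,N)$ bound on the time-regularity of $v_{\dl,N}$ in $H^s$. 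Here I would use that $0\le L_\dl(n)\le \min(n^2, \text{const}\cdot|n|/\dl)$ and $L_\dl(n)\to n^2$ as $\dl\to 0$ (Lemma~\ref{LEM:p1}), so the dispersive smoothing only improves as $\dl\to 0$ and degenerates to the KdV smoothing $n^2$ in the limit; in particular the time-derivative bound is $\dl$-uniform. This yields tightness of $\{\wt\nu_{\dl,N}\}_{\dl\in(0,1],N\in\N}$ on $C([-T,T];H^s(\T))$. Using the Lévy--Prokhorov metric $\dlp$ to quantify weak convergence on this separable metric space, together with (a)~the weak convergence $\wt\rho_\dl\rightharpoonup\rho_\KDV$ as $\dl\to 0$ from Theorem~\ref{THM:Gibbs2}\,(ii), (b)~the $N\to\infty$ convergence $\wt\nu_{\dl,N}\to\wt\nu_\dl$ from Part~(i), and (c)~the analogous construction for gKdV \eqref{KDV3} (which is classical: the Gibbs measure $\rho_\KDV$ in \eqref{Gibbs5b}/\eqref{Gibbs11} is a well-defined weighted Wiener measure, and Bourgain's theory gives global dynamics preserving it), a triangle-inequality plus diagonal argument produces a decreasing sequence $\dl_m\downarrow 0$ and indices $N_m$ along which $\wt\nu_{\dl_m,N_m}$ converges weakly to the law $\nu_\KDV$ of the gKdV Gibbs dynamics. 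One more application of the Skorokhod representation theorem (first for each fixed $m$, to identify $v_{\dl_m}$ as a genuine distributional solution of \eqref{WILW3}, then once more for the sequence in $m$) gives the almost sure convergence $v_{\dl_m}\to v$ in $C(\R;H^s(\T))$ with $v$ solving \eqref{KDV3} and $\Law(v(t))=\rho_\KDV$. For $k=2$ one uses \eqref{THM2d} and \eqref{Gibbs11} and again no renormalization is needed.

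The main obstacle, I expect, is the joint-in-$(\dl,N)$ tightness in Part~(ii), and specifically proving the time-regularity bound on $v_{\dl,N}$ uniformly as $\dl\to 0$. The delicate point is that the natural a priori control is a bound on $\EE_\dl(v_{\dl,N})$ or on moments of the renormalized potential energy with respect to $\wt\mu_\dl$, but $\wt\mu_\dl$ is itself collapsing onto a measure ($\wt\mu_0$) singular to all of them, so one cannot simply pass to a limiting measure on the space of fields; the estimates must be phrased on the fixed probabilistic side, i.e.~for the random field $\wt X_\dl(\o)=\sqrt{\dl/3}\,X_\dl(\o)$ in \eqref{GH3}, and then the $\dl$-dependence of the Wick variance $\wt\s_{\dl,N}=\frac\dl3\s_{\dl,N}$ must be tracked carefully through the Hermite-polynomial/Wick-ordering estimates so that the resulting bounds are $\dl$-uniform on $(0,1]$. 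A secondary subtlety is that the limit equation \eqref{KDV3} carries the \emph{nonzero} finite Wick constant $\s_\KDV$ in $\W(v^k)=H_k(v;\s_\KDV)$ (since for $\dl>0$ the renormalization is forced), whereas $\wt\s_{\dl,N}\to\infty$; one has to check that in the distributional identification of the limit the divergent part of the renormalization is exactly absorbed by the vanishing of $\wt X_\dl$ in $L^2$, i.e.~that $\W(v_{\dl,N}^k)$ converges to $H_k(v;\s_\KDV)$ and not to $H_k(v;0)$ or something divergent. Once these two bookkeeping issues are handled, the rest is the now-standard compactness-plus-Skorokhod machinery combined with the Lévy--Prokhorov triangle inequality.
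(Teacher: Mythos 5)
Your overall architecture coincides with the paper's: Theorem \ref{THM:5} is proved there by running the proof of Theorem \ref{THM:6} verbatim in the shallow-water regime, with the uniform-in-$(\dl,N)$ integrability of the truncated densities for $0<\dl\le 1$ (Theorem \ref{THM:Gibbs2}, i.e.\ Propositions \ref{PROP:HN} and \ref{PROP:Gibbs2}, phrased on the fixed probability space $(\O,\F,\PP)$ via $\wt X_\dl$) replacing the deep-water bounds, and with the symbol bound $0<L_\dl(n)\le n^2$ from Lemma \ref{LEM:p1} supplying the $\dl$-uniform control of $\dt v_{\dl,N}$ (one loses three derivatives instead of two in the analogue of Lemma \ref{LEM:bound1}). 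Your Part (i), and the tightness/Skorokhod/L\'evy--Prokhorov diagonal scheme in Part (ii), are exactly this machinery.

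Two points in your Part (ii) do not survive as written. First, you feed into the triangle inequality the claim that $\wt\nu_{\dl_m,N_m}$ converges weakly to ``the law $\nu_\KDV$ of the gKdV Gibbs dynamics'' furnished by Bourgain's well-posedness theory. The compactness method gives no access to such an identification: the limit of the diagonal family is an \emph{abstract} measure $\nu_\infty$ produced by tightness and the Prokhorov theorem (Lemma \ref{LEM:Pro}), and, absent a uniqueness or weak--strong comparison statement, one cannot show $\dlp(\wt\nu_{\dl,N},\nu_\KDV)\to 0$ for the law of the strong flow --- nor is this needed. The weak convergence $\wt\rho_\dl\rightharpoonup\rho_\KDV$ from Theorem \ref{THM:Gibbs2}\,(ii) is used only to identify the time-$t$ marginals $\L(v(t))=\rho_\KDV$ of the limit, in the same way total-variation convergence is used in the step \eqref{CQ3} of Proposition \ref{PROP:end}. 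Second, your ``secondary subtlety'' about the Wick constants is resolved differently from what you suggest: there is no cancellation with ``the vanishing of $\wt X_\dl$ in $L^2$'' (indeed $\wt X_\dl\notin L^2(\T)$ almost surely for every $\dl>0$). In the iterated-limit structure one first sends $N\to\infty$ at fixed $\dl$, so the renormalized nonlinearity $\dx\W(v_\dl^k)$ is already the $L^p(\wt\mu_\dl)$-limit of Proposition \ref{PROP:HN}\,(ii); then, in the splitting analogous to \eqref{CQ11}, the fixed-$M$ comparison only requires $\wt\s_{\dl_m,M}\to\s_{\KDV,M}$ as $\dl_m\to 0$ (Lemma \ref{LEM:p1}\,(ii)) --- both quantities finite --- while the $M$-tails are controlled uniformly in $0<\dl\le 1$ by Proposition \ref{PROP:HN}\,(ii) together with the uniform density bound, as in \eqref{CQ13}. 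With these two corrections your proposal reproduces the paper's proof.
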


With the uniform integrability on the truncated densities
in both $0 < \dl \le 1$ and $N \in \N$ 
(established in Theorem \ref{THM:Gibbs2}), 
Theorem \ref{THM:5}
follows from 
 exactly the same argument 
in the proof of  Theorem \ref{THM:6}
and hence we omit details.

\begin{remark}\label{REM:XX1}\rm
Theorems~\ref{THM:6} and \ref{THM:5} 
yield the construction and convergence of {\it weak} solutions.
Due to the use of a compactness argument, 
we do not have any uniqueness statement.
While these solutions are distributional solutions, 
they do not satisfy the Duhamel formulation, 
which is the usual notion for strong solutions in the study of dispersive PDEs.
Furthermore, due to the lack of uniqueness,\footnote{The solution map to the frequency-truncated equation
such as \eqref{WILW2} enjoys the flow property, and thus a suitable uniqueness statement
would imply the flow property for the limiting dynamics.} 
these solutions do not enjoy the flow property~\eqref{flow} 
and thus do not satisfy the invariance property as stated in~\eqref{1inv1}.
This is the reason why we have
a weaker invariance property in 
Theorems~\ref{THM:6} and \ref{THM:5},
which is, for example, not sufficient to imply the 
Poincar\'e recurrence property.
See \cite{STz} for a further discussion.
We also expect that a suitable uniqueness statement
would allow us to show convergence of the entire family $\{u_\dl\}_{\dl \gg 1}$
 in the deep-water limit ($\dl \to \infty$)
(and $\{v_\dl\}_{0 < \dl \ll 1}$
in the shallow-water limit ($\dl\to 0$)).

Therefore, it would be of significant interest to 
study 
probabilistic construction of strong solutions to the (scaled) ILW equation
with the Gibbsian initial data.\footnote{In view of the absolute continuity
of the Gibbs measure with respect to the base Gaussian
measure, it suffices to study probabilistic local well-posedness with the Gaussian initial data $X_\dl$ in \eqref{GG3}
(or $\wt X_\dl$ in \eqref{GH3}) in the spirit of \cite{BO96, CO, R16}.}
As mentioned above, 
the $k \ge 3$ case seems to be  out of reach at this point.
Even as for  the $k = 2$ case,
the problem is  very challenging.  
For example, in studying low regularity well-posedness of the BO equation, 
the gauge transform \cite{Tao} plays a crucial role.
For the ILW equation, however,   existence of such a gauge transform is unknown;
see
\cite[p.~128]{KS2022}.

When $k = 2$, another possible approach would be to exploit the complete integrability of the ILW equation.
In the case of the BO equation, there are recent breakthrough works \cite{GKT, KLV}
on sharp global well-posedness in $H^s(\T)$, $s > -\frac 12$.
Even with the complete integrability, however, the low regularity well-posedness
of the ILW equation seems to be very challenging.
See \cite{CLOP, CFLOP} for recent developments in this direction.

Lastly, let us point out that,  
as for the gKdV equation \eqref{KDV} (and also \eqref{KDV3}), 
there is a good well-posedness theory 
with the Gibbsian initial data; see \cite{BO93, R16, CK21}.
In particular, in a recent work \cite{CK21}, 
Chapouto and Kishimoto completed the program 
initiated by Bourgain~\cite{BO94}
on the construction of invariant Gibbs dynamics
for the (defocusing) gKdV \eqref{KDV} for any $k \in 2\N + 1$.

\end{remark}

\begin{remark}\label{REM:int}\rm

When 
$k = 2$, the ILW equation is known to be completely integrable 
with an infinite sequence of conservation laws of increasing regularities.
In this work, we study the construction and convergence
of the Gibbs measures associated with the Hamiltonians
and the corresponding dynamical problem.
For the ILW equation, it is also possible to study the
construction of invariant measures associated with the  higher order conservation laws.
See
\cite{Zhi01, TV1, TV2, TV3, DTV}
for such construction of invariant measures associated
with the higher order conservation laws in the context of the KdV and BO equations.
Once such construction is done, 
it would be of strong interest to study the related convergence problem.
We plan to address this issue in a forthcoming work
\cite{CLOZ}.
These invariant measures 
will be supported on 
smooth(er) functions
and thus this problem is  of importance even from the physical point of view.

We point out that, 
after  the completion of the current paper, 
there have been 
very recent progresses on low-regularity well-posedness and convergence issues
for the ILW equation \eqref{ILW1}, 
at the $L^2$-level 
 \cite{IS, CLOP} 
 and in negative Sobolev spaces \cite{CFLOP}.

\end{remark}

\begin{remark}\rm
In this work, we focus our attention to the circle $\T$.
From the physical point of view, 
it seems natural to study the problem on the real line.
The difficulty of this problem comes from not only the roughness of the support but also the integrability
of typical functions.
See  \cite{BO00, OQS}
for the construction of invariant Gibbs dynamics 
in the context of the nonlinear Schr\"odinger equations on the real line.
See also \cite{KMV}.
In the focusing case (including the $k = 2$ case), however, 
we expect a triviality result 
(namely, a large-torus limit of the periodic Gibbs measures
is ``trivial'' such as the Dirac delta measure on the trivial  function
(= the zero function) or a Gaussian measure);
see \cite{Rider, TW}
for such triviality results and further discussions
in the context of the Gibbs measures associated with 
the focusing nonlinear Schr\"odinger equations on the real line.

\end{remark}

\begin{remark}\rm

There are recent works
 \cite{FIH, Zine1, Zine2} on convergence of stochastic dynamics
at the Gibbs equilibrium.
One key difference between our work and these works is that, 
in \cite{FIH, Zine1, Zine2}, a single Gibbs measure remains
invariant for the entire one-parameter family of dynamics, whereas, 
in our work, the Gibbs measure (and even the base Gaussian measure) varies as the depth parameter $\dl$ changes, 
requiring us to first establish the convergence at the level of the Gibbs measures.

\end{remark}

\medskip

\noi
{\bf Organization of the paper.}
In Section \ref{SEC:2}, 
after introducing some notations, 
we study basic properties of the  variance parameters
$K_\dl(n)$ in \eqref{GG4} and 
$L_\dl(n)$ in \eqref{GH4}. 
We then go over some tools from stochastic analysis
and different modes of convergence 
for probability measures and random variables.
In Section \ref{SEC:Gibbs1}, 
we study the construction
and convergence of the Gibbs measures in the deep-water regime
and present the proof of Theorem \ref{THM:Gibbs1}.
In Section \ref{SEC:4}, 
we study the corresponding problem
in the shallow-water regime (Theorem \ref{THM:Gibbs2}).
In Section \ref{SEC:5}, 
we then study the dynamical problem
and present the proof of Theorem \ref{THM:6}.

\section{Preliminaries} \label{SEC:2}

\noi
{\bf Notations.}
By $A\les B$, we mean  $A\leq CB$ 
for some  constant $C> 0$.
We use   $A \sim  B$
to mean 
 $A\les B$ and $B\les A$.
We write $A\ll B$, if there is some small $c>0$, 
such that  $A\leq cB$.  
We may use subscripts to denote dependence on external parameters; for example,
 $A\les_{\dl} B$ means $A\le C(\dl) B$.

Throughout this paper, we fix a rich enough probability 
space $(\O, \F, \mathbb{P})$, on which all the random objects
are defined.
The realization $\o\in\O$ is often omitted in the writing.
For a random variable $X$, we denote by $\L(X)$
the law  of $X$.

We set 
$e_n(x) = e^{inx}$, $n \in \Z$
and 
$\Z^\ast = \Z \setminus \{0\}$.
Given $N \in \N$, 
let $\P_N$ be the Dirichlet projection onto the frequencies $\{|n|\leq N\}$
defined by 
\begin{align*}
\P_N f(x)         = \frac{1}{2\pi}   \sum_{|n|\le N} \ft{f} (n) e_n(x)   .
\end{align*}

\noi
Let $s \in \R$ and $1 \leq p \leq \infty$.
We define the $L^2$-based Sobolev space $H^s(\T)$
by the norm:
\begin{align*}
\| f \|_{H^s} =   \| \jb{n}^s \ft f (n) \|_{\l^2_n}.
\end{align*}

\noi
We also define the $L^p$-based Sobolev space $W^{s, p}(\T)$
by the norm:
\begin{align*}
\| f \|_{W^{s, p}} =   \|\jb{\nb}^s f\|_{L^p} = \big\| \F^{-1} [\jb{n}^s \ft f(n)] \big\|_{L^p}, 
\end{align*}

\noi
where $\F^{-1}$ denotes the inverse Fourier transform.
When $p = 2$, we have $H^s(\T) = W^{s, 2}(\T)$. 

We use short-hand notations such as
$L^q_T H^s_x$
and $L^p_\o H^s_x$
for $L^q([-T, T]; H^s(\T))$
and $L^p(\O; H^s(\T))$, respectively.

In the following, we only work with 
real-valued functions on $\T$ or on $\R \times \T$.

\subsection{On the variance parameters}
\label{SUBSEC:2.1}

In this subsection, we establish elementary lemmas
on the variance parameters $K_\dl(n)$ in \eqref{GG4}
and $L_\dl(n)$ in \eqref{GH4}
for the Gaussian Fourier series 
$X_\dl$ in \eqref{GG3}
and
$\wt X_\dl$ in \eqref{GH3}, respectively.

\begin{lemma}  \label{LEM:p2}

Let $K_\dl(n)$  be as in  \eqref{GG4}.
Then, for any $\dl > 0$, we have 
\begin{align}
 \max \Big( 0, |n|- \frac 1\dl\Big)\leq K_\dl(n)=n \coth(\dl n) -\frac{1}{\dl} \leq  |n| ,
    \label{ub5}
\end{align}

\noi
where the above inequalities are strict for $n\neq 0$.
In particular, we have
\begin{align}
K_\dl(n) \sim_\dl |n|
\label{HH1a}
\end{align}

\noi
for any $n \in \Z^*$.
 Furthermore, for each fixed $n \in \Z^*$, 
 $K_\dl(n)$ is strictly increasing in $\dl \ge 1$
 and converges to  $|n|$ as $\dl \to \infty$.

\end{lemma}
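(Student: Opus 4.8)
The plan is to reduce everything to the single-variable function $f(x) = x\coth x - 1$ for $x > 0$, since $K_\dl(n) = \frac{1}{\dl} f(\dl n)$ when $n > 0$, and $K_\dl(n) = K_\dl(-n)$ by the evenness of $\coth$ combined with the sign in \eqref{GG4}. First I would record the elementary bounds on $f$: writing $\coth x = 1 + \frac{2}{e^{2x}-1}$, one gets $f(x) = x - 1 + \frac{2x}{e^{2x}-1}$. For the upper bound $f(x) < x$ it suffices to show $\frac{2x}{e^{2x}-1} < 1$, i.e.\ $e^{2x} - 1 > 2x$, which is immediate from the Taylor series (or convexity of $e^{2x}$) for $x > 0$. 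For the lower bound $f(x) > x - 1$ (which gives $K_\dl(n) > |n| - \frac1\dl$), it suffices that $\frac{2x}{e^{2x}-1} > 0$, which is obvious; combined with $f(x) > 0$ (equivalently $\coth x > \frac1x$, i.e.\ $x\cosh x > \sinh x$, again from the series), this yields $K_\dl(n) \ge \max(0, |n| - \frac1\dl)$, with strict inequality for $n \ne 0$. Dividing through by $\dl$ and evaluating at $x = \dl|n|$ gives \eqref{ub5}; the asymptotic \eqref{HH1a} is then immediate since for fixed $n \ne 0$ both $\max(0,|n|-\frac1\dl)$ and $|n|$ are comparable to $|n|$ up to $\dl$-dependent constants (indeed $K_\dl(n) \le |n|$ always, and $K_\dl(n) \to |n| > 0$, so $K_\dl(n) \gtrsim_\dl |n|$).

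For the monotonicity statement, fix $n \in \Z^*$ and view $K_\dl(n) = \frac{1}{\dl} f(\dl|n|)$ as a function of $\dl$. I would differentiate in $\dl$: with $x = \dl|n|$,
\[
\frac{d}{d\dl} K_\dl(n) = \frac{1}{\dl^2}\Big( x f'(x) - f(x) \Big).
\]
So it suffices to show $g(x) := x f'(x) - f(x) > 0$ for $x > 0$. One computes $f'(x) = \coth x - x\,\mathrm{csch}^2 x$, hence $g(x) = x\coth x - x^2\mathrm{csch}^2 x - x\coth x + 1 = 1 - x^2 \mathrm{csch}^2 x = 1 - \frac{x^2}{\sinh^2 x}$, which is strictly positive for $x > 0$ precisely because $\sinh x > x$. (Note this argument in fact gives strict monotonicity for all $\dl > 0$, not merely $\dl \ge 1$; the statement as phrased is slightly weaker.) Finally, $\lim_{\dl\to\infty} K_\dl(n) = \lim_{\dl\to\infty}\big(|n|\coth(\dl|n|) - \frac1\dl\big) = |n|$ since $\coth(\dl|n|) \to 1$ and $\frac1\dl \to 0$.

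I do not anticipate a serious obstacle here: everything reduces to two or three convexity/Taylor-series inequalities for elementary hyperbolic functions. The only point requiring mild care is bookkeeping the even extension in $n$ and the convention $K_\dl(0) = 0$, and making sure the strictness claims are tracked (each of the underlying inequalities $e^{2x} - 1 > 2x$ and $\sinh x > x$ is strict for $x > 0$). An alternative to the differentiation step for monotonicity, if one prefers to avoid computing $f'$, is to note $K_\dl(n) = |n| - \frac1\dl + \frac{2|n|}{e^{2\dl|n|}-1}$ and observe that $\dl \mapsto -\frac1\dl$ is increasing while $\dl \mapsto \frac{2|n|}{e^{2\dl|n|}-1}$ is decreasing, so monotonicity of the sum is not completely obvious this way — which is why the derivative computation, yielding the clean expression $1 - x^2/\sinh^2 x$, is the cleaner route.
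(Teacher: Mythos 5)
Your proposal is correct, and for the two-sided bound \eqref{ub5} it is essentially the paper's argument in disguise: the paper works with $\mathfrak{h}(x) = 1 - x\coth x + |x|$, which is exactly $x - f(x)$ in your notation, and proves $0 < \mathfrak{h}(x) < \min(1,|x|)$ from the same inequality $e^{2x}-1 > 2x$ that you invoke. Where you genuinely diverge is the monotonicity step: the paper writes $K_\dl(n) = |n| - \frac{\mathfrak{h}(\dl n)}{\dl n}\,n$ and differentiates $\mathfrak{h}(x)/x$ in $x$, obtaining the numerator $e^{4x} - 2e^{2x} - 4x^2e^{2x} + 1$ whose sign it only verifies for $x \ge 1$ (hence monotonicity only for $\dl \ge 1/n$), whereas you differentiate $K_\dl(n) = \frac1\dl f(\dl|n|)$ directly in $\dl$ and reduce to $1 - x^2/\sinh^2 x > 0$, i.e.\ $\sinh x > x$. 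Your computation is cleaner, it makes the strictness transparent, and it yields strict monotonicity for \emph{all} $\dl > 0$, strictly more than the stated $\dl \ge 1$ (in fact the paper's numerator factors as $(e^{2x}-1)^2 - 4x^2e^{2x} = 4e^{2x}(\sinh x - x)(\sinh x + x)$, so the two criteria are the same inequality; the paper simply did not exploit this). One small point to tighten: in your justification of \eqref{HH1a}, the phrase ``$\max(0,|n|-\frac1\dl)$ is comparable to $|n|$'' fails for $0 < |n| \le 1/\dl$, where that bound is $0$; the correct statement (and the paper's) is that for $|n| \gtrsim 1/\dl$ one uses $K_\dl(n) \ge |n| - \frac1\dl$, while for the remaining finitely many $n$ one uses the strict positivity $K_\dl(n) > 0$, giving a $\dl$-dependent lower constant. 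This is a one-line repair, not a gap in the method.
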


The bound  \eqref{BOp} implies that, for $\dl \ge 2$, we have 
\begin{align}
 K_\dl(n) \geq |n| - \frac{1}{2} \sim |n|
 \label{Low_Kdl}
 \end{align}

\noi
for any $n \in \Z^*$.

\begin{proof}

For  $x\in\R\setminus \{0\}$, 
define $\mathfrak{h}$
by 
\begin{align*}
\mathfrak{h}(x) = 1 - x\coth(x) + |x|
=   1 + |x| - x \frac{  e^{x} + e^{-x} }{ e^{x} - e^{-x}}
\end{align*}

\noi
such that 
\begin{align}
K_\dl(n) = |n| -  \tfrac 1{\dl} \mathfrak{h}(\dl n).
\label{HH2}
\end{align}

\noi
In view of  \eqref{tan1}, we 
set  $\mathfrak{h} (0)=0$ such that $\mathfrak{h}$ is continuous.
We claim that  
\begin{align}
0 <   \mathfrak{h}(x)< \min( 1, |x|)
\label{ub5a}
\end{align}

\noi
for any $x \in \R \setminus\{0\}$.
Indeed, 
we first  note that $\mathfrak{h}$ is an even function.
For  $x >0$, a direct computation shows
\begin{align}
\mathfrak{h}(x)
& =1 + x - x  \frac{  e^{2x} + 1 }{ e^{2x} - 1} 
= 1 - \frac{2x}{e^{2x}-1} \in (0,1), \label{HH3}\\
\mathfrak{h}(x) 
- x & 
=   1 - x \frac{  e^{x} + e^{-x} }{ e^{x} - e^{-x}} < 0, 
\notag 
\end{align}

\noi
from which the claim \eqref{ub5a} follows.
Then, the bound \eqref{ub5} follows
from \eqref{HH2} and \eqref{ub5a}.
The equivalence \eqref{HH1a} is a direct consequence of  \eqref{ub5}
and the fact that $K_\dl(n) > 0$ for $n \in \Z^*$.

Fix $n \in \N$.
By writing
$K_\dl(n) = |n| -  \tfrac{\mathfrak{h}(\dl n)}{\dl n} n$, 
the claimed strict monotonicity of $K_\dl(n)$ in $\dl \ge 1$ 
follows once we show that $\frac{\mathfrak{h}(x)}{x}$ is strictly decreasing
and its limit as $x \to \infty$ is $0$.
A direct computation shows that 
\[ \frac d{dx}\bigg( \frac{\mathfrak{h}(x)}{x}\bigg)
= - \frac{e^{4x} - 2e^{2x} - 4x^2 e^{2x} + 1}{x^2 (e^{2x} - 1)^2}
< 0
\]

\noi
for $x \ge 1$.
Namely, $K_\dl(n)$ is increasing for $\dl \ge \frac 1n$.
From \eqref{HH3}, we have
$\frac{\mathfrak{h}(x)}{x}
= \frac1x - \frac{2}{e^{2x}-1}$, 
from which we conclude 
$\lim_{x\to \infty} \frac{\mathfrak{h}(x)}{x} = 0$.
This concludes the proof of Lemma \ref{LEM:p2}.
\end{proof}

\begin{remark}\label{REM:q1}\rm
Note that we have $q_\dl(n)
= \dl^{-1} \mathfrak{h} (\dl n)$, 
where  $q_\dl(n)$ is as in \eqref{PO1a}.
Then, \eqref{ub5a} in Lemma \ref{LEM:p2} yields 
\eqref{BOp} with the right-hand side replaced by $\frac 1 \dl$.

\end{remark}

Next, we state
basic properties of $L_\dl(n)$ defined in \eqref{GH4}.
Given $\dl > 0$, 
it follows from  $L_\dl(n) = \frac 3 \dl K_\dl(n)$ and Lemma \ref{LEM:p2}
that 
\begin{align}
  L_\dl(n) \sim_\dl |n|
   \label{Ld1}
\end{align}

\noi
for any $n \in \Z^*$.

\begin{lemma} \label{LEM:p1}

The following statements hold.

\begin{itemize}
\item
[{\rm (i)}] $0 <  L_\dl(n) <  n^2$ for any $\dl > 0$ and $n\in\Z^*$.

\smallskip

\item[{\rm (ii)}] For each $n\in\Z^\ast$, 
$L_\dl(n)$ increases  to $n^2$ as $\dl\to 0$.

\smallskip

\item[{\rm (iii)}] 
We have
\begin{align*}
L_\dl(n)\ges 
\begin{cases}
n^2, & \text{if } \dl |n| \les 1, \\
 |n|, & \text{if $\dl |n| \gg 1$ and $\dl \les 1$}.
\end{cases}
\end{align*}

\noi
In particular, the following uniform bound holds\textup{:}
\begin{align}
\inf_{0 < \dl\les 1} L_\dl(n) \ges |n|
\label{Low_Ldl}
\end{align}

\noi
for any $n \in \Z^*$.

\smallskip

\item[{\rm (iv)}] Define $h(n, \dl)$ by 
\begin{align} 
L_\dl(n) = n^2 - h(n,\dl)n^2.
\label{LL0a}
\end{align}

\noi
Then, we have 
\begin{align} \label{hnd2}
\sum_{n\in\Z} h^2(n,\dl) = \infty
\end{align}
for any $\dl>0$.

\end{itemize}

\end{lemma}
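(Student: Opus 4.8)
The plan is to exploit throughout the representation $L_\dl(n) = \tfrac 3\dl K_\dl(n) = \tfrac 3\dl\big(n\coth(\dl n) - \tfrac 1\dl\big)$ together with the scalar function $\mathfrak{h}$ from the proof of Lemma \ref{LEM:p2}. Setting $x = \dl n$, one has $L_\dl(n) = \tfrac 3{\dl^2} \psi(\dl n)$ where $\psi(x) = x\coth x - 1$; equivalently $\psi(x) = x\cdot\tfrac{\mathfrak{h}(x)}{x}\cdot(\text{sign})$ rewritten as $\psi(x) = |x| - \mathfrak{h}(x) \cdot\operatorname{sgn}(x)\cdot\ldots$ — in any case the key scalar facts are $\psi(x) = \tfrac{x^2}{3} + o(x^2)$ as $x\to 0$ (from \eqref{tan1}) and $0 < \psi(x) < \tfrac{x^2}{3}$ for all $x \neq 0$. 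For (i), I would verify $0 < \psi(x) < \tfrac{x^2}{3}$ directly: positivity is $x\coth x > 1$, and the upper bound is the statement that $g(x) := \tfrac{x^2}{3} - x\coth x + 1 > 0$, which follows from the Taylor expansion $x\coth x = 1 + \tfrac{x^2}{3} - \tfrac{x^4}{45} + \cdots$ with alternating-sign control, or more cleanly from the partial-fraction expansion $x\coth x = 1 + \sum_{m\ge 1}\tfrac{2x^2}{x^2 + \pi^2 m^2}$ together with $\tfrac{2x^2}{x^2+\pi^2 m^2} < \tfrac{2x^2}{\pi^2 m^2}$ and $\sum_{m\ge 1}\tfrac{2}{\pi^2 m^2} = \tfrac 13$. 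Multiplying by $\tfrac 3{\dl^2}$ gives $0 < L_\dl(n) < n^2$.

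For (ii), since $L_\dl(n) = \tfrac 3{\dl^2}\psi(\dl n)$, monotonicity in $\dl$ reduces to showing that $\phi(x) := \tfrac{\psi(x)}{x^2} = \tfrac{\coth x}{x} - \tfrac 1{x^2}$ is strictly decreasing on $(0,\infty)$ (then $\dl \mapsto \phi(\dl|n|)\cdot n^2 = L_\dl(n)$ decreases as $\dl$ increases, i.e.\ increases as $\dl\to 0$), and that $\lim_{x\to 0}\phi(x) = \tfrac 13$, so $L_\dl(n)\uparrow n^2$. The limit is \eqref{tan1}; for monotonicity I would use the partial-fraction series $\phi(x) = \sum_{m\ge 1}\tfrac{2}{x^2+\pi^2 m^2}$, which is manifestly strictly decreasing in $x > 0$ term by term. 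This series representation is the cleanest route and also re-proves (i). For (iii), I combine: when $\dl|n|\les 1$, $\phi(\dl n)$ is bounded below by a positive constant (since $\phi$ is continuous, positive, and $\phi(0^+) = \tfrac13$), giving $L_\dl(n) = \phi(\dl n) n^2 \gtrsim n^2$; when $\dl|n|\gg 1$ we already have from \eqref{Low_Kdl} (valid once $\dl\ge 2$; for $2 \ge \dl \gtrsim 1$ one uses \eqref{ub5a}/Remark \ref{REM:q1} giving $K_\dl(n)\ge |n| - \tfrac1\dl \gtrsim |n|$ when $\dl|n|\gg1$) that $K_\dl(n)\gtrsim |n|$, hence $L_\dl(n) = \tfrac3\dl K_\dl(n)\gtrsim |n|$ since $\dl\les 1$. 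Taking the minimum of the two regimes yields \eqref{Low_Ldl}.

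The substantive part is (iv). From \eqref{LL0a}, $h(n,\dl) = 1 - \tfrac{L_\dl(n)}{n^2} = 1 - \tfrac 3{\dl^2 n^2}\psi(\dl n) = 1 - 3\phi(\dl n)$, which depends only on $x = \dl n$; write $h(n,\dl) = H(\dl n)$ with $H(x) = 1 - 3\phi(x) \in (0,1)$. As $x\to\infty$, $\phi(x)\to 0$, so $H(x)\to 1$; more precisely $H(x) = 1 - \tfrac{3\coth x}{x} + \tfrac 3{x^2} = 1 - \tfrac 3x + O(x^{-2}\vee e^{-2x})$, so $H(x)\ge \tfrac 12$ for all $x \ge x_0$ with some absolute $x_0$. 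The point is that for fixed $\dl > 0$, $h(n,\dl) = H(\dl n) \ge \tfrac 12$ for all $|n| \ge x_0/\dl$, so $h^2(n,\dl)\ge \tfrac 14$ for infinitely many $n$, whence $\sum_{n\in\Z} h^2(n,\dl) = \infty$. I expect this last item to be essentially immediate once the scaling structure $h(n,\dl) = H(\dl n)$ with $H(\infty) = 1$ is recorded; there is no real obstacle, just bookkeeping. The mildly delicate point across the whole lemma is pinning down the strict inequality in (i)/(ii) cleanly — the partial-fraction expansion $x\coth x = 1 + \sum_{m\ge1}\tfrac{2x^2}{x^2+\pi^2 m^2}$ makes both positivity, the sharp upper bound $n^2$, monotonicity, and the divergence in (iv) fall out uniformly, so I would centre the write-up on that identity and verify it (or cite it) at the start.
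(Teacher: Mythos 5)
Your proposal is correct and is essentially the paper's own argument: the proof there is also built entirely on the Mittag-Leffler expansion $\pi z\coth(\pi z) = 1 + \sum_{k\ge 1}\frac{2z^2}{k^2+z^2}$, which yields $L_\dl(n) = 6n^2\sum_{k\ge1}(k^2\pi^2+\dl^2n^2)^{-1} = n^2 - n^2\sum_{k\ge1}\frac{6\dl^2n^2}{k^2\pi^2(k^2\pi^2+\dl^2n^2)}$ and hence (i), (ii), and (iv) exactly as you describe. The only divergence is in the regime $\dl|n|\gg1$ of (iii), where the paper estimates the series directly by a Riemann sum $\frac1\dl\sum_k\frac{1}{\pi^2(k/(\dl|n|))^2+1}\frac{1}{\dl|n|}\ges\int_0^\infty\frac{dx}{\pi^2x^2+1}$ rather than pulling back to $K_\dl(n)\ge|n|-\frac1\dl$ as you do; both routes are valid.
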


\begin{proof} 
From \eqref{GH4}, we have $L_\dl(n) = \frac 3\dl K_\dl(n)$.
Hence, from Lemma \ref{LEM:p2}, we have
$L_\dl (n) > 0$ for any $n \in \Z^*$.
On the other hand, 
from  the Mittag-Leffler expansion \cite[(11) on p.\,189]{Ahl}, we have
\begin{align}
\pi  z \coth (\pi z)  = 
 \frac{\pi z}{i}\cot\Big( \frac{\pi z}{i}\Big)
 = 1  +   \sum_{k=1}^{\infty}  \frac{  2 z^2  }{ k^2 + z^2   } 
\label{LL1}
\end{align}

\noi
for $z \in \C \setminus i \Z$.
Then,  from \eqref{GH4} and \eqref{LL1}, we have
\begin{align}
\begin{split}
L_\dl(n) 
&  = 6 n^2 \sum_{k=1}^\infty \frac{1}{k^2\pi^2 + \dl^2n^2}  
 \\
&  =     \frac{6 n^2}{\pi^2}   \sum_{k=1}^\infty \frac{1}{k^2} 
-  6 n^2    \sum_{k=1}^\infty 
\bigg(\frac{1}{k^2\pi^2}  -  \frac{1}{k^2\pi^2+\dl^2n^2} \bigg) \\
 & =   n^2  -  n^2
\sum_{k=1}^\infty \frac{ 6\dl^2n^2 }{  k^2\pi^2  ( k^2\pi^2+\dl^2n^2  ) }
\end{split}
\label{LL2}
\end{align}
	
\noi
for any $\dl>0$ and $n\in\Z$.
Hence, we conclude that $L_\dl (n) <  n^2$
for any $n \in \Z^*$.
This proves the claim (i).

From \eqref{LL0a} and \eqref{LL2}, we have 
\begin{align} 
h(n, \dl) 
=   6 \dl^2 \sum_{k=1}^\infty \frac{n^2}{k^2\pi^2(k^2\pi^2+\dl^2n^2)},
\label{LL3}
\end{align}

\noi
which tends to $0$ as $\dl \to 0$.
We also note that the expression after the first equality in \eqref{LL2}
shows that $L_\dl(n)$ is monotonic in $\dl$.
This yields  the claim (ii).
From \eqref{LL3}, 
we see that, 
as $n \to \infty$,  
$h(n, \dl) \not\to 0$ (for each fixed $\dl > 0$),
which yields \eqref{hnd2}.
This proves the claim (iv).

Lastly, we prove (iii).
Suppose $\dl |n| \les 1$.
Then, from \eqref{LL2}, we have
 \begin{align} 
L_\dl(n)
&= 6n^2 \sum_{k=1}^\infty \frac{1}{k^2\pi^2 + \dl^2 n^2}  
 \ges n^2 \sum_{k=1}^\infty \frac{1}{k^2+ 1} \ges n^2.
 \label{LL4}
\end{align}

\noi
Now, suppose 
 $\dl |n| \gg 1$ and $\dl\les 1$.
 Then, from \eqref{LL2} and a Riemann sum approximation, we have
\begin{align}
\begin{aligned}
\frac{ L_\dl(n)}{|n|} 
&\ges \sum_{k=1}^\infty \frac{|n|}{k^2\pi^2+ \dl^2 n^2} 
=  \frac{1}{\dl} \sum_{k=1}^\infty
\frac{1}{\pi^2 (\frac{k}{\dl | n|})^2+1} \frac{1}{\dl |n|}\\
& \ges \int_0^\infty \frac{dx}{\pi^2 x^2 + 1} \ges 1.
\end{aligned}
        \label{LL5}
\end{align}

\noi
Note that the implicit constants in \eqref{LL4} and \eqref{LL5}
are independent of $\dl$.
This proves the claim~(iii).
\end{proof}

\subsection{Tools from stochastic analysis}
\label{SUBSEC:2.2}

In the following, we review some basic facts on  
the Hermite polynomials 
and the Wiener chaos estimate. 
See,  for example,  \cite{Kuo, Nualart06}.

We define the $k$th Hermite polynomials 
$H_k(x; \s)$ with variance $\s$ via the following generating function:
\begin{equation}
e^{tx - \frac{1}{2}\s t^2} = \sum_{k = 0}^\infty \frac{t^k}{k!} H_k(x;\s) 
\label{H1b}
 \end{equation}
 
\noi
for $t, x \in \R$ and $\s > 0$.
When $\s = 1$, we set $H_k(x) = H_k(x; 1)$.
Then, we have
\begin{align}
 H_k(x ; \s) = \s^{\frac{k}{2}}H_k(\s^{-\frac{1}{2}}x ).
\label{H1a}
 \end{align}

\noi
It is well known that $\{ H_k /\sqrt{k!}\}_{k \in \N \cup \{0\}}$
form an orthonormal basis of $L^2(\R; \frac{1}{\sqrt{2\pi}} e^{-x^2/2}dx)$.
In the following, we list the first few Hermite polynomials
for readers' convenience:
\begin{align*}
& H_0(x; \s) = 1, 
\qquad 
H_1(x; \s) = x, 
\qquad
H_2(x; \s) = x^2 - \s, \\
& H_3(x; \s) = x^3 - 3\s x, 
\qquad 
H_4(x; \s) = x^4 - 6\s x^2 +3\s^2.
\end{align*}

 \noi
From \eqref{H1b}, we obtain
  the following recursion relation: 
\begin{equation*}
\partial_{x}H_{k}(x;\s)=kH_{k-1}(x;\s)
\end{equation*}

\noi	
for any $k\in\N$, and the following identity:
\begin{align*}
 H_k(x+y) 
& = \sum_{\l = 0}^k
\begin{pmatrix}
k \\ \l
\end{pmatrix}
x^{k - \l} H_\l(y), 
\end{align*}

\noi
which, together with \eqref{H1a}, yields
\begin{align}
\begin{split}
H_k(x+y; \s )
&  = \s^\frac{k}{2}\sum_{\l = 0}^k
\begin{pmatrix}
k \\ \l
\end{pmatrix}
\s^{-\frac{k-\l}{2}} x^{k - \l} H_\l(\s^{-\frac{1}{2}} y)\\
&  = 
\sum_{\l = 0}^k
\begin{pmatrix}
k \\ \l
\end{pmatrix}
 x^{k - \l} H_\l(y; \s).
 \end{split}
\label{Herm3}
\end{align}

\smallskip

Let $\{g_n\}_{n \in \Z}$
be an independent family of 
standard complex-valued Gaussian random variables 
conditioned that  $g_n = \cj{g_{-n}}$.
We first recall the following bound:
\begin{align}
\sup_{n \in \Z}     
 \jb{n}^{-\eps}|g_n| \leq C_{\eps, \o} < \infty
\label{Gb1}
\end{align}

\noi
almost surely for some random constant $C_{\eps, \o} > 0$;
see
Lemma 3.4 in \cite{CO}.
See also Appendix in~\cite{OH3}.

We define a real-valued, mean-zero Gaussian white noise $W$ on $\T$ by 
\begin{align}\label{Wnoise}
  W(x;\o) = \sum_{n\in \Z} g_n(\o) e^{- i n \cdot x}.
\end{align}

\noi
Next,  we introduce the isonormal Gaussian process 
$\big\{W_f: f\in L^2(\T)\big\}$ 
associated to the   Gaussian white noise $W$.

\begin{definition}\label{DEF:W}\rm

The isonormal Gaussian process $\big\{W_f : f\in L^2(\T)\big\}$
is a real-valued, mean-zero  Gaussian process 
indexed by the real separable Hilbert space $L^2(\T)$ 
such that 
\begin{align*}
\E\big[ W_f W_g \big] = \jb{f, g }_{L^2_x}
\end{align*}

\noi
for $f, g\in L^2(\T)$.
Moreover, we can realize $W_f$ as follows: 
\begin{align}
 f\in L^2(\T) \longmapsto
  W_f  = \jb{f, W}_{L^2_x} = \sum_{n \in \Z} \ft f(n)  g_n(\o) , 
\label{H2a}
 \end{align}

\noi
where $W$ is as in  \eqref{Wnoise}.

\end{definition}

\begin{remark}\rm
The action \eqref{H2a} on $f$ by the white noise
is referred to as the white noise functional in \cite{OTh1, ORT}.
Note that $W_f$ is basically the `periodic' Wiener integral on $\T$.
\end{remark}

In the following, we denote $L^2(\O, \sigma\{W\}, \PP)$  the space of real-valued,
square-integrable random variables that are measurable with respect to $W$.
We present below a fundamental result in Gaussian analysis,
providing 
 us an 
orthogonal decomposition of this  $L^2$-probability space.
 Let $\G_k^W$ be the $L^2(\O)$-completion 
of  the linear span of the set 
  $
 \{ H_k(W_f): f\in L^2(\T); \|f\|_{L^2} =1\}
    $.
 We call $\G_k^W$ the $k$th Wiener chaos associated to $W$. 
 The following Wiener-Ito chaos decomposition holds:
 \begin{align} 
   L^2(\O, \s\{W\}, \PP) 
        = \bigoplus_{k = 0}^\infty \G_k^W .
\label{H3}
\end{align}

\noi
The orthogonal decomposition \eqref{H3} indicates that random variables belonging to
Wiener chaoses of different orders are uncorrelated (namely, $L^2(\O)$-orthogonal).
See also the following particular case that we will often use in our computations.

\begin{lemma}\label{LEM:W1}

Let $Y_1, Y_2$ be two real-valued, mean-zero, and jointly Gaussian random variables with variances
$\s_1 = \E[ Y_1^2] > 0$ and $\s_2 = \E[ Y_2^2] > 0$.
Then, for $k, m \in \N \cup\{0\}$, we have 
\begin{align}
\E\big[ H_k(Y_1 ; \s_1) H_m(Y_2;  \s_2)\big]
=  \ind_{k=m} \cdot k!   \big(\E[ Y_1 Y_2] \big)^k.
     \label{W1}
\end{align}

 \end{lemma}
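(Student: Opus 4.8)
The plan is to prove Lemma~\ref{LEM:W1} by the generating function method based on~\eqref{H1b}. First I would fix real parameters $s, t$ and introduce
\[
F(s,t) := \E\Big[ e^{sY_1 - \frac12 \s_1 s^2}\, e^{tY_2 - \frac12 \s_2 t^2}\Big].
\]
Since $(Y_1, Y_2)$ is jointly Gaussian with mean zero, $sY_1 + tY_2$ is a mean-zero Gaussian random variable with variance $\s_1 s^2 + 2st\,\E[Y_1 Y_2] + \s_2 t^2$, so the Laplace transform formula for Gaussians gives $\E[ e^{sY_1 + tY_2}] = \exp\big(\tfrac12 \s_1 s^2 + st\,\E[Y_1 Y_2] + \tfrac12 \s_2 t^2\big)$, and therefore
\[
F(s,t) = e^{st\,\E[Y_1 Y_2]} = \sum_{j \ge 0} \frac{\big(\E[Y_1 Y_2]\big)^j}{j!}\, s^j t^j .
\]

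On the other hand, substituting the generating function \eqref{H1b} for each of the two exponentials inside the expectation and interchanging $\E$ with the two summations would give
\[
F(s,t) = \sum_{k, m \ge 0} \frac{s^k t^m}{k!\, m!}\, \E\big[ H_k(Y_1 ; \s_1)\, H_m(Y_2 ; \s_2)\big].
\]
Comparing the coefficient of $s^k t^m$ in the two representations of $F$ then yields $\E[H_k(Y_1;\s_1) H_m(Y_2;\s_2)] = 0$ when $k \ne m$ and $= k!\,\big(\E[Y_1 Y_2]\big)^k$ when $k = m$, which is exactly \eqref{W1}. Specializing to $Y_1 = Y_2$ also records the normalization $\E\big[ H_k(Y;\s)^2\big] = k!\,\s^k$, which is used freely in later computations.

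The one step that requires justification, and the only genuine obstacle, is the interchange of $\E$ with the double sum, equivalently differentiating $F$ under the integral sign at the origin. This is routine for Gaussian variables: for $\PP$-a.e.\ $\o$ the map $(s,t) \mapsto e^{sY_1 - \frac12 \s_1 s^2} e^{tY_2 - \frac12 \s_2 t^2}$ is entire in $(s,t)$, and on any compact subset of $\R^2$ its modulus is dominated by $C\, e^{C(|Y_1| + |Y_2|)}$, which is $\PP$-integrable by the Gaussian tail bound; dominated convergence then permits differentiating under the expectation to all orders, and $\partial_s^k \partial_t^m F(0,0)$ computed from either representation reproduces the two sides of the claimed coefficient identity. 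Alternatively, one may restrict $s, t$ to a small neighbourhood of $0$ and apply Fubini after verifying absolute convergence of $\sum_{k,m} \frac{|s|^k |t|^m}{k!\, m!}\, \E\big[|H_k(Y_1;\s_1)|\,|H_m(Y_2;\s_2)|\big]$, using Cauchy--Schwarz together with the crude bound $\E[H_k(Y;\s)^2] \les C^k\, k!\, \s^k$, which follows from Cauchy's estimates applied to the entire function in \eqref{H1b} and the Gaussian moment bounds.
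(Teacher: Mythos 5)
Your proof is correct. Note that the paper does not actually prove Lemma \ref{LEM:W1}: it is stated in Subsection \ref{SUBSEC:2.2} as a standard fact from Gaussian analysis with references to \cite{Kuo, Nualart06}, so there is no in-paper argument to compare against; your generating-function computation of $F(s,t)=e^{st\,\E[Y_1Y_2]}$ together with the coefficient comparison is exactly the classical proof found in those references, and either of your two justifications for the interchange of expectation and summation (the Fubini route with the Cauchy-estimate bound $\E[H_k(Y;\s)^2]\les C^k k!\,\s^k$ being the cleaner one, since for repeated differentiation under the expectation one should strictly dominate the derivatives of the integrand, not just the integrand itself) closes the only delicate step.
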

For example,  with $f, h \in L^2(\T)$, the random variables 
$Y_1= W_f$ and $Y_2= W_h$,
with $\s_1 =  \| f\|_{L^2_x}^2$  and $\s_2 = \| h\|_{L^2_x}^2$
satisfy the identity \eqref{W1}.

Next, we state the Wiener chaos estimate, 
which is a consequence of Nelson's hypercontractivity \cite{Ne65}.
See, for example,   \cite[Theorem I.22]{Simon}.
See also \cite[Proposition 2.4]{TTz}.

\begin{lemma}[Wiener chaos estimate] \label{LEM:hyp}

Let $\{ g_n\}_{n \in \Z }$ 
be an independent family of 
standard complex-valued Gaussian random variables 
conditioned that  $g_n = \cj{g_{-n}}$.
Given $k \in \N$,  let
$\{Q_j\}_{j \in \N}$ be a sequence of polynomials in 
$ \mathbf{g}=\{ g_n\}_{n \in \Z}$ of  degrees at most $k$
such that $\sum_{j \in \N} Q_j(\mathbf{g}) \in \R$, 
almost surely.
Then, for any finite $p \ge 2$, we have 
\begin{equation*}
 \bigg\|\sum_{j \in \N} Q_j(\mathbf{g}) \bigg\|_{L^p(\O)} 
 \le (p-1)^\frac{k}{2} \bigg\|\sum_{j \in \N} Q_j( \mathbf{g}) \bigg\|_{L^2(\O)}.
\end{equation*}

\end{lemma}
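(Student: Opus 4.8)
The plan is to obtain this Wiener chaos estimate as a direct consequence of Nelson's hypercontractivity inequality for the Ornstein--Uhlenbeck semigroup, so that the only nontrivial input is that classical theorem, which I would quote rather than reprove. First I would fix the real Gaussian structure underlying $\mathbf g = \{g_n\}_{n\in\Z}$: because $g_n = \cj{g_{-n}}$, the collection of real and imaginary parts of the $g_n$ is an independent family of real-valued Gaussian variables, and one works inside the associated Gaussian Hilbert space, with the Wiener chaoses $\G_m^W$ and the Wiener--It\^o decomposition $L^2(\O, \s\{W\}, \PP) = \bigoplus_{m \ge 0} \G_m^W$ as in \eqref{H3}. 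Let $T_\rho$, $\rho \in [0,1]$, denote the Mehler (Ornstein--Uhlenbeck) semigroup, characterized by $T_\rho = \rho^m\,\Id$ on $\G_m^W$. Nelson's theorem (see \cite[Theorem~I.22]{Simon}) then states that $\|T_\rho G\|_{L^q(\O)} \le \|G\|_{L^{p_0}(\O)}$ whenever $1 < p_0 \le q < \infty$ and $\rho^2 \le (p_0 - 1)/(q-1)$.

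The next step is to reduce to a single random variable. Set $F := \sum_{j\in\N} Q_j(\mathbf g)$; we may assume $\|F\|_{L^2(\O)} < \infty$, since otherwise there is nothing to prove, and then $F \in L^2(\O, \s\{W\}, \PP)$. Because each $Q_j$ is a polynomial of degree at most $k$ in the $g_n$'s, iterating the product expansion \eqref{Herm3} shows $Q_j \in \bigoplus_{m=0}^k \G_m^W$, hence $F = \sum_{m=0}^k F_m$ with $F_m \in \G_m^W$ and the sum $L^2(\O)$-orthogonal. For $p \ge 2$ I would set $\rho := (p-1)^{-1/2} \in (0,1]$ and $G := \sum_{m=0}^k \rho^{-m} F_m \in L^2(\O)$, so that $T_\rho G = \sum_{m=0}^k \rho^{-m}\rho^m F_m = F$. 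Nelson's theorem with $p_0 = 2$ and $q = p$ (the hypothesis $\rho^2 \le (2-1)/(p-1)$ holds, with equality) gives $\|F\|_{L^p(\O)} = \|T_\rho G\|_{L^p(\O)} \le \|G\|_{L^2(\O)}$, and then orthogonality together with $\rho^{-2m} \le \rho^{-2k}$ for $0 \le m \le k$ (valid since $0 < \rho \le 1$) yields
\[
\|G\|_{L^2(\O)}^2 = \sum_{m=0}^k \rho^{-2m} \|F_m\|_{L^2(\O)}^2 \le \rho^{-2k} \sum_{m=0}^k \|F_m\|_{L^2(\O)}^2 = \rho^{-2k}\|F\|_{L^2(\O)}^2 .
\]
Combining the two displays gives $\|F\|_{L^p(\O)} \le \rho^{-k}\|F\|_{L^2(\O)} = (p-1)^{k/2}\|F\|_{L^2(\O)}$, which is the claim.

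The main obstacle is essentially nonexistent once Nelson's hypercontractivity is granted: there is no delicate estimate here, only bookkeeping. The only points I would spell out are \textup{(i)} that a degree-$\le k$ polynomial in the Gaussians has vanishing projection onto $\G_m^W$ for $m > k$, which follows by repeatedly applying the addition/product formula \eqref{Herm3}; \textup{(ii)} that $G$ genuinely lies in $L^2(\O)$, immediate from $\|F\|_{L^2(\O)} < \infty$ and orthogonality; and \textup{(iii)} the routine identification of the Hermitian-complex description of $\mathbf g$ with the real Gaussian Hilbert space on which the abstract semigroup acts. Alternatively, since the statement is by now standard, one may simply cite \cite[Theorem~I.22]{Simon} or \cite[Proposition~2.4]{TTz} and be done.
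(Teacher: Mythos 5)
Your argument is correct, and it is exactly the standard derivation underlying the references the paper relies on: the paper states this lemma without proof, simply citing Nelson's hypercontractivity \cite{Ne65} via \cite[Theorem I.22]{Simon} and \cite[Proposition 2.4]{TTz}, and your reduction (chaos decomposition up to order $k$, the amplified element $G$ with $\rho = (p-1)^{-1/2}$, hypercontractivity of the Ornstein--Uhlenbeck semigroup, and the orthogonality bound $\|G\|_{L^2}\le \rho^{-k}\|F\|_{L^2}$) is precisely the proof behind those citations. So there is no discrepancy with the paper's approach; your write-up just makes explicit what the paper delegates to the literature.
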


Lastly, 
we provide a brief discussion on 
the Wick renormalization.

\smallskip

\noi
$\bullet$ \textbf{Wick renormalization.}
Let $\{\b_k, k\in\N\}$  be independent real-valued standard Gaussian random variables, 
which can be built from the Gaussian white noise $W$ in \eqref{Wnoise}.
 Consider the polynomial $Q(x_1, ... , x_n)$ 
 with $n$ variables. We denote its degree  by ${\rm deg}(Q)$.
Then, the random variable  $Q(\b_1, ... , \b_n)$ belongs to 
the sum of the first ${\rm deg}(Q)$ Wiener chaoses, that is, 
\[
Q(\b_1, ... , \b_n)\in\bigoplus_{k\leq {\rm deg}(Q)} \G_k^W.
\]

\noi
One can find a unique polynomial $P$ with the same degree 
and the same coefficient on the leading order term  such that 
$P(\b_1, ... , \b_n) \in \G_k^W$, 
that is, $P(\b_1, ... , \b_n) $ is the projection of $Q(\b_1, ... , \b_n)$ 
onto $\G^W_{{\rm deg}(Q)}$. 
We call such a polynomial $P$ as the Wick-ordered version of $Q$,
and we write $P = \W(Q)$.

\begin{example} \rm

(i)
 Consider the   polynomial $Q(x_1, ..., x_n) =x_1^{k_1} \cdots x_n^{k_n}$.
 Then, we have 
  \[
 \W( Q)(x_1, ..., x_n) =  \prod_{j = 1}^n H_{k_j}(x_j),
 \]
 
 \noi
 where $H_k$ is the $k$th Hermite polynomial with variance $\s = 1$.

  \vspace{1mm}
 
 \noi
 (ii)
 Given $N \in \N$, 
 consider the following truncated random Fourier series $X_{\dl, N}$:
 \begin{align}
 X_{\dl, N}(x, \o) =   \frac 1{2\pi}  \sum_{n\in\Z^\ast} \frac{g_n(\o) }{| K_\dl(n)|^{ \frac{1}{2} } }e_n.
\label{H4}
 \end{align}

\noi
Note that 
$ X_{\dl, N} = \P_N X_\dl$, where $X_\dl$ is as  in  \eqref{GG3}.
For each $x \in \T$, 
$  X_{\dl, N}(x)$ is a real-valued, mean-zero Gaussian random variable
with variance $\s_{\dl, N}$ in \eqref{Wick1a}.
Then, the Wick-ordered version of $X_{\dl, N}^k$,   $k\in\N$,
 is given by $\W(X_{\dl, N}^k) = H_k(X_{\dl, N}; \sigma_{\dl, N})$.
Compare this with \eqref{WO1}. 
 
\end{example}

\subsection{Various modes of convergence for probability measures
and random variables}
\label{SUBSEC:dist}

We conclude this section by going over various modes of convergence 
for  probability measures 
and random variables.
See, for example, \cite[Chapter~3]{Pollard}
and \cite[Chapter~2]{Tsy}
for further discussions.
See also \cite{Dudley_book}.

\medskip

\noi
$\bullet$ {\bf 
Convergence in probability
and the Ky-Fan distance.}
\\
\indent
 Let $X$ and $ Y$ be two real-valued random variables
defined on a common probability space $\O$.
Then,  the {\it Ky-Fan distance} between $X$ and $Y$ 
is defined by 
\begin{align*}
d_{\rm KF}\big( X, Y\big) = \E\big[ 1 \wedge |X-Y| \big] ,
\end{align*}

\noi
where $a\wedge b :=\min(a, b)$.
It is known that the Ky-Fan distance
 characterizes  
convergence in probability.
Namely, a sequence $\{Z_n\}_{n \in \N}$
of random variables converges in probability to some limit $Z$
if and only if $d_{\rm KF}(Z_n, Z) \to 0$ as $n \to \infty$.

The usual continuous mapping theorem \cite[Problem 5.17 on p.\,83]{Billingsley2}
states that if a sequence $\{Z_n\}_{n \in \N}$
converges to a limit $Z$ in probability, 
then, given a continuous function $\phi: \R\to \R$,  $\{\phi(Z_n)\}_{n \in \N}$
converges to $\phi(Z)$ in probability.
For our purpose, we need to extend this continuous mapping
theorem
for 
{\it uniform} convergence in probability.

\begin{lemma}[uniform continuous mapping theorem]  \label{LEM:KFC}

Let $\J \subset [0, \infty]$ be an index set.
Suppose that $\{Z_{\dl, n}\}_{n\in \N}$
converges in probability to a limit $Z_\dl$
uniformly in $\dl \in \J$, as $n \to \infty$ in the following sense\textup{:}
\begin{align}
\lim_{n\to\infty} \sup_{\dl\in \J} \dkf( Z_{\dl,n} ,  Z_{\dl})  =0
\label{uni_cvg_P1}
\end{align}

\noi
or equivalently, for any $\eta>0$,

\noi
\begin{align}
\lim_{n\to\infty} \sup_{\dl\in \J}  \PP\big(  | Z_{\dl,n} - Z_{\dl} | > \eta   \big) =0.
\label{uni_cvg_P11}
\end{align}

\noi
Suppose that the family of random variables $\{Z_\dl\}_{\dl \in \J}$
is tight, meaning that for any $\eps>0$, there exists a compact set $K_\eps\subset \R$
such that 
\begin{align}
\sup_{\dl \in \J} \PP\big(   Z_\dl \in K^c_\eps \big) \leq \eps .
\label{Z_tight}
\end{align}

 \noi
 Then, 
given any continuous function $\phi: \R\to\R$, we have 
\noi
\begin{align}
\lim_{n\to\infty} \sup_{\dl\in \J} 
\dkf \big( \phi(Z_{\dl,n}), \phi( Z_{\dl}) \big)  =0 .
\label{uni_cvg_P2}
\end{align}

\end{lemma}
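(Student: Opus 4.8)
The plan is to reduce the uniform continuous mapping statement to the scalar one by a standard truncation argument, where tightness of $\{Z_\dl\}_{\dl \in \J}$ provides uniform control of the tails and uniform continuity of $\phi$ on compacts handles the bounded part. First I would fix $\eps > 0$ and use \eqref{Z_tight} to choose a compact set $K_\eps \subset \R$ with $\sup_{\dl \in \J} \PP(Z_\dl \in K_\eps^c) \le \eps$; without loss of generality $K_\eps = [-R_\eps, R_\eps]$ for some $R_\eps > 0$. On the enlarged compact interval $I_\eps := [-R_\eps - 1, R_\eps + 1]$, the function $\phi$ is uniformly continuous, so there exists $\eta = \eta(\eps) \in (0,1)$ such that $|\phi(x) - \phi(y)| \le \eps$ whenever $x, y \in I_\eps$ and $|x - y| \le \eta$.

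The key step is the splitting of the expectation defining $\dkf(\phi(Z_{\dl,n}), \phi(Z_\dl))$ according to the events $E_1 = \{Z_\dl \in K_\eps,\ |Z_{\dl,n} - Z_\dl| \le \eta\}$ and its complement. On $E_1$, both $Z_\dl$ and $Z_{\dl,n}$ lie in $I_\eps$ (since $\eta \le 1$), so $|\phi(Z_{\dl,n}) - \phi(Z_\dl)| \le \eps$ there, contributing at most $\eps$ to the expectation. On the complement, we bound $1 \wedge |\phi(Z_{\dl,n}) - \phi(Z_\dl)| \le 1$ and estimate
\begin{align*}
\PP(E_1^c) \le \PP(Z_\dl \in K_\eps^c) + \PP(|Z_{\dl,n} - Z_\dl| > \eta)
\le \eps + \sup_{\dl \in \J}\PP(|Z_{\dl,n} - Z_\dl| > \eta).
\end{align*}
Combining, we obtain
\begin{align*}
\sup_{\dl \in \J} \dkf\big(\phi(Z_{\dl,n}), \phi(Z_\dl)\big)
\le 2\eps + \sup_{\dl \in \J} \PP\big(|Z_{\dl,n} - Z_\dl| > \eta(\eps)\big).
\end{align*}
By hypothesis \eqref{uni_cvg_P11}, the last term tends to $0$ as $n \to \infty$ for the fixed threshold $\eta(\eps)$, so $\limsup_{n\to\infty} \sup_{\dl \in \J} \dkf(\phi(Z_{\dl,n}), \phi(Z_\dl)) \le 2\eps$. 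Since $\eps > 0$ is arbitrary, \eqref{uni_cvg_P2} follows.

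The main subtlety — not a deep obstacle, but the point requiring care — is that both the compact set $K_\eps$ and the modulus-of-continuity threshold $\eta$ must be chosen \emph{uniformly in $\dl$ before} sending $n \to \infty$; this is exactly what the tightness assumption \eqref{Z_tight} buys us, in contrast to the pointwise continuous mapping theorem where one may localize using the single random variable $Z_\dl$. One should also note the equivalence of \eqref{uni_cvg_P1} and \eqref{uni_cvg_P11}, which is immediate from the elementary two-sided bound $\eta\,\PP(|X-Y|>\eta) \le \E[1\wedge|X-Y|] \le \eta + \PP(|X-Y|>\eta)$ valid for $\eta \in (0,1)$, so no separate argument is needed there.
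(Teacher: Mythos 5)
Your proof is correct and follows essentially the same route as the paper: tightness fixes a compact set uniformly in $\dl$, uniform continuity of $\phi$ on a slightly enlarged compact supplies a threshold $\eta(\eps)$ independent of $\dl$ and $n$, and the Ky-Fan expectation is split into the good event (contributing $\eps$) and its complement (controlled by tightness plus the uniform convergence in probability \eqref{uni_cvg_P11}); the equivalence of \eqref{uni_cvg_P1} and \eqref{uni_cvg_P11} is handled by the same elementary two-sided bound as in the paper. The only differences are cosmetic (naming the event $E_1$ and ordering the splitting slightly differently).
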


Note that the tightness assumption
on $\{Z_\dl\}_{\dl \in \J}$ is crucial.

\begin{proof}

Let us first show  the equivalence of 
\eqref{uni_cvg_P1} and 
\eqref{uni_cvg_P11}. 
Let $0 < \eta < 1$.
Then, 
by Markov's inequality and \eqref{uni_cvg_P1}, we have
\begin{align*}
   \PP\big(  | Z_{\dl,n} - Z_{\dl} | > \eta   \big)
    \leq \E\Big[ \ind_{\{ | Z_{\dl,n} - Z_{\dl} | > \eta   \}} 
    \tfrac{ | Z_{\dl,n} - Z_{\dl} |\wedge \eta }{ \eta  }
          \Big] 
          \leq \frac{1}{\eta} \dkf( Z_{\dl,n} ,  Z_{\dl}) 
\end{align*}

\noi
and  
\begin{align*}
\dkf( Z_{\dl,n} ,  Z_{\dl})  =  \E\big[ | Z_{\dl,n} - Z_{\dl} |\wedge 1 \big]  \leq \eta 
 +  \PP \big( | Z_{\dl,n} - Z_{\dl} | \geq \eta \big).
\end{align*}

\noi
This proves the equivalence of \eqref{uni_cvg_P1} and 
\eqref{uni_cvg_P11}.

We now prove \eqref{uni_cvg_P2}.
Fix  $\b>0$.
In view of \eqref{Z_tight},   there exists  $\eta = \eta(\be) \geq 1$  such that 
 \begin{align}
 \sup_{\dl \in \J} \PP\big( |Z_\dl| > \eta \big) \leq \be .
    \label{tight_Zd}
 \end{align}
 
 \noi
Since $\phi$ is continuous, 
 it is uniformly continuous
 on $[-2\eta, 2\eta]$. 
 In particular,  there exists small 
  $\eps= \eps(\phi, \b)>0$
  such that
 \begin{align}\label{unif_cont}
|  \phi(x) - \phi(y) | \leq \b, \quad \text{whenever $x,y\in[-2\eta, 2\eta]$ with  $|x-y| \leq \eps$}.
 \end{align}
 
 \noi
 Without loss of generality, 
 we assume that $\eps \le \eta$.
 Note that these parameters $\e, \b$, and $\eta$ do not depend on $n\in \N$.

From \eqref{uni_cvg_P11}, we have
\begin{align}
\begin{split}
& \lim_{n\to\infty} \sup_{\dl\in \J} \E\Big[  \big( | \phi(Z_{\dl,n}) -\phi( Z_{\dl}) |\wedge 1\big) 
\ind_{\{  |  Z_{\dl,n}  -  Z_{\dl}  | > \eps \}} \Big] \\
& \quad \le \lim_{n\to\infty} \sup_{\dl\in \J} 
 \PP\big(  | Z_{\dl,n} - Z_{\dl} | > \eps   \big) =0.
\end{split}
\label{KFC1}
\end{align}

\noi
On the other hand, 
from \eqref{unif_cont} and \eqref{tight_Zd}, 
we have
\begin{align}
\begin{split}
  \E & \Big[  \big( | \phi(Z_{\dl,n}) -\phi( Z_{\dl}) |\wedge 1\big) 
\ind_{\{  |  Z_{\dl,n}  -  Z_{\dl}  | \leq \eps \}} \Big]   \\
 &\leq   \E\Big[  \big( | \phi(Z_{\dl,n}) -\phi( Z_{\dl}) |\wedge 1\big) 
\ind_{\{  |  Z_{\dl,n}  -  Z_{\dl}  | \leq \eps,  |Z_\dl|\leq \eta \}} \Big] \\
&\qquad\qquad\qquad +  \E\Big[  \big( | \phi(Z_{\dl,n}) -\phi( Z_{\dl}) |\wedge 1\big) 
\ind_{\{    |Z_\dl| > \eta \}} \Big]
  \leq 2\b   .
  \end{split}
  \label{KFC2}
 \end{align}

\noi
Since $\b>0$ is arbitrary, 
\eqref{uni_cvg_P2} follows from \eqref{KFC1} and \eqref{KFC2}.
\end{proof}

\medskip

\noi
$\bullet$ {\bf Convergence in total variation and the Hellinger distance.}
\\
\indent
 Let $\mu$ and $\nu$ be two 
probability measures on a measurable space $(E, \mathcal{E})$, 
the {\it total variation distance} 
$\dtv $ of 
$\mu$ and $\nu$ is given by 
  \begin{align}
\dtv(\mu, \nu) : = \sup\big\{  \vert \mu(A) - \nu(A) \vert : A\in\mathcal{E} \big\}.
\label{KL0}
 \end{align}
This metric induces a much stronger topology than the one induced by the weak convergence.\footnote{For example, let $\mu_N$ denote the 
law of the random variable $\frac{1}{\sqrt{N}} (Y_1+ ... + Y_N)$, 
where $Y_i, i\in\N$, are i.i.d.  random variables
with $\mathbb{P}(Y_1= 1) =\mathbb{P}(Y_1= -1) = \frac{1}{2}$.
Then, the classical central limit theorem asserts that 
$\mu_N$ converges weakly to the standard Gaussian measure on $\R$,
while due to the discrete nature of $\mu_N$, its total variation distance
from the standard Gaussian measure is always one. 
}

\smallskip

Next, we recall the notion of 
the Hellinger integral \cite{DP, DZ}.
Let $\mu$ and $\nu$ be two probability measures on 
a  measurable space $(E, \EE)$. 
Note that both $\mu$ and $\nu$ are  absolutely continuous with respect to the probability measure 
$\ld = \frac 12  (\mu + \nu)$.
Then, 
the Hellinger integral of $\mu$ and $\nu$ is defined by
\begin{align}
H(\mu, \nu) = \int_E \sqrt{\frac {d\mu}{d\ld}\frac{d\nu}{d\ld}} d\ld .
\label{KL1}
\end{align}

\noi
In fact, the definition 
 \eqref{KL1} is independent of the choice of a probability measure
 $\ld$ such that $\mu, \nu \ll \ld$.
When $\mu$ and $\nu$ are equivalent
(i.e.~mutually absolutely continuous), 
we can write $H(\mu, \nu)$ as 
\begin{align}
H(\mu, \nu) = \int_E \sqrt{\frac {d\nu}{d\mu}} d\mu .
\label{KL2}
\end{align}

\noi
Note that $0 \le H(\mu, \nu)\le 1$.
The Hellinger integral provides  a
criterion for singularity (and equivalence) of two probability measures.
It is known 
\cite[Proposition 2.20]{DZ}
that $H(\mu, \nu) = 0$ if and only if $\mu$ and $\nu$
are mutually singular.
Thus, for $\mu$ and $\nu$ to be equivalent, 
we must have $H(\mu, \nu) > 0$.
In fact, when $\mu$ and $\nu$ are product measures
on $(\R^\infty, \mathcal{B}_{\R^\infty})$, 
the condition $H(\mu, \nu) > 0$ is also  sufficient
(Kakutani's theorem). See Theorem 2.7 in \cite{DP}.

\begin{lemma}\label{LEM:Kaku2}
Let $\{\mu_n\}_{n \in \N}$ and $\{\nu_n\}_{n \in \N}$
be two sequences of probability measures on $(\R, \mathcal{B}_{\R})$
such that $\mu_n$ and $\nu_n$ are equivalent for any $n \in \N$.
Let $\mu = \bigotimes_{n \in \N} \mu_n$
and  $\nu = \bigotimes_{n \in \N} \nu_n$.
Then, we have $H(\mu, \nu) = \prod_{n \in \N} H(\mu_n, \nu_n)$
and

\smallskip
\begin{itemize}
\item
$H(\mu, \nu) > 0$ if and only if $\mu$ and $\nu$ are equivalent.
In this case, 
we have 
\begin{align}
\frac{d \mu}{d\nu} = \prod_{n \in \N} \frac{d \mu_n}{d\nu_n}.
\label{KL2a}
\end{align}

\smallskip

\item $H(\mu, \nu) = 0$ if and only if $\mu$ and $\nu$ are mutually singular.

\end{itemize}

\end{lemma}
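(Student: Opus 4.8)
The plan is to establish the classical Kakutani dichotomy by a martingale argument on the product space. First I would set $f_n = \frac{d\mu_n}{d\nu_n}$; the assumed equivalence $\mu_n \sim \nu_n$ guarantees $0 < f_n < \infty$ both $\nu_n$- and $\mu_n$-almost surely, with $\frac{d\nu_n}{d\mu_n} = 1/f_n$, and I would write $c_n := H(\mu_n, \nu_n) = \int_\R \sqrt{f_n}\, d\nu_n$, noting $0 < c_n \le 1$ by the Cauchy--Schwarz inequality. On $(\R^\infty, \mathcal B_{\R^\infty}, \nu)$ equipped with the filtration $\mathcal F_m = \sigma(x_1, \dots, x_m)$, I would introduce the two nonnegative $\nu$-martingales
\begin{align*}
M_m = \prod_{j=1}^m f_j(x_j)
\qquad \text{and} \qquad
N_m = \prod_{j=1}^m \frac{\sqrt{f_j(x_j)}}{c_j},
\end{align*}
which by independence of the coordinates satisfy $\E_\nu[M_m] = \E_\nu[N_m] = 1$ and $\E_\nu[N_m^2] = \prod_{j=1}^m c_j^{-2}$. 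By the martingale convergence theorem both converge $\nu$-almost surely; together with the finite-product identity $H\big(\bigotimes_{j\le m}\mu_j, \bigotimes_{j\le m}\nu_j\big) = \prod_{j\le m} c_j$ (immediate from Fubini), this reduces the whole statement to a dichotomy according to whether $\prod_n c_n > 0$ or $\prod_n c_n = 0$.

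In the case $\prod_n c_n > 0$, the martingale $N_m$ is bounded in $L^2(\nu)$, hence converges in $L^2(\nu)$ and $\nu$-a.s.\ to some $N_\infty$ with $\E_\nu[N_\infty] = 1$; consequently $M_m = \big(\prod_{j\le m} c_j\big)^2 N_m^2$ converges in $L^1(\nu)$ to $M_\infty := \big(\prod_n c_n\big)^2 N_\infty^2$, so that $M_m = \E_\nu[M_\infty \mid \mathcal F_m]$ and $\E_\nu[M_\infty] = 1$. I would then test against a cylinder $C = A_1 \times \cdots \times A_m \times \R \times \R \times \cdots$, where $\mu(C) = \prod_{j\le m} \int_{A_j} f_j\, d\nu_j = \int_C M_m\, d\nu = \int_C M_\infty\, d\nu$, and upgrade this by a monotone class argument (the cylinders form a $\pi$-system generating $\mathcal B_{\R^\infty}$) to $\mu = M_\infty\, \nu$; thus $\mu \ll \nu$ with $\frac{d\mu}{d\nu} = M_\infty = \prod_n f_n$, the product converging $\nu$-a.s.\ and in $L^1(\nu)$. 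Since $H$ is symmetric and the Hellinger factors of the swapped pair $(\nu_n, \mu_n)$ are again $c_n$, the same argument with the roles of $\mu$ and $\nu$ exchanged gives $\nu \ll \mu$, hence equivalence and \eqref{KL2a}. Finally $\sqrt{M_\infty} = \big(\prod_n c_n\big) N_\infty$ together with $\E_\nu[N_\infty] = 1$ yields, via \eqref{KL2} and the symmetry of $H$, the identity $H(\mu, \nu) = \int \sqrt{M_\infty}\, d\nu = \prod_n c_n$.

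In the case $\prod_n c_n = 0$, the nonnegative $\nu$-martingale $N_m$ still converges $\nu$-a.s.\ to a finite limit, whence $\sqrt{M_m} = \big(\prod_{j\le m} c_j\big) N_m \to 0$ and so $M_m \to 0$ $\nu$-a.s. Running the identical construction with $\mu$ and $\nu$ interchanged (legitimate since the Hellinger factors are unchanged) gives $\prod_{j\le m}(1/f_j(x_j)) \to 0$ $\mu$-a.s., that is, $M_m \to +\infty$ $\mu$-a.s. Therefore the event $A := \big\{ x \in \R^\infty : \prod_{j=1}^m f_j(x_j) \to \infty \big\}$ satisfies $\mu(A) = 1$ and $\nu(A) = 0$, so $\mu$ and $\nu$ are mutually singular and $H(\mu, \nu) = 0 = \prod_n c_n$. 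Combining the two cases delivers the product formula $H(\mu, \nu) = \prod_n H(\mu_n, \nu_n)$ and both halves of the dichotomy.

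The step I expect to be the main obstacle is the identification $\mu = M_\infty\, \nu$ on the entire product $\sigma$-algebra: one must pass from the elementary equality on cylinders to all of $\mathcal B_{\R^\infty}$ by a monotone class ($\pi$-$\lambda$) argument, and for this it is essential that $M_m$ is a uniformly integrable martingale with $M_m = \E_\nu[M_\infty \mid \mathcal F_m]$, which is exactly what the $L^2(\nu)$-boundedness of $N_m$ (equivalently, $\prod_n c_n > 0$) buys. The remainder is routine manipulation of Radon--Nikodym derivatives and Fubini; since the statement is classical, one may alternatively simply cite \cite[Theorem~2.7]{DP} together with \cite[Proposition~2.20]{DZ}.
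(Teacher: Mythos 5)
Your argument is correct, but note that the paper does not actually prove Lemma \ref{LEM:Kaku2}: it is invoked as the classical Kakutani dichotomy, with the general fact ``$H(\mu,\nu)=0$ iff $\mu\perp\nu$'' attributed to \cite[Proposition 2.20]{DZ} and the product-measure statement to \cite[Theorem 2.7]{DP}; the only proof the paper supplies is the downstream application (Lemma \ref{LEM:Kaku}), which takes Lemma \ref{LEM:Kaku2} as input. What you have written is a correct, self-contained version of the standard martingale proof of the cited result: the factorization $M_m=\bigl(\prod_{j\le m}c_j\bigr)^2N_m^2$, the observation $\E_\nu[N_m^2]=\prod_{j\le m}c_j^{-2}$ so that $\prod_n c_n>0$ is exactly $L^2(\nu)$-boundedness of $N_m$, the uniform integrability needed to upgrade the cylinder identity $\mu(C)=\int_C M_\infty\,d\nu$ to all of $\mathcal B_{\R^\infty}$ via a $\pi$-$\lambda$ argument, and the symmetry $H(\nu_n,\mu_n)=H(\mu_n,\nu_n)$ that lets you run the same construction under $\mu$ to get $\nu\ll\mu$ (respectively $M_m\to\infty$ $\mu$-a.s.\ in the degenerate case) are all in order, and the identity $H(\mu,\nu)=\int\sqrt{M_\infty}\,d\nu=\prod_n c_n$ follows from $\E_\nu[N_\infty]=1$. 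Two small remarks: in the singular case you quietly use the general fact that mutual singularity forces $H(\mu,\nu)=0$, which is legitimate since the paper records it from \cite{DZ}, though you could avoid it by noting that the Hellinger integral over the full $\sigma$-algebra is dominated by its value on $\mathcal F_m$, namely $\prod_{j\le m}c_j\to 0$; and your closing alternative of simply citing \cite[Theorem 2.7]{DP} together with \cite[Proposition 2.20]{DZ} is in fact exactly what the paper does.
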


With the notations as above, 
we  introduce the {\it Hellinger distance} $\dhh$
of $\mu$ and $\nu$ by setting
\begin{align}
\begin{split}
\dhh(\mu, \nu) & = \bigg(\frac 12 \int_E \Big(\sqrt{\frac{d \mu}{d\ld}}
- \sqrt{\frac{d \nu}{d\ld}}\Big)^2d\ld\bigg)^\frac 12 \\
& = \big(1 - H(\mu, \nu)\big)^\frac 12 , 
\end{split}
\label{KL3}
\end{align}

\noi
where $H(\mu, \nu)$ is the Hellinger integral defined in \eqref{KL2}.
It is clear that $0 \le \dhh (\mu, \nu) \le 1$.
We state 
Le Cam's inequality, relating the total variation distance and Hellinger distance;
see
Lemma 2.3 in \cite{Tsy}.\footnote{Note a slightly difference multiplicative constant
in the definition of the Hellinger distance in \cite{Tsy}.}

\begin{lemma}\label{LEM:KL1}
Let $\dtv$ and $\dhh$ be as in \eqref{KL0} and \eqref{KL3}, respectively.
Then, we have
\[ \big(\dhh(\mu, \nu)\big)^2 \le \dtv(\mu, \nu) \le \sqrt 2 \cdot \dhh(\mu, \nu)
\]

\noi
for any probability measures $\mu$ and $\nu$ on a measurable space $(E, \EE)$.
In particular, a sequence $\{\mu_k\}_{k \in \N}$
of probability measures on $(E, \EE)$  converges to some limit $\mu$
in total variation if and only if it converges to the same limit in the Hellinger distance.

\end{lemma}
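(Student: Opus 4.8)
The plan is to reduce everything to integrals against a common dominating measure. Put $\ld = \frac12(\mu+\nu)$ and write $p = \frac{d\mu}{d\ld}$, $q = \frac{d\nu}{d\ld}$, so that $p,q\ge 0$ and $\int_E p\,d\ld = \int_E q\,d\ld = 1$; recall (as noted after \eqref{KL1} and \eqref{KL3}) that the Hellinger integral and the Hellinger distance are independent of the particular choice of $\ld$, which legitimizes this reduction. The first step is to record the two representations
\[
\dtv(\mu,\nu) = \frac12\int_E |p-q|\,d\ld,
\qquad
\dhh(\mu,\nu)^2 = \frac12\int_E\big(\sqrt p-\sqrt q\big)^2\,d\ld = 1 - H(\mu,\nu).
\]
The second is immediate from \eqref{KL3} together with $\int_E p\,d\ld=\int_E q\,d\ld=1$. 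For the first, note that for every $A\in\EE$ one has $\mu(A)-\nu(A) = \int_A(p-q)\,d\ld$, which is maximized by taking $A=\{p>q\}$ and minimized by taking $A=\{p<q\}$; since $\int_E(p-q)\,d\ld=0$, the positive and negative parts of $p-q$ have equal integral, and \eqref{KL0} yields the claimed formula.

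For the lower bound $\dhh(\mu,\nu)^2\le\dtv(\mu,\nu)$, the key observation is the elementary pointwise inequality $|\sqrt p-\sqrt q|\le \sqrt p+\sqrt q$ (both square roots being nonnegative); multiplying and integrating gives
\[
\dhh(\mu,\nu)^2 = \frac12\int_E\big|\sqrt p-\sqrt q\big|^2\,d\ld
\le \frac12\int_E\big|\sqrt p-\sqrt q\big|\big(\sqrt p+\sqrt q\big)\,d\ld
= \frac12\int_E|p-q|\,d\ld = \dtv(\mu,\nu).
\]
For the upper bound $\dtv(\mu,\nu)\le\sqrt2\,\dhh(\mu,\nu)$, I would factor $|p-q| = |\sqrt p-\sqrt q|\,(\sqrt p+\sqrt q)$ and apply the Cauchy--Schwarz inequality: the first factor contributes $\big(\int_E(\sqrt p-\sqrt q)^2\,d\ld\big)^{1/2} = \sqrt2\,\dhh(\mu,\nu)$, while the second contributes $\big(\int_E(\sqrt p+\sqrt q)^2\,d\ld\big)^{1/2} = \big(2+2H(\mu,\nu)\big)^{1/2}\le 2$ since $H(\mu,\nu)\le 1$. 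Combining these, $\dtv(\mu,\nu)\le \frac12\cdot\sqrt2\,\dhh(\mu,\nu)\cdot 2 = \sqrt2\,\dhh(\mu,\nu)$. (One actually obtains the sharper bound $\dtv(\mu,\nu)\le\dhh(\mu,\nu)\sqrt{2-\dhh(\mu,\nu)^2}$, but the stated estimate is all that is needed here.)

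Finally, the equivalence of the two modes of convergence is an immediate consequence of the two-sided estimate: if $\dtv(\mu_k,\mu)\to0$ then $\dhh(\mu_k,\mu)^2\le\dtv(\mu_k,\mu)\to0$, and if $\dhh(\mu_k,\mu)\to0$ then $\dtv(\mu_k,\mu)\le\sqrt2\,\dhh(\mu_k,\mu)\to0$. I do not foresee a genuine obstacle in this argument; the only point requiring any care is establishing the representation $\dtv(\mu,\nu)=\frac12\int_E|p-q|\,d\ld$ and invoking the (already recorded) invariance of $H$ and $\dhh$ under the choice of dominating measure, both of which are standard.
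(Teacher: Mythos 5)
Your proof is correct. Note that the paper does not prove this lemma at all: it simply cites Lemma 2.3 of Tsybakov's book (modulo a different normalization of the Hellinger distance), so your argument supplies the standard proof of the cited result rather than deviating from one in the paper. Your steps — Scheff\'e's identity $\dtv(\mu,\nu)=\frac12\int_E|p-q|\,d\ld$, the pointwise bound $(\sqrt p-\sqrt q)^2\le|p-q|$ for the lower estimate, and Cauchy--Schwarz with $\int_E(\sqrt p+\sqrt q)^2\,d\ld=2+2H(\mu,\nu)\le 4$ for the upper estimate (which indeed also gives the sharper bound $\dtv\le\dhh\sqrt{2-\dhh^2}$) — are exactly the textbook argument, and the use of $\ld=\frac12(\mu+\nu)$ is consistent with the paper's definition \eqref{KL3}, so no invariance issue arises.
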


In the remaining part of the paper, 
we do not make use of the Hellinger distance.
We, however, decided to introduce it here due to its connection 
to the total variation distance and
also to the fact that Hellinger integral plays an important role in 
the proof of Lemma~\ref{LEM:Kaku}.
See also Remark \ref{REM:HH}\,(iii).

\medskip

\noi
$\bullet$ {\bf Kullback-Leibler divergence (= relative entropy).}
\\
\indent
We now define
 the {\it Kullback-Leibler divergence} $\dkl (\mu, \nu)$ between $\mu$ and $\nu$ by 
 setting
\begin{align}
\dkl(\mu, \nu) = 
\begin{cases}
\displaystyle 
\int_E \log \frac{d\mu}{d\nu} d\mu, & \text{if } \mu \ll \nu, \\
\infty, & \text{otherwise}, 
\end{cases}
\label{KL4}
\end{align}

\noi
which is nothing but the relative entropy of $\mu$ with respect to $\nu$. 
While the total variation distances and the Hellinger distance
are metrics, the  Kullback-Leibler divergence is not a metric.
For example, $\dkl(\cdot, \cdot)$ is not symmetric, and moreover, 
the symmetrized version $\dkl(\mu, \nu) + \dkl(\mu, \nu)$ is not a metric, 
either.
If $\mu$ and $\nu$ are product measures
of the form $\mu = \bigotimes_{n \in \N} \mu_n$
and $\nu = \bigotimes_{n \in \N} \nu_n$, then we have
\begin{align}
\dkl(\mu, \nu) 
= \sum_{n \in \N} \dkl(\mu_n, \nu_n) .
\label{KL4a}
\end{align}

\noi
 The following lemma
 shows that 
 convergence  in  the Kullback-Leibler divergence
 (or in relative entropy)
 implies convergence in total variation
 and in the Hellinger distance.
See Lemmas~2.4 and~2.5 in \cite{Tsy}
for the proof.

\begin{lemma}\label{LEM:KL2}
Let $\dtv,$ $\dhh$, and $\dkl$ be as in \eqref{KL0}, \eqref{KL3}, and \eqref{KL4}, respectively.
Then, we have
\begin{align}
\dhh(\mu, \nu) \le 
\frac {\sqrt{\dkl(\mu, \nu)}}{\sqrt2}  
\label{KL5}
\end{align}

\noi
and
\begin{align}
\dtv(\mu, \nu) \le 
\frac {\sqrt{\dkl(\mu, \nu)}}{\sqrt2}  .
\label{KL6}
\end{align}

\end{lemma}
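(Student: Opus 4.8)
The plan is to establish the two inequalities \eqref{KL5} and \eqref{KL6} separately, in each case reducing to a one-variable estimate for the function $\psi(t) = t\log t - t + 1$, which is nonnegative and convex on $(0,\infty)$ with $\psi(1) = \psi'(1) = 0$. We may assume $\dkl(\mu,\nu) < \infty$, since otherwise both bounds are trivial; in particular $\mu \ll \nu$, and writing $r = \frac{d\mu}{d\nu}$ (so that $r > 0$ $\mu$-a.e.\ and $\int_E r\, d\nu = 1$) we have
\begin{align*}
\dkl(\mu,\nu) = \int_E r\log r\, d\nu = \int_E \psi(r)\, d\nu, \qquad
\dtv(\mu,\nu) = \tfrac12\int_E |r-1|\, d\nu,
\end{align*}
while, as in \eqref{KL1}, $\dhh(\mu,\nu)^2 = 1 - H(\mu,\nu)$ with $H(\mu,\nu) = \int_E \sqrt r\, d\nu = \E_\mu\big[r^{-1/2}\big]$ (the last identity using $d\mu = r\, d\nu$ and $\mu(\{r=0\}) = 0$).

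For the Hellinger bound \eqref{KL5}, I would apply Jensen's inequality for the convex function $-\log$ with respect to $\mu$: since $H(\mu,\nu) = \E_\mu\big[r^{-1/2}\big]$, we get
\begin{align*}
-\log H(\mu,\nu) \le \E_\mu\Big[-\tfrac12\log r^{-1}\Big] = \tfrac12\int_E \log r\, d\mu = \tfrac12\,\dkl(\mu,\nu),
\end{align*}
hence $H(\mu,\nu) \ge e^{-\dkl(\mu,\nu)/2}$. Combining with the elementary bound $1 - e^{-x} \le x$ for $x \ge 0$ yields $\dhh(\mu,\nu)^2 = 1 - H(\mu,\nu) \le 1 - e^{-\dkl(\mu,\nu)/2} \le \tfrac12\,\dkl(\mu,\nu)$, which is \eqref{KL5}.

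For the total variation bound \eqref{KL6} (Pinsker's inequality), the plan is to combine the Cauchy--Schwarz inequality with the scalar estimate
\begin{align}
3(t-1)^2 \le (2t+4)\,\psi(t) = (2t+4)(t\log t - t + 1), \qquad t \ge 0.
\label{ptwise}
\end{align}
Granting \eqref{ptwise}, Cauchy--Schwarz with weight $2r+4$ and the identity $\int_E(2r+4)\, d\nu = 6$ give
\begin{align*}
\int_E |r-1|\, d\nu
\le \bigg(\int_E \frac{(r-1)^2}{2r+4}\, d\nu\bigg)^{1/2}\bigg(\int_E (2r+4)\, d\nu\bigg)^{1/2}
\le \Big(\tfrac13\,\dkl(\mu,\nu)\Big)^{1/2}\sqrt 6 = \sqrt{2\,\dkl(\mu,\nu)},
\end{align*}
and dividing by $2$ produces $\dtv(\mu,\nu) \le \sqrt{\dkl(\mu,\nu)/2}$, i.e.\ \eqref{KL6}. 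The last assertion of the lemma --- that convergence in the Kullback--Leibler divergence implies convergence in total variation and in the Hellinger distance --- is then immediate from \eqref{KL5} and \eqref{KL6}.

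The only real computational content is the verification of \eqref{ptwise}, which is also the step I would expect to require the most care. I would set $F(t) := (2t+4)(t\log t - t + 1) - 3(t-1)^2$ and check that $F(1) = F'(1) = F''(1) = 0$ together with $F'''(t) = \tfrac{4}{t^2}(t-1)$; consequently $F''$ attains a strict minimum at $t=1$, so $F'' \ge 0$ on $(0,\infty)$, hence $F$ is convex there and its global minimum is $F(1) = 0$, giving $F \ge 0$ (the endpoint value $F(0) = 1 > 0$ is consistent). No step interacts with the specific structure of the (scaled) ILW problem --- only the facts $\mu \ll \nu$ and $\int_E (d\mu/d\nu)\, d\nu = 1$ are used --- and since the statement is classical one may alternatively simply invoke Lemmas~2.4 and~2.5 of \cite{Tsy}, as indicated above.
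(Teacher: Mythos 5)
Your proof is correct, and it is essentially the argument the paper outsources by citing Lemmas~2.4 and~2.5 of Tsybakov: the Hellinger bound \eqref{KL5} via $H(\mu,\nu)=\E_\mu[r^{-1/2}]\ge e^{-\dkl(\mu,\nu)/2}$ (your Jensen step is a harmless variant of the usual pointwise $\log t\le t-1$ estimate) together with $1-e^{-x}\le x$, and Pinsker's inequality \eqref{KL6} via weighted Cauchy--Schwarz with the standard pointwise bound $3(t-1)^2\le(2t+4)(t\log t-t+1)$, which your $F'''$ computation verifies correctly. All the reductions (finiteness of $\dkl$ forcing $\mu\ll\nu$, $\dtv=\tfrac12\int|r-1|\,d\nu$, $H(\mu,\nu)=\int\sqrt r\,d\nu$) are sound, so the proposal stands as a complete self-contained proof.
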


The second inequality \eqref{KL6} is known as Pinsker's inequality
and it is slightly stronger than  
$\dtv(\mu, \nu) \le 
\sqrt{\dkl(\mu, \nu)}$,  which follows from Lemma \ref{LEM:KL1} and \eqref{KL5}.

\medskip

\noi
$\bullet$ {\bf Weak convergence the L\'evy-Prokhorov metric.}
\\
\indent
Finally, let us introduce the L\'evy-Prokhorov metric for 
probability measures on a separable metric space $(\M, d)$.
Given $\eps > 0$, 
 we define 
an $\eps$-neighborhood of
 a measurable subset $A\subset \M$ by 
\[
A^\eps := \big\{ z\in \M: \, d(z, x)<\eps
\,\, \text{for some $x\in\M$} \big\}.
\] 

\noi
Given two probability measures $\mu$ and $\nu$ on $\M$,
their L\'evy-Prokhorov distance $d_{\rm LP}(\mu, \nu)$ is defined by
\begin{align}
\begin{split}
d_{\rm LP}(\mu, \nu)  := \inf\big\{ \eps> 0: \, 
& \mu(A) \leq \nu(A^\eps) +\eps\ 
{\rm and}\ 
\nu(A) \leq \mu(A^\eps) + \eps\\
& 
\text{for all measurable $A\subset \M$}
\big\}.
\end{split}
\label{LP1}
\end{align}

\noi
Note that the  L\'evy-Prokhorov metric is indeed a metric on 
the space of probability measures on $\M$.
It is known that 
the  L\'evy-Prokhorov metric
 induces the same topology as the topology for weak convergence.
Together with this property, 
 we only need one additional property of the  L\'evy-Prokhorov metric in this paper, 
that is, the triangle inequality; see \eqref{CQQ} below.
See \cite{Billingsley, Dudley_book} and   \cite[Section 30.3]{Bass} 
for a further discussion.

\smallskip

Lastly, 
we recall the Prokhorov theorem
and the Skorokhod representation theorem.

\begin{definition}\label{DEF:tight}\rm
Let $\J$ be any nonempty index set.
A family 
 $\{ \rho_i \}_{i \in \J}$ of probability measures
on a metric space $\M$ is said to be   tight
if, for every $\eps > 0$, there exists a compact set $K_\eps\subset \M$
such that $ \sup_{i\in \J}\rho_i(K_\eps^c) \leq \eps$. 
We say  that $\{ \rho_i \}_{i \in \J}$ is relatively compact, if every sequence 
in  $\{ \rho_i \}_{i \in \J}$ contains a weakly convergent subsequence. 

\end{definition}

Note that the index set $\J$ does not need to be countable.
We now 
recall the following 
Prokhorov theorem from \cite{Billingsley, Bass}.

\begin{lemma}[Prokhorov theorem]
\label{LEM:Pro}

If a sequence of probability measures 
on a metric space $\M$ is tight, then
it is relatively compact. 
If in addition, $\M$ is separable and complete, then relative compactness is 
equivalent to tightness.

\end{lemma}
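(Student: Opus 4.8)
This is a classical result (see \cite{Billingsley, Bass}); the plan is to establish the two implications separately, and I sketch each in turn. \emph{Tightness implies relative compactness.} Given a tight sequence $\{\rho_n\}$, the first step is to reduce to a separable space: choosing compact sets $K_j\subset\M$ with $\sup_n\rho_n(K_j^c)\le \frac1j$ and enlarging them to an increasing sequence, the $\sigma$-compact set $S:=\bigcup_j K_j$ carries full $\rho_n$-mass for every $n$ and is separable, so it suffices to extract a weakly convergent subsequence of the measures viewed on $S$. On the separable metric space $S$ I would then run the standard selection argument: fix a countable base of open sets, close it under finite unions to obtain a countable family $\mathcal A$, and by a diagonal extraction pass to a subsequence $\{\rho_{n_k}\}$ along which $\rho_{n_k}(A)$ converges for every $A\in\mathcal A$. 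From these subsequential values one builds an outer measure by the usual infimum over open covers, checks via the Carath\'eodory criterion that its restriction to the Borel sets is a measure $\mu$, and — using the compact sets $K_j$ to prevent escape of mass — verifies $\mu(S)=1$. A routine portmanteau bookkeeping ($\limsup_k\rho_{n_k}(F)\le\mu(F)$ for closed $F$ and $\liminf_k\rho_{n_k}(G)\ge\mu(G)$ for open $G$) then yields $\rho_{n_k}\wto\mu$. A cleaner variant embeds $S$ into the Hilbert cube $[0,1]^{\N}$ via Urysohn metrization, pushes the measures forward to this compact metric space, and applies the Riesz representation theorem together with the Banach--Alaoglu (Helly) selection theorem on the separable space $C([0,1]^{\N})$ to obtain a weak-$*$ convergent subsequence, after which tightness is used to check that the limit functional is a probability measure concentrated on the image of $S$; pulling back gives the claim.

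\emph{When $\M$ is separable and complete, relative compactness implies tightness.} Assuming $\{\rho_n\}$ is relatively compact, I must construct, for each $\eps>0$, a single compact set capturing mass at least $1-\eps$ uniformly in $n$. The crux is the following sublemma: for every $\eps,r>0$ there are finitely many open balls of radius $r$ whose union $U$ satisfies $\inf_n\rho_n(U)\ge 1-\eps$. This is proved by contradiction: using separability, enumerate a countable cover of $\M$ by $r$-balls; if no finite subunion worked uniformly, one could pick $n_m$ with $\rho_{n_m}$ assigning mass $<1-\eps$ to the union of the first $m$ balls, extract a weakly convergent subsequence $\rho_{n_{m'}}\wto\mu$, and use the openness of these finite unions together with the portmanteau inequality to force $\mu(\M)\le 1-\eps$, contradicting that $\mu$ is a probability measure. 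Granting the sublemma, apply it with $r=\frac1j$ and with $\eps$ replaced by $\eps\cdot 2^{-j}$ to get finite unions $U_j$ with $\inf_n\rho_n(U_j)\ge 1-\eps\cdot 2^{-j}$, and set $K:=\bigcap_j\overline{U_j}$. Then $K$ is closed and totally bounded, hence compact since $\M$ is complete, and $\rho_n(K^c)\le\sum_j\rho_n\big((\overline{U_j})^c\big)\le\eps$ for all $n$, which is the asserted tightness.

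The main obstacle is the first implication, specifically turning the subsequential limits on the countable family $\mathcal A$ into a genuine countably additive Borel probability measure: finite additivity of the limiting set function is immediate, but promoting it to a measure and ruling out loss of mass both rely essentially on tightness and require the careful Carath\'eodory-extension and portmanteau argument indicated above. The Hilbert-cube route trades this bookkeeping for the (easier) verification that the weak-$*$ limit of the pushed-forward functionals is positive, normalized, and supported inside the embedded copy of $S$.
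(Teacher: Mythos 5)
The paper does not prove this lemma at all: it is recalled as a classical result with a pointer to \cite{Billingsley, Bass}, so there is no in-paper argument to compare against. Your sketch is essentially the standard textbook proof from those references, and both halves are sound. For the direct implication you follow Billingsley's scheme (reduce to the $\sigma$-compact, hence separable, set $S=\bigcup_j K_j$ of full mass, diagonalize over a countable family of open sets closed under finite unions, and rebuild a limit measure), and you correctly identify the only genuinely delicate step, namely promoting the finitely additive limiting set function to a countably additive Borel probability measure without loss of mass; the Hilbert-cube variant you mention (embed $S$ in $[0,1]^{\N}$, use Riesz representation and sequential weak-$*$ compactness on the separable space $C([0,1]^{\N})$, then use the compacts $K_j$ to see the limit sits on a $\sigma$-compact Borel subset of the embedded copy of $S$ before pulling back) is indeed the cleaner way to discharge that step. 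For the converse on a Polish space, your contradiction argument via the portmanteau inequality for the open finite unions of $r$-balls, followed by $K=\bigcap_j \overline{U_j}$ (closed, totally bounded, hence compact by completeness, with $\rho_n(K^c)\le\sum_j \eps 2^{-j}=\eps$), is exactly the classical proof and uses separability and completeness precisely where the statement requires them. In short: correct, and consistent with the sources the paper cites; nothing further is needed for the paper, which only uses the statement as a black box.
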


Lastly, we recall  the following Skorokhod representation 
theorem from   \cite[Chapter 31]{Bass}.

\begin{lemma}[Skorokhod representation theorem]\label{LEM:Sk}

Let $\M$ be a complete  
separable metric space \textup{(}i.e.~a Polish space\textup{)}.
Suppose that    
 probability measures 
 $\{\rho_n\}_{n\in\N}$
 on $\M$ converges  weakly   to a probability measure $\rho$
as $n \to\infty$.
Then, there exist a probability space $(\wt \O, \wt \F, \wt\PP)$,
and random variables $X_n, X:\wt \O \to \M$ 
such that 
\begin{align*}
\L( X_n) = \rho_n
\qquad \text{and}\qquad
\L(X) = \rho ,
\end{align*}

\noi
and $X_n$ converges $\wt\PP$-almost surely to $X$ as $n\to\infty$.

\end{lemma}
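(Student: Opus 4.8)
The plan is to prove this classical fact (recorded here for completeness, following \cite[Chapter~31]{Bass}) in two stages: first on the real line by an explicit quantile construction, and then on a general Polish space $\M$ by approximating from the inside with finite partitions into small Borel sets and transferring the one-dimensional argument level by level. \emph{Step 1 (the real line).} Suppose first $\M = \R$, with $F_n, F$ the distribution functions of $\rho_n, \rho$. On $(\wt\O, \wt\F, \wt\PP) = \big((0,1), \mathcal{B}_{(0,1)}, \mathrm{Leb}\big)$, set $X_n(u) = \inf\{x \in \R : F_n(x) \ge u\}$ and $X(u) = \inf\{x \in \R : F(x) \ge u\}$. A routine computation with generalized inverses shows $\L(X_n) = \rho_n$ and $\L(X) = \rho$. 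Weak convergence $\rho_n \Rightarrow \rho$ is equivalent to $F_n \to F$ at every continuity point of $F$, and from this one deduces $X_n(u) \to X(u)$ at every $u$ where the nondecreasing function $X$ is continuous; since $X$ has at most countably many discontinuities, $X_n \to X$ holds $\wt\PP$-almost surely.

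\emph{Step 2 (fine partitions on Polish $\M$).} By the Prokhorov theorem (Lemma~\ref{LEM:Pro}), the weakly convergent family $\{\rho_n\}_{n}\cup\{\rho\}$ is tight, so for each $k \in \N$ there is a compact $K_k \subset \M$ with $\inf_n \rho_n(K_k) \wedge \rho(K_k) > 1 - 2^{-k}$. Covering $K_k$ by finitely many balls of radius $< 2^{-k}$ and using that, for a fixed center, the sphere $\{z : d(z,\cdot) = r\}$ has positive $\rho$-measure for at most countably many $r$, one produces a finite Borel partition $\mathcal{P}_k = \{B_{k,0}, B_{k,1}, \dots, B_{k,m_k}\}$ of $\M$ with $\mathrm{diam}(B_{k,i}) < 2^{-k}$ for $i \ge 1$, with $\rho(B_{k,0}) < 2^{-k}$, and with $\rho(\partial B_{k,i}) = 0$ for every $i$; passing to common refinements, one may also assume $\mathcal{P}_{k+1}$ refines $\mathcal{P}_k$. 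Since every cell has $\rho$-null boundary, weak convergence upgrades to $\rho_n(B) \to \rho(B)$ for each cell $B$, and hence also to convergence of the conditional masses $\rho_n(B' \mid B) \to \rho(B' \mid B)$ for cells $B' \subset B$ of consecutive partitions with $\rho(B) > 0$.

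\emph{Step 3 (coupling and Borel--Cantelli).} On $\wt\O = (0,1)$ with Lebesgue measure I would encode $\rho$ in the partition tree: subdivide $(0,1)$ into intervals of lengths $\rho(B_{1,i})$, subdivide each of these according to the conditional probabilities of the level-$2$ subcells, and iterate; for $u$ outside the $\wt\PP$-null set of those landing in a garbage cell $B_{\cdot,0}$ infinitely often, the nested closures $\overline{B_{k,i_k(u)}}$ have diameters tending to $0$ and so intersect in a single point $X(u)$, with $\L(X) = \rho$. Choose then $n_1 < n_2 < \cdots$ so that for $n \ge n_k$ the level-$k$ cell masses and conditional masses of $\rho_n$ differ from those of $\rho$ by a total of less than $2^{-k}$, and for $n_k \le n < n_{k+1}$ run the same tree procedure with the masses of $\rho_n$, but at each node couple the two conditional distributions by the one-dimensional quantile map (a maximal-overlap coupling, realized using extra uniform randomness encoded in $u$). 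This gives $\L(X_n) = \rho_n$ exactly while $X_n$ and $X$ lie in a common level-$k$ cell outside a set of $\wt\PP$-measure $\les 2^{-k}$; hence $\wt\PP\big(d(X_n, X) \ge 2^{-k}\big) \les 2^{-k}$ for $n \ge n_k$, these probabilities are summable along $n$, and the Borel--Cantelli lemma yields $d(X_n, X) \to 0$ $\wt\PP$-almost surely.

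The main obstacle is the bookkeeping in Step~3: making the node-by-node couplings of $\rho_n$ with $\rho$ mutually consistent along the refining tree while keeping a quantitative handle on the total $\wt\PP$-mass where the two constructions part ways, and checking measurability of $X$ (that the intersection of the nested closures is a genuine single-valued measurable map). A cleaner route that streamlines this is to first embed $\M$ homeomorphically onto a subset of the compact space $[0,1]^\N$ (via a countable family of bounded continuous functions separating points and metrizing the topology), observe by tightness that the weak limit still lives on $\M$, and carry out Steps~2--3 in the compact ambient space, where the partitions are automatically finite with no garbage cell and the diameter condition comes for free.
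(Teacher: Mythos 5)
The paper does not actually prove this lemma: it is recalled verbatim from \cite[Chapter 31]{Bass}, so your proposal is being measured against the standard textbook argument, whose outline you do follow (quantile coupling on $\R$, tight partitions into small cells with $\rho$-null boundaries, a refining-tree construction on $((0,1),\mathrm{Leb})$, or alternatively an embedding of $\M$ into $[0,1]^{\N}$).

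There is, however, a genuine gap at the decisive point of Step 3, namely the deduction of almost sure convergence. Your construction gives, for $n_k \le n < n_{k+1}$, the bound $\wt\PP\big(d(X_n,X)\ge 2^{-k}\big)\lesssim 2^{-k}$, and you then assert that "these probabilities are summable along $n$" and invoke Borel--Cantelli. They need not be summable: the length of the block $[n_k,n_{k+1})$ is dictated by how fast $\rho_n(B)\to\rho(B)$ on the level-$k$ cells, which you do not control, so $\sum_n \wt\PP\big(d(X_n,X)\ge 2^{-k(n)}\big)$ may diverge; and the exceptional set genuinely depends on $n$ (it is built from the symmetric differences between the $\rho_n$- and $\rho$-subdivisions of $(0,1)$), so it cannot be replaced by a single level-$k$ set without further argument. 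Since convergence in law of $X_n$ to $X$ is already the hypothesis, the almost sure convergence is the entire content of the theorem, and as written it does not follow. The standard repair is pointwise rather than via Borel--Cantelli: fix $u$ outside the countable set of endpoints of the limiting subdivision intervals and outside the null "garbage" event; at level $k$ the point $u$ sits in the interior of its $\rho$-interval at some positive distance $\eta(u,k)$ from the endpoints, and since the endpoints of the $\rho_n$-subdivision converge to those of the $\rho$-subdivision (the cell masses converge), for all $n$ large enough, depending on $u$ and $k$, the point $u$ lies in the corresponding $\rho_n$-interval, so $X_n(u)$ and $X(u)$ share a cell of diameter $<2^{-k}$; this gives $d(X_n(u),X(u))\to 0$ for $\wt\PP$-a.e.\ $u$ with no summability needed (alternatively, arrange the construction so that the level-$k$ exceptional sets are nested). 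A further, smaller imprecision: coupling "the two conditional distributions by the one-dimensional quantile map" does not literally parse, since those conditional laws live on cells of $\M$ rather than on $\R$; one needs the Borel isomorphism of an uncountable Polish space with $[0,1]$, or regular conditional probabilities together with the auxiliary uniform randomness you allude to, and your proposed reduction to the compact ambient space $[0,1]^{\N}$ simplifies the partitions but does not by itself resolve the convergence issue above.
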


\section{Gibbs measures in the deep-water regime}
\label{SEC:Gibbs1}

In this section, we go over the construction of the Gibbs measures
for the gILW equation~\eqref{gILW1}, 
including the gBO case ($\dl = \infty$),  and 
prove convergence of the Gibbs measures in the deep-water limit
(as $\dl \to \infty$).
As mentioned in Section \ref{SEC:1}, 
we construct the Gibbs measure
as a weighted Gaussian measure, 
where the base Gaussian measure is given by
$\mu_\dl$ in~\eqref{Gibbs2}
with the understanding that it is given by $\mu_\infty$ in \eqref{Gbo1}
when $\dl = \infty$.
For $0 < \dl < \infty$,
let $K_\dl(n)$ be as in \eqref{GG4}.
We extend the definition of $K_\dl(n)$
to  $\dl = \infty$ by setting
\begin{align}
K_\infty(n) = |n|,
\label{GX2}
\end{align}

\noi
which is consistent with Lemma \ref{LEM:p2}.
Then, a typical element
under the Gaussian measure $\mu_\dl$ in~\eqref{Gibbs2}
(and $\mu_\infty$ in \eqref{Gbo1})
is given by $X_\dl$ in \eqref{GG3}
when $0 < \dl < \infty$
and $X_\infty : = X_\BO$ in~\eqref{Gbo2} when $\dl = \infty$.
It is easy to see that, 
given $0 < \dl \le \infty$,  $X_\dl \in H^{-\eps}(\T)\setminus L^2(\T)$ for any $\eps > 0$, almost surely. 
Indeed,  from  Lemma \ref{LEM:p2}, 
we have
$K_\dl(n)   \sim_\dl |n|$.
Hence, with $X_{\dl, N} = \P_N X_\dl$ in \eqref{H4}, it follows from Lemma \ref{LEM:hyp} that 
there exists $C_\dl > 0$ such that, for any finite $p \ge 1$, 
\begin{align}
\| X_{\dl, N} \|_{L^p_\o H^{-\eps}_x}
\le p^\frac 12 \| \jb{\nb}^{-\eps} X_{\dl, N} (x) \|_{ L^2_xL^2_\o}
\leq C_\dl  \,  p ^\frac 12 \bigg(\sum_{0 < |n| \le N} \frac 1{|n|^{1+2\eps} }\bigg)^\frac 12
 \sim C_\dl  \, p^\frac 12, 
\label{GX3}
\end{align}

\noi
uniformly in $N \in \N$, provided that $\eps > 0$.
A similar computation together with the Borel-Cantelli lemma shows that $X_{\dl, N}$ 
converges,  
in $L^p(\O)$ and almost surely,  to the limit $X_\dl$
in $H^{-\eps}(\T)$ for any $\eps > 0$.
The fact that 
 $X_\dl \notin L^2(\T)$ almost surely 
 follows from 
Lemma B.1 in \cite{BT1}.

\smallskip

In Subsection \ref{SUBSEC:equiv}, 
 we first study various properties of the base Gaussian measures $\mu_\dl$.
See Proposition \ref{PROP:equiv}.
By restricting our attention to the defocusing case ($k \in 2\N + 1$), 
we then go over the construction of the Gibbs measures in Subsection
\ref{SUBSEC:MC1}.
In Subsection \ref{SUBSEC:conv1}, 
we continue to study the defocusing case and establish
convergence in total variation
of the Gibbs measure $\rho_\dl$ to $\rho_\BO$ 
in the deep-water limit ($\dl \to \infty$).
Finally, in Subsection \ref{SUBSEC:ILW1},
we present the proof of Theorem \ref{THM:Gibbs1} 
when $k = 2$.

\subsection{Equivalence of the base Gaussian measures}
\label{SUBSEC:equiv}

\begin{proposition} \label{PROP:equiv}

{\rm (i)} 
Let $X_\dl$ and $X_\BO$ be as in \eqref{GG3} and \eqref{Gbo2}, respectively.
Then, given any $\eps > 0$
and finite $p \ge 1$, 
$X_\dl$ converges to $X_\BO$ in $L^p(\Omega ;H^{-\e}(\T))$
and in $H^{-\eps}(\T)$ almost surely, 
as $\dl    \to \infty$.
 In particular, the Gaussian measure $\mu_\dl$ in \eqref{Gibbs2}
converges weakly to the Gaussian measure $\mubo$ in \eqref{Gbo1}, as $\dl\to\infty$.

\smallskip

\noi
{\rm (ii)} 
For any $0 < \dl < \infty$, 
the Gaussian measures $\mu_\dl$ and $ \mubo$ are equivalent.

\smallskip

\noi
{\rm (iii)} 
As $\dl \to \infty$, 
the Gaussian measure $\mu_\dl$ converges to $ \mubo$
in the Kullback-Leibler divergence defined in \eqref{KL4}. 
In particular, $\mu_\dl$ converges to $ \mubo$ in total variation.

\end{proposition}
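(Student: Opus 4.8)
The plan is to treat the three parts in order, as each builds on the previous one. For Part~(i), I would compute directly with the Gaussian random series. Write $X_\dl - X_\BO = \frac{1}{2\pi}\sum_{n \in \Z^*} \bigl(|K_\dl(n)|^{-1/2} - |n|^{-1/2}\bigr) g_n e_n$, and use Lemma~\ref{LEM:hyp} (the Wiener chaos estimate, applied at the level of the first chaos) to bound $\|X_\dl - X_\BO\|_{L^p_\o H^{-\eps}_x} \les p^{1/2}\bigl(\sum_{n\in\Z^*}\jb{n}^{-2\eps}\bigl||K_\dl(n)|^{-1/2} - |n|^{-1/2}\bigr|^2\bigr)^{1/2}$, exactly as in \eqref{GX3}. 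From Lemma~\ref{LEM:p2} we have $K_\dl(n) = |n| + O(1/\dl)$ uniformly in $n$, hence $\bigl||K_\dl(n)|^{-1/2} - |n|^{-1/2}\bigr| \les_\dl |n|^{-3/2}\cdot\tfrac1\dl$ for $|n| \ge 2/\dl$ (say), while the finitely many low frequencies $|n| \lesssim 1/\dl$ are handled crudely using $K_\dl(n) \ge \max(0,|n|-1/\dl)$; since $K_\dl(n) \to |n|$ monotonically for each fixed $n$, dominated convergence on the series gives that the whole sum tends to $0$ as $\dl \to \infty$. This yields $L^p$-convergence; almost sure convergence along $\dl \to \infty$ (it suffices to work along a sequence, or use the explicit rate and \eqref{Gb1}) follows from a Borel--Cantelli argument as in the discussion after \eqref{GX3}. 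Weak convergence of $\mu_\dl$ to $\mubo$ is then immediate since $X_\dl \to X_\BO$ in probability in $H^{-\eps}(\T)$.

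For Part~(ii), the cleanest route is the Feldman--H\'ajek dichotomy for Gaussian measures, or more concretely Kakutani's theorem via Lemma~\ref{LEM:Kaku2}, since $\mu_\dl$ and $\mubo$ are product measures in the Fourier variables. Pairing the mode $n$ with the mode $-n$ (using $g_{-n} = \cj{g_n}$), the $n$-th factor of $\mu_\dl$ is a centered Gaussian on $\C$ (two real coordinates) with variance parameter $\propto K_\dl(n)^{-1}$, and the corresponding factor of $\mubo$ has variance parameter $\propto |n|^{-1}$. The Hellinger integral of two centered Gaussians with variances $\s_1, \s_2$ is a fixed explicit function of the ratio $\s_1/\s_2$, behaving like $1 - c(\s_1/\s_2 - 1)^2 + \cdots$ near ratio $1$. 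Since $K_\dl(n)/|n| = 1 + O(1/(\dl|n|))$, the deviation of the ratio from $1$ is $O(1/(\dl n))$, so $\sum_n \bigl(K_\dl(n)/|n| - 1\bigr)^2 \les_\dl \sum_n n^{-2} < \infty$, hence $\prod_n H(\mu_{\dl,n},\mubo[n]) > 0$, and Lemma~\ref{LEM:Kaku2} gives equivalence.

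For Part~(iii), I would compute the Kullback--Leibler divergence $\dkl(\mu_\dl, \mubo)$ using its additivity over product measures, \eqref{KL4a}. For a pair of centered Gaussians on $\R$ with variances $\s_1,\s_2$ one has the explicit formula $\dkl = \tfrac12\bigl(\tfrac{\s_1}{\s_2} - 1 - \log\tfrac{\s_1}{\s_2}\bigr)$, which is $O\bigl((\s_1/\s_2 - 1)^2\bigr)$ when the ratio is bounded away from $0$ and $\infty$. Summing over $n \in \Z^*$ (grouping $\pm n$), and inserting $\s_1/\s_2 = |n|/K_\dl(n) = 1 + O(1/(\dl|n|))$, gives $\dkl(\mu_\dl,\mubo) \les \sum_{n \ge 1} \bigl(\tfrac{|n|}{K_\dl(n)} - 1\bigr)^2 \les \frac{1}{\dl^2}\sum_{n\ge1} \frac{1}{n^2} \les \frac{1}{\dl^2} \to 0$ as $\dl \to \infty$ (the rate $\dl^{-2}$ is not needed, only that it vanishes). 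Convergence in total variation then follows from Pinsker's inequality \eqref{KL6}. The main obstacle is bookkeeping the low-frequency regime $|n| \lesssim 1/\dl$ and the precise constants in the Gaussian Hellinger/KL formulas so that the $O(\cdot)$ estimates are genuinely summable and uniform; once the asymptotics $K_\dl(n) = |n| + O(1/\dl)$ with $K_\dl(n) \sim_\dl |n|$ from Lemma~\ref{LEM:p2} are in hand, everything reduces to elementary one-dimensional Gaussian computations and convergence of $\sum n^{-2}$.
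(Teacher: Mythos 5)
Your proposal is correct and follows essentially the same route as the paper's proof: the first-chaos computation with the asymptotics $K_\dl(n) = |n| + O(1/\dl)$ from Lemma~\ref{LEM:p2} for Part~(i) (including a.s.\ convergence via the explicit rate and \eqref{Gb1}), Kakutani's theorem with the summability of $\sum_n \big(K_\dl(n)/|n| - 1\big)^2 \les_\dl \sum_n n^{-2}$ for Part~(ii), and the explicit one-dimensional Gaussian Kullback--Leibler formula $\tfrac12\big(t - 1 - \log t\big)$ with $t = |n|/K_\dl(n)$, summed over modes and combined with Pinsker's inequality, for Part~(iii). The only cosmetic difference is that in Part~(iii) you obtain a quantitative $O(\dl^{-2})$ bound directly (using $K_\dl(n)\gtrsim |n|$ uniformly for $\dl \ge 2$), whereas the paper argues qualitatively via monotonicity of $n/K_\dl(n)$ and dominated convergence; both are fine.
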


Part (iii) of Proposition \ref{PROP:equiv}
plays an essential role in establishing convergence in total variation
of the Gibbs measure $\rho_\dl$ to $\rho_\BO$ in the deep-water limit ($\dl \to \infty$).

In proving Part (ii) of Proposition \ref{PROP:equiv}, we resort to 
the following   Kakutani's theorem~\cite{Kakutani} in the Gaussian
setting
(or  the Feldman-H\'ajek theorem \cite{Feldman, Hajek};
see also \cite[Theorem 2.9]{DP}).
See, for example, \cite{BT3, OST18, OTz, GOTW},
where Kakutani's theorem was used in the study of dispersive PDEs.
In particular, see also Proposition B.1 in \cite{BT3}.

\begin{lemma}\label{LEM:Kaku}
Let $\{A_n\}_{n\in\Z^*}$ and $\{B_n\}_{n\in\Z^*}$ be two sequences of independent, 
real-valued,  mean-zero Gaussian random variables with
$\E[ A_n^2] = a_n >0$
and  $\E[ B_n^2] = b_n >0$ for all $n\in\Z^*$.
     %
Then, the laws of the sequences    $\{A_n\}_{n\in\Z^*}$ and $\{B_n\}_{n\in\Z^*}$ are equivalent
if and only if 
\begin{align} 
\sum_{n\in\Z^*} \Big( \frac{a_n}{b_n} -1 \Big)^2 <\infty . 
\label{cond_Kaku}
\end{align}

\noi
If they are not equivalent, then they are singular.
  
\end{lemma}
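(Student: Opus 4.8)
The plan is to deduce Lemma~\ref{LEM:Kaku} from Kakutani's dichotomy for product measures (Lemma~\ref{LEM:Kaku2}) together with an explicit computation of the one-dimensional Gaussian Hellinger integral. First I would set up the product structure. After relabelling the countable index set $\Z^*$ as $\N$ and identifying $\R^{\Z^*}$ with $\R^\infty$, the law of the sequence $\{A_n\}_{n\in\Z^*}$ is the product measure $\mu := \bigotimes_{n}\mu_n$, where $\mu_n$ is the centered Gaussian $N(0,a_n)$ on $\R$, and likewise $\nu := \bigotimes_{n}\nu_n$ with $\nu_n = N(0,b_n)$. Since $a_n, b_n > 0$, the densities of $\mu_n$ and $\nu_n$ with respect to Lebesgue measure are everywhere strictly positive, so $\mu_n$ and $\nu_n$ are equivalent for every $n$; hence Lemma~\ref{LEM:Kaku2} applies and reduces everything to the infinite product $H(\mu,\nu) = \prod_{n} H(\mu_n,\nu_n)$: one has $\mu\sim\nu$ if and only if this product is positive, $\mu\perp\nu$ if and only if it vanishes, and no other alternative occurs.

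Next I would compute $H(\mu_n,\nu_n)$. Writing $p_a(x) = (2\pi a)^{-1/2}e^{-x^2/(2a)}$ and using \eqref{KL2} (or \eqref{KL1} with $\lambda$ the Lebesgue measure on $\R$), a Gaussian integral gives
\begin{align}
H(\mu_n,\nu_n) = \int_\R \sqrt{p_{a_n}(x)\,p_{b_n}(x)}\,dx = \bigg(\frac{2\sqrt{a_n b_n}}{a_n + b_n}\bigg)^{1/2},
\label{one-d-Hellinger}
\end{align}
so that, setting $r_n := a_n/b_n \in (0,\infty)$,
\begin{align}
1 - H(\mu_n,\nu_n)^2 = \frac{a_n + b_n - 2\sqrt{a_n b_n}}{a_n+b_n} = \frac{(\sqrt{r_n}-1)^2}{r_n+1} \in [0,1).
\label{one-d-defect}
\end{align}
Then I would invoke the elementary fact that a sequence $c_n \in (0,1]$ satisfies $\prod_n c_n > 0$ if and only if $\sum_n (1-c_n) < \infty$. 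Applying this with $c_n = H(\mu_n,\nu_n)$, and using $\tfrac12(1-c_n^2) \le 1 - c_n \le 1 - c_n^2$ together with \eqref{one-d-defect},
\begin{align}
H(\mu,\nu) > 0 \iff \sum_{n \in \Z^*}\bigl(1 - H(\mu_n,\nu_n)\bigr) < \infty \iff \sum_{n \in \Z^*} \frac{(\sqrt{r_n}-1)^2}{r_n+1} < \infty.
\label{prod-crit}
\end{align}

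Finally I would match the last series with $\sum_n (a_n/b_n - 1)^2$. Writing $(\sqrt{r_n}-1)^2 = (r_n-1)^2(\sqrt{r_n}+1)^{-2}$, the $n$th term in \eqref{prod-crit} equals $(r_n - 1)^2[(\sqrt{r_n}+1)^2(r_n+1)]^{-1}$. If either $\sum_n (r_n-1)^2$ or $\sum_n (\sqrt{r_n}-1)^2/(r_n+1)$ converges, then its general term tends to $0$; a short check (the quantity $(\sqrt{r_n}-1)^2/(r_n+1)$ stays bounded away from $0$ along any subsequence on which $r_n \to 0$, $r_n\to\infty$, or $r_n\to L\neq 1$, and $(r_n-1)^2$ does the same) forces $r_n \to 1$, whence the bracketed weight $[(\sqrt{r_n}+1)^2(r_n+1)]^{-1}$ converges to $\tfrac18$ and the two series converge together. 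Combining this with \eqref{prod-crit} shows that $\mu\sim\nu$ precisely when \eqref{cond_Kaku} holds, and $\mu\perp\nu$ otherwise.

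The computation and the dichotomy are classical; the only points requiring care are (a) the reduction to the product setting, i.e.\ verifying the coordinatewise equivalences so that Lemma~\ref{LEM:Kaku2} is applicable, and (b) the asymptotic comparison in the last step, where one must rule out the degenerate regimes $r_n \to 0$ or $r_n \to \infty$ before linearizing $1 - H(\mu_n,\nu_n)$ in $(a_n/b_n - 1)^2$. Neither presents a genuine obstacle, so I would expect this to be the routine, mechanical part of the paper rather than a source of difficulty.
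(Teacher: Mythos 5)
Your proposal is correct and follows essentially the same route as the paper: reduce to the product dichotomy of Lemma~\ref{LEM:Kaku2}, compute the one-dimensional Gaussian Hellinger integral explicitly, and then compare the resulting series with $\sum_n(a_n/b_n-1)^2$ by first noting that convergence of either series forces $a_n/b_n\to 1$. The only cosmetic difference is that the paper works with $1-H(\mu_n,\nu_n)^4=\frac{(a_n-b_n)^2}{(a_n+b_n)^2}$ while you work with $1-H(\mu_n,\nu_n)^2$; both lead to the same conclusion.
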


We first  present a short proof of Lemma \ref{LEM:Kaku}, 
based on Lemma \ref{LEM:Kaku2}.
See also the proof of Theorem 2.9 in \cite{DP}.

\begin{proof}[Proof of Lemma \ref{LEM:Kaku}]
Given $n \in \Z^*$, let  $\mu_n$ and $\nu_n$ denote the laws
of $A_n$ and $B_n$, respectively, 
and set 
$\mu
= \bigotimes_{n \in \Z^*} \mu_n$ and $\nu
= \bigotimes_{n \in \Z^*} \nu_n$.
Namely, 
$\mu$ are $\nu$ are the laws of the sequences    $\{A_n\}_{n\in\Z^*}$ and $\{B_n\}_{n\in\Z^*}$, 
respectively.
The Hellinger integral $H(\mu, \nu)$ defined in \eqref{KL2} is given by an infinite product:
\begin{align*}
H(\mu, \nu) 
& = \prod_{n \in \Z^*} H(\mu_n, \nu_n)
= \prod_{n \in \Z^*} \int_\R \sqrt{\frac{d \mu_n }{d \nu_n}} d\nu_n\\
& = \prod_{n \in \Z^*} \int_\R \frac{1}{\sqrt{2\pi}(a_n b_n)^\frac 14} e^{-\frac 14(\frac 1{a_n} + \frac 1{b_n})x^2 }dx\\
& = \prod_{n \in \Z^*} \frac{\sqrt 2 (a_n b_n)^\frac 14}{\sqrt{a_n + b_n}}.
\end{align*}

\noi
Thus, we have
\begin{align*}
\big( H(\mu, \nu) \big)^4
& = \prod_{n \in \Z^*} \frac{4 a_n b_n}{(a_n + b_n)^2} 
= \prod_{n \in \Z^*} \bigg(1 - \frac{(a_n- b_n)^2}{(a_n+ b_n)^2}\bigg).
\end{align*}

\noi
Hence, $H(\mu, \nu) > 0$ if and only if 
\begin{align}
 \sum_{n \in \Z^*}  \frac{(a_n- b_n)^2}{(a_n+ b_n)^2} 
= \sum_{n\in\Z^*} \Big( \frac{a_n}{b_n} -1 \Big)^2 \bigg/
\Big( \frac{a_n}{b_n} +1 \Big)^2
<\infty.
\label{Kaku1}
\end{align}

\noi
Note that the condition \eqref{Kaku1}
is equivalent to the 
condition \eqref{cond_Kaku},
since if one of the sums in~\eqref{cond_Kaku}
or \eqref{Kaku1} converges, then $\frac {a_n}{ b_n}$
must tend to $1$ as $n \to \infty$,
which implies the other sum also converges.
Then, the desired conclusion follows from Lemma \ref{LEM:Kaku2}.
\end{proof}

We now present the proof of Proposition \ref{PROP:equiv}.

\begin{proof}[Proof of Proposition \ref{PROP:equiv}]
(i) 
Let $\eps > 0$ and fix finite $p \ge1 $.
Then, it follows from Lemma \ref{LEM:hyp}, \eqref{Gbo2}, \eqref{GG3}, 
and 
$\sqrt{a} -\sqrt{b} \leq \sqrt{a-b}$ for any $a\ge b \ge 0$
together with 
\eqref{ub5} in Lemma \ref{LEM:p2} that 
\begin{align}
\begin{split}
\|  X_\dl - X_\BO \|_{L^p_\o H^{-\eps}_x}  
& \les_p \|  \jb{\nb}^{-\eps}(X_\dl - X_\BO)(x)\|_{ L^2_xL^2_\o}\\
& \sim \Bigg( \sum_{n\in\Z^\ast} \frac 1{\jb{n}^{2\eps}}
 \bigg( \frac{1}{  K_\dl^{ \frac{1}{2}}(n)    } 
 - \frac{1}{|n|^{\frac{1}{2}} }  \bigg)^2  \Bigg)^{\frac 12}\\
&\le  \bigg( \sum_{n\in\Z^\ast} \frac 1{ \jb{n}^{2\eps}} 
 \frac{  |n|- K_\dl(n)  }{ |n|    K_\dl(n) }\bigg)^\frac 12\\
&  \les \bigg( \frac{1}{\dl}   \sum_{n\in\Z^\ast}    \frac{ 1  }{ \jb{n}^{2+2\eps}  }  \bigg)^\frac 12
\les      \frac{1}{\dl^\frac 12 }  \too 0 , 
\end{split}
\label{GX4}
\end{align}

\noi
as $\dl \to \infty$.
See also \eqref{Low_Kdl}
 for  the penultimate step
in \eqref{GX4}.

As for the almost sure convergence,
we  repeat a computation analogous to \eqref{GX4}
 but with~\eqref{Gb1} in place of $\E[ |g_n|^2] \sim 1$.
Then, together with Lemma \ref{LEM:p2} (for $\dl \ge 2$), 
we have 
\begin{align}
\big\|  X_\dl(\o) - X_\BO(\o) \big\|_{H^{-\e}}^2 
\le \frac{C_{\eps_0, \o}}{\dl}
\sum_{n\in\Z^\ast}    \frac{ \jb{n}^{2\eps_0}  }{ \jb{n}^{1+2\eps}  ( |n| - \frac{1}{2} )} 
\too0,  
\label{GX4a}
\end{align}

\noi
as $\dl \to \infty$, provided that $0 < \eps_0 < \eps$.
Recalling that $\mu_\dl$ and $\mubo$
are the laws of $X_\dl$ and $X_\BO$, 
we conclude weak convergence of $\mu_\dl$ to $\mubo$.
This proves (i).

 \smallskip
 
 \noi
 (ii) Rewrite $X_\dl$ in \eqref{GG3} (and in \eqref{Gbo2} when $\dl  = \infty$
 with the understanding \eqref{GX2})
 as
 \begin{align*}
 X_\dl(\o) =     \sum_{n\in \N} \bigg( \frac{\Re g_n}{\pi K_\dl^\frac 12(n)} \cos (n x)
 - \frac{\Im g_n}{\pi K_\dl^\frac 12(n)}  \sin (nx)\bigg).
 \end{align*}

 \noi
 For $n\in\Z^*$, set 
  \[
 A_{n} = \frac{\Re g_n}{\pi K_\dl^\frac 12(n)} 
 \qquad \text{and}\qquad 
 A_{-n} = - \frac{ \Im g_n}{\pi K_\dl^\frac 12(n)} , 
 \]
 
\noi
and 
  \[
 B_{n} = \frac{\Re g_n}{\pi |n|^\frac 12} 
 \qquad \text{and}\qquad 
 B_{-n} = - \frac{ \Im g_n}{\pi |n|^\frac 12} \qquad \text{for $\dl = \infty$}
 \]

 \noi
 with $a_{\pm n} = \E[ A_{\pm n}^2] = \frac{1}{\pi K_\dl(n)}$ and  $b_{\pm n} = \E[ B_{\pm n}^2] = \frac{1}{\pi |n|}$. 
Then, from 
Lemma \ref{LEM:p2}, 
 we have
\begin{align*}
\sum_{n\in\Z^*} \Big( \frac{a_n}{b_n} -1 \Big)^2 
=  \sum_{n\in\Z^\ast} \frac{ ( |n| - K_\dl(n) )^2}{K^2_\dl(n)} 
\les C_\dl  
 \sum_{n\in\Z^\ast} \frac{ 1}{n^2}  <\infty.
\end{align*}

\noi
Therefore, the claimed equivalence of $\mu_\dl$ and $\mu_\infty$
follows from 
 Kakutani's theorem (Lemma~\ref{LEM:Kaku}).

\smallskip

\noi
(iii) In this part, we prove that $\mu_\dl$ converges to 
$\mubo$ in  the Kullback-Leibler divergence defined in~\eqref{KL4}.
Once this is achieved, 
convergence in total variation follows from 
Pinsker's inequality (\eqref{KL6} in Lemma \ref{LEM:KL2}).

Let us first write $\mu_\dl$, $0< \dl \le \infty$,  as the product
of Gaussian measures on $\R$ (see also~\eqref{Gibbs8}):
\begin{align*}
d\mu_\dl 
& = \bigg(\bigotimes_{n \in \N} \frac{K_\dl^\frac 12(n)}{\sqrt 2\pi}
e^{-\frac 1{2\pi} K_\dl(n) (\Re \ft u(n))^2 } d\Re \ft u(n)\bigg)\\
& \quad \times  \bigg(\bigotimes_{n \in \N} \frac{K_\dl^\frac 12(n)}{\sqrt 2\pi}
e^{-\frac 1{2\pi} K_\dl(n) (\Im \ft u(n))^2 } d\Im \ft u(n)\bigg)
\end{align*}

\noi
with the identification \eqref{GX2} when $\dl = \infty$.
With $x = (x_1, x_2) \in \R^2$, we then have
\begin{align}
d\mu_\dl 
& = \bigotimes_{n \in \N} \frac{K_\dl(n)}{ 2\pi^2}
e^{-\frac 1{2\pi} K_\dl(n) |x|^2} dx
=: \bigotimes_{n \in \N} \frac{K_\dl(n)}{ 2\pi^2}d\mu_\dl^n.
\label{KD1}
\end{align}

\noi
Then,
the Radon-Nikodym derivative $\frac{d \mu_\dl^n}{d \mu_\infty^n}$
is given by 
\begin{align}
\frac{d \mu_\dl^n}{d \mu_\infty^n}
= \frac{K_\dl(n)}{n} e^{\frac 1{2\pi} (n - K_\dl(n)) |x|^2}.
\label{KD2}
\end{align}

\noi
See \eqref{KL2a}.
Then, from 
Part (ii), 
\eqref{KL4},  and \eqref{KL4a} with \eqref{KD1} and \eqref{KD2}, we have 
\begin{align}
\begin{split}
 \dkl(\mu_\dl, \mu_\infty)
&  = \sum_{n \in \N}\dkl(\mu_\dl^n, \mu_\infty^n)
\\
&  = \sum_{n \in \N}
 \int_{\R^2}\bigg( \log \frac{K_\dl(n)}{n}
+ \frac 1{2\pi} (n - K_\dl(n)) |x|^2
\bigg)
\frac{K_\dl(n)}{ 2\pi^2}
e^{-\frac 1{2\pi} K_\dl(n) |x|^2} dx\\
&  = \sum_{n \in \N} 
\bigg( \log \frac{K_\dl(n)}{n}
+ \frac 1{2\pi} (n - K_\dl(n))
 \int_{\R^2}
\frac{K_\dl(n)}{ 2\pi^2}|x|^2 
e^{-\frac 1{2\pi} K_\dl(n) |x|^2} dx\bigg)\\
&  = \sum_{n \in \N}  \phi\bigg(\frac{n}{K_\dl(n)}\bigg), 
\end{split}
\label{KD3}
\end{align}

\noi
where $\phi(t) := t - 1 - \log t$.
Note that $\phi(1) = 0$ and $\phi'(t) > 0$ for $t > 1$.
Then, it follows from Lemma \ref{LEM:p2}
that  for each fixed $n \in \N$, 
we have
\begin{align}
 \phi\bigg(\frac{n}{K_\dl(n)}\bigg)
 \text{ decreases to } \phi(1) = 0 , 
 \label{KD4}
\end{align}

\noi
as $\dl \to \infty$,
since $\frac{n}{K_\dl(n)}$ decreases to 1 as $\dl \to \infty$.
Hence, if  the right-hand side of  \eqref{KD3} is finite
for some $\dl \gg 1$, 
then the observation~\eqref{KD4} allows us to apply
the dominated convergence theorem 
and conclude 
\begin{align*}
\lim_{\dl \to \infty}
 \dkl(\mu_\dl, \mu_\infty)
= \lim_{\dl \to \infty}\sum_{n \in \N}  \phi\bigg(\frac{n}{K_\dl(n)}\bigg)
= \sum_{n \in \N} \lim_{\dl \to \infty} \phi\bigg(\frac{n}{K_\dl(n)}\bigg)
= 0,
\end{align*}

\noi
yielding the desired convergence in the Kullback-Leibler divergence.

It remains to check that the right-hand side of \eqref{KD3} is
finite for some $\dl \gg 1$.
In fact, we show that  the right-hand side of \eqref{KD3} is
finite for any $\dl >0$.
By a direct computation, we have $\phi(t) \le (t - 1)^2$ for $t \ge 1$.
Then, from Lemma \ref{LEM:p2}, we have 
\begin{align*}
\sum_{n \in \N}  \phi\bigg(\frac{n}{K_\dl(n)}\bigg)
\le
\sum_{n \in \N}  \frac{(n - K_\dl(n))^2}{K_\dl^2(n)}
\le C_\dl  \sum_{n \in \N}\frac{1}{n^2} < \infty
\end{align*}

\noi
for any $\dl > 0$.
This concludes the proof of Proposition \ref{PROP:equiv}.
\end{proof}

\begin{remark}\label{REM:HH} \rm

(i)
By using the Wiener chaos estimate (Lemma \ref{LEM:hyp}), 
Chebyshev's inequality, and the Borel-Cantelli lemma, 
one can easily upgrade the convergence of 
$X_\dl$  to $X_\BO$ to that in $L^2(\Omega ;W^{-\eps, \infty}(\T))$
and in $W^{-\eps, \infty}(\T)$ almost surely,

\smallskip

\noi
(ii)
From \eqref{GX4}, 
we see that 
 the difference $X_\dl - X_\BO$
lives in $H^{\frac{1}{2} - \eps }(\T)$,\footnote{In fact, in $W^{\frac 12 - \eps, \infty}(\T)$ 
if we use  the Wiener chaos estimate (Lemma \ref{LEM:hyp}),} although
neither 
$X_\dl$ nor $X_\BO$  belongs to  $L^2(\T)$.

\smallskip

\noi
(iii)
In order to prove convergence of $\mu_\dl$ to $\mu_\infty$
in total variation, 
it is indeed possible to 
 directly show that 
$\mu_\dl$ converges to $\mu_\infty$ in 
the Hellinger distance $\dhh$ defined in \eqref{KL3}
and invoke Lemma~\ref{LEM:KL1}.

\end{remark}

\subsection{Construction of the Gibbs measure for the defocusing gILW equation}
\label{SUBSEC:MC1}

In this subsection, we present the construction of the Gibbs measure
for the gILW equation~\eqref{gILW1}, $0 < \dl \le \infty$
with the understanding that the $\dl = \infty$ case corresponds
to the gBO~\eqref{BO}, 
in the defocusing case: $k \in 2\N + 1$.
We treat the $k = 2$ case, corresponding to the ILW equation~\eqref{ILW1}, 
in Subsection \ref{SUBSEC:ILW1}.
Our basic strategy is to follow the argument
presented in~\cite{OTh1} on the construction of the complex $\Phi^{k+1}_2$-measures,
by utilizing the Wiener chaos estimate (Lemma~\ref{LEM:hyp})
and Nelson's estimate.
In order to establish convergence of the Gibbs measures
in the deep-water limit ($\dl \to \infty$), 
however, we need to establish an $L^p(\O)$-integrability
of the (truncated) densities,
{\it uniformly in both the frequency-truncation parameter $N \in \N$
and the depth parameter $\dl \gg 1$}.
See Proposition \ref{PROP:Gibbs1}.
This uniform bound also plays a crucial role
in the dynamical part presented in Section \ref{SEC:5}.

Fix the depth parameter $0 < \dl \le \infty$.
Given $N \in \N$, let $X_{\dl, N} = \P_N X_\dl$, 
where $X_\dl$ is defined in \eqref{GG3}:
\begin{align*}
X_{\dl, N}(\o) := \P_N X_\dl (\o)
=  
\frac 1{2\pi}  \sum_{0 < |n|\le N} \frac{g_n(\o) }{ K_\dl^\frac{1}{2}(n)  }e_n
\end{align*}

\noi
with the identification \eqref{GX2} when $\dl = \infty$.
When $\dl = \infty$, 
we also set 
\[X_{\BO, N} : = X_{\infty, N} = \P_N X_\infty = \P_N X_\BO,\]

\noi 
where $X_\BO$ is as in~\eqref{Gbo2}.
Given $k \in \N$, 
let  $\W(  X_{\dl, N}^k ) = H_k( X_{\dl, N} ;  \s_{\dl, N}   )$
denotes the Wick power defined in 
\eqref{WO1}, where $\s_{\dl, N}$ is as in \eqref{Wick1a}.
Then, the truncated Gibbs measure $\rho_{\dl,  N}$ in~\eqref{GG6}
can be written as 
\begin{align}
\begin{split}
\rho_{\dl,  N} (A)
& = Z_{\dl, N}^{-1} \int_{H^{-\eps}} \ind_{\{u \in A\}}e^ {-\frac 1{k+1} \int_\T \W(u_N^{k+1})dx} 
d\mu_\dl(u)\\
& = Z_{\dl, N}^{-1} \int_\O \ind_{\{X_{\dl}(\o) \in A\}}e^ {-\frac 1{k+1} \int_\T \W(X_{\dl, N}^{k+1}(\o))dx} 
d \PP(\o)
\end{split}
\label{ZZ2}
\end{align}

\noi
for any measurable set $A \subset H^{-\eps}(\T)$ with some small $\eps > 0$.
where $u_N = \P_N u$.
In the following, we freely interchange
the representations in terms of $X_\dl$
and in terms of $u$ distributed by $\mu_\dl$,
when there is no confusion.

Let us first
  construct the limiting Wick power $\W(X^k_\dl)$
  and the related stochastic objects.

\begin{proposition}\label{PROP:FN}

Let $k \in \N$ and $0 < \dl \le \infty$.
Given $N \in \N$, let 
$\W(  X_{\dl, N}^k )$ be as in~\eqref{WO1}.
Then, given any finite $p \ge 1$, 
the sequence  $\{ \W(X_{\dl, N}^k)\}_{N \in \N}$
is Cauchy in $L^p( \O; W^{s, \infty}(\T))$, $s < 0$, 
thus converging to a limit,  denoted by
$\W(X_{\dl}^k)$.
This convergence of $\W(X_{\dl, N}^k)$
to $\W(X_{\dl}^k)$ also holds almost surely in $W^{s, \infty}(\T)$.
Furthermore, given any finite $p \ge 1$, we have 
\begin{align}
\sup_{N \in \N} \sup_{2 \le \dl\le \infty} \big\|  \|\W(  X_{\dl, N}^k )\|_{W^{s, \infty}_x} \big\|_{L^p(\O)} 
< \infty
\label{FN1a}
\end{align}

\noi
and 
\begin{align}
\sup_{2 \le \dl \le \infty} \big\|  \| \W(  X_{\dl, M}^k ) - \W(  X_{\dl, N}^k )\|_{W^{s, \infty}_x} \big\|_{L^p(\O)} 
\too 0
\label{FN1}
\end{align}

\noi
for any $M \ge N$, tending to $\infty$.
In particular, the rate of convergence is uniform in $2\le \dl \le \infty$.

\smallskip

As a corollary, the following two statements hold.

\smallskip

\noi
\textup{(i)}
 Let
$0 < \dl \le \infty$.
Given $N \in \N$, 
let $R_{\dl, N}(u) = R_{\dl, N}(u; k+1)$ denotes the truncated potential energy defined by 
\begin{align}
R_{\dl, N}(u):=
\frac 1{k+1} \int_\T \W((\P_N u)^{k+1}) dx 
=\frac 1{k+1}
\int_{\T}  H_{k+1}\big(  \P_N u ; \s_{\dl, N} \big) dx, 
\label{FN2}
\end{align}

\noi
where $\sigma_{\dl, N}$ is  as in \eqref{Wick1a}
with the identification \eqref{GX2} when $\dl = \infty$;
see $\s_{\infty, N}$ in \eqref{CC1}.
Then,  given any finite $p \ge 1$, 
the sequence
$\{R_{\dl, N}(u)\}_{N \in \N}$
converges to the limit\textup{:}
\noi
\begin{align}
 R_\dl(u) = \frac 1{k+1}\int_{\T} \W(u^{k+1})dx = \lim_{N\to\infty}  \frac 1{k+1} \int_{\T} \W( (\P_Nu)^{k+1} )dx 
 \label{Wick2}
\end{align} 

\noi
in $L^p(d\mu_\dl)$, as $N \to \infty$.
Furthermore, 
 there exists $\ta > 0$ such that  given any finite $p \ge 1$, we have
\begin{align}
\sup_{N \in \N\cup\{\infty\}} \sup_{2\le \dl\le \infty} 
  \|  R_{\dl, N}(u)\|_{L^p(d\mu_\dl)} 
< \infty, 
\label{FN3a}
\end{align}

\noi
with $R_{\dl, \infty}(u) = R_\dl(u)$, 
and 
\begin{align}
\| R_{\dl, M}(u) - R_{\dl,N}(u) \|_{L^p(d\mu_\dl)}
\leq
 \frac{C_{k, \dl}\,  p^{\frac{k+1}{2}} }{N^\ta}
\label{FN3}
\end{align}

\noi
for 
any $ M \geq N \geq 1$.
For $2 \le \dl \le \infty$, 
we can choose the constant $C_{k, \dl}$ in \eqref{FN3} to be independent of $\dl$
and hence the rate of convergence 
of $R_{\dl, N}(u)$ to the limit $R_\dl(u)$ is uniform in $2\le \dl \le \infty$.

\smallskip

\noi
\textup{(ii)}
 Let
$0 < \dl \le \infty$.
Given $N \in \N$, 
let $F_N(u) = F_N(u; k)$ be  the truncated renormalized nonlinearity 
in  \eqref{WILW2} given 
by 
\noi
\begin{align}
F_N(u) := \dx \P_N\W((\P_N u)^k) = \dx \P_N H_k( \P_N u; \sigma_{\dl, N}),
\label{FN4}
\end{align}

\noi
where $\sigma_{\dl, N}$ is  as in \eqref{Wick1a}
with the identification \eqref{GX2} when $\dl = \infty$;
see $\s_{\infty, N}$ in \eqref{CC1}.
Then,  given any finite $p \ge 1$, 
the sequence  $\{ F_N(u)\}_{ N\in\N }$
is Cauchy in $L^p(d\mu_\dl; H^s (\T)  )$, $s < -1$, 
thus converging to a limit denoted by 
 $F(u) = \dx \W(u^k)$.
Furthermore, given any finite $p \ge 1$, we have 
\begin{align}
\sup_{N \in \N\cup\{\infty\}} \sup_{2\le \dl\le \infty} \big\|  \|  F_N(u) \|_{H^s_x} \big\|_{L^p(d\mu_\dl)} 
< \infty, 
\label{FN4a}
\end{align}

\noi
with $F_\infty(u) = F(u)$, 
and 
\begin{align}\label{FN5}
\sup_{2\le \dl\le \infty} \big\|  \| F_M(u) - F_N(u) \|_{H^s_x} \big\|_{L^p(d\mu_\dl)} 
\too 0
\end{align}

\noi
for any $M \ge N$, tending to $\infty$.
In particular, the rate of convergence 
of $F_N(u)$ to the limit $F(u)$
is uniform in $2\le \dl \le \infty$.

\end{proposition}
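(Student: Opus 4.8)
\noindent
The plan is to reduce all the claims to second--moment ($L^2(\O)$) estimates, and then to upgrade them to $L^p(\O)$ and to $W^{s,\infty}(\T)$ via the Wiener chaos estimate (Lemma~\ref{LEM:hyp}) and Sobolev embedding, with the uniformity in $\dl$ coming entirely from the single bound $K_\dl(n)\ges |n|$, valid uniformly for $2\le\dl\le\infty$ by \eqref{Low_Kdl} and \eqref{GX2}. First I would write $\Gamma_{\dl, N}(x-y) := \E\big[X_{\dl,N}(x)X_{\dl,N}(y)\big] = \tfrac1{2\pi}\sum_{0<|n|\le N}\tfrac{e^{in(x-y)}}{K_\dl(n)}$, so that $\s_{\dl,N} = \Gamma_{\dl,N}(0)$ and $\ft{\Gamma_{\dl,N}}(n) = K_\dl(n)^{-1}\ind_{0<|n|\le N}$, and record that $|\ft{\Gamma_{\dl,N}}(n)|\les \jb n^{-1}$ uniformly in $N\in\N$ and $2\le\dl\le\infty$. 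Since $\W(X_{\dl,N}^k)(x) = H_k(X_{\dl,N}(x);\s_{\dl,N})$ lies in the $k$th Wiener chaos $\G_k^W$ for each $x\in\T$ (by \eqref{H1a}), Lemma~\ref{LEM:W1} gives the exact identity $\E\big[\W(X_{\dl,N}^k)(x)\,\W(X_{\dl,N}^k)(y)\big] = k!\,\Gamma_{\dl,N}(x-y)^k$.

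Taking the spatial Fourier transform and using translation invariance, $\E\big[|\ft{\W(X_{\dl,N}^k)}(m)|^2\big]\les_k \ft{\Gamma_{\dl,N}^{\,k}}(m) = \sum_{n_1+\cdots+n_k=m}\prod_j\ft{\Gamma_{\dl,N}}(n_j)\les \jb m^{-1}(\log\jb m)^{k-1}$, where the last step is the standard convolution bound $\sum_{n_1+\cdots+n_k=m}\prod_j\jb{n_j}^{-1}\les \jb m^{-1}(\log\jb m)^{k-1}$, uniform in $\dl$ and $N$. Hence, for any $s<0$, $\E\big[\|\jb{\nb}^s\W(X_{\dl,N}^k)(x)\|_{L^2_\o}^2\big] \sim \sum_m \jb m^{2s}\,\E\big[|\ft{\W(X_{\dl,N}^k)}(m)|^2\big] \les_{k,s} 1$, uniformly in $N$, $x\in\T$, and $2\le\dl\le\infty$. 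By the Wiener chaos estimate (Lemma~\ref{LEM:hyp}) applied pointwise in $x$ together with Minkowski's integral inequality, for $q\ge p\ge2$,
\[
\big\|\,\|\jb{\nb}^s\W(X_{\dl,N}^k)\|_{L^p_x}\big\|_{L^q_\o}
\le \big\|\,\|\jb{\nb}^s\W(X_{\dl,N}^k)(x)\|_{L^q_\o}\big\|_{L^p_x}
\le (q-1)^{k/2}\big\|\,\|\jb{\nb}^s\W(X_{\dl,N}^k)(x)\|_{L^2_\o}\big\|_{L^p_x}\les_{k,s,q}1;
\]
choosing $p\gg1$ and $s$ slightly more negative and invoking the embedding $W^{s+\frac2p,p}(\T)\hookrightarrow W^{s,\infty}(\T)$ then yields \eqref{FN1a}.

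For the Cauchy property \eqref{FN1}, I would write $X_{\dl,M} = X_{\dl,N} + Y_{N,M}$ with $Y_{N,M} := X_{\dl,M}-X_{\dl,N}$ supported on frequencies $N<|n|\le M$ and independent of $X_{\dl,N}$; the Hermite addition formula for independent Gaussians (a consequence of \eqref{Herm3}) gives $\W(X_{\dl,M}^k) = \sum_{\ell=0}^k\binom k\ell H_{k-\ell}(X_{\dl,N};\s_{\dl,N})\,H_\ell(Y_{N,M};\s_{\dl,M}-\s_{\dl,N})$, whose $\ell=0$ term is exactly $\W(X_{\dl,N}^k)$. Repeating the second--moment computation above on each of the remaining ($\ell\ge1$) terms produces at least one factor of $\E\big[Y_{N,M}(x)Y_{N,M}(y)\big] = \tfrac1{2\pi}\sum_{N<|n|\le M}\tfrac{e^{in(x-y)}}{K_\dl(n)}$, whose Fourier coefficients are $\les \jb n^{-1}\ind_{|n|>N}$; the resulting convolution sums therefore carry a power gain $N^{-\ta}$ for some $\ta>0$, uniformly in $\dl$, which gives \eqref{FN1} and, via Chebyshev's inequality and the Borel--Cantelli lemma, the almost sure convergence. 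For a fixed $0<\dl\le\infty$ the same argument, now with $\dl$--dependent constants and using $K_\dl(n)\sim_\dl|n|$ (see \eqref{HH1a}, \eqref{GX2}), gives the $L^p(\O)$--Cauchy and a.s.\ statements on the whole range $0<\dl\le\infty$.

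The two corollaries are specializations. For (i), $R_{\dl,N}(u) = \tfrac1{k+1}\ft{\W((\P_N u)^{k+1})}(0)$ is (up to a constant) the zeroth Fourier coefficient of the $(k+1)$th Wick power, so \eqref{FN3a} and the rate \eqref{FN3} follow from the $m=0$ case of the bounds above, where the $\ell=1$ term dominates and gives $N^{-\ta}$ for any $\ta<1$. For (ii), $F_N(u) = \dx\P_N\W((\P_N u)^k)$; since $\dx$ acts as the Fourier multiplier $in$, one reruns the same estimates with an extra factor $|m|^2$ inside the frequency sums, so the relevant summand becomes $\jb m^{2s+1}(\log\jb m)^{k-1}$, summable precisely for $s<-1$; this yields \eqref{FN4a} and \eqref{FN5}, the discrepancy $\dx(\P_N-\P_M)\W((\P_M u)^k)$ being absorbed by the tail sum $\sum_{|m|>N}\jb m^{2s+1}(\log\jb m)^{k-1}$. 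The hard part will be the bookkeeping of the $N$--dependence of the variance $\s_{\dl,N}$ through the Hermite addition formula while keeping every constant independent of $\dl\in[2,\infty]$; once the covariance bound $|\ft{\Gamma_{\dl,N}}(n)|\les\jb n^{-1}$ is available, the remainder is the by--now standard combination of Lemma~\ref{LEM:W1}, Lemma~\ref{LEM:hyp}, convolution estimates, and Sobolev embedding.
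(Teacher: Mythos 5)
Your proposal is correct and follows essentially the same route as the paper: the second-moment identity from Lemma~\ref{LEM:W1} with the covariance $\gamma_N$, the Wiener chaos estimate plus Minkowski and Sobolev embedding, convolution bounds on $\sum\prod\jb{n_j}^{-1}$, Borel--Cantelli for almost sure convergence, and uniformity in $2\le\dl\le\infty$ drawn solely from \eqref{Low_Kdl}. The only (harmless) deviations are cosmetic: you obtain the difference estimates via independence of the high-frequency increment and the two-variance Hermite addition formula (correctly matching the Wick constants, since the $\ell=0$ term carries $\s_{\dl,N}$) where the paper directly computes $\gamma_M^k-\gamma_N^k$ with mixed truncations, and you read $R_{\dl,N}$ as the zeroth Fourier coefficient where the paper bounds it by an $H^{-1/2}$ norm.
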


\begin{remark}\label{REM:FN1}\rm

In the proof of Proposition \ref{PROP:FN}, 
we use  \eqref{Low_Kdl} to obtain a lower bound
on $K_\dl(n)$, uniformly in $2 \le \dl \le \infty$, for any fixed $n \in \Z^*$.
The lower bound $\dl = 2$ is by no means sharp.
For example, in view of 
the strict monotonicity of $K_\dl(n)$ in $\dl \ge 1$ (for fixed $n \in \Z^*$)
and the fact that $K_\dl(n) \ne 0$ for $n \in \Z^*$
as stated in Lemma \ref{LEM:p2}, 
 a slight modification of the proof  of  Proposition \ref{PROP:FN} yields
the uniform (in $\dl$) bounds
for $1 \le \dl \le\infty$.
Since our main interest is to take the limit $\dl \to \infty$, 
we do not attempt to optimize a lower bound for $\dl$.
The same  comment applies to 
the subsequent results presented in this section
and hence to Theorem~\ref{THM:Gibbs1}.
\end{remark}

\begin{proof} [Proof of Proposition \ref{PROP:FN}]

Given $N \in \N$ and 
 $x, y \in \T$, 
we define $\g_N = \g_N(\dl)$ by setting
\begin{align}
\g_N(x-y):  = 
\E[  X_{\dl, N}(x)  X_{\dl, N}(y)] 
= \frac 1 {2\pi} \sum_{0 < |n|\le N} \frac{e_{n}(x-y)}{K_\dl(n)}.
\label{GD1}
\end{align}

\noi
Note that we have
\begin{align*}
\g_N(x-y)  = 
\E[  X_{\dl, N}(x)  X_{\dl, M}(y)] 
\end{align*}

\noi
for any  $M \ge N \ge 1$.
In the following, 
for simplicity of notation, 
we set $u_N = \P_N u$
and suppress the $\dl$-dependence
in $\g_N = \g_N(\dl)$.

Let us first make a preliminary computation.
Given  $n, m \in \Z^*$, we have
\begin{align}
\begin{split}
& \E\Big[ \F(H_{k}( X_{\dl, N} ; \s_{\dl, N}))(n)\,
\cj{\F(H_{k}( X_{\dl, N} ; \s_{\dl, N}))(m)}\Big]\\
& =  
 \iint_{\T^2}
\E\big[H_{k}( X_{\dl, N}(x) ; \s_{\dl, N})  
H_{k}( X_{\dl, N}(y) ; \s_{\dl, N})\big] e_{-n + m}(x)  e_{-m}(x-y) dy dx.
\end{split}
\label{PZ1}
\end{align}

\noi
From Lemma \ref{LEM:W1} with \eqref{GD1}
we have 
\begin{align}
\E\big[H_{k}( X_{\dl, N}(x) ; \s_{\dl, N})  
H_{k}( X_{\dl, N}(y) ; \s_{\dl, N})\big]
= k! \, \g_N^k(y-x).
\label{PZ2}
\end{align}

\noi
Then, 
from \eqref{PZ1}, \eqref{PZ2}, a change of variables $z = y-x$, 
and integrating in $x$, we have 
\begin{align}
\begin{split}
 \E & \Big[ \F(H_{k}( X_{\dl, N} ; \s_{\dl, N}))(n)\,
\cj{\F(H_{k}( X_{\dl, N} ; \s_{\dl, N}))(m)}\Big]\\
& =  
k! \int_{\T} \bigg(\int_{\T} e_{-n + m}(x)   dx\bigg)
 \g_N^k(z)
e_{m}(z)dz
 \\
 & = 2\pi k! \ind_{n = m}\cdot
 \int_{\T} \g_N^k(z)e_n(z) dz.
\end{split}
 \label{GD1b}
\end{align}

Fix small  $\eps > 0$.
Then, by Sobolev's inequality
with finite $r \gg1 $ such that $r \eps > 1$, we have
\begin{align}
 \| \W(u_N^k)  \|_{W^{s,  \infty}}
\les   \| \W(u_N^k)  \|_{W^{s+\eps, r}}.
\label{GD1c}
\end{align}

\noi
Let $p \geq r$.
Then, by  \eqref{GD1c}, Minkowski's inequality
(with $p \ge r \gg 1$),  
the Wiener chaos estimate (Lemma \ref{LEM:hyp}), 
\eqref{GD1b}, and 
the boundedness of the torus $\T$, 
we have
\begin{align}
\begin{split}
\big\|  & \| \W(u_N^k)  \|_{W^{s, \infty}_x} \big\|_{L^p(d\mu_\dl)}  
\les p ^\frac k2 \big\|  \|\jb{\nb}^{s+\eps} \W(u_N^k)  \|_{L^2(d\mu_\dl)}  \big\|_{L^r_x} \\
& =  \frac{p ^\frac k2}{2\pi} 
\bigg\| \Big\| \sum_{n \in \Z} \jb{n}^{s+\eps} \F(H_{k}( X_{\dl, N} ; \s_{\dl, N}))(n)
e_n(x) \Big\|_{L^2(\O)}  \bigg\|_{L^r_x} \\
& = C_k \, p^{\frac{k}{2}}
\bigg( 
  \sum_{n\in \Z}  \jb{n}^{2(s+\eps)}  
   \int_{\T}  \g_N^k(z) e_{n}(z) dz
\bigg)^{\frac12}.
\end{split}
\label{GD2}
\end{align}

\noi
From  \eqref{GD1} with $ \g_N^k(z) =  \g_N^k(-z)$, we have 
\begin{align}
\int_{\T}  \g_N^k(z) e_{n}(z) dz
= \frac{1}{(2\pi)^{k-1}}\sum_{\substack{ 0< |n_j| \leq N \\   j=1, \dots , k  }} 
\frac{\ind_{ n = n_1 + \cdots + n_{k} }}{\prod_{j=1}^{k} K_\dl(n_j) }  .
\label{GD3}
\end{align}  

\noi
Hence, from \eqref{GD2} and \eqref{GD3},
and Lemma \ref{LEM:p2}, we obtain
\begin{align}
\begin{split}
\big\|  & \| \W(u_N^k)  \|_{W^{s, \infty}_x} \big\|_{L^p(d\mu_\dl)}  \\
&\le  C_{k, \dl} \, p^\frac k2  
\bigg(\sum_{\substack{ 0< |n_j| \leq N \\   j=1, \dots , k  }} 
\frac{1}{\prod_{j=1}^{k} \jb{n_j} }  \jb{n_1+ \cdots + n_{k}}^{2(s+\eps)} \bigg)^\frac 12\\
&\leq C_{k, \dl} \, p^\frac k2  
\bigg(
\sum_{n_1, ... ,n_{k}\in\Z^*} 
\frac{1}{\prod_{j=1}^{k} \jb{n_j} }  \jb{n_1+ \cdots + n_{k}}^{2(s+\eps)} \bigg)^\frac 12< \infty,
\end{split}
\label{GD4}
\end{align}

\noi
uniformly in $N \in \N$, 
provided that $s +\eps< 0$.
This last condition can be guaranteed for $s< 0$ 
by taking $\eps > 0$ sufficiently small.
In view of  \eqref{Low_Kdl}, 
the bound \eqref{GD4} holds uniformly in $2\le \dl \le \infty$
(namely, the constant $C_{k, \dl}$ can be chosen to be independent
of $2\le \dl \le \infty$).
This proves \eqref{FN1a}.

Let  $M\geq N \geq 1$ and  $p\geq 2$.
Proceeding as above, we have 
\begin{align}
\begin{split}
 \big\| &   \|\W(u_M^k)  - \W(u_N^k) \|_{W^{s, \infty}_x} \big\|_{L^p(d\mu_\dl)}  \\
& \le C_k\,  p^\frac k2
\bigg( 
  \sum_{n}  \jb{n}^{2(s+\eps)}   \int_{\T}
  \big(  \g_M^k(z) -  \g_N^k(z)\big) e_{n}(z) dz
\bigg)^{\frac12}\\
&\le  C_{k, \dl} \, p^\frac k2  \bigg(
\sum_{\substack{ 0< |n_j| \leq M \\   j=1, \dots , k  }} 
\frac{1}{\prod_{j=1}^{k} \jb{n_j} }  \jb{n_1+ \cdots + n_{k}}^{2(s+\eps)} \\
& \hphantom{XXXXXXX}- \sum_{\substack{ 0< |n_j| \leq N \\   j=1, \dots , k  }} 
\frac{1}{\prod_{j=1}^{k} \jb{n_j} }  \jb{n_1+ \cdots + n_{k}}^{2(s+\eps)} \bigg)^\frac 12\\
&\le  C_{k, \dl} \, p^\frac k2 \bigg(
\sum_{\substack{  0< |n_j| \leq M \\   j=1, \dots , k  }} 
\frac{\ind_{\max_{j = 1, \dots, k}  |n_j| > N}}{\prod_{j=1}^{k} \jb{n_j} }  \jb{n_1+ \cdots + n_{k}}^{2(s+\eps)} 
\bigg)^\frac 12\\
& \le C_{k, \dl} \, p^\frac k2 N^{\max(s, -\frac 12) +2\eps}
\end{split}
\label{GD5}
\end{align}

\noi
for any $\eps > 0$, 
provided that $s < 0$.
By choosing $0 < 2\eps < \min\big(-s, \frac 12 \big)$, 
we then obtain 
\begin{align}
 \big\|    \|\W(u_M^k)  - \W(u_N^k) \|_{W^{s, \infty}_x} \big\|_{L^p(d\mu_\dl)}  
 \too 0, 
 \label{GD6}
\end{align}

\noi
as $N \to \infty$.
In view of  \eqref{Low_Kdl}, 
the bound \eqref{GD5} holds uniformly in $2\le \dl \le \infty$
and thus the convergence in \eqref{GD6}
holds uniformly in $2 \le \dl \le \infty$, 
yielding \eqref{FN1}.

By applying Chebyshev's inequality 
(see also Lemma 4.5 in \cite{Tz10}), 
to \eqref{GD5} (with $M = \infty$)
and summing over in $N \in \N$
we have
\begin{align*}
\sum_{N = 1}^\infty
\PP\bigg( \|\W(u^k)  - \W(u_N^k) \|_{W^{s, \infty}} > \frac{1}{j} \bigg)
& \les 
\sum_{N = 1}^\infty e^{-c N^{- \frac{2}{k}(\max(s, -\frac 12) +2\eps)} j^{-\frac{2}{k}}}\\
& \les e^{-c' j^{-\frac 2k}}< \infty.
\end{align*}

\noi
Therefore, we conclude from the Borel-Cantelli lemma that 
there exists $\O_j$ with $\PP(\O_j) = 1$
such that 
for each $\o \in \O_j$, 
there exists $N_j = N_j(\o) \in \N$ such that 
\[
\|\W(u^k)(\o)  - \W(u_N^k))(\o) \|_{W^{s, \infty}}  <\frac{1}{j}\]
 for any $N \geq N_j$.
By setting $\Sigma = \bigcap_{j = 1}^\infty \O_j$, 
we have $\PP(\Si) = 1$.
Hence, we conclude that $\W(u_N^k)$ converges almost surely to $\W(u^k)$ in $W^{s,\infty}(\T)$.

Let us briefly discuss how to obtain the corollaries (i) and (ii).
We only discuss the difference estimates \eqref{FN4} and \eqref{FN5}.
The first corollary  on $R_{\dl, N}(u)$ (Part (i)) 
easily follows from the discussion above (in particular \eqref{GD5} with $k$ replaced by $k+1$) by noting that 
\[  |R_{\dl, M}(u)  -R_{\dl, N}(u) | \le C_k \| \W(u_M^{k+1})  - \W(u_N^{k+1}) \|_{H^s}\]

\noi
for any $s < 0$. We can take $s = -\frac 12$ for example.

As for the second corollary on $F_N(u)$, 
we just need to note that 
\begin{align}
\begin{split}
 \big\| &   \|F_M(u)  - F_N(u) \|_{H^s} \big\|_{L^p(d\mu_\dl)}  \\
& \le  \big\|    \|(\P_M - \P_N) \W(u_M^k)  \|_{H^{s+1}} \big\|_{L^p(d\mu_\dl)}  \\
& \quad +  \big\|    \|\W(u_M^k)  - \W(u_N^k) \|_{H^{s+1}} \big\|_{L^p(d\mu_\dl)}  \\
& =: \I + \II.
\end{split}
\label{GD7}
\end{align}

\noi
For $s < -1$, 
we can estimate $\II$ in \eqref{GD7} just as in \eqref{GD5}.
As for the first term $\I$ in \eqref{GD7}, we note that 
due to the projection $\P_M - \P_N$, we have
$|n| = |n_1 + \cdots + n_k| > N $ in 
a computation analogous to \eqref{GD4}, 
which in particular implies 
$\max_{j = 1, \dots, k}  |n_j| \ges_k N$.
Hence, a slight modification of \eqref{GD6} yields the desired bound
\eqref{FN5}.
\end{proof}


Next, we study the densities
for the truncated Gibbs measures $\rho_{\dl, N}$ in \eqref{ZZ2}.
As mentioned above, we restrict our attention to the defocusing case
in this subsection.
Namely, we fix $k \in 2\N+1$.
See Subsection \ref{SUBSEC:ILW1}
for the $k = 2$ case.
Given $0 < \dl \le \infty$
and $N \in \N$, 
let $G_{\dl,  N}(u)$ be the truncated density
defined in \eqref{WO2}.
Our main goal is to establish an $L^p$-integrability of 
the truncated density
 $G_{\dl,  N}(u)$
 for the following two purposes:
 
 \begin{itemize}
 \item
 In order to construct the limiting Gibbs measure $\rho_\dl$ 
 for each fixed $0 < \dl \le \infty$
 (Theorem \ref{THM:Gibbs1}\,(i)), 
 we establish such an
$L^p$-integrability of 
the truncated density, 
uniformly in $N \in \N$ but for each fixed $0 < \dl \le \infty$.

\item In order to prove convergence of the Gibbs measures
in the deep-water limit  (Theorem~\ref{THM:Gibbs1}\,(ii)), 
we establish
 an
$L^p$-integrability of 
the truncated density, 
uniformly in 
 both $N \in \N$ and $\dl \gg 1$. 
 
 \end{itemize}
Here, we need to study the $L^p$-integrability of  $G_{\dl,  N}(u)$
  with respect to the Gaussian measure~$\mu_\dl$ in \eqref{Gibbs2},
which is different for different values of $\dl$.
In order to establish a uniform (in $\dl$) bound, 
it is therefore more convenient to work with the Gaussian process $X_\dl$
and the underlying probability measure $\PP$ on $\O$.

Given $0 < \dl \le \infty$ 
and $N \in \N$, 
we define $ G_{\dl, N}(X_\dl)=  G_{\dl, N}(X_\dl; k+1)$ by 
\begin{align*}
 G_{\dl, N}(X_\dl) = e^{- R_{\dl, N}(X_\dl)    } 
                           =  e^{-  \frac 1{k+1}\int_\T  \W( X_{\dl, N}^{k+1} ) dx   } ,
\end{align*}

\noi
where $R_{\dl, N}(X_\dl) = R_{\dl, N}(X_\dl; k+1)$
is  the truncated potential energy defined 
in \eqref{FN2}.

\begin{proposition}\label{PROP:Gibbs1}

Let $k \in 2 \N + 1$
and fix finite $p \ge 1$.
Given any $0 < \dl \le \infty$, we have
\begin{align}
&\sup_{N\in \N} \| G_{\dl, N}(X_\dl)  \|_{L^p(\O)} 
=
\sup_{N \in \N}\| G_{\dl, N}(u) \|_{L^p(d\mu_\dl)}
 \le C_{p, k, \dl} < \infty.
 \label{GN0a}
\end{align}

\noi
In addition, the following uniform bound holds for $2 \le \dl \le \infty$\textup{:}
\begin{align}
\begin{split}
 \sup_{N \in \N} \sup_{2 \le \dl \le \infty}
\| G_{\dl, N}(X_\dl)  \|_{L^p(\O)}
& = 
\sup_{N \in \N} \sup_{2 \le \dl \le \infty}
\| G_{\dl, N}(u)  \|_{L^p(d\mu_\dl)}\\
& 
 \le C_{p, k} < \infty.
\end{split}
 \label{GN0b}
\end{align}

Define $G_\dl(X_\dl) = G_{\dl, \infty}(X_\dl)$  by 
\[
G_\dl(X_\dl) = e^{- R_\dl(X_\dl)}
\]

\noi
with $R_\dl(X_\dl)$  as in \eqref{Wick2}.
Then, 
$G_{\dl, N}(X_\dl)$ converges to $G_{\dl}(X_\dl)$
in $L^p(\O)$.
Namely, 
we have
\begin{align} 
\lim_{N\to\infty} \| G_{\dl, N}(X_\dl) - G_{\dl}(X_\dl) \|_{L^p(\O)} & =0.
\label{GN0c}
\end{align}

\noi
Furthermore, 
the convergence is uniform in $2 \le \dl \le \infty$\textup{:}
\begin{align}
\lim_{N\to\infty}\sup_{2 \le \dl \le \infty} \| G_{\dl, N}(X_\dl) - G_{\dl}(X_\dl) \|_{L^p(\O)} =0.
\label{GN0d}
\end{align}

\noi
As a consequence, the uniform bounds \eqref{GN0a}
and \eqref{GN0b} hold even if we replace the supremum in $N \in \N$
by the supremum in $N \in \N \cup\{\infty\}$.

\end{proposition}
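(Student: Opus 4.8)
The plan is to establish all four assertions by a Nelson-type argument, in the spirit of the construction of the $\Phi^{k+1}_2$-measures in \cite{OTh1}, while carrying the uniformity in $\dl$ through every step. Throughout I would work on $(\O, \F, \PP)$ and set $Z_{\dl, N} := R_{\dl, N}(X_\dl)$, whose law under $\PP$ equals that of $R_{\dl, N}(u)$ under $\mu_\dl$, so that every $L^p(d\mu_\dl)$-bound is equivalent to an $L^p(\O)$-bound. The first ingredient is a \emph{deterministic} lower bound on $R_{\dl, N}$. Since $k \in 2\N + 1$, the power $k+1$ is even and $H_{k+1}(x; \s) = x^{k+1} + \sum_{j=1}^{(k+1)/2} c_j\, \s^j x^{k+1-2j}$, with leading term $x^{k+1} \ge 0$; applying Young's inequality to each lower-order term $|\s^j x^{k+1-2j}|$ (with conjugate exponents $\tfrac{k+1}{k+1-2j}$ and $\tfrac{k+1}{2j}$) gives, for any $\eps > 0$,
\[
H_{k+1}(x; \s) \ge (1 - c_k \eps)\, x^{k+1} - C_{k, \eps}\, \s^{\frac{k+1}{2}} \ge - C_k\, \s^{\frac{k+1}{2}} .
\]
Integrating over $\T$ and recalling $\s_{\dl, N} \sim_\dl \log(N+1)$ from \eqref{Wick1a} yields the pointwise (realization-wise) bound $R_{\dl, N}(u) \ge - C_k\, \s_{\dl, N}^{(k+1)/2} \ge - c(\dl)\, (\log(N+1))^{(k+1)/2}$. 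The key observation for the uniform statement \eqref{GN0b} is that, by \eqref{Low_Kdl} in Lemma \ref{LEM:p2}, $\s_{\dl, N} = \tfrac{1}{2\pi}\sum_{0 < |n| \le N} K_\dl(n)^{-1} \le \tfrac{1}{2\pi}\sum_{0 < |n| \le N} (|n| - \tfrac12)^{-1} \le C\log(N+1)$ uniformly for $2 \le \dl \le \infty$, so $c(\dl)$ above may be chosen independent of $\dl \in [2, \infty]$.

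Next comes the Nelson step. For $\lambda > 0$ large I would pick $N_0 = N_0(\lambda)$ with $\log N_0 \sim \lambda^{2/(k+1)}$ (uniformly for $2 \le \dl \le \infty$) so that $c(\dl)(\log N_0)^{(k+1)/2} = \lambda/2$; if $N \le N_0$ the deterministic bound already gives $Z_{\dl, N} \ge -\lambda/2 > -\lambda$, and if $N > N_0$ the splitting $Z_{\dl, N} = R_{\dl, N_0}(X_\dl) + \bigl(R_{\dl, N}(X_\dl) - R_{\dl, N_0}(X_\dl)\bigr)$ forces $|R_{\dl, N}(X_\dl) - R_{\dl, N_0}(X_\dl)| > \lambda/2$ on $\{Z_{\dl, N} < -\lambda\}$. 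Chebyshev's inequality together with the difference estimate \eqref{FN3}, i.e.\ $\|R_{\dl, N}(u) - R_{\dl, N_0}(u)\|_{L^q(d\mu_\dl)} \le C_{k, \dl}\, q^{(k+1)/2} N_0^{-\ta}$, then gives $\PP(Z_{\dl, N} < -\lambda) \le \bigl(C q^{(k+1)/2} N_0^{-\ta}/\lambda\bigr)^q$ for every $q \ge 1$. Optimizing $q$ as a large power of $\lambda$ — so that the super-exponential factor $e^{-\ta q \log N_0}$, with $\log N_0 \sim \lambda^{2/(k+1)}$, dominates the polynomial-in-$q$ factor — produces, for each fixed finite $p$, a tail bound
\[
\PP\bigl(Z_{\dl, N} < -\lambda\bigr) \le e^{-2p\lambda}, \qquad \lambda \ge \lambda_0(p) ,
\]
with $\lambda_0(p)$ and all implicit constants uniform in $2 \le \dl \le \infty$ (thanks to the uniformity of the two inputs: the logarithmic bound on $\s_{\dl, N}$ and the $\dl$-independence, for $2 \le \dl \le \infty$, of the constant in \eqref{FN3}). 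Integrating then gives $\E[G_{\dl, N}(X_\dl)^p] = \E[e^{-p Z_{\dl, N}}] \le 1 + p\int_0^\infty e^{p\lambda}\, \PP(Z_{\dl, N} < -\lambda)\, d\lambda < \infty$, with the bound uniform in $N \in \N$ for each fixed $0 < \dl < \infty$ (giving \eqref{GN0a}) and uniform in both $N \in \N$ and $2 \le \dl \le \infty$ (giving \eqref{GN0b}); the case $\dl = \infty$ is included verbatim.

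For the convergence statements, Proposition \ref{PROP:FN} gives $R_{\dl, N}(u) \to R_\dl(u)$ both $\mu_\dl$-a.s.\ and in $L^p(d\mu_\dl)$, whence $G_{\dl, N} \to G_\dl$ $\mu_\dl$-a.s.\ by continuity of $x \mapsto e^{-x}$; combined with the uniform-in-$N$ bound \eqref{GN0a} at exponent $2p$ (which makes $\{G_{\dl, N}^p\}_N$ uniformly integrable), Vitali's theorem yields \eqref{GN0c}. For the uniform-in-$\dl$ convergence \eqref{GN0d}, I would first upgrade to convergence in probability uniform in $\dl$: by \eqref{FN3} with $M = \infty$, $\sup_{2 \le \dl \le \infty}\|R_{\dl, N} - R_\dl\|_{L^1(d\mu_\dl)} \les N^{-\ta} \to 0$, hence $\sup_{2 \le \dl \le \infty}\dkf\bigl(R_{\dl, N}(X_\dl), R_\dl(X_\dl)\bigr) \to 0$; since $\{R_\dl(X_\dl)\}_{2 \le \dl \le \infty}$ is tight (by the uniform $L^1$-bound \eqref{FN3a}), the uniform continuous mapping theorem (Lemma \ref{LEM:KFC}) applied to $\phi(x) = e^{-x}$ gives $\sup_{2 \le \dl \le \infty}\dkf\bigl(G_{\dl, N}(X_\dl), G_\dl(X_\dl)\bigr) \to 0$. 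A standard truncation — splitting $\E|G_{\dl, N} - G_\dl|^p$ over $\{|G_{\dl, N} - G_\dl| \le \eta\}$ and its complement, and using Hölder on the latter together with the uniform $L^{2p}$-bound \eqref{GN0b} — then promotes this to $\sup_{2 \le \dl \le \infty}\|G_{\dl, N} - G_\dl\|_{L^p(\O)} \to 0$, i.e.\ \eqref{GN0d}. Finally, the replacement of $\sup_{N \in \N}$ by $\sup_{N \in \N \cup \{\infty\}}$ in \eqref{GN0a}--\eqref{GN0b} is immediate from $\|G_\dl\|_{L^p} = \lim_N \|G_{\dl, N}\|_{L^p}$.

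I expect the main obstacle — and the only genuinely new ingredient relative to \cite{OTh1} — to be the Nelson estimate with constants uniform in $2 \le \dl \le \infty$: this rests entirely on the two uniform inputs isolated above, the uniform logarithmic bound $\s_{\dl, N} \le C\log(N+1)$ and the $\dl$-independence of the constant in the difference estimate \eqref{FN3}, both of which ultimately trace back to the uniform lower bound $K_\dl(n) \ge |n| - \tfrac12$ for $\dl \ge 2$ from Lemma \ref{LEM:p2}.
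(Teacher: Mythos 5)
Your proposal is correct and follows essentially the same route as the paper: the deterministic lower bound $-R_{\dl,N}\lesssim \s_{\dl,N}^{(k+1)/2}\lesssim(\log(N+1))^{(k+1)/2}$ (uniform in $2\le\dl\le\infty$ via \eqref{Low_Kdl}), Nelson's dichotomy $N\le N_0$ versus $N> N_0$ with $\log N_0\sim\ld^{2/(k+1)}$ combined with the $\dl$-uniform difference estimate \eqref{FN3}, and then convergence via convergence in probability plus the uniform $L^{2p}$-bound, with the uniform-in-$\dl$ statement handled exactly as in the paper through Lemma \ref{LEM:KFC} and the tightness coming from \eqref{FN3a}. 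Your only deviations are cosmetic: you obtain the Hermite lower bound by Young's inequality rather than the global-minimum-plus-scaling argument, and you implement the tail estimate by optimizing the Chebyshev moment $q$ rather than quoting the exponential tail bound \eqref{N1}, which is how that bound is proved anyway.
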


Theorem \ref{THM:Gibbs1}\,(i)
follows as a directly corollary to  
 Proposition \ref{PROP:Gibbs1},
 allowing us to define the limiting
 Gibbs measure $\rho_\dl$ in \eqref{THM1b}.
 Fix $0 < \dl \le \infty$.
 Then, \eqref{GN0c} with $p = 1$
 implies that the partition function 
 $Z_{\dl, N} = \|G_{\dl, N}(u)\|_{L^1(d\mu_\dl)}$
 of the truncated Gibbs measure $\rho_{\dl, N}$ in \eqref{GG6}
 converges to 
 the partition function 
 $Z_{\dl} = \|G_{\dl}(u)\|_{L^1(d\mu_\dl)}$
of the Gibbs measure $\rho_\dl$ in \eqref{THM1b}.
Let  $\mathcal{B}_{H^{-\eps}}$ denote
the collection of Borel sets in $H^{-\eps}(\T)$.
Then, once again from \eqref{GN0c}, 
we have
\begin{align}
\begin{split}
& \lim_{N\to \infty}
\sup_{A \in \mathcal{B}_{H^{-\eps}}}
| \rho_{\dl, N}(A) - \rho_{\dl}(A) |\\
&\quad =  \lim_{N\to \infty}
\sup_{A \in \mathcal{B}_{H^{-\eps}}}
\bigg| \frac{Z_{\dl, N}}{Z_\dl} \rho_{\dl, N}(A) - \rho_{\dl}(A) \bigg|\\
 &\quad\leq Z_\dl ^{-1}
 \lim_{N\to \infty}
\sup_{A \in \mathcal{B}_{H^{-\eps}}}
  \int_{H^{-\e} } \ind_A(u)
| G_{\dl, N}(u) -  G_\dl(u) |d \mu_\dl(u) \\
& \quad \le Z_\dl^{-1}\lim_{N\to\infty} \| G_{\dl, N}(X_\dl) - G_{\dl}(X_\dl) \|_{L^1(\O)}\\
&\quad  = 0.
\end{split}
\label{GN0}
\end{align}

\noi
This proves convergence in total variation of $\rho_{\dl, N}$
to $\rho_\dl$.
By using \eqref{GN0d} in place of \eqref{GN0c},
a slight modification of the argument above yields uniform convergence
in total variation
of $\rho_{\dl, N}$ to $\rho_\dl$ for $2 \le \dl \le \infty$.
See \eqref{CX1} below.
We omit details.

We now present the proof of
 Proposition~\ref{PROP:Gibbs1}.

\begin{proof}[Proof of Proposition~\ref{PROP:Gibbs1}] 

We break the proof into two steps.

\smallskip

\noi
$\bul$ \textbf{Step 1:} We  first prove the uniform $L^p$-bounds
  \eqref{GN0a} and   \eqref{GN0b}.
Given $k \in 2\N+1$, 
the Hermite polynomial $H_{k+1}$ has a global minimum; 
there exists finite $a_{k+1} > 0$ such that 
$H_{k+1}(x) \geq -a_{k+1}$ for any $x\in\R$.  It follows from  \eqref{H1a}
that 
\begin{align}
 H_{k+1}(x; \s ) \geq - \s^{\frac{k+1}{2}} a_{k+1}
     \label{GN1}
  \end{align}

\noi
for any $x \in \R$ and $\s > 0$.
Hence, from \eqref{FN2} and \eqref{GN1}
with \eqref{Wick1a}, we have
\begin{align}
\begin{aligned}
- R_{\dl,N}(X_\dl) 
&=   -  \frac 1{k+1}  \int_{\T} H_{k+1}( X_{\dl, N} ; \s_{\dl, N})  dx   \\
&\leq  \frac{2\pi}{k+1} \s_{\dl,N}^{\frac{k+1}{2}}  a_{k+1} 
 \leq A_{k, \dl}    (\log (N+1))^{\frac{k+1}{2}}
\end{aligned}
\label{N2}
\end{align}

\noi
for some $A_{k, \dl} > 0$,
uniformly in $N \in \N$.
The bound \eqref{N2} is exactly
where the defocusing nature of the equation plays a crucial role.

\begin{remark}\label{REM:GN1}\rm
Recall the uniform lower bound 
\eqref{Low_Kdl} for $2\le \dl \le \infty$
(with the identification~\eqref{GX2} when $\dl = \infty$).
In view of  \eqref{Wick1a}, 
we can then choose $A_{k, \dl}$ to be independent of 
$2 \le \dl \le \infty$
(and $N \in \N$)
as in the proof of Proposition \ref{PROP:FN}.
Similarly, 
by restricting our attention to $2 \le \dl \le \infty$, 
we can choose the constant
 $c_{k, \dl}$ in \eqref{N1} below to be 
independent of $2\le \dl \le \infty$  
since the constant $C_{k, \dl}$ in \eqref{FN3} is independent of
$2\le \dl \le \infty$.
As a result, the constants
in 
$B_{k, \dl, p}$
and $C_2 (k, \dl, p) $
 in~\eqref{N4} below
can be chosen to be 
independent of $2 \le \dl \le \infty$.

\end{remark}

By applying
 Proposition \ref{PROP:FN}\,(i)
and  Chebyshev's inequality
(see also Lemma 4.5 in \cite{Tz10}), 
we have, for some $C_ 1 > 0$ and $c_{k, \dl} > 0$, 
\begin{align}
\PP \Big(p | R_{\dl,M}(X_\dl) - R_{\dl,N}(X_\dl)| > \ld \Big) 
 \leq C_1   e^{-c_{k,\dl}\, p^{-\frac2{k+1}} N^{\frac{2\ta }{k+1}}   \ld^{\frac2{k+1}}} 
\label{N1}
\end{align}

\noi
for any $ M \geq N \geq 1$
and any $p, \ld > 0$.

By writing
\begin{align*}
\|G_{\dl,N}(X_\dl)\|^{p}_{L^{p}(\O)}
&= \int_0^\infty  \PP\Big ( e^{-p R_{\dl, N}(X_\dl)}> \al \Big)  d \al\\
&\leq 1 +  \int_{1}^\infty \PP \Big(  -p R_{\dl, N}(X_\dl ) > \log \al \Big)  d \al, 
\end{align*}

\noi

\noi
we see that the desired bound \eqref{GN0a} follows once we show
that there exist $C_2 = C_2(k, \dl, p) > 0$ and $ \b > 0$
such that
\begin{align}
\PP \Big(-p R_{\dl,N}(X_\dl) > \log \al \Big) 
\leq 
C_2  \al^{-(1+\b)}
\label{N11}
\end{align}

\noi
for any   $\al > 1$ and $N \in \N$.
We prove \eqref{N11} via a standard application of 
the so-called Nelson's estimate.
Namely, given $\al > 1$, 
we choose $N_0 = N_0(\al) > 0$
and establish \eqref{N11}
for $N \ge N_0$ and $N < N_0$
in two different ways.

 Given $\ld := \log \al > 0$, 
 we choose $N_0 >0$
 by setting 
\begin{align}
\ld =  2 p A_{k, \dl}    (\log (N_0+1))^{\frac{k+1}{2}}.
\label{N1a}
\end{align}

\noi
Then,
from \eqref{N2} and \eqref{N1a}, we have
\begin{align}
  -p R_{\dl,N_0}(X_\dl)
 \leq p A_{k, \dl}    (\log (N_0+1))^{\frac{k+1}{2}} = \tfrac 12 \ld.
\label{N3}
\end{align}

\noi
Hence, from \eqref{N3} and \eqref{N1}, 
we have 
\begin{align}
\begin{aligned}
\PP \Big( & -p R_{\dl,N}(X_\dl) > \ld \Big) 
 \leq 
\PP \Big( -p \big( R_{\dl,N}(X_\dl) -  R_{\dl, N_0}( X_\dl )\big) > 
\tfrac 12 \ld  \Big)\\
& \leq 
\PP \Big( p \bv R_{\dl,N}(X_\dl) -  R_{\dl, N_0}( X_\dl )\bv >    \tfrac{1}{2}\ld \Big)
\\
&
 \leq C_1   e^{-c'_{k,\dl}\, p^{-\frac2{k+1}} N_0^{\frac{2\ta }{k+1}}   \ld^{\frac2{k+1}}} \\
&
 \leq C_1   e^{-c'_{k,\dl}\, p^{-\frac2{k+1}}   \ld^{\frac2{k+1}}
(  e^{B_{k, \dl, p} \ld^\frac 2{k+1}} -1)}\\
& \le C_2 (k, \dl, p)  e^{- (1+ \b) \ld } 
\end{aligned}
\label{N4}
\end{align}
	
\noi
for any $N \geq N_0$.
On the other hand, for $N < N_0$, it follows from \eqref{N2}
and \eqref{N1a} that 
\begin{align*}
  -p R_{\dl,N}(X_\dl)
 \leq p A_{k, \dl}    (\log (N+1))^{\frac{k+1}{2}} < \tfrac 12 \ld
\end{align*}
 
\noi
and thus we have
\begin{align}
\PP \Big( & -p R_{\dl,N}(X_\dl) > \ld \Big) 
= 0.
\label{N5}
\end{align}

\noi
Putting \eqref{N4} and \eqref{N5} together, we conclude that 
\eqref{N11} holds for any $\al > 1$ and $N \in \N$.
Therefore, we obtain
\begin{align}
\|G_{\dl,N}(X_\dl)\|^{p}_{L^{p}(\O)}
\le C_3(k, \dl, p) < \infty
\label{N6}
\end{align}

\noi
for any $N \in \N$.

For $2 \le \dl \le \infty$, 
it follows from Remark \ref{REM:GN1}
that the constant 
$C_3(k, \dl, p)$ in \eqref{N6}
can be chosen to be  independent of $2 \le \dl \le \infty$, 
thus yielding \eqref{GN0b}.

\smallskip

\noi
$\bul$ \textbf{Step 2:} Next, we  show the (uniform) $L^p$-convergence
of the truncated densities.

Fix $0 < \dl \le \infty$.
The $L^p$-convergence \eqref{GN0c} of the truncated density $G_{\dl, N}(X_\dl)$
follows from the  uniform bound \eqref{GN0a} and a standard argument (see \cite[Remark 3.8]{TZ2}). More precisely, as a consequence of Proposition \ref{PROP:FN}\,(i)
and the continuous mapping theorem, 
we see that $ G_{\dl, N}(X_\dl) = e^{-R_{\dl,  N}(X_\dl)}$ converges in probability 
to the limit $ G_{\dl}(X_\dl) = e^{-R_{\dl}(X_\dl)}$.
Then, the $L^p$-convergence \eqref{GN0c}
 follows from the  uniform bound \eqref{GN0a} and this softer convergence in probability.
While we omit details of the argument in this case, 
we present details of an analogous argument in 
establishing the uniform $L^p$-convergence \eqref{GN0d} in the following.

 \smallskip
 
In the following, we present the proof of \eqref{GN0d}
and thus restrict our attention to $2 \le \dl \le \infty$. 
 Proposition \ref{PROP:FN}\,(i), 
 the continuity of the exponential function, 
and  the uniform continuous mapping theorem (Lemma \ref{LEM:KFC}),\footnote{Here, we use the tightness of $\{R_\dl(X_\dl)\}_{2\le \dl \le \infty}$, 
coming from \eqref{FN3a}, 
to verify the hypothesis \eqref{Z_tight}
in Lemma \ref{LEM:KFC}.} 
we see that 
$ G_{\dl, N}(X_\dl)$
 converges in probability to 
$G_{\dl}(X_\dl)$ as $N \to \infty$, uniformly in $2\le \dl \le \infty$.
Then, by setting 
\begin{equation}
A_{\dl,N,\eps}=\big\{  |G_{\dl,N}(X_\dl)-G_\dl(X_\dl)| \leq\eps  \big\}, 
\label{N7a}
\end{equation}

\noi
we have 

\noi
\begin{align}
\sup_{2 \le\dl  \le \infty} \PP ({A^{c}_{\dl, N,\eps}})\too 0 ,
\label{N7}
\end{align}

\noi
as $N\to\infty$.  
Then, from \eqref{N7a},
Cauchy-Schwarz's inequality,  
the uniform (in $\dl$ and $N$, including $N = \infty$) bound~\eqref{GN0b}, and~\eqref{N7}, 
we obtain
\noi
\begin{align*}
\sup_{2 \le\dl  \le \infty} & \| G_\dl(X_\dl) - G_{\dl, N}(X_\dl) \|_{L^{p}(\O)}\\
&\leq \sup_{2 \le\dl  \le \infty}
 \| (G_\dl(X_\dl) -G_{\dl, N}(X_\dl) )\cdot  \ind_{A_{\dl,N,\eps}}\|_{L^{p}(\O)} \\
&\quad    
 + \sup_{2 \le\dl  \le \infty}
   \| (G_\dl(X_\dl) -G_{\dl, N}(X_\dl) )\cdot  \ind_{{A^{c}_{\dl,N,\eps}}}\,\|_{L^{p}(\O)}\\
&\leq  \eps +   \sup_{2 \le\dl  \le \infty}\| G_\dl(X_\dl) -G_{\dl, N}(X_\dl) \|_{L^{2p}(\O)}
\cdot 
\sup_{2 \le\dl  \le \infty}
 \PP  \big( A^c_{\dl,N,\eps} \big)^{\frac1{2p}} \\
&\leq 2\eps
\end{align*}

\noi
for    sufficiently large $N\gg 1$.
This proves 
the uniform (in $\dl$) $L^p$-convergence \eqref{GN0d}.
This concludes the proof of Proposition \ref{PROP:Gibbs1}.
\end{proof}

\subsection{Convergence of the Gibbs measures in the deep-water limit}
\label{SUBSEC:conv1}

In this subsection, we present the proof of Theorem \ref{THM:Gibbs1}\,(ii).
Once again, we restrict our attention to the defocusing case: $k \in 2\N + 1$.
The construction of the Gibbs measures in the previous subsection
shows that, for each
 $0 < \dl \le \infty$,  the Gibbs measure $\rho_\dl$ and the base
Gaussian measure $\mu_\dl$ are equivalent.
On the other hand, 
from Proposition \ref{PROP:equiv}, 
we know that the Gaussian measures $\mu_\dl$
are all equivalent for $0 < \dl \le \infty$.
Therefore,
we conclude that the Gibbs measure $\rho_\dl$, $0 < \dl < \infty$, 
for the defocusing gILW equation \eqref{gILW1}
and the Gibbs measure $\rho_\BO = \rho_\infty$ 
for the defocusing gBO equation \eqref{BO} are equivalent.
This proves the first claim in Theorem \ref{THM:Gibbs1}\,(ii).
Hence, it remains to show that
the Gibbs measure $\rho_\dl$ converges 
to $\rho_\BO$ in total variation, as $\dl \to \infty$.

Before proceeding to the proof of convergence in total variation
of $\rho_\dl$ to $\rho_\BO$, 
let us first present the following 
$L^p$-convergence of the (truncated) densities.
For $0 < \dl \le \infty$, 
 let  $X_\dl$ and $X_\BO = X_\infty$
be as in~\eqref{GG3} and \eqref{Gbo2}, respectively, 
and
let $R_\dl(X_\dl)$
(and $G_\dl(X_\dl)$, respectively)
be the limit of 
$R_{\dl, N}(X_\dl)$ 
constructed in Proposition \ref{PROP:FN}
(and of $G_{\dl, N}(X_\dl)$
constructed in Proposition~\ref{PROP:Gibbs1}, respectively).

\begin{lemma}\label{LEM:C1}
Let $k \in 2\N + 1$ and $1 \le p < \infty$. Then, given $N \in \N$, 
we have
\begin{align}
\lim_{\dl \to \infty} \| G_{\dl,N}(X_\dl) - G_{\infty, N}(X_\BO)\|_{L^p(\O)} = 0.
\label{CC4}
\end{align}

\noi
As a corollary, we  have
\begin{align}
   \lim_{\dl\to\infty}
                 \| G_\dl(X_\dl) - G_\infty(X_\BO)\|_{L^p(\O)} =0 .
\label{CC4a}
\end{align}

\noi
In particular, the partition function 
$Z_{\dl}$ of the Gibbs measure $\rho_\dl$ in \eqref{THM1b}
converges to the partition function $Z_\BO = Z_\infty$
of the Gibbs measure $\rho_\BO = \rho_\infty$, as $\dl\to\infty$.
\end{lemma}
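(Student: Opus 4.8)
The plan is to establish \eqref{CC4} directly — for a fixed $N$ everything takes place in the finite-dimensional space $\P_N L^2(\T)$ — and then to deduce \eqref{CC4a} from \eqref{CC4} together with the $\dl$-uniform $L^p$-convergence \eqref{GN0d} of $G_{\dl,N}(X_\dl)$ to $G_\dl(X_\dl)$ established in Proposition \ref{PROP:Gibbs1}.

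First I would fix $N \in \N$ and note that $X_{\dl,N} = \P_N X_\dl$ is a trigonometric polynomial of degree $\le N$ whose Fourier coefficients $g_n(\o)/K_\dl^{1/2}(n)$, $0 < |n| \le N$, converge as $\dl \to \infty$ to $g_n(\o)/|n|^{1/2}$, since $K_\dl(n)$ increases to $|n|$ for each fixed $n \in \Z^*$ by Lemma \ref{LEM:p2}. Hence $X_{\dl,N}(\o) \to X_{\BO,N}(\o)$ for every $\o \in \O$ in any norm on $\P_N L^2(\T)$; simultaneously, by \eqref{Wick1a} and the same convergence of $K_\dl(n)$, $\s_{\dl,N} \to \s_{\infty,N}$. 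Using the joint continuity of $(x,\s) \mapsto H_{k+1}(x;\s)$ and dominated convergence over the compact torus, this gives $R_{\dl,N}(X_\dl) = \frac{1}{k+1}\int_\T H_{k+1}(X_{\dl,N};\s_{\dl,N})\,dx \to R_{\infty,N}(X_\BO)$ pointwise in $\o$, and then, by continuity of the exponential, $G_{\dl,N}(X_\dl) \to G_{\infty,N}(X_\BO)$ almost surely as $\dl \to \infty$. To upgrade this to $L^p(\O)$-convergence I would invoke the uniform bound \eqref{GN0b} of Proposition \ref{PROP:Gibbs1}, applied with exponent $p+1$ in place of $p$: it makes $\{G_{\dl,N}(X_\dl)\}_{2 \le \dl \le \infty}$ bounded in $L^{p+1}(\O)$, hence uniformly $p$-integrable, so that almost sure convergence combined with Vitali's convergence theorem yields \eqref{CC4}.

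For the corollary \eqref{CC4a}, the idea is the three-term triangle inequality
\begin{align*}
\|G_\dl(X_\dl) - G_\infty(X_\BO)\|_{L^p(\O)}
&\le \|G_\dl(X_\dl) - G_{\dl,N}(X_\dl)\|_{L^p(\O)}
+ \|G_{\dl,N}(X_\dl) - G_{\infty,N}(X_\BO)\|_{L^p(\O)}\\
&\quad + \|G_{\infty,N}(X_\BO) - G_\infty(X_\BO)\|_{L^p(\O)}.
\end{align*}
Given $\eta > 0$, I would first use the $\dl$-uniform convergence \eqref{GN0d} (which includes $\dl = \infty$) to choose $N \in \N$ so that $\sup_{2 \le \dl \le \infty}\|G_{\dl,N}(X_\dl) - G_\dl(X_\dl)\|_{L^p(\O)} < \eta/3$, controlling the first and third terms simultaneously; with $N$ now fixed, \eqref{CC4} sends the middle term to $0$ as $\dl \to \infty$, whence $\limsup_{\dl \to \infty}\|G_\dl(X_\dl) - G_\infty(X_\BO)\|_{L^p(\O)} \le 2\eta/3$, and letting $\eta \downarrow 0$ gives \eqref{CC4a}. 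The claim on partition functions is then immediate from \eqref{CC4a} with $p = 1$: since $G_\dl \ge 0$ one has $Z_\dl = \|G_\dl(u)\|_{L^1(d\mu_\dl)} = \E[G_\dl(X_\dl)]$, so $|Z_\dl - Z_\BO| \le \|G_\dl(X_\dl) - G_\infty(X_\BO)\|_{L^1(\O)} \to 0$.

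The genuinely substantive inputs — the uniform-in-$\dl$ density bound \eqref{GN0b} and the uniform-in-$\dl$ rate \eqref{GN0d} — are already in hand from Proposition \ref{PROP:Gibbs1}, so no serious obstacle remains; the one point requiring care is that the cutoff level $N$ in the triangle inequality must be chosen \emph{uniformly in $\dl$}, which is precisely what \eqref{GN0d}, as opposed to mere fixed-$\dl$ convergence, supplies. If one preferred to avoid Vitali's theorem in Step~2, one could instead run the Ky-Fan/uniform continuous mapping argument of Lemma \ref{LEM:KFC}, but for fixed $N$ the direct route above is the cleaner one.
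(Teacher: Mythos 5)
Your argument is correct and follows essentially the same route as the paper: pointwise (in $\o$) convergence of $X_{\dl,N}$, $\s_{\dl,N}$, and hence of $R_{\dl,N}(X_\dl)$ and $G_{\dl,N}(X_\dl)$, upgraded to $L^p(\O)$, followed by the same three-term triangle inequality in which \eqref{GN0d} controls the first and third terms uniformly in $2\le\dl\le\infty$ and \eqref{CC4} kills the middle term. The only (harmless) variation is that you upgrade the almost sure convergence via the uniform $L^{p+1}$ bound \eqref{GN0b} and Vitali, whereas the paper uses the deterministic, $\o$-uniform bound \eqref{N2} (with Remark \ref{REM:GN1}) and bounded convergence.
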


\begin{remark}\rm

In view of the argument presented in 
\eqref{GN0}, one may be tempted to 
conclude 
directly from \eqref{CC4a} in 
Lemma \ref{LEM:C1} 
that $\rho_\dl$ converges 
to $\rho_\BO = \rho_\infty$ in total variation
as $\dl \to \infty$.
However, this is not possible.
This is due to the fact that the base Gaussian measures
$\mu_\dl$ and $\mu_\infty$ are different.
If we were to mimic the argument in \eqref{GN0}, 
the integral in the third step of \eqref{GN0}
would be replaced by 
\begin{align}
\begin{split}
  \int_{\O }&  \Big(\ind_{\{X_\dl \in A\}} 
 G_{\dl, N}(X_\dl) - 
  \ind_{\{X_\BO \in A\}}  G_\infty(X_\BO) \Big)d \PP \\
& =     \int_{\O } \ind_{\{X_\dl \in A\}} 
\Big( G_{\dl, N}(X_\dl) 
 -  G_\infty(X_\BO) \Big)d \PP \\
& \quad +   \int_{\O } \Big(\ind_{\{X_\dl \in A\}} 
 -  \ind_{\{X_\BO \in A\}} \Big) G_\infty(X_\BO) d \PP .
 \end{split}
\label{CC4b}
\end{align}

\noi
While we can apply \eqref{CC4a} in Lemma \ref{LEM:C1}
to control the first term on the right-hand side of~\eqref{CC4b}, 
we can not handle the second term as it is.
Note that the difference
$\ind_{\{X_\dl \in A\}} 
 -  \ind_{\{X_\BO \in A\}}$
with respect to the $\PP$-integration
(and taking the supremum in $A \in \mathcal{B}_{H^{-\eps}}$)
 is closely related to 
the convergence in total variation of $\mu_\dl$ to $\mu_\infty$
proven in Proposition \ref{PROP:equiv}\,(iii),
which plays a crucial role in the proof of 
convergence in total variation of $\rho_\dl$ to $\rho_\BO$
presented below.

\end{remark}

\begin{proof}[Proof of Lemma \ref{LEM:C1}]

Fix $N \in \N$.
From \eqref{Wick1a}
and 
Lemma \ref{LEM:p2}, we have
\begin{align}
\s_{\dl,N}
=\frac 1{2\pi} \sum_{0< |n|\leq N}\frac{1}{K_\dl(n)}
\too
 \frac1{2\pi}  \sum_{|n|\leq N}\frac{1}{|n|} =: \s_{\infty, N},
\label{CC1}
\end{align}

 \noi
as $\dl \to \infty$.
It also follows from the definitions \eqref{Gbo2}, \eqref{GG3}, and Lemma \ref{LEM:p2}
 that, for any $x \in \T$ and $\o \in \O$,\footnote{Here, 
we use the convention that $g_n(\o) \in \C$ for every $\o \in \O$ and $n \in \Z^*$.}
$X_{\dl, N}(x)$ converges to $X_{\BO, N}(x)$ as $\dl\to\infty$.
Moreover, 
from \eqref{GG3} and Lemma \ref{LEM:p2}, 
we have
\begin{align*}
| X_{\dl, N}(x; \o)| \les \sum_{0< |n| \leq N} \frac{|g_n(\o)|}{ K_\dl^\frac{1}{2}(n)}
\le C_{N, \o} < \infty
\end{align*}

\noi
for any $2 \le \dl \le \infty$, $x \in \T$,  and  $\o \in \O$.
Then, by the 
dominated convergence theorem applied to the integration in $x \in \T$,
 we have 

\noi
\begin{align}
R_{\dl, N}(X_\dl(\o)) = \frac 1{k+1} \int_{\T} H_k( X_{\dl, N}(x; \o); \s_{\dl, N} )dx 
 \too R_{\infty, N}(X_{\rm BO}(\o))
\label{CC3}
\end{align}

\noi
as $\dl \to \infty$, for any $\o \in \O$.
As a consequence, we see that 
   $G_{\dl, N}(X_\dl(\o)) $ converges  to 
 $G_{\infty, N}(X_{\rm BO}(\o))$
  as $\dl\to\infty$, 
 for any $\o \in \O$.
Moreover, from the uniform  (in $\o$) bound~\eqref{N2}, 
we conclude that 
$ G_{\dl,N}(X_\dl)$ converges to $G_{\infty, N}(X_\BO)$ in $L^p(\O)$,
as $\dl\to\infty$.
This proves \eqref{CC4}.

By the triangle inequality, we have
\begin{align}
\begin{aligned}
\|  & G_\dl  (X_\dl)  - G_\infty(X_{\rm BO})\|_{L^p(\O)}\\
&\leq 
\| G_\dl(X_\dl)  -G_{\dl, N }(X_\dl)\|_{L^p(\O)} 
 +
\| G_{\dl, N}(X_\dl)  -G_{\infty, N }(X_\BO)\|_{L^p(\O)} \\
& \quad + \|G_{\infty, N }(X_\BO) - G_\infty(X_{\rm BO}) \|_{L^p(\O)} .
\end{aligned}
\label{CC0}
\end{align}

\noi
Then,  by first  applying \eqref{CC4} above
and then \eqref{GN0d} in Proposition \ref{PROP:Gibbs1}
 to \eqref{CC0}
 (namely, we first take 
 $\dl\to\infty$
 and then  $N\to\infty$),  we obtain
\begin{align*}
& \lim_{\dl \to \infty} \|   G_\dl  (X_\dl)  - G_\infty(X_{\rm BO})\|_{L^p(\O)}\\
& \quad \leq 
2 \lim_{N\to\infty}\Big( \sup_{2 \le \dl \le \infty} \| G_\dl(X_\dl)  -G_{\dl, N }(X_\dl)\|_{L^p(\O)} \\
& \hphantom{XXXXXXX}  +
\lim_{\dl \to \infty}
\| G_{\dl, N}(X_\dl)  -G_{\infty, N }(X_\BO)\|_{L^p(\O)} \Big)\\
& \quad = 0.
\end{align*}

\noi
This proves \eqref{CC4a}.
\end{proof}

We are now ready to show that the Gibbs measure $\rho_\dl$ in \eqref{THM1b}
converges to $\rho_\BO = \rho_\infty$ in total variation as $\dl \to \infty$.
By  the triangle inequality, we have
\begin{align}
\dtv(\rho_\dl, \rho_\BO)
\le \dtv(\rho_\dl, \rho_{\dl, N})
+
\dtv(\rho_{\dl, N}, \rho_{\infty, N})
+ 
\dtv(\rho_{\infty, N}, \rho_\BO)
\label{CX2}
\end{align}

\noi
for any $N \in \N$.
From Theorem \ref{THM:Gibbs1}\,(i)
(see also Proposition \ref{PROP:Gibbs1}), 
we have
\begin{align}
\lim_{N \to \infty} \sup_{2 \le \dl \le \infty} \dtv(\rho_{\dl, N}, \rho_\dl)  = 0.
\label{CX1}
\end{align}

\noi
Hence, it suffices to prove
\begin{align}	
\lim_{\dl \to \infty} \dtv(\rho_{\dl, N}, \rho_{\infty, N}) = 0
\label{CX3}
\end{align}

\noi
for any $N \in \N$.
Indeed, by applying \eqref{CX1} and \eqref{CX3} to \eqref{CX2}
(namely, by first taking $\dl \to \infty$ and then $N \to \infty$), we obtain
\begin{align*}
\lim_{\dl \to \infty} \dtv(\rho_\dl, \rho_\BO)
& \le \lim_{N \to \infty} \Big(\sup_{2 \le \dl \le \infty} \dtv(\rho_{\dl,N},  \rho_{\dl})
+
\lim_{\dl \to \infty} \dtv(\rho_{\dl, N}, \rho_{\infty, N})\Big)\\
& = 0.
\end{align*}

In the following, we prove \eqref{CX3} for any fixed $N \in \N$.
Fix $N \in \N$.
Then, Lemma \ref{LEM:C1}  with $p = 1$
 implies that the partition function 
 $Z_{\dl, N} = \|G_{\dl, N}(u)\|_{L^1(d\mu_\dl)}$
 of the truncated Gibbs measure $\rho_{\dl, N}$ in \eqref{GG6}
 converges to 
 the partition function 
 $Z_{\infty, N} = \|G_{\infty, N}(u)\|_{L^1(d\mu_\infty)}$
of the truncated Gibbs measure $\rho_{\infty, N}$ 
for the gBO equation as $\dl \to \infty$.
Then, from Proposition~\ref{PROP:equiv}\,(ii), 
we have
\begin{align}
\begin{split}
& \lim_{\dl \to \infty}
\sup_{A \in \mathcal{B}_{H^{-\eps}}}
| \rho_{\dl, N}(A) - \rho_{\infty, N}(A) |\\
& =  \lim_{\dl \to \infty}
\sup_{A \in \mathcal{B}_{H^{-\eps}}}
\bigg| \frac{Z_{\dl, N}}{Z_{\infty, N}} \rho_{\dl, N}(A) - \rho_{\infty, N}(A) \bigg|\\
 &\leq Z_{\infty, N} ^{-1}
 \lim_{\dl \to \infty}
\sup_{A \in \mathcal{B}_{H^{-\eps}}}
\bigg|  \int_{H^{-\e} } \ind_A(u)
\Big( G_{\dl, N}(u)  \frac {d\mu_{\dl}}{d \mu_\infty} (u) -  G_{\infty, N} (u) \Big)d \mu_\infty(u)\bigg| \\
 &\leq Z_{\infty, N} ^{-1}
 \lim_{\dl \to \infty}
 \int_{H^{-\e} } 
| G_{\dl, N}(u)  -  G_{\infty, N} (u) |\, d \mu_\infty(u) \\
 & \quad  +  Z_{\infty, N} ^{-1}
 \lim_{\dl \to \infty}
  \int_{H^{-\e}} 
G_{\dl, N}(u)\Big|  \frac {d\mu_{\dl}}{d \mu_\infty} (u) - 1\Big|\, d \mu_\infty(u).
\end{split}
\label{CX4}
\end{align}

From \eqref{WO2} and 
\eqref{CC1}, we see that $G_{\dl, N}(u)$
converges to $G_{\infty, N}(u)$ $\mu_\infty$-almost surely, as $\dl \to \infty$.
Moreover, it follows from \eqref{N2}
and Remark \ref{REM:GN1}
that 
\begin{align}
 | G_{\dl, N}(u)  -  G_{\infty, N} (u) | \le C_{N} < \infty
 \label{CX4a}
\end{align}

\noi
for any $2 \le \dl \le \infty$.
Hence, by the dominated convergence theorem, we obtain
\begin{align}
 \lim_{\dl \to \infty}
 \int_{H^{-\e} } 
| G_{\dl, N}(u)  -  G_{\infty, N} (u) |\, d \mu_\infty(u) 
= 0.
\label{CX5}
\end{align}

By Scheff\'e's theorem (Lemma 2.1 in \cite{Tsy};
see also Proposition 1.2.7 in \cite{KS}), we have
\begin{align}
\dtv(\mu_\dl, \mu_\infty) = \frac 12 
  \int_{H^{-\e}} 
\Big|  \frac {d\mu_{\dl}}{d \mu_\infty} (u) - 1\Big|\, d \mu_\infty(u).
\label{CX6}
\end{align}

\noi
Then, it follows from 
the convergence in total variation of $\mu_\dl$ to $\mu_\infty$
as $\dl \to \infty$ (Proposition~\ref{PROP:equiv}\,(iii)), \eqref{CX6}, 
and the uniform (in $\dl$) bound \eqref{N2} (and Remark \ref{REM:GN1}) for $2 \le \dl \le \infty$
that
\begin{align}
\begin{split}
&  \lim_{\dl \to \infty}
  \int_{H^{-\e}} 
G_{\dl, N}(u)\Big|  \frac {d\mu_{\dl}}{d \mu_\infty} (u) - 1\Big|\, d \mu_\infty(u)\\
& \quad \le C_N 
 \lim_{\dl \to \infty}
  \int_{H^{-\e}} 
\Big|  \frac {d\mu_{\dl}}{d \mu_\infty} (u) - 1\Big|\, d \mu_\infty(u)\\
& \quad
= 2C_N \lim_{\dl \to \infty}\dtv(\mu_\dl, \mu_\infty) \\
& \quad = 0.
\end{split}
\label{CX7}
\end{align}

\noi
Therefore, from \eqref{CX4}, \eqref{CX5}, and \eqref{CX7}, 
we conclude \eqref{CX3} and hence 
convergence in total variation of $\rho_\dl$ to $\rho_\BO$ as $\dl \to \infty$.
This concludes the proof of Theorem \ref{THM:Gibbs1}
when $k \in 2\N + 1$.

\subsection{Gibbs measures for the ILW equation: variational approach}
\label{SUBSEC:ILW1}

We conclude this section 
by presenting the proof of Theorem \ref{THM:Gibbs1}
for the $k = 2$ case, corresponding to the 
ILW equation \eqref{ILW1}.
In this case, the problem is no longer defocusing
and thus 
we need to consider the (truncated) Gibbs measures 
with a Wick-ordered $L^2$-cutoff
of the form \eqref{THM1c}
and \eqref{THM1d}.
As pointed out in Remark~\ref{REM:k2}, 
there is no need for a renormalization
on the potential energy under the current (spatial) mean-zero condition.

Fix $K > 0$ in the remaining part of this section.
Given $0 < \dl \le \infty$ and $N \in \N$, 
define
the truncated density $G_{\dl, N}^K(u)$ by 
\begin{align}
\begin{split}
G_{\dl, N}^K(u)
& =  \chi_K\bigg( \int_\T \W(u_N^2) dx \bigg) e^{-\frac 1{3} \int_\T u_N^{3} dx} \\
& =  \chi_K\bigg(\int_\T H_2(u_N; \s_{\dl, N}) dx \bigg) e^{-\frac 1{3} \int_\T u_N^{3} dx} , 
\end{split}
\label{V1}
\end{align}

\noi
where $u_N = \P_N u$
and 
$\chi_K: \R \to  [0,1]$
is  a continuous function 
such that $\chi_K(x) = 1$ for $|x| \le K$
and $\chi_K(x) = 0$ for $|x| \ge 2K$.

In view of the discussion 
in Subsections \ref{SUBSEC:MC1}
and \ref{SUBSEC:conv1}, 
Theorem \ref{THM:Gibbs1} for $k = 2$ follows
once we prove the following uniform bounds.

\begin{proposition}\label{PROP:V1}
Fix finite $p \ge 1$ and $K > 0$.
Then, given any $0 < \dl \le \infty$, we have
\begin{align*}
&\sup_{N\in \N} \| G_{\dl, N}^K(X_\dl)  \|_{L^p(\O)} 
=
\sup_{N \in \N}\| G_{\dl, N}^K(u) \|_{L^p(d\mu_\dl)}
 \le C_{p, \dl, K} < \infty.
\end{align*}

\noi
In addition, the following uniform bound holds for $2 \le \dl \le \infty$\textup{:}
\begin{align*}
 \sup_{N \in \N} \sup_{2 \le \dl \le \infty}
\| G_{\dl, N}^K(X_\dl)  \|_{L^p(\O)}
& = 
\sup_{N \in \N} \sup_{2 \le \dl \le \infty}
\| G_{\dl, N}^K(u)  \|_{L^p(d\mu_\dl)}\\
& 
 \le C_{p, K} < \infty.
\end{align*}

\end{proposition}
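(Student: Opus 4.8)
The plan is to establish Proposition~\ref{PROP:V1} by the variational approach of Seong, Tolomeo, and the second author \cite{OST}, based on the Bou\'e--Dupuis formula, rather than by the original argument of Tzvetkov~\cite{Tz10}. First I would reduce the claim to an exponential moment bound for the Gaussian field. Since $0 \le \chi_K \le 1$, we have $\big(G^K_{\dl,N}(u)\big)^p \le \chi_K\big(\int_\T \W(u_N^2)\,dx\big) e^{-\frac p3 \int_\T u_N^3\,dx} \le \ind_{\{|\int_\T \W(u_N^2)\,dx| \le 2K\}}\, e^{-\frac p3 \int_\T u_N^3\,dx}$, and hence, transferring to $\O$ via the identification $u \leftrightarrow X_\dl$, it suffices to bound
\begin{align*}
\E\bigg[ \ind_{\{|\int_\T \W(X_{\dl,N}^2)\,dx| \le 2K\}}\, e^{-\frac p3 \int_\T X_{\dl,N}^3\,dx}\bigg]
\end{align*}
uniformly in $N$, both for each fixed $0 < \dl \le \infty$ and uniformly for $2 \le \dl \le \infty$. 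Under the mean-zero condition (see Remark~\ref{REM:k2}) one has $\int_\T \W(X_{\dl,N}^2)\,dx = \|X_{\dl,N}\|_{L^2}^2 - 2\pi\s_{\dl,N}$ and $\int_\T X_{\dl,N}^3\,dx = \int_\T \W(X_{\dl,N}^3)\,dx$, so the cubic is itself a Wick power (its contributions therefore converging by Proposition~\ref{PROP:FN} with $k = 3$) and the cutoff is a shifted $L^2$-ball $\big| \|X_{\dl,N}\|_{L^2}^2 - 2\pi\s_{\dl,N}\big| \le 2K$. Realizing $X_\dl$ as a function of the underlying white noise (Definition~\ref{DEF:W}) then puts us in position to apply the Bou\'e--Dupuis variational formula.

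Writing the indicator-weighted density as $e^{-F}$ with $F = \frac p3 \int_\T X_{\dl,N}^3\,dx + \infty\cdot\ind_{\{|\int_\T\W(X_{\dl,N}^2)\,dx| > 2K\}}$, the Bou\'e--Dupuis formula reduces the desired bound to showing
\begin{align*}
\inf_{\ta}\, \E\bigg[ \frac p3 \int_\T (X_{\dl,N} + \Theta_N)^3\,dx + \frac12 \int_0^1 \|\ta_s\|_{L^2}^2\,ds\bigg] \ge -C,
\end{align*}
where the infimum is over adapted drifts $\ta$, $\Theta = I(\ta)$ is the associated drift process, $\Theta_N = \P_N \Theta$, and the expectation is on the event $\big| \|X_{\dl,N} + \Theta_N\|_{L^2}^2 - 2\pi\s_{\dl,N}\big| \le 2K$ (the integrand being $+\infty$ off it). Expanding the cube, the pure Gaussian term $\frac p3\int_\T X_{\dl,N}^3\,dx$ has zero expectation, and all $\s_{\dl,N}$-divergences cancel by the mean-zero condition. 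Moreover, using the lower bound $K_\dl(n) \ge |n| - \frac12 \ges |n|$ for $2 \le \dl \le \infty$ from \eqref{Low_Kdl} (with the convention \eqref{GX2}), the Cameron--Martin penalty controls $\|\Theta\|_{H^{1/2}}^2 \les \int_0^1 \|\ta_s\|_{L^2}^2\,ds$ with a constant independent of $\dl \in [2,\infty]$ (and, for each fixed $\dl$, a $\dl$-dependent constant via Lemma~\ref{LEM:p2}).

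It remains to estimate the three terms $3\int_\T X_{\dl,N}^2 \Theta_N\,dx$, $3\int_\T X_{\dl,N}\Theta_N^2\,dx$, and $\int_\T \Theta_N^3\,dx$. For the first two I would pair the stochastic factors against Sobolev norms of $\Theta_N$: writing $\int_\T X_{\dl,N}^2 \Theta_N\,dx = \int_\T \W(X_{\dl,N}^2)\,\Theta_N\,dx$ (again by mean-zero), one bounds this by $\|\W(X_{\dl,N}^2)\|_{H^{-\eps}}\|\Theta_N\|_{H^{1/2}}$, and $\big|\int_\T X_{\dl,N}\Theta_N^2\,dx\big| \les \|X_{\dl,N}\|_{W^{-\eps,\infty}}\,\|\Theta_N\|_{L^2}^{2-2\eps}\|\Theta_N\|_{H^{1/2}}^{2\eps}$ by a product estimate and interpolation; in both cases Young's inequality absorbs the top-order factor $\|\Theta_N\|_{H^{1/2}}^2$ into the penalty, leaving only high moments of $\|\W(X_{\dl,N}^2)\|_{H^{-\eps}}$ and $\|X_{\dl,N}\|_{W^{-\eps,\infty}}$ (both finite, uniformly in $N$ and $2 \le \dl \le \infty$, by Proposition~\ref{PROP:FN}) together with a positive power of $\|\Theta_N\|_{L^2}$. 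The genuinely dangerous term is $\frac p3\int_\T \Theta_N^3\,dx$: here I would invoke the one-dimensional Gagliardo--Nirenberg inequality $\|\Theta_N\|_{L^3}^3 \les \|\Theta_N\|_{L^2}^{5/2}\|\Theta_N\|_{\dot H^{1/2}}^{1/2}$, absorb the $\dot H^{1/2}$-factor into the penalty by Young, and tame the surviving power of $\|\Theta_N\|_{L^2}$ via the Wick-ordered $L^2$-cutoff. Indeed, on the cutoff event,
\begin{align*}
\|\Theta_N\|_{L^2}^2 = \|X_{\dl,N} + \Theta_N\|_{L^2}^2 - \|X_{\dl,N}\|_{L^2}^2 - 2\jb{X_{\dl,N}, \Theta_N}_{L^2_x} \le 2K + \Big|\int_\T \W(X_{\dl,N}^2)\,dx\Big| + 2\|X_{\dl,N}\|_{H^{-1/2}}\|\Theta_N\|_{H^{1/2}},
\end{align*}
where $\|X_{\dl,N}\|_{H^{-1/2}}^2 \les \sum_{n \ne 0}\jb{n}^{-2}|g_n|^2$ and $\big|\int_\T \W(X_{\dl,N}^2)\,dx\big|$ both lie in a fixed Wiener chaos and so have finite moments, uniformly in $N$ and $2 \le \dl \le \infty$; plugging this in and using Young once more to absorb $\|\Theta_N\|_{H^{1/2}}$ into the penalty closes the estimate, yielding the lower bound $-C$ with $C$ independent of $N$, of the drift $\ta$, and of $\dl \in [2,\infty]$.

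For each fixed $0 < \dl \le \infty$ the same argument, with $\dl$-dependent constants from the equivalence $K_\dl(n) \sim_\dl |n|$ of Lemma~\ref{LEM:p2}, gives the first (pointwise-in-$\dl$) bound, the $\dl = \infty$ endpoint reducing to the BO computation of \cite{Tz10, OST}. I expect the main obstacle to be the treatment of the cubic drift term $\int_\T \Theta_N^3\,dx$: it is exactly ``$\dot H^{1/2}$-critical'', so there is no room to spare, and the absorption succeeds only because the cutoff is imposed on the \emph{Wick-ordered} $L^2$-norm (the unrenormalized $L^2$-norm being logarithmically divergent on the support of $\mu_\dl$), while simultaneously every implicit constant must be carried through independently of both $N$ and $\dl \in [2,\infty]$ — this uniformity being precisely what the $\dl$-uniform stochastic bounds of Proposition~\ref{PROP:FN} and the lower bound \eqref{Low_Kdl} provide. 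Finally, once Proposition~\ref{PROP:V1} is in hand, the $L^p(d\mu_\dl)$-convergence of the truncated densities $G^K_{\dl,N}$ to their limit (and hence the total variation convergence statements of Theorem~\ref{THM:Gibbs1} for $k = 2$, in \eqref{THM1c}--\eqref{THM1d}) follows from this uniform bound together with convergence in probability — obtained from Proposition~\ref{PROP:FN} and the continuity of $\chi_K$ via the (uniform) continuous mapping theorem — exactly as in Step~2 of the proof of Proposition~\ref{PROP:Gibbs1}.
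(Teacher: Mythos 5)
Your global strategy --- bounding $\chi_K$ from above, reducing to a uniform exponential moment, running the Bou\'e--Dupuis formula with the drift penalty controlling $\|\Theta_N\|_{H^{1/2}}^2$ uniformly in $2\le\dl\le\infty$ via \eqref{Low_Kdl}, handling the two mixed terms with the uniform stochastic bounds of Proposition \ref{PROP:FN}, and concluding the convergence statements as in Step 2 of the proof of Proposition \ref{PROP:Gibbs1} --- is the paper's strategy. The decisive difference is how the Wick-ordered $L^2$-cutoff enters the variational estimate, and this is where your argument has a genuine gap. The inequality you invoke for the cubic drift term, $\|\Theta_N\|_{L^3}^3\les\|\Theta_N\|_{L^2}^{5/2}\|\Theta_N\|_{\dot H^{1/2}}^{1/2}$, is false: those exponents belong to the classical one-dimensional Gagliardo--Nirenberg inequality with a full derivative, $\|\Theta_N\|_{L^3}^3\les\|\Theta_N\|_{L^2}^{5/2}\|\dx\Theta_N\|_{L^2}^{1/2}$ (test your version on $L^2$-normalized functions concentrated at frequency $N$: the left-hand side grows like $N^{1/2}$ while your right-hand side grows only like $N^{1/4}$). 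The correct estimate at $H^{1/2}$ regularity is the critical one, $\|\Theta_N\|_{L^3}^3\les\|\Theta_N\|_{L^2}^{2}\|\Theta_N\|_{H^{1/2}}$, so after Young one must pay $\frac1{100}\|\Theta_N\|_{H^{1/2}}^2 + C_0p^2\|\Theta_N\|_{L^2}^4$, exactly as in Lemma \ref{LEM:Dr2}\,(iii).

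With the correct estimate, the mechanism you propose for taming $\|\Theta_N\|_{L^2}$ does not close. The hard cutoff constrains $\|X_{\dl,N}+\Theta_N\|_{L^2}$, not $\|\Theta_N\|_{L^2}$, and your transfer inequality leaves the cross term $2|\jb{X_{\dl,N},\Theta_N}_{L^2_x}|\les\|X_{\dl,N}\|_{H^{-1/2}}\|\Theta_N\|_{H^{1/2}}$; feeding this into $C_0p^2\|\Theta_N\|_{L^2}^4$ (or into $p\|\Theta_N\|_{L^2}^2\|\Theta_N\|_{H^{1/2}}$ directly) produces a term of the form $p^2\|X_{\dl,N}\|_{H^{-1/2}}^2\|\Theta_N\|_{H^{1/2}}^2$, i.e.\ the full critical power of $\|\Theta_N\|_{H^{1/2}}$ multiplied by an unbounded random coefficient, which cannot be absorbed into the deterministic Cameron--Martin penalty by ``using Young once more''; and replacing the cross term by $\|X_{\dl,N}\|_{L^2}^2$ instead costs $2\pi\s_{\dl,N}\sim\log N$ and destroys the uniformity in $N$. (A secondary point: Lemma \ref{LEM:var3} is stated for $F$ with finite moments, so a constraint functional taking the value $+\infty$ requires an additional approximation argument.) This is precisely why the paper does not impose the cutoff as a constraint but dominates it, via \eqref{H6} with $\g=2$, by the tamed density $\GG_{\dl,N}^K$ in \eqref{V8}: in the variational problem the taming contributes the positive term $\frac{Ap}{4}\|\Theta_N\|_{L^2}^4$ (Lemma \ref{LEM:Dr2}\,(ii)) with $A=A(p)$ large at our disposal, and this is what beats $C_0p^2\|\Theta_N\|_{L^2}^4$ coming from the cubic drift term. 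Your route could conceivably be repaired by pairing $\jb{X_{\dl,N},\Theta_N}$ at regularity $H^{-1/2+\eps}\times H^{1/2-\eps}$ and interpolating so that only a subcritical power of $\|\Theta_N\|_{H^{1/2}}$ survives, but that additional step (or the paper's large-$A$ taming) is the missing ingredient; as written, the key absorption rests on an incorrect inequality.
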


Let us first discuss how to conclude Theorem \ref{THM:Gibbs1}
when $k = 2$, by assuming 
Proposition~\ref{PROP:V1}.
Define the limiting density $G^K_\dl(u)$ by 
\begin{align}
G_{\dl}^K(u)
=  \chi_K\bigg( \int_\T \W(u^2) dx \bigg)e^{-\frac 1{3} \int_\T u^{3} dx}. 
\label{V4}
\end{align}

\noi
Note that Proposition \ref{PROP:FN}\,(i)
guarantees that 
$\int_\T \W(u^2) dx $
and $\int_\T u^{3} dx = \int_\T \W(u^{3}) dx$ in~\eqref{V4}
exist as the limits in $L^p(\mu_\dl)$ of the truncated versions.
Hence, the truncated density
$
G_{\dl, N}^K(u)$ converges in measure to 
the limiting density $G^K_\dl(u)$ in \eqref{V4}.
Hence, once we prove
Proposition \ref{PROP:V1}, 
we can repeat the argument in Step 2 in the proof of 
Proposition \ref{PROP:Gibbs1}
to show the following convergence results.

\begin{corollary}
Let $0 < \dl \le \infty$ and $1 \le p < \infty$.
Then, 
$G_{\dl, N}^K(X_\dl)$ converges to $G_{\dl}^K(X_\dl)$
in $L^p(\O)$ as $N \to \infty$.
Namely, 
we have
\begin{align*} 
\lim_{N\to\infty} \| G_{\dl, N}^K(X_\dl) - G_{\dl}^K(X_\dl) \|_{L^p(\O)}  =0.
\end{align*}

\noi
Furthermore, 
the convergence is uniform in $2 \le \dl \le \infty$\textup{:}
\begin{align*}
\lim_{N\to\infty}\sup_{2 \le \dl \le \infty} \| G_{\dl, N}^K(X_\dl) - G_{\dl}^K(X_\dl) \|_{L^p(\O)} =0.
\end{align*}

\end{corollary}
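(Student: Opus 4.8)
The plan is to argue exactly as in Step~2 of the proof of Proposition~\ref{PROP:Gibbs1}, with the uniform $L^p$-bounds of Proposition~\ref{PROP:Gibbs1} replaced by those of Proposition~\ref{PROP:V1}. The two inputs are: (a) the bounds $\sup_N\|G_{\dl,N}^K(X_\dl)\|_{L^{2p}(\O)}\le C_{2p,\dl,K}$ for each fixed $0<\dl\le\infty$ and $\sup_N\sup_{2\le\dl\le\infty}\|G_{\dl,N}^K(X_\dl)\|_{L^{2p}(\O)}\le C_{2p,K}$, both supplied by Proposition~\ref{PROP:V1}; and (b) the soft convergence $G_{\dl,N}^K(X_\dl)\to G_\dl^K(X_\dl)$ in probability. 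For (b), recall from the discussion preceding the statement that Proposition~\ref{PROP:FN}, applied to the quadratic and cubic Wick powers, gives $\int_\T\W(X_{\dl,N}^2)\,dx\to\int_\T\W(X_\dl^2)\,dx$ and $\int_\T X_{\dl,N}^3\,dx=\int_\T\W(X_{\dl,N}^3)\,dx\to\int_\T X_\dl^3\,dx$ in $L^p(\mu_\dl)$, hence in probability; since $\chi_K$ and $t\mapsto e^{-t/3}$ are continuous, the continuous mapping theorem then yields $G_{\dl,N}^K(X_\dl)\to G_\dl^K(X_\dl)$ in probability for each fixed $0<\dl\le\infty$.

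Having (a) and (b), for each fixed $\dl$ I would set $A_{\dl,N,\eps}:=\{\,|G_{\dl,N}^K(X_\dl)-G_\dl^K(X_\dl)|\le\eps\,\}$, so that $\PP(A_{\dl,N,\eps}^c)\to0$ as $N\to\infty$. Splitting the $L^p(\O)$-norm over $A_{\dl,N,\eps}$ and its complement, applying Cauchy--Schwarz on the complement, and invoking the uniform $L^{2p}$-bound, one gets $\|G_{\dl,N}^K(X_\dl)-G_\dl^K(X_\dl)\|_{L^p(\O)}\le\eps+C_{2p,\dl,K}\,\PP(A_{\dl,N,\eps}^c)^{1/(2p)}\le 2\eps$ for all $N$ large; letting $\eps\to0$ gives the first assertion.

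For the uniform statement over $2\le\dl\le\infty$, the only extra work is to make the convergence in probability in (b) \emph{uniform} in $\dl$. I would apply the uniform continuous mapping theorem (Lemma~\ref{LEM:KFC}) separately to the two factors: the proof of Proposition~\ref{PROP:FN} (see \eqref{FN3} and \eqref{GD5}) provides the convergence of $\int_\T\W(X_{\dl,N}^2)\,dx$ and of $\int_\T X_{\dl,N}^3\,dx$ in probability with a rate uniform in $2\le\dl\le\infty$, and the tightness hypothesis \eqref{Z_tight} of Lemma~\ref{LEM:KFC} holds — automatically for the quadratic factor since $\chi_K$ is bounded, and for the cubic factor because of the uniform bound $\sup_{2\le\dl\le\infty}\|\int_\T\W(X_\dl^3)\,dx\|_{L^p(\O)}<\infty$ coming from the $N=\infty$ case of Proposition~\ref{PROP:FN}. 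Hence $\chi_K\big(\int_\T\W(X_{\dl,N}^2)\,dx\big)$ and $e^{-\frac13\int_\T X_{\dl,N}^3\,dx}$ converge in probability to their respective limits uniformly in $2\le\dl\le\infty$; writing the product difference as $a_N(b_N-b)+b(a_N-a)$ with $a_N\in[0,1]$ bounded and $\{b\}_{2\le\dl\le\infty}$ tight, one concludes that $G_{\dl,N}^K(X_\dl)\to G_\dl^K(X_\dl)$ in probability uniformly in $2\le\dl\le\infty$. Then $\sup_{2\le\dl\le\infty}\PP(A_{\dl,N,\eps}^c)\to0$, and repeating the Cauchy--Schwarz splitting with the uniform-in-$\dl$ bound $C_{2p,K}$ yields $\sup_{2\le\dl\le\infty}\|G_{\dl,N}^K(X_\dl)-G_\dl^K(X_\dl)\|_{L^p(\O)}\le 2\eps$ for all $N$ large.

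The genuinely substantive ingredient is not any of the above but Proposition~\ref{PROP:V1} itself — the variational argument as in \cite{OST} — which is assumed here. Within the proof of the corollary, the one point requiring care is that $G_{\dl,N}^K$ is \emph{not} uniformly bounded: the Wick-ordered $L^2$-cutoff $\chi_K$ controls only the quadratic Wick mass $\int_\T\W(X_{\dl,N}^2)\,dx$, not the cubic exponential $e^{-\frac13\int_\T X_{\dl,N}^3\,dx}$, so dominated convergence is unavailable and one genuinely needs the $L^{2p}$-bound of Proposition~\ref{PROP:V1} — together, for the uniform-in-$\dl$ statement, with the tightness inputs required to invoke Lemma~\ref{LEM:KFC} — in order to upgrade convergence in probability to $L^p(\O)$-convergence.
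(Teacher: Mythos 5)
Your proposal is correct and follows essentially the same route as the paper: the paper derives the corollary precisely by combining the convergence in measure of the truncated densities (via Proposition~\ref{PROP:FN} and the (uniform) continuous mapping theorem) with the uniform $L^{2p}$-bounds of Proposition~\ref{PROP:V1}, and then repeating the good-set/bad-set Cauchy--Schwarz splitting from Step~2 of the proof of Proposition~\ref{PROP:Gibbs1}. Your extra care in handling the product structure $\chi_K(\cdot)\,e^{-\frac13\int_\T u_N^3\,dx}$ and the tightness inputs for Lemma~\ref{LEM:KFC} simply fills in details the paper leaves implicit, and is accurate.
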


This proves an analogue of Theorem \ref{THM:Gibbs1}\,(i) when $ k = 2$.
The equivalence
of the Gibbs measure $\rho_\dl$ in \eqref{THM1d}, $0 <  \dl < \infty$, 
and $\rho_\BO$ in \eqref{Gx3}
follows from (i) the equivalence
of the Gibbs measure $\rho_\dl$ 
and the Gaussian measure with the Wick-ordered $L^2$-cutoff:
\begin{align*}
 \chi_K\bigg( \int_\T \W(u^2) dx \bigg)d \mu_\dl(u)
\end{align*}

\noi
(including $\dl = \infty$ with the understanding that $\rho_\infty = \rho_\BO$),
and (ii) the equivalence of the base Gaussian measure 
$\mu_\dl$, $0 <  \dl \le \infty$ (Proposition \ref{PROP:equiv}\,(ii)).

Finally, we discuss  convergence of the Gibbs measure $\rho_\dl$
to $\rho_\BO$
in the deep-water limit ($\dl \to \infty$).
In the defocusing case discussed in the previous subsection, 
the bound \eqref{N2} provided 
the uniform (in $2\le \dl \le \infty$ and $\o \in \O)$
bound on the truncated density $G_{\dl,  N}(u)$;
see the discussion 
 after \eqref{CC3}.
 See also 
\eqref{CX4a} and 
\eqref{CX7}.
In the current non-defocusing case, however, 
the bound  \eqref{N2} is not available to us. 
Nonetheless, 
in view of \eqref{WO1}
and  \eqref{Wick1a} with Lemma \ref{LEM:p2}, 
the Wick-ordered $L^2$-cutoff in \eqref{V1} with \eqref{WO1}
implies 
\begin{align}
\bigg|\int_\T u_N^2 dx \bigg| \leq \s_{\dl, N} + 2K 
\le C_{N, K} < \infty, 
\label{V5}
\end{align}

\noi
for any $2 \le \dl \le \infty$ and $N \in \N$, 
where $C_{N, K}$ is independent of $ 2\le \dl \le \infty$.
Then, by Sobolev's inequality with \eqref{V5}, we have
\begin{align}
\bigg|\int_\T u_N^3 dx \bigg|
\les \| u_N \|_{H^{\frac 16}}^3
\le N^\frac 12 C_{N, K}^\frac 32, 
\label{V6}
\end{align}

\noi
which provides a bound on the truncated density
$G_{\dl,  N}^K(u)$ in \eqref{V1}, uniformly in $2 \le \dl \le \infty$.
With this bound on 
$G_{\dl,  N}^K(u)$, 
we can  repeat the argument presented in Subsection \ref{SUBSEC:conv1}
to conclude the desired convergence in total variation of $\rho_\dl$ to $\rho_\BO$ as $\dl \to \infty$.

In the remaining part of this section, we present the proof of 
 Proposition \ref{PROP:V1}.
Given $0 < \dl \le \infty$ and $N \in \N$, set 
\begin{align}
\begin{split}
\RR_{\dl, N} (u)
&=   \frac13  \int_{\T }  u_N^3    dx
+ A \, \bigg| \int_{\T } \W(  u_N^2 ) dx\bigg|^2 , 
\end{split}
\label{V7}
\end{align}

\noi
where 
$\W(  u_N^2 ) = \W_{\dl, N}(  u_N^2 ) = H_2(u_N; \s_{\dl, N})$.
Then, 
as in \cite{OST}, 
we consider the following truncated density:
\begin{align}
\begin{split}
\GG_{\dl, N}^K(u)
& = e^{-\RR_{\dl, N}(u)}= 
 e^{-\frac 1{3} \int_\T u_N^3  dx
- A| \int_\T \W(u_N^2) dx |^2}
\end{split}
\label{V8}
\end{align}

\noi
for some suitable $A > 0$.
Noting that
\begin{align}
\chi_K(x) \le \exp\big( -  A |x|^\gamma\big) \exp(A 2^\g K^\g)
\label{H6}
\end{align}

\noi
for any $K, A , \g > 0$, 
we have
\begin{align*}
G_{\dl, N}^K(u) \le C_{A, K} \cdot \GG_{\dl, N}^K(u).
\end{align*}

\noi
Hence, 
Proposition \ref{PROP:V1} follows once we prove the following 
uniform bounds
on $\GG_{\dl, N}^K(u)$.

\begin{proposition}\label{PROP:V2}

Fix finite $p \ge 1$.
Then, there exists  $A_0  = A_0(p)>0 $ such that
\begin{align}
&\sup_{N\in \N} \| \GG_{\dl, N}^K(X_\dl)  \|_{L^p(\O)} 
=
\sup_{N \in \N}\| \GG_{\dl, N}^K(u) \|_{L^p(d\mu_\dl)}
 \le C_{p, \dl, K, A} < \infty
 \label{V10}
\end{align}

\noi
for any  $0 < \dl \le \infty$, 
  $K > 0$, 
 and $A \ge A_0$.
In addition, the following uniform bound holds for $2 \le \dl \le \infty$\textup{:}
\begin{align}
\begin{split}
 \sup_{N \in \N} \sup_{2 \le \dl \le \infty}
\| \GG_{\dl, N}^K(X_\dl)  \|_{L^p(\O)}
& = 
\sup_{N \in \N} \sup_{2 \le \dl \le \infty}
\| \GG_{\dl, N}^K(u)  \|_{L^p(d\mu_\dl)}\\
& 
 \le C_{p, K, A} < \infty
\end{split}
 \label{V11}
\end{align}

\noi
for any    $K > 0$  and $A \ge A_0$.

\end{proposition}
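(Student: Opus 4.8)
I would prove both \eqref{V10} and \eqref{V11} by the variational (Bou\'e--Dupuis) method, following the approach of \cite{OST}; the two bounds differ only in which lower bound on $K_\dl(n)$ is used, so I describe the argument for the uniform bound \eqref{V11} over $2 \le \dl \le \infty$ and indicate at the end the modification giving \eqref{V10}. Since $\GG_{\dl, N}^K(X_\dl) = e^{-\RR_{\dl, N}(X_\dl)} \ge 0$ and, by \eqref{V7}, $\RR_{\dl, N}$ does not in fact depend on $K$, it suffices to show
\begin{align}
\sup_{N \in \N} \, \sup_{2 \le \dl \le \infty} \E\big[ e^{- p \RR_{\dl, N}(X_\dl)} \big] < \infty
\label{PV2-1}
\end{align}
for a suitable $A = A_0(p)$. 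For each fixed $N$ the quartic term $A | \int_\T \W(u_N^2)\, dx |^2$ makes $e^{-p\RR_{\dl, N}}$ integrable (a quartic dominates a cubic in the finitely many Gaussian variables $\{g_n\}_{|n| \le N}$ on which $\RR_{\dl, N}(X_\dl)$ depends), so the content of \eqref{PV2-1} is the uniformity. The plan is to realize $X_\dl$ as the value at time $1$ of a cylindrical Wiener process on $L^2(\T)$ smoothed by the Fourier multiplier $n \in \Z^\ast \mapsto K_\dl(n)^{-1/2}$ (with the convention \eqref{GX2} when $\dl = \infty$) and to apply the Bou\'e--Dupuis variational formula, which writes $-\log \E[e^{-p\RR_{\dl, N}(X_\dl)}]$ as the infimum, over drift processes $v$, of $\E\big[ p\RR_{\dl, N}(X_\dl + \Theta) + \tfrac12 \int_0^1 \| v(t) \|_{L^2}^2\, dt \big]$, where $\Theta$ is the corresponding drift term. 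This reduces \eqref{PV2-1} to the uniform lower bound
\begin{align}
\E\Big[ p\RR_{\dl, N}(X_\dl + \Theta) + \tfrac12 \int_0^1 \| v(t) \|_{L^2}^2\, dt \Big] \ge - C
\label{PV2-2}
\end{align}
over all $N$, all $2 \le \dl \le \infty$, and all drifts. The structural input here is that $K_\dl(n) \gtrsim |n|$ uniformly for $\dl \ge 2$ by \eqref{Low_Kdl}; hence the smoothing multiplier maps $L^2(\T)$ into $H^{\frac12}(\T)$ with a $\dl$-independent norm, so that $\Theta$ is mean-zero and satisfies $\| \Theta \|_{H^{1/2}}^2 \lesssim \int_0^1 \| v(t) \|_{L^2}^2\, dt$ with an implicit constant independent of $N$ and of $\dl \ge 2$.

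Next I would expand $\RR_{\dl, N}(X_\dl + \Theta)$. With $Y = X_\dl$, $Y_N = \P_N Y = X_{\dl, N}$, and $\Theta_N = \P_N \Theta$, the Hermite addition formula \eqref{Herm3} and the mean-zero condition (cf.\ Remark~\ref{REM:k2}) give
\begin{align}
\int_\T (Y_N + \Theta_N)^3\, dx = \int_\T \W(Y_N^3)\, dx + 3\int_\T \W(Y_N^2)\, \Theta_N\, dx + 3\int_\T Y_N\, \Theta_N^2\, dx + \int_\T \Theta_N^3\, dx,
\label{PV2-3}
\end{align}
together with $\int_\T \W((Y_N + \Theta_N)^2)\, dx = \int_\T \W(Y_N^2)\, dx + 2\int_\T Y_N\, \Theta_N\, dx + \| \Theta_N \|_{L^2}^2$. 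The purely Gaussian pieces are harmless: $\int_\T \W(Y_N^3)\, dx$ has zero mean, while $\int_\T \W(Y_N^2)\, dx$, $\| \W(Y_N^2) \|_{H^{-\e}}$, and $\| Y_N \|_{H^{-\e}}$ all have finite moments of every order, uniformly in $N$ and $2 \le \dl \le \infty$, by Proposition~\ref{PROP:FN}\,(i) and \eqref{Low_Kdl}. The mixed terms $\int_\T \W(Y_N^2) \Theta_N\, dx$, $\int_\T Y_N \Theta_N^2\, dx$, and $\int_\T Y_N \Theta_N\, dx$ I would bound by an $H^{-\e}$--$H^\e$ duality pairing, a product estimate for $\Theta_N^2$, and interpolation of $\| \Theta_N \|_{H^\e}$ between $\| \Theta_N \|_{L^2}$ and $\| \Theta_N \|_{H^{1/2}}$; Young's inequality then splits each of them into (i) a random quantity with finite, uniformly bounded expectation, (ii) an arbitrarily small multiple of $\| \Theta_N \|_{H^{1/2}}^2$, and (iii) an arbitrarily small multiple of $\| \Theta_N \|_{L^2}^4$.

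The decisive term will be the cubic drift term $\int_\T \Theta_N^3\, dx$, which by Gagliardo--Nirenberg obeys $|\int_\T \Theta_N^3\, dx| \le \| \Theta_N \|_{L^3}^3 \lesssim \| \Theta_N \|_{L^2}^2\, \| \Theta_N \|_{H^{1/2}}$. The role of the auxiliary quartic is that, from the identity above and the elementary inequality $(c + r)^2 \ge \tfrac12 c^2 - 2 r^2$,
\begin{align}
A \Big| \int_\T \W((Y_N + \Theta_N)^2)\, dx \Big|^2 \ge \tfrac12 A\, \| \Theta_N \|_{L^2}^4 - C A \bigg( \Big| \int_\T \W(Y_N^2)\, dx \Big|^2 + \Big| \int_\T Y_N \Theta_N\, dx \Big|^2 \bigg),
\label{PV2-4}
\end{align}
and the two subtracted terms are controlled exactly as the mixed terms above. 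Collecting \eqref{PV2-3}, \eqref{PV2-4}, and $\tfrac12 \int_0^1 \| v(t) \|_{L^2}^2\, dt \ge c\, \| \Theta \|_{H^{1/2}}^2 \ge c\, \| \Theta_N \|_{H^{1/2}}^2$, one is left (pointwise) with a bound of the form
\begin{align}
p\RR_{\dl, N}(Y + \Theta) + \tfrac12 \int_0^1 \| v(t) \|_{L^2}^2\, dt \ \ge\ \tfrac14 A\, \| \Theta_N \|_{L^2}^4 + \tfrac{c}{2}\, \| \Theta \|_{H^{1/2}}^2 - C p\, \| \Theta_N \|_{L^2}^2\, \| \Theta_N \|_{H^{1/2}} - Q,
\label{PV2-5}
\end{align}
where $Q \ge 0$ is random with $\sup_N \sup_{2 \le \dl \le \infty} \E[Q] < \infty$. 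One further Young's inequality, $C p\, \| \Theta_N \|_{L^2}^2\, \| \Theta_N \|_{H^{1/2}} \le \tfrac14 A\, \| \Theta_N \|_{L^2}^4 + \tfrac{C^2 p^2}{A}\, \| \Theta_N \|_{H^{1/2}}^2$, shows that choosing $A = A_0(p) \ge 2 C^2 p^2/c$ makes the right-hand side of \eqref{PV2-5} at least $-Q$, so that taking expectations gives \eqref{PV2-2} with $C = \E[Q]$, whence \eqref{PV2-1} and thus \eqref{V11}.

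The main obstacle is precisely this final balancing: the cubic drift term is only barely dominated by the quartic plus the cost, so one has to track the constants in each Young and interpolation step to certify that a single threshold $A_0(p)$ works simultaneously for all $N \in \N$ and all $\dl \in [2, \infty]$, with the uniformity in $\dl$ resting entirely on the $\dl$-independent bound $K_\dl(n) \gtrsim |n|$ of Lemma~\ref{LEM:p2}. The bound \eqref{V10} then follows from the same argument with \eqref{Low_Kdl} replaced by the $\dl$-dependent equivalence $K_\dl(n) \sim_\dl |n|$ (Lemma~\ref{LEM:p2}, \eqref{HH1a}), valid for each fixed $0 < \dl \le \infty$; only the implicit constants change, while the threshold $A_0$ remains dependent on $p$ alone.
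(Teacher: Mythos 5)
Your proposal is correct and follows essentially the same route as the paper: the Bou\'e--Dupuis formula applied to $p\RR_{\dl,N}$, the Hermite expansion of $\RR_{\dl,N}(Y+\Theta)$, pathwise absorption of the mixed and cubic drift terms into small multiples of $\|\Theta_N\|_{H^{1/2}}^2$ and $\|\Theta_N\|_{L^2}^4$ plus Gaussian quantities with uniformly bounded moments, the quartic taming term and drift cost as the coercive terms, and the choice $A = A_0(p)$ large, with uniformity in $2 \le \dl \le \infty$ resting on $K_\dl(n) \gtrsim |n|$ and the uniform Wick-power bounds. The only cosmetic difference is that you sketch direct proofs of the pathwise estimates that the paper imports as Lemma~\ref{LEM:Dr2} from \cite{OST}.
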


As mentioned in the introduction, 
we employ the variational approach,  introduced by Barashkov and Gubinelli \cite{BG}, 
to prove Proposition \ref{PROP:V2}.
In particular, 
we follow closely the argument in \cite{OST}, 
where the $\dl = \infty$ case was treated via the variational approach.
See also 
\cite{GOTW, OSRW2,  OOT1, bring, OOT2}
for recent works on dispersive PDEs, where the variational approach played a crucial role.

Let us first introduce some notations.
Let $W(t)$ be a cylindrical Brownian motion in 
\begin{align*}
L^2_0(\T) = \P_{\ne 0} L^2(\T)
\end{align*}

\noi
of mean-zero functions on $\T$, 
where $\P_{\ne 0}$ denotes the projection onto the non-zero frequencies.
Namely, we have
\begin{align}
W(t) = \frac 1{2\pi}\sum_{n \in \Z^*} B_n(t) e_n,
\label{P1}
\end{align}

\noi
where  
$\{B_n\}_{n \in \Z^*}$ is a sequence of mutually independent complex-valued\footnote{By convention, we normalize $B_n$ such that $\text{Var}(B_n(t)) = 2\pi t$.}
Brownian motions such that 
$\cj{B_n}= B_{-n}$, $n \in \Z^*$. 
Then, we define a centered Gaussian process $Y_\dl (t)$
by 
\begin{align}
 Y_\dl  (t)
=  ( \Gdl \dx)^{-\frac 12}    W(t), 
\label{P2}
\end{align}

\noi
where $({\Gdl \dx})^{-\frac 12}$ is the Fourier multiplier operator
with the multiplier $(K_\dl(n))^{-\frac 12}$
with $K_\dl (n)$ as in \eqref{GG4}.
In view of \eqref{GG3}, we have 
$\L(Y_\dl(1)) = \mu_\dl$.
Given $N \in \N$, 
we set   $Y_{\dl,N} = \P_NY_\dl $.
Then, from \eqref{Wick1a}, 
we have  
\[
\E [Y_{\dl, N}^2(1)] = \s_{\dl,N} \sim_\dl \log (N+1).
\]

Next, we recall the  Bou\'e-Dupuis variational formula. 
Let $\Ha$ denote the collection of drifts, 
which are progressively measurable processes 
belonging to 
$L^2([0,1]; L^2_0(\T))$, $\PP$-almost surely. 
We now state the  Bou\'e-Dupuis variational formula \cite{BD, Ust}.
See, in  particular, Theorem 7 in~\cite{Ust}.

\begin{lemma}\label{LEM:var3}
Given $0 < \dl \le \infty$, 
let $Y_\dl$ be as in \eqref{P2}.
Fix $N \in \N$.
Suppose that  $F:C^\infty(\T) \to \R$
is measurable such that $\E\big[|F(Y_{\dl, N}(1))|^p\big] < \infty$
and $\E\big[|e^{-F(Y_{\dl, N} (1))}|^q \big] < \infty$ for some $1 < p, q < \infty$ with $\frac 1p + \frac 1q = 1$.
Then, we have
\begin{align}
- \log \E\Big[e^{-F( Y_{\dl, N}(1))}\Big]
= \inf_{\dr \in \mathbb H_a}
\E\bigg[ F( Y_{\dl,N}(1) +  \P_N I_{\dl}(\dr)(1)) + \frac{1}{2} \int_0^1 \| \dr(t) \|_{L^2_x}^2 dt \bigg], 
\label{P3}
\end{align}

\noi
where  $I_{\dl}(\dr)$ is  defined by 
\begin{align*}
 I_{\dl}  (\dr)(t) = \int_0^t (\Gdl \dx)^{-\frac 12}     \dr(t') dt'
\end{align*}

\noi
and the expectation $\E = \E_\PP$
is an 
expectation with respect to the underlying probability measure~$\PP$.

\end{lemma}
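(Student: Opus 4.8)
The plan is to deduce the identity \eqref{P3} from the classical Bou\'e--Dupuis variational formula for the cylindrical Brownian motion $W$ in \eqref{P1} (Theorem~7 in \cite{Ust}; see also \cite{BD}), by transferring the drift from the $Y_\dl$-variable to the $W$-variable. Recall that the classical formula asserts that, for any measurable functional $G$ on the path space of $W$ with $\E\big[|G(W)|^p\big]<\infty$ and $\E\big[|e^{-G(W)}|^q\big]<\infty$,
\begin{align*}
-\log\E\big[e^{-G(W)}\big]
= \inf_{\dr\in\Ha}\E\bigg[ G\Big(W + \int_0^\cdot \dr(t')\,dt'\Big) + \frac12\int_0^1\|\dr(t)\|_{L^2_x}^2\,dt\bigg].
\end{align*}
First I would observe that, by \eqref{P2} and $Y_{\dl,N}=\P_N Y_\dl$, we have $Y_{\dl,N}(1) = T_{\dl,N}\big(W(1)\big)$, where $T_{\dl,N}:=\P_N(\Gdl\dx)^{-\frac12}$ is the Fourier multiplier operator with symbol $(K_\dl(n))^{-\frac12}\ind_{0<|n|\le N}$; by Lemma~\ref{LEM:p2} one has $K_\dl(n)>0$ for every $n\in\Z^\ast$, so $T_{\dl,N}$ is a bounded, finite-rank operator on $L^2_0(\T)$, and in particular $Y_{\dl,N}(1)$ is almost surely a (smooth) trigonometric polynomial.

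Next I would apply the classical formula to the functional $G$ defined by $G(W):=F\big(\P_N(\Gdl\dx)^{-\frac12}W(1)\big)=F\big(Y_{\dl,N}(1)\big)$, which depends on $W$ only through finitely many Fourier modes of $W(1)$; hence $G$ is measurable on the path space and, since $G(W)=F(Y_{\dl,N}(1))$, it inherits from $F$ the two integrability hypotheses in the statement. For any $\dr\in\Ha$, linearity of $T_{\dl,N}$ together with the fact that $\P_N$, $(\Gdl\dx)^{-\frac12}$, and the time integral commute gives
\begin{align*}
G\Big(W + \int_0^\cdot\dr(t')\,dt'\Big)
& = F\Big( \P_N(\Gdl\dx)^{-\frac12}W(1) + \P_N\!\int_0^1 (\Gdl\dx)^{-\frac12}\dr(t)\,dt\Big) \\
& = F\big(Y_{\dl,N}(1) + \P_N I_\dl(\dr)(1)\big),
\end{align*}
with $I_\dl(\dr)(1)=\int_0^1(\Gdl\dx)^{-\frac12}\dr(t)\,dt$ as in the statement. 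Since the cost term and the law of $W$ are unaffected and $\E[e^{-G(W)}]=\E[e^{-F(Y_{\dl,N}(1))}]$, substituting this into the classical formula yields exactly \eqref{P3}.

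The only point requiring care is checking that the classical Bou\'e--Dupuis formula of \cite{Ust} genuinely applies to $G=F\circ T_{\dl,N}$; but as $T_{\dl,N}$ has finite-dimensional range, $G$ is as regular as $F$, and the integrability conditions imposed in the statement are precisely those required, so there is no real obstacle here---the substance of the argument is the bookkeeping of the drift transformation above. Alternatively, one may reduce directly to the finite-dimensional Bou\'e--Dupuis formula for the $O(N)$ real Gaussian coordinates on which $F(Y_{\dl,N}(1))$ depends, restricting without loss to drifts supported on the frequencies $\{0<|n|\le N\}$ since a drift on higher modes only increases the cost without changing $F$.
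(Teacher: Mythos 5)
Your argument is correct: the paper itself gives no proof of this lemma, simply quoting it as the Bou\'e--Dupuis formula from \cite{BD} and Theorem 7 of \cite{Ust}, and your derivation—applying the classical formula to the functional $G(W)=F\big(\P_N(\Gdl\dx)^{-\frac12}W(1)\big)$ and transferring the drift through the bounded finite-rank multiplier—is exactly the routine reduction that citation is intended to cover. The bookkeeping of the shift, the inheritance of the integrability hypotheses, and the remark that one may equivalently restrict to drifts supported on the frequencies $\{0<|n|\le N\}$ are all in order.
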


\begin{remark}\rm
(i) 
As far as the proof of Proposition \ref{PROP:V2} is concerned, 
we only need to work with $Y_{\dl, N}$ evaluated at time $t = 1$.
As such, we could have stated Lemma \ref{LEM:var3}
with $X_{\dl, N}$ in place of $Y_{\dl, N}(1)$, thus allowing
us to avoid introducing $W(t)$ in \eqref{P1} and $Y_\dl(t)$ in \eqref{P2}.
We, however, did not do so since the natural setting
of the Bou\'e-Dupuis formula is as stated above.
For example, \eqref{P3} allows us to choose a $Y_\dl$-dependent drift $\dr$,
which is crucial in showing non-normalizability of the focusing Gibbs measures
$\rho_\dl$.  See \cite{OST}.

\smallskip

\noi
(ii) In view of the discussion above, 
in order to prove Proposition \ref{PROP:V2}, 
it is possible to work with 
a slightly different and weaker variational formula
stated in  \cite[Proposition 4.4]{GOTW}, 
where an expectation is taken 
with respect to  a shifted measure.

\end{remark}

In the following, we 
prove Proposition \ref{PROP:V2}
by applying 
 Lemma \ref{LEM:var3}
to $\GG^K_{\dl, N}(u)$ in~\eqref{V8}.
Before proceeding to the proof of Proposition \ref{PROP:V2}, 
let us state a preliminary lemma
on the  pathwise regularity bounds  of 
$Y_{\dl, N}(1)$ and $I_{\dl}(\dr)(1)$.

\begin{lemma}  \label{LEM:Dr}
	
\textup{(i)} 
Let $\eps > 0$ and fix  finite $p \ge 1$.
Then, given any $0 < \dl \le \infty$, 
we have 
\begin{align}
\begin{split}
\E 
\Big[  \|Y_{\dl, N}(1) & \|_{W^{-\eps,\infty}}^p
 + \| \W(Y_{\dl, N}^2(1))  \|_{W^{-\eps,\infty}}^p\\
& + 
\big\| \W( Y_{\dl, N}^3(1)  )  \big\|_{W^{-\eps,\infty}}^p
\Big]
\leq C_{\eps, p, \dl} <\infty,
\end{split}
\label{P4}
\end{align}

\noi
uniformly in $N \in \N$.
Furthermore, 
by restricting our attention to  $2 \le \dl \le \infty$, 
we can choose the constant $C_{\eps, p, \dl}$ in \eqref{P4}
to be independent of $\dl$.

\smallskip
	
\noi
\textup{(ii)} 
Let  $0 < \dl \le \infty$.
For any $\dr \in \Ha$, we have
\begin{align}
\| I_{\dl} (\dr)(1) \|_{H^{\frac 12}}^2 \le C_\dl \int_0^1 \| \dr(t) \|_{L^2_x}^2dt, 
\label{CM}
\end{align}

\noi
where the constant $C_\dl > 0$ can be chosen
to be independent of $2 \le \dl \le \infty$.

\end{lemma}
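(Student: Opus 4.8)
The plan is to deduce part (i) directly from Proposition~\ref{PROP:FN} and to prove part (ii) by an elementary Fourier-side computation; in both cases the uniformity in $2 \le \dl \le \infty$ will be reduced to the single lower bound~\eqref{Low_Kdl}, i.e.~$K_\dl(n) \ge |n| - \tfrac 12 \sim |n|$.

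For part (i), the first step is to observe that, since $\L(Y_\dl(1)) = \mu_\dl = \L(X_\dl)$ and the Wick renormalization of $Y_{\dl, N}(1) = \P_N Y_\dl(1)$ is carried out with exactly the variance $\s_{\dl, N}$ in~\eqref{Wick1a} (recall $\E[Y_{\dl, N}^2(1)] = \s_{\dl, N}$), the random variables $Y_{\dl, N}(1)$, $\W(Y_{\dl, N}^2(1))$, $\W(Y_{\dl, N}^3(1))$ have the same laws as $X_{\dl, N}$, $\W(X_{\dl, N}^2)$, $\W(X_{\dl, N}^3)$, respectively. Consequently the bounds on the second and third summands in~\eqref{P4} are precisely~\eqref{FN1a} with $k = 2, 3$ and $s = -\eps$, while the bound on the first summand follows from a computation as in~\eqref{GX3}, now with $W^{-\eps, \infty}(\T)$ in place of $H^{-\eps}(\T)$: one embeds $W^{-\eps, \infty} \hookrightarrow W^{-\eps/2, r}$ for $r \gg 1$ and applies Minkowski's inequality together with the Wiener chaos estimate (Lemma~\ref{LEM:hyp}), exactly as in the $k = 1$ case of the proof of Proposition~\ref{PROP:FN}. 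The uniformity in $N \in \N$ and in $2 \le \dl \le \infty$ is inherited from Proposition~\ref{PROP:FN}, which rests on~\eqref{Low_Kdl}.

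For part (ii), I would argue directly on the Fourier side. From the definition of $I_\dl(\dr)$ one has $\ft{I_\dl(\dr)(1)}(n) = K_\dl(n)^{-1/2} \int_0^1 \ft{\dr(t)}(n)\, dt$ for $n \in \Z^*$, and $\ft{I_\dl(\dr)(1)}(0) = 0$ since $\dr(t) \in L^2_0(\T)$. By Cauchy-Schwarz in $t$,
\[
|\ft{I_\dl(\dr)(1)}(n)|^2 \le \frac{1}{K_\dl(n)} \int_0^1 |\ft{\dr(t)}(n)|^2\, dt .
\]
Multiplying by $\jb{n}$, summing over $n \in \Z^*$, and using $\jb{n} \le \sqrt 2 |n|$ together with $K_\dl(n) \ge |n| - \tfrac 12 \ge \tfrac 12 |n|$ for $|n| \ge 1$ (from~\eqref{Low_Kdl}), so that $\jb{n}/K_\dl(n) \les 1$ uniformly in $n \in \Z^*$ and in $2 \le \dl \le \infty$, Plancherel's theorem yields
\[
\| I_\dl(\dr)(1) \|_{H^{\frac 12}}^2 \les \sum_{n \in \Z^*} \int_0^1 |\ft{\dr(t)}(n)|^2\, dt \les \int_0^1 \| \dr(t) \|_{L^2_x}^2\, dt ,
\]
with an implicit constant independent of $2 \le \dl \le \infty$; for general $0 < \dl \le \infty$ the same argument gives~\eqref{CM} with a $\dl$-dependent constant coming from~\eqref{HH1a}.

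The computations here are routine. The only point requiring care is the book-keeping of the uniformity in $\dl$, which in both parts collapses to the elementary estimate~\eqref{Low_Kdl}; so there is no serious obstacle, and indeed part (i) is essentially a corollary of Proposition~\ref{PROP:FN} while part (ii) is a one-line Cauchy-Schwarz bound.
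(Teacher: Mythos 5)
Your proposal is correct and takes essentially the same route as the paper: part (i) is obtained by identifying $\L(Y_{\dl,N}(1))$ with $\L(X_{\dl,N})$ and invoking Proposition~\ref{PROP:FN} (whose uniformity in $2\le\dl\le\infty$ rests on \eqref{Low_Kdl}), and part (ii) is the same Cauchy--Schwarz computation with the lower bound \eqref{HH1a}/\eqref{Low_Kdl} on $K_\dl(n)$, carried out mode-by-mode instead of via Minkowski's integral inequality as in the paper. Only a cosmetic remark: the Sobolev embedding should read $W^{-\eps/2, r}(\T) \hookrightarrow W^{-\eps, \infty}(\T)$ for $r\eps \gg 1$, i.e.~the reverse of the arrow you wrote, though you apply it in the correct direction.
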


\begin{proof}
By noting that $\L(Y_{\dl, N}(1)) = \L(X_{\dl, N})$, 
we see that 
Part (i) follows from 
Proposition~\ref{PROP:FN}.
As for the bound \eqref{CM}, 
it follows from  Minkowski's and Cauchy-Schwarz's inequalities and the lower bound 
\eqref{HH1a}
of $K_{\dl}(n)$  that 
\begin{align*}
\| I_{\dl} (\dr)(1) \|_{H^{\frac 12}}
&=\bigg\| \jb{\nb}^{\frac12} \int_0^1 (\Gdl \dx)^{-\frac 12}     \dr(t') dt'  \bigg\|_{L^{2}_0}\\
& \le C_\dl    \int_0^1 \| \dr(t) \|_{L^2}dt 
 \leq   C_\dl \bigg( \int_0^1 \| \dr(t) \|_{L^2}^2dt   \bigg)^{\frac12}.
\end{align*}

\noi
When $ 2 \le \dl \le \infty$, 
the lower bound 
\eqref{Low_Kdl} allows us to choose the constant $C_\dl$
to be independent of $2 \le \dl \le \infty$.
\end{proof}

Fix  $0 < \dl \le \infty$ and finite $p \ge 1$.
We
 first prove the bound \eqref{V10}.
In view of the Bou\'e-Dupuis formula (Lemma \ref{LEM:var3}), 
it suffices to  establish a  lower bound on 
\begin{equation}
\M_{\dl,N}(\dr) = \E
\bigg[p \RR_{\dl, N}(Y_{\dl} (1) + I_{\dl }  (\dr)(1)) + \frac{1}{2} \int_0^1 \| \dr(t) \|_{L^2_x}^2 dt \bigg], 
\label{P5}
\end{equation}

\noi 
uniformly in $N \in \N$ and  $\dr \in \Ha$.
We  set 
\[
Y_{\dl, N} = \P_N Y_{\dl} = \P_N Y_{\dl}(1)
\qquad
\textrm{and}
\qquad
\Dr_{\dl,N} = \P_N  \Dr_{\dl} = \P_N I_{\dl} (\dr)(1).
\]

\noi
From \eqref{V7} and \eqref{Herm3}, we have 
\begin{align}
\begin{split}
\RR_{\dl, N} (Y_{\dl} + \Dr_{\dl})  & = 
\frac 13 \int_{\T}  \W ( Y_{\dl, N}^3 )  dx
+ \int_{\T}  \W ( Y_{\dl, N}^2 )  \Dr_{\dl, N} dx+ \int_{\T}   Y_{\dl, N}   \Dr_{\dl, N}^2 dx
\\
&\hphantom{X}
+ \frac 13 \int_{\T} \Dr_{\dl, N}^3 dx
+  A \bigg\{ \int_{\T} \Big( \W ( Y_{\dl, N}^2 ) + 2 Y_{\dl, N} \Dr_{\dl, N} + \Dr_{\dl, N}^2 \Big) dx \bigg\}^2, 
\end{split}
\label{P5a}
\end{align}

\noi
where the first term on the right-hand side vanishes under the expectation.
Hence, from~\eqref{P5} and \eqref{P5a}, we have
\begin{align}
\begin{split}
\M_{\dl,N}(\dr)
&=\E
\Bigg[\, 
p \int_{\T}  \W ( Y_{\dl, N}^2 )  \Dr_{\dl, N} dx
+ p \int_{\T}   Y_{\dl, N}   \Dr_{\dl, N}^2 dx + \frac p3 \int_{\T} \Dr_{\dl, N}^3 dx
\\
&\hphantom{XXX}
+  A p  \bigg\{ \int_{\T} \Big( \W ( Y_{\dl, N}^2 ) + 2 Y_{\dl, N} \Dr_{\dl, N} + \Dr_{\dl, N}^2 \Big) dx \bigg\}^2\\
&\hphantom{XXX}
+ \frac{1}{2} \int_0^1 \| \dr(t) \|_{L^2_x}^2 dt 
\Bigg].
\end{split}
\label{P5b}
\end{align}

We now recall the following lemma from \cite[Lemma 4.1]{OST}, 
where the $p = 1$ case was treated.
See also Lemma 5.8 in \cite{OOT1}.

\begin{lemma} \label{LEM:Dr2}
\textup{(i)}
There exist small $\eps>0$ and  a constant  $c  = c(p) >0$ 
and $C_0 > 0$ such that
\begin{align*}
 p \bigg| \int_{\T}  \W ( Y_{\dl, N}^2 ) \Dr_{\dl, N} dx \bigg|
&\le c \| \W ( Y_{\dl, N}^2 ) \|_{W^{-\eps,\infty}}^2  
+ \frac 1{100} 
\| \Dr_{\dl, N} \|_{H^\frac 12}^2, 
\\
p \bigg| \int_{\T}  Y_{\dl, N} \Dr_{\dl, N}^2  dx \bigg|
&\le c 
\| Y_{\dl, N} \|_{W^{-\eps,\infty}}^{6} + \frac 1{100} \Big(
\| \Dr_{\dl, N} \|_{H^\frac 12}^2 +  \| \Dr_{\dl, N} \|_{L^2}^4 \Big),
\\
\frac p3 \bigg| \int_{\T}  \Dr_{\dl, N}^3  dx \bigg|
&\le     \frac 1{100} \| \Dr_{\dl, N} \|_{H^\frac 12}^2
+ C_0 p^2 \| \Dr_{\dl, N} \|_{L^2}^{4 }, 
\end{align*}

\noi
uniformly in $N \in \N$ and $0 < \dl \le \infty$.

\smallskip

\noi
\textup{(ii)}	
Let $A> 0$. Given any small $\eps > 0$, 
there exists $c = c(\eps, p, A)>0$ such that
\begin{align}
\begin{split}
Ap\bigg\{ & \int_{\T}  \Big( \W ( Y_{\dl, N}^2 ) + 2 Y_{\dl, N} \Dr_{\dl, N} + \Dr_{\dl, N}^2 \Big) dx \bigg\}^2 \\
&\ge \frac {Ap}4 \| \Dr_{\dl, N} \|_{L^2}^4 - \frac 1{100} \| \Dr_{\dl, N} \|_{H^\frac 12}^2 
- c\bigg\{ \| Y_{\dl, N} \|_{W^{-\eps,\infty}}^c 
+ \bigg( \int_{\T} \W ( Y_{\dl, N}^2 )  dx \bigg)^2 \bigg\}, 
\end{split}
\label{P6}
\end{align}

\noi
uniformly in $N \in \N$
and $0 < \dl \le \infty$.

\end{lemma}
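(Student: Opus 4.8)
\textbf{Plan for the proof of Lemma \ref{LEM:Dr2}.}
The plan is to estimate each of the terms appearing in \eqref{P5b} separately, via a standard combination of duality, Young's inequality for products (to split a term into a "low-order stochastic factor" and a power of $\|\Dr_{\dl, N}\|$), and interpolation. The key point to keep track of throughout is that every constant is \emph{independent of $\dl$} (after restricting to $2 \le \dl \le \infty$), which is automatic once we invoke Lemma \ref{LEM:Dr} (the pathwise regularity bounds for $Y_{\dl, N}(1)$ and the change-of-variable bound \eqref{CM} for $I_\dl(\dr)(1)$, both of which come with $\dl$-independent constants for $2 \le \dl \le \infty$), and also the uniformity in $N \in \N$.

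\textbf{Part (i): the three cubic/mixed terms.} For the first estimate, I would bound $\big|\int_\T \W(Y_{\dl, N}^2)\Dr_{\dl, N}\, dx\big|$ by a duality pairing between $W^{-\eps, \infty}(\T)$ and $W^{\eps, 1}(\T)$; since $H^{\frac 12}(\T) \hookrightarrow W^{\eps, 1}(\T)$ for $\eps < \frac 12$, this gives $\|\W(Y_{\dl, N}^2)\|_{W^{-\eps, \infty}}\|\Dr_{\dl, N}\|_{H^{\frac 12}}$, and Young's inequality $ab \le c a^2 + \frac{1}{100 c} b^2$ (absorbing the factor $p$ into $c = c(p)$) produces the stated form. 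For the second estimate, I would interpolate: write $\int_\T Y_{\dl, N}\Dr_{\dl, N}^2\, dx$ as a pairing of $Y_{\dl, N} \in W^{-\eps, \infty}$ against $\Dr_{\dl, N}^2 \in W^{\eps, 1}$, estimate $\|\Dr_{\dl, N}^2\|_{W^{\eps, 1}} \lesssim \|\Dr_{\dl, N}\|_{H^{\frac 12}}\|\Dr_{\dl, N}\|_{H^{\eps}}$ (product estimate), then interpolate $\|\Dr_{\dl, N}\|_{H^{\eps}} \lesssim \|\Dr_{\dl, N}\|_{L^2}^{1-\theta}\|\Dr_{\dl, N}\|_{H^{\frac 12}}^{\theta}$, and finally apply Young's inequality with the right exponents so that the $\|\Dr_{\dl, N}\|_{H^{\frac 12}}^2$ piece can be absorbed into the $\frac 1{100}$-terms while the leftover powers of $\|\Dr_{\dl, N}\|_{L^2}$ collect into $\|\Dr_{\dl, N}\|_{L^2}^4$; the residual stochastic factor becomes a power $\|Y_{\dl, N}\|_{W^{-\eps, \infty}}^6$. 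For the third estimate, $\big|\int_\T \Dr_{\dl, N}^3\, dx\big| \lesssim \|\Dr_{\dl, N}\|_{H^{\frac 16}}^3$ by Sobolev, then interpolate $\|\Dr_{\dl, N}\|_{H^{\frac 16}} \lesssim \|\Dr_{\dl, N}\|_{L^2}^{2/3}\|\Dr_{\dl, N}\|_{H^{\frac 12}}^{1/3}$, giving $\|\Dr_{\dl, N}\|_{L^2}^2\|\Dr_{\dl, N}\|_{H^{\frac 12}}$, and Young again yields $\frac 1{100}\|\Dr_{\dl, N}\|_{H^{\frac 12}}^2 + C_0 p^2\|\Dr_{\dl, N}\|_{L^2}^4$.

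\textbf{Part (ii): the quartic term with the $+A$ sign.} Here I would expand the square $\{\int_\T(\W(Y_{\dl, N}^2) + 2Y_{\dl, N}\Dr_{\dl, N} + \Dr_{\dl, N}^2)\, dx\}^2$ and isolate the "good" term $(\int_\T \Dr_{\dl, N}^2\, dx)^2 = \|\Dr_{\dl, N}\|_{L^2}^4$; this is where the \emph{defocusing} sign of $A$ is essential. The cross terms are $2\int_\T \W(Y_{\dl, N}^2)\, dx \cdot \int_\T \Dr_{\dl, N}^2\, dx$, $4\int_\T Y_{\dl, N}\Dr_{\dl, N}\, dx \cdot \int_\T \Dr_{\dl, N}^2\, dx$, $4(\int_\T Y_{\dl, N}\Dr_{\dl, N}\, dx)^2$, $4\int_\T \W(Y_{\dl, N}^2)\, dx \cdot \int_\T Y_{\dl, N}\Dr_{\dl, N}\, dx$, and the square $(\int_\T \W(Y_{\dl, N}^2)\, dx)^2$; I would estimate each by Cauchy--Schwarz / Young with a small parameter, peeling off a tiny constant times $\|\Dr_{\dl, N}\|_{L^2}^4$ (absorbed into $\frac{Ap}{4}$ by taking $A$ large, or rather leaving a clean $\frac{Ap}{4}$ after the dust settles) and a tiny constant times $\|\Dr_{\dl, N}\|_{H^{\frac 12}}^2$ (absorbed into $\frac{1}{100}$), with the remainder a sum of powers of $\|Y_{\dl, N}\|_{W^{-\eps, \infty}}$ and $(\int_\T \W(Y_{\dl, N}^2)\, dx)^2$. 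The main obstacle — modest, but the place where care is genuinely needed — is bookkeeping the numerology so that \emph{all} the $\|\Dr_{\dl, N}\|_{H^{\frac 12}}^2$ contributions from Parts (i) and (ii) together sum to at most, say, $\frac{5}{100}\|\Dr_{\dl, N}\|_{H^{\frac 12}}^2$ (leaving a net positive coefficient when combined with $\frac 12\int_0^1\|\dr\|_{L^2_x}^2\, dt$ via \eqref{CM}), and that the $\|\Dr_{\dl, N}\|_{L^2}^4$ bookkeeping leaves a net positive coefficient once $A$ is chosen large depending on $p$. Everything else follows the $p = 1$ argument of \cite[Lemma 4.1]{OST} and Lemma 5.8 in \cite{OOT1} essentially verbatim, the only new input being to track the $p$-dependence explicitly and to appeal to the $\dl$-uniform constants from Lemma \ref{LEM:Dr}.
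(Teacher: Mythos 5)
Your proposed argument is essentially the proof the paper relies on: the paper does not reprove this lemma but imports it from \cite[Lemma 4.1]{OST} (see also \cite[Lemma 5.8]{OOT1}), where precisely your combination of $W^{-\eps,\infty}$--$W^{\eps,1}$ duality, fractional Leibniz estimates and interpolation, and weighted Young's inequality — together with, in part (ii), isolating the $\|\Dr_{\dl, N}\|_{L^2}^4$ contribution and arranging, via interpolation, that the cross term $\int_\T Y_{\dl, N}\Dr_{\dl, N}\, dx$ produces only an $\eps$-power of $\|\Dr_{\dl, N}\|_{H^{1/2}}$ (so that no random coefficient multiplies the full $\|\Dr_{\dl, N}\|_{H^{1/2}}^2$) — is carried out, the only additional bookkeeping being the explicit $p$-dependence and the $\dl$-uniform constants from Lemma \ref{LEM:Dr}, exactly as you describe. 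One small correction: since the bound in part (ii) must hold for every $A>0$ with $c = c(\eps, p, A)$, one cannot "take $A$ large"; as you yourself note in passing, the negative cross terms are instead absorbed with weights proportional to $Ap$, leaving the clean coefficient $\frac{Ap}{4}$ for any fixed $A$.
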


As in \cite{OST}, we establish a pathwise lower bound on $\M_{\dl,N}(\dr)$ in~\eqref{P5b}, 
uniformly in $N \in \N$ and $\dr \in \Ha$, 
by making use of the 
 positive terms:
\begin{equation}
\U_{\dl, N}(\dr) =
\E \bigg[\frac {Ap} 4\| \Dr_{\dl, N}\|_{L^2}^4 + \frac{1}{2} \int_0^1 \| \dr(t) \|_{L^2_x}^2 dt\bigg].
\label{P7}
\end{equation}

\noi
coming from \eqref{P5b} and \eqref{P6}.
From \eqref{P5b} and \eqref{P7} together with Lemmas  \ref{LEM:Dr2} and \ref{LEM:Dr}, we obtain
\begin{align}
\inf_{N \in \mathbb{N}} \inf_{\dr \in \Ha} \M_{\dl,N}(\dr) 
\geq 
\inf_{N \in \mathbb{N}} \inf_{\dr \in \Ha}
\Big\{ -C_{p, \dl, A} + \frac{1}{10}\U_{\dl, N}(\dr)\Big\}
\geq - C_{p, \dl, A} >-\infty, 
\label{P8}
\end{align}

\noi
provided that $A = A(p) \gg1$ is sufficiently large.
Hence, the uniform (in $N$) bound \eqref{V10}
follows from 
Lemma \ref{LEM:var3} with \eqref{V8} and \eqref{P8}.

Next, we restrict our attention to  $2 \le \dl \le \infty$.
In this case, 
the constant $C_{\eps, p, \dl}$ in \eqref{P4} of Lemma \ref{LEM:Dr}
is independent of $\dl$
and, as a result, we see that the constant $C_{p, \dl, A}$ in~\eqref{P8}
is also independent of $2 \le \dl \le \infty$.
Therefore, the second bound \eqref{V11} follows from 
Lemma~\ref{LEM:var3} with \eqref{V8} and \eqref{P8}.
This concludes the proof of Proposition \ref{PROP:V2}
and hence of Theorem~\ref{THM:Gibbs1} when $k = 2$.

\section{Gibbs measures in the shallow-water regime}
\label{SEC:4}

In this section, we present the proof of Theorem \ref{THM:Gibbs2}.
Namely, we go over the construction and convergence in the shallow-water limit ($\dl \to 0$)
of the Gibbs measure $\wt \rho_\dl$ associated
with the scaled gILW equation \eqref{gILW3}.
For each fixed $0 < \dl < \infty$, 
the scaling transformation \eqref{trans}
simply introduces a constant factor, depending on $\dl$.
Hence, the regularity properties
of the support of the base Gaussian measures
$\mu_\dl$ in \eqref{Gibbs2}
for the unscaled problem
and
$\wt \mu_\dl$ in \eqref{Gibbs4}
for the scaled problem 
are the same
for each fixed $0 < \dl < \infty$,
and thus
we can repeat the argument in Section \ref{SEC:Gibbs1}
to construct the Gibbs measure $\wt \rho_\dl$ supported on $H^{-\eps}(\T) \setminus L^2(\T)$, 
$\eps > 0$, 
yielding Theorem \ref{THM:Gibbs2}\,(i)
for {\it each fixed} $0 < \dl < \infty$.
The main difference in this shallow-water regime
appears in 
establishing uniform (in $\dl$) bounds
and convergence as $\dl \to 0$.
This is 
due to the singularity of the base Gaussian measures
$\wt \mu_\dl$, $0 < \dl < \infty$, supported on $H^{-\eps}(\T) \setminus L^2(\T)$, 
and $\wt \mu_0$ in \eqref{Gibbs6}
supported on $H^{\frac 12-\eps}(\T) \setminus H^{\frac 12}(\T)$;
see Proposition \ref{PROP:equiv2}.

In Subsection \ref{SUBSEC:41}, 
we first study the singularity and convergence properties
of the base Gaussian measures.
Then, we briefly go over the construction
and convergence of the Gibbs measure $\wt \rho_\dl$
for the defocusing case ($2 \N + 1$)
in  Subsections \ref{SUBSEC:42}
and 
\ref{SUBSEC:conv2}.
In Subsection \ref{SUBSEC:ILW2}, 
we discuss the variational approach to treat the $k = 2$ case.

\subsection{Singularity of the base Gaussian measures}
\label{SUBSEC:41}

Given $0 < \dl < \infty$, 
let 
$\wt \mu_\dl$ be as in~\eqref{Gibbs4}
and let 
$\wt \mu_0$ be as in \eqref{Gibbs6}. 
Then, a typical element
under $\wt \mu_\dl$
(and under $\wt \mu_0$, respectively)
is given by 
the Gaussian Fourier series
$\wt X_\dl$ in \eqref{GH3}
(and 
by $X_\KDV$ in \eqref{Gibbs7}, respectively).
Given $N \in \N$, 
set
\begin{align}
\wt X_{\dl, N} = \P_N \wt X_{\dl, N}
\qquad \text{and}\qquad  
X_{\KDV, N} = \P_N X_\KDV.
\label{KK1}
\end{align}

\noi
Then,  in view of \eqref{GH4a}, 
we see that, for each $0 < \dl < \infty$,   $\wt X_{\dl, N}$
converges in $L^p(\O)$ for any finite $p \ge 1$ and almost surely to 
the limit $\wt X_{\dl}$ in $H^{-\eps}(\T) \setminus L^2(\T)$, $\eps > 0$, 
as $N \to \infty$.
On the other hand, it is well known \cite{BO94, ORT} that $X_{\KDV, N}$
converges,  in $L^p(\O)$ and almost surely,  to 
the limit $X_\KDV$ in $H^{\frac 12-\eps}(\T) \setminus H^\frac 12(\T)$, $\eps > 0$, 
as $N \to \infty$.

\begin{proposition}
\label{PROP:equiv2}
	
{\rm (i)} 
Given any  $\eps >0$
and finite $p \ge 1$, 
   $\wt X_\dl$ converges to $X_\KDV$ in $L^p(\Omega ;H^{-\e}(\T))$
and almost surely in $H^{-\eps}(\T)$, 
as $\dl\to  0$.  In particular, the Gaussian measure~$\wt \mu_\dl$
converges weakly to the Gaussian measure $\mukdv$, as $\dl\to 0$.
	
\smallskip
	
\noi
{\rm (ii)} 
Let $\eps > 0$. Then,
for any $0 < \dl < \infty$,  the Gaussian measures $\wt \mu_\dl$ and $ \mukdv$ are singular
as probability measures $H^{-\e}(\T)$.
\end{proposition}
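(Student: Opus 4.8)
\textbf{Proof plan for Proposition \ref{PROP:equiv2}.}

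The plan is to prove the two parts separately, with Part (i) being a routine adaptation of the argument for Proposition \ref{PROP:equiv}\,(i) and Part (ii) being the substantive point. For Part (i), I would mimic the computation in \eqref{GX4}: write $\wt X_\dl - X_\KDV$ in Fourier, apply the Wiener chaos estimate (Lemma \ref{LEM:hyp}), and bound the resulting $\ell^2$-type sum
\[
\sum_{n \in \Z^*} \frac 1{\jb n^{2\eps}} \Big( \frac{1}{L_\dl^{1/2}(n)} - \frac 1{|n|}\Big)^2 .
\]
Using Lemma \ref{LEM:p1}\,(ii), $L_\dl(n) \uparrow n^2$ as $\dl \to 0$ for each fixed $n$, so each summand tends to $0$; and using the lower bound in Lemma \ref{LEM:p1}\,(iii) (or simply $L_\dl(n) \gtrsim_\dl |n|$ from \eqref{Ld1}) one gets a dominating summable tail, allowing the dominated convergence theorem in $n$ and hence $L^p(\O)$-convergence. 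Almost sure convergence follows from the Borel--Cantelli lemma after upgrading with \eqref{Gb1}, exactly as in \eqref{GX4a}. Weak convergence of the laws is then immediate.

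For Part (ii), the key point is that $L_\dl(n) = n^2 - h(n,\dl) n^2$ with $\sum_n h^2(n,\dl) = \infty$ for every $\dl > 0$, which is precisely Lemma \ref{LEM:p1}\,(iv). I would set this up through Kakutani's theorem in the Gaussian setting (Lemma \ref{LEM:Kaku}), exactly as in the proof of Proposition \ref{PROP:equiv}\,(ii): decompose $\wt X_\dl$ and $X_\KDV$ into their real and imaginary Fourier components, so that $\wt \mu_\dl$ (resp.\ $\mukdv$) is the law of a sequence of independent mean-zero real Gaussians with variances $a_{\pm n} = \frac{1}{\pi L_\dl(n)}$ (resp.\ $b_{\pm n} = \frac{1}{\pi n^2}$). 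Then
\[
\sum_{n \in \Z^*} \Big( \frac{a_n}{b_n} - 1\Big)^2 = \sum_{n \in \Z^*} \Big( \frac{n^2}{L_\dl(n)} - 1 \Big)^2 = \sum_{n \in \Z^*} \Big( \frac{h(n,\dl)}{1 - h(n,\dl)}\Big)^2 .
\]
Since $h(n,\dl) \not\to 0$ as $n \to \infty$ (this is what drives \eqref{hnd2}), the ratios $\frac{h(n,\dl)}{1-h(n,\dl)}$ do not tend to $0$, so the series diverges; by Lemma \ref{LEM:Kaku} the two laws are mutually singular. The statement that they are singular \emph{as measures on $H^{-\eps}(\T)$} then follows because the support of $\wt \mu_\dl$ is contained in $H^{-\eps}(\T)\setminus L^2(\T)$ while that of $\mukdv$ is contained in $H^{\frac12-\eps}(\T)\setminus H^{\frac12}(\T) \subset L^2(\T)$; in fact one can exhibit the separating set directly, e.g.\ $\{u : u \in L^2(\T)\}$ has full $\mukdv$-measure and null $\wt\mu_\dl$-measure.

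The main obstacle — and really the only nontrivial input — is the divergence $\sum_n h^2(n,\dl) = \infty$, i.e.\ the fact that $h(n,\dl)$ stays bounded away from $0$ along a subsequence of frequencies. This is already isolated and proved in Lemma \ref{LEM:p1}\,(iv) via the Mittag-Leffler expansion \eqref{LL1}: from \eqref{LL3}, $h(n,\dl) = 6\dl^2 \sum_{k\ge 1} \frac{n^2}{k^2\pi^2(k^2\pi^2 + \dl^2 n^2)}$, which for $\dl n \gg 1$ is $\gtrsim 1$ by a Riemann-sum comparison as in \eqref{LL5}. So in the write-up I would simply cite Lemma \ref{LEM:p1} and assemble the Kakutani argument; the rest is bookkeeping parallel to Section \ref{SEC:Gibbs1}. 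One should be mildly careful that for small $\dl$ one has $h(n,\dl) < 1$ so that $L_\dl(n) > 0$ (guaranteed by Lemma \ref{LEM:p1}\,(i)), making the quotient $\frac{h}{1-h}$ well-defined and positive, but this is immediate.
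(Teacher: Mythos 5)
Your proposal is correct and follows essentially the same route as the paper: part (i) is the Fourier-side computation using Lemma \ref{LEM:p1}\,(ii)--(iii), dominated convergence in $n$, and Borel--Cantelli with \eqref{Gb1} for the almost sure statement, while part (ii) is Kakutani's theorem (Lemma \ref{LEM:Kaku}) driven by $\sum_{n} h^2(n,\dl) = \infty$ from Lemma \ref{LEM:p1}\,(iv), your ratio $\frac{h(n,\dl)}{1-h(n,\dl)} \ge h(n,\dl)$ (legitimate since $0<h<1$ by Lemma \ref{LEM:p1}\,(i)) yielding the same divergence. One minor caution: the dominated convergence step requires the $\dl$-uniform lower bound \eqref{Low_Ldl} from Lemma \ref{LEM:p1}\,(iii), not the $\dl$-dependent equivalence \eqref{Ld1} mentioned in your parenthetical, and your closing observation that $L^2(\T)$ already separates $\mukdv$ from $\wt\mu_\dl$ is a valid (indeed quicker) way to realize the singularity as measures on $H^{-\eps}(\T)$.
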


In Section \ref{SEC:Gibbs1}, 
the convergence in total variation of $\mu_\dl$ to $\mu_\infty$ played
an essential role in establishing 
the convergence in total variation of $\rho_\dl$ to $\rho_\BO$.
Proposition \ref{PROP:equiv2}
only provides weak convergence of the 
 base Gaussian measures $\wt \mu_\dl$
to  $\wt \mu_0$, 
and 
the singularity between the base Gaussian measures suggests that 
we do not expect any stronger mode of convergence
(such as convergence in total variation).
As a result, 
we only expect weak convergence of the associated Gibbs measures
$\wt \rho_\dl$ to $\rho_\KDV$ in \eqref{Gibbs5b} 
in  the shallow-water limit ($\dl \to 0$).

\begin{proof}[Proof of Proposition \ref{PROP:equiv2}]
Let $\eps > 0$.
From 
\eqref{Gibbs7}, 
\eqref{GH3}, and Lemma \ref{LEM:hyp}, we have
\begin{align}
\begin{split}
\|  \wt X_\dl - X_\KDV \|_{L^p_\o H_x^{-\eps}}  \
& \les_p \| \jb{\nb}^{-\eps} (\wt  X_\dl - X_\KDV)(x) \|_{L^2_x L^2_\o }\\
& \sim \Bigg( \sum_{n\in\Z^\ast} \frac 1{\jb{n}^{2\eps}}
 \bigg( \frac{1}{  L_\dl^ \frac{1}{2} (n)   } 
 - \frac{1}{|n| }  \bigg)^2  \Bigg)^{\frac 12}.
\end{split}
\label{KK2}
\end{align}

\noi
It follows from \eqref{Low_Ldl} in 
Lemma \ref{LEM:p1}
that 
  the summand is bounded by $\jb{n}^{-1 -2\eps}$
uniformly in $0 < \dl \les 1$, 
which is summable in $n \in \Z^*$.
Moreover, 
from Lemma \ref{LEM:p1}\,(ii), 
we see that, for each $n \in \Z^*$,  the summand tends to 0 
as $\dl \to 0$.
Hence, by the dominated convergence theorem, 
we conclude that $\wt X_\dl $
converges to $X_\KDV$ in $L^p(\O; H^{-\eps}(\T))$.
As for almost sure convergence,
we  repeat a computation analogous to \eqref{KK2}
 but with~\eqref{Gb1} in place of $\E[ |g_n|^2] \sim 1$.
We omit details.
See
\eqref{GX4a} for an analogous argument in the unscaled case.

 Next we prove part (ii) by using Lemma \ref{LEM:Kaku}.
From \eqref{GH3} and \eqref{Gibbs7}, we have
 \begin{align*}
\wt  X_\dl(\o) =     \sum_{n\in \N} \bigg( \frac{\Re g_n}{\pi L_\dl^\frac 12(n)} \cos (n x)
 - \frac{\Im g_n}{\pi L_\dl^\frac 12(n)}  \sin (nx)\bigg)
 \end{align*}

\noi
for $0 \le \dl < \infty$
with the understanding that $\wt X_0 = X_\KDV$ and $L_0 (n) = n^2$.
 For $n\in\Z^*$, set 
  \[
 A_{n} = \frac{\Re g_n}{\pi L_\dl^\frac 12(n)} 
 \qquad \text{and}\qquad 
 A_{-n} = - \frac{\Im g_n}{\pi L_\dl^\frac 12(n)} , 
 \]
 
\noi
and 
  \[
 B_{n} = \frac{\Re g_n}{\pi |n|} 
 \qquad \text{and}\qquad 
 B_{-n} =-  \frac{ \Im g_n}{\pi |n|} . 
 \]

 \noi
 with $a_{\pm n} = \E[ A_{\pm n}^2] = \frac{1}{\pi L_\dl(n)}$ and  $b_{\pm n} = \E[ B_{\pm n}^2] = \frac{1}{\pi n^2}$. 
Then, from 
Lemma \ref{LEM:p1}\,(iv),  we have
\begin{align*}
\sum_{n\in\Z^*} \Big( \frac{b_n}{a_n} -1 \Big)^2 
=  \sum_{n\in\Z^\ast} \frac{ ( n^2 - L_\dl(n) )^2}{n^4} 
=  \sum_{n\in\Z^\ast} h^2(n, \dl) = \infty
\end{align*}

\noi
for any $\dl > 0$, 
where $h(n, \dl)$ is as in \eqref{LL0a}.
Therefore, 
we conclude from  Kakutani's theorem (Lemma~\ref{LEM:Kaku})
that, for any $0 < \dl < \infty$, 
the Gaussian measures $\wt \mu_\dl$
 and $\wt \mu_0$ are mutually singular.
 \end{proof}

\subsection{Construction of the  Gibbs measures for the defocusing scaled gILW equation}
\label{SUBSEC:42}

In this subsection, we briefly go over the construction 
of the Gibbs measure $\wt \rho_\dl$, $0 < \dl < \infty$, 
for the scaled gILW equation \eqref{gILW3}
in the defocusing case $k \in 2\N+1$.
(Theorem \ref{THM:Gibbs2}\,(i)).
We treat the $k = 2$ case
in Subsection \ref{SUBSEC:ILW2}.

Fix the depth parameter $0 < \dl <  \infty$.
Given $N \in \N$, let $\wt X_{\dl, N} = \P_N \wt X_\dl$, 
where $\wt X_\dl$ is defined in \eqref{GH3}.
Given $k \in \N$, 
let  
\begin{align}
\W(  \wt X_{\dl, N}^k ) = H_k( \wt X_{\dl, N} ;  \wt \s_{\dl, N}   )
\label{HN0a}
\end{align}
denote the Wick power defined in~\eqref{GH5a},\footnote{As in Section \ref{SEC:Gibbs1},
we freely interchange
the representations in terms of $\wt X_\dl$
and in terms of $v$ distributed by $\wt \mu_\dl$,
when there is no confusion.
}
 where $\wt \s_{\dl, N}$ is as in \eqref{Wick11a}.
Then, the truncated Gibbs measure $\wt \rho_{\dl, N}$ in~\eqref{GH6}
can be written as 
\begin{align*}
\wt \rho_{\dl,  N} (A)
& = Z_{\dl, N}^{-1} \int_{H^{-\eps}} \ind_{\{v \in A\}}e^ {-\frac 1{k+1} \int_\T \W(v_N^{k+1})dx} 
d\wt \mu_\dl(v)\\
& = Z_{\dl, N}^{-1} \int_\O \ind_{\{\wt X_{\dl}(\o) \in A\}}e^ {-\frac 1{k+1} \int_\T \W(\wt X_{\dl, N}^{k+1}(\o))dx} 
d \PP(\o), 
\end{align*}

\noi
where $v_N = \P_N v$.
By repeating the proof of Proposition \ref{PROP:FN}
in the unscaled setting, 
we obtain the following result.

\begin{proposition}\label{PROP:HN}

Let $k \in \N$ and $0 < \dl <  \infty$.
Given $N \in \N$, let 
$\W( \wt  X_{\dl, N}^k )$ be as in~\eqref{HN0a}.
Then, given any finite $p \ge 1$, 
the sequence  $\{ \W(\wt X_{\dl, N}^k)\}_{N \in \N}$
is Cauchy in $L^p( \O; W^{s, \infty}(\T))$, $s < 0$, 
thus converging to a limit denoted by
$\W(\wt X_{\dl}^k)$.
This convergence of $\W(\wt X_{\dl, N}^k)$
to $\W(\wt X_{\dl}^k)$ also holds almost surely in $W^{s, \infty}(\T)$.
Furthermore, given any finite $p \ge 1$, we have 
\begin{align*}
\sup_{N \in \N} \sup_{0 < \dl\le1} \big\|  \|\W(  \wt X_{\dl, N}^k )\|_{W^{s, \infty}_x} \big\|_{L^p(\O)} 
< \infty
\end{align*}

\noi
and 
\begin{align*}
\sup_{0 < \dl \le1} \big\|  \| \W(  \wt X_{\dl, M}^k ) - \W( \wt  X_{\dl, N}^k )\|_{W^{s, \infty}_x} \big\|_{L^p(\O)} 
\too 0
\end{align*}

\noi
for any $M \ge N$, tending to $\infty$.
In particular, the rate of convergence is uniform in $0 < \dl \le 1$.

\smallskip

As a corollary, the following two statements hold.

\smallskip

\noi
\textup{(i)}
 Let
$0 < \dl <  \infty$.
Given $N \in \N$, 
let $\wt R_{\dl, N}(v) = \wt R_{\dl, N}(v; k+1)$ denote the truncated potential energy defined by 
\begin{align}
\wt  R_{\dl, N}(v):=
\frac 1{k+1}\int_\T \W((\P_N v)^{k+1}) dx 
=
\frac 1{k+1}\int_{\T}  H_{k+1}\big(  \P_N v ; \wt  \s_{\dl, N} \big) dx, 
\label{HN2}
\end{align}

\noi
where $\wt \sigma_{\dl, N}$ is  as in \eqref{Wick11a}.
Then,  given any finite $p \ge 1$, 
the sequence
$\{\wt R_{\dl, N}(v)\}_{N \in \N}$
converges to the limit\textup{:}
\begin{align}
\wt  R_\dl(v) =\frac 1{k+1} \int_{\T} \W(v^{k+1})dx = \lim_{N\to\infty}  \frac 1{k+1}\int_{\T} \W( (\P_N v)^{k+1} )dx 
 \label{HN3a}
\end{align} 

\noi
in $L^p(d\mu_\dl)$, as $N \to \infty$.
Furthermore, 
 there exists $\ta > 0$ such that  given any finite $p \ge 1$, we have
\begin{align}
\sup_{N \in \N\cup\{\infty\}} \sup_{0 <  \dl\le 1} 
  \| \wt  R_{\dl, N}(v)\|_{L^p(d\wt \mu_\dl)} 
< \infty, 
\label{HN3x}
\end{align}

\noi
with $\wt R_{\dl, \infty}(v) = \wt R_\dl(v)$, 
and \begin{align}
\|\wt  R_{\dl, M}(v) - \wt  R_{\dl,N}(v) \|_{L^p(d\wt  \mu_\dl)}
\leq
 \frac{C_{k, \dl}\,  p^{\frac{k+1}{2}} }{N^\ta}
\label{HN3}
\end{align}

\noi
for 
any $ M \geq N \geq 1$.
For $0 < \dl \le 1$, 
we can choose the constant $C_{k, \dl}$ in \eqref{HN3} to be independent of $\dl$
and hence the rate of convergence 
of $ \wt R_{\dl, N}(v)$ to the limit $\wt  R_\dl(v)$ is uniform in $0 < \dl \le1$.

\smallskip

\noi
\textup{(ii)}
 Let
$0 < \dl <  \infty$.
Given $N \in \N$, 
let $\wt F_N(u) = \wt F_N(u; k)$ be  the truncated renormalized nonlinearity 
in  \eqref{WILW4} given 
by 
\noi
\begin{align*}
\wt  F_N(v) := \dx \P_N\W((\P_N v)^k) = \dx \P_N H_k( \P_N v; \wt \sigma_{\dl, N}),
\end{align*}

\noi
where $\wt  \sigma_{\dl, N}$ is  as in \eqref{Wick11a}.
Then,  given any finite $p \ge 1$, 
the sequence  $\{ \wt  F_N(v)\}_{ N\in\N }$
is Cauchy in $L^p(d\wt  \mu_\dl; H^s (\T)  )$, $s < -1$, 
thus converging to a limit denoted by 
 $\wt  F(v) = \dx \W(v^k)$.
Furthermore, given any finite $p \ge 1$, we have 
\begin{align*}
\sup_{N \in \N} \sup_{0 <  \dl\le 1} \big\|  \| \wt  F_N(v) \|_{H^s_x} \big\|_{L^p(d\wt  \mu_\dl)} 
< \infty
\end{align*}

\noi
and 
\begin{align*}
\sup_{0<  \dl\le 1} \big\|  \| \wt  F_M(v) - \wt  F_N(v) \|_{H^s_x} \big\|_{L^p(d\wt  \mu_\dl)} 
\too 0
\end{align*}

\noi
for any $M \ge N$, tending to $\infty$.
In particular, the rate of convergence 
of $\wt F_N(v)$ to the limit $\wt  F(v)$
is uniform in $0<  \dl \le 1$.

\end{proposition}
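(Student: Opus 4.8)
\textbf{Proof proposal for Proposition \ref{PROP:HN}.}
The plan is to reduce everything to the corresponding statements in the unscaled setting (Proposition \ref{PROP:FN}) via the exact scaling identity \eqref{GH4a}, namely $\wt X_\dl = \sqrt{\dl/3}\, X_\dl$, together with the variance relation \eqref{Wick11a}, $\wt \s_{\dl, N} = \tfrac\dl3 \s_{\dl, N}$, and then recover uniformity in $0 < \dl \le 1$ by invoking the lower bound \eqref{Low_Ldl} from Lemma \ref{LEM:p1}\,(iii) in place of \eqref{Low_Kdl}. First I would observe that, by \eqref{Wick11a} and the Hermite rescaling identity \eqref{H1a}, we have the pointwise identity
\begin{align}
\W(\wt X_{\dl, N}^k)
= H_k\big(\wt X_{\dl, N}; \wt \s_{\dl, N}\big)
= \Big(\tfrac\dl3\Big)^{\frac k2} H_k\big(X_{\dl, N}; \s_{\dl, N}\big)
= \Big(\tfrac\dl3\Big)^{\frac k2} \W(X_{\dl, N}^k),
\label{HNplan1}
\end{align}
so that every stochastic object for the scaled problem is obtained from the unscaled one by multiplying by the deterministic constant $(\dl/3)^{k/2}$, which is bounded by $1$ (up to the harmless factor $3^{-k/2}$) for $0 < \dl \le 1$. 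Thus the $L^p(\O; W^{s,\infty})$-Cauchyness of $\{\W(\wt X_{\dl, N}^k)\}_{N}$, its almost sure convergence, and the convergence of $\W(\wt X_{\dl, N}^k)$ to $\W(\wt X_\dl^k) = (\dl/3)^{k/2} \W(X_\dl^k)$ all follow immediately from Proposition \ref{PROP:FN} applied with the multiplier $K_\dl(n)$ replaced by $L_\dl(n)$.

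Next I would redo the quantitative estimates directly, rather than only quoting \eqref{HNplan1}, because the key point is to obtain constants uniform in $0 < \dl \le 1$ and the factor $(\dl/3)^{k/2}$ already gives uniformity from above but I still need a uniform lower bound on $L_\dl(n)$ to control the covariance kernel. Concretely, I would run the computation \eqref{GD2}--\eqref{GD6} verbatim with $\g_N(x-y) = \E[\wt X_{\dl, N}(x) \wt X_{\dl, N}(y)] = \frac1{2\pi}\sum_{0<|n|\le N} e_n(x-y)/L_\dl(n)$ in place of \eqref{GD1}, so that \eqref{GD3} becomes a sum with $\prod_j K_\dl(n_j)$ replaced by $\prod_j L_\dl(n_j)$. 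At that stage I would apply \eqref{Low_Ldl}, i.e. $\inf_{0<\dl\le1} L_\dl(n) \gtrsim |n|$ for $n \in \Z^*$, exactly as \eqref{Low_Kdl} was used in the proof of Proposition \ref{PROP:FN}, to bound $1/L_\dl(n_j) \lesssim 1/\jb{n_j}$ with an implicit constant independent of $0 < \dl \le 1$. The convergent sum $\sum_{n_1,\dots,n_k \in \Z^*} \prod_j \jb{n_j}^{-1} \jb{n_1+\cdots+n_k}^{2(s+\eps)}$ (for $s + \eps < 0$) then yields the uniform bound, and the difference estimate with the indicator $\ind_{\max_j |n_j| > N}$ gives the $N^{\max(s,-1/2)+2\eps}$ decay, uniformly in $0 < \dl \le 1$; the Borel--Cantelli argument for almost sure convergence is identical. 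The corollaries (i) and (ii) then follow exactly as in the proof of Proposition \ref{PROP:FN}: for $\wt R_{\dl,N}(v)$ one uses $|\wt R_{\dl,M}(v) - \wt R_{\dl,N}(v)| \le C_k \|\W(v_M^{k+1}) - \W(v_N^{k+1})\|_{H^s}$ (taking $s = -\tfrac12$, say), and for $\wt F_N(v)$ one splits as in \eqref{GD7} into a piece supported on $|n| > N$ (handled by the projection gaining decay) and a piece of the form $\|\W(v_M^k) - \W(v_N^k)\|_{H^{s+1}}$ with $s < -1$.

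I do not expect a serious obstacle here: the scaling transformation is a genuine isometry between the two settings apart from deterministic constants, so the entire analytic content is already contained in Section \ref{SEC:Gibbs1}. The one point requiring a little care — and the only place the scaled problem differs qualitatively from the unscaled one — is that the correct uniform lower bound for the variance parameter is \eqref{Low_Ldl}, valid for small $\dl$, rather than \eqref{Low_Kdl}, valid for large $\dl$; this is precisely the content of Lemma \ref{LEM:p1}\,(iii), and it is what forces the uniformity range to be $0 < \dl \le 1$ (matching the statement of Theorem \ref{THM:Gibbs2}) instead of $2 \le \dl \le \infty$. Beyond that, since the multiplicative factor $(\dl/3)^{k/2} \le 1$ for $\dl \le 1$ only helps, the proof is a routine transcription and I would simply indicate these substitutions and omit the repeated computations.
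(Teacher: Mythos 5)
Your proposal is correct and follows essentially the same route as the paper, which simply notes that Proposition \ref{PROP:HN} is a straightforward modification of the proof of Proposition \ref{PROP:FN} with the bounds \eqref{Ld1} and \eqref{Low_Ldl} for $L_\dl(n)$ replacing \eqref{HH1a} and \eqref{Low_Kdl} for $K_\dl(n)$. Your additional observation $\W(\wt X_{\dl,N}^k) = (\dl/3)^{k/2}\,\W(X_{\dl,N}^k)$ is a correct and tidy supplement, and you rightly recognize that it does not by itself yield uniformity in $0<\dl\le 1$, so that the substantive step remains the rerun of \eqref{GD2}--\eqref{GD6} with the uniform lower bound on $L_\dl(n)$.
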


\begin{proof}
Proposition \ref{PROP:HN}
follows from a straightforward modification 
of the proof of Proposition~\ref{PROP:FN}.
The only notable difference is that instead of using the bounds
\eqref{HH1a} and  \eqref{Low_Kdl} 
for $K_\dl(n)$, we need to use the bounds \eqref{Ld1} and  \eqref{Low_Ldl} for $L_\dl(n)$.
We omit details.
\end{proof}

Given $0 < \dl <  \infty$ 
and $N \in \N$, 
we define $\wt  G_{\dl, N}( \wt X_\dl)= \wt  G_{\dl, N}(\wt X_\dl; k+1)$ by 
\begin{align*}
\wt  G_{\dl, N}(\wt X_\dl) = e^{- \wt R_{\dl, N}(\wt X_\dl)    } 
                           =  e^{-\frac{1}{k+1}  \int_\T  \W( \wt X_{\dl, N}^{k+1} ) dx   } ,
\end{align*}

\noi
where $\wt R_{\dl, N}(\wt X_\dl) = \wt R_{\dl, N}(\wt X_\dl; k+1)$
is  the truncated potential energy defined 
in \eqref{HN2}.
Then, a slight modification of the proof of Proposition \ref{PROP:Gibbs1}
yields the following proposition.

\begin{proposition}\label{PROP:Gibbs2}

Let $k \in 2 \N + 1$
and fix finite $p \ge 1$.
Given any $0 < \dl <  \infty$, we have
\begin{align*}
&\sup_{N\in \N} \| \wt G_{\dl, N}(\wt X_\dl)  \|_{L^p(\O)} 
=
\sup_{N \in \N}\| \wt  G_{\dl, N}(v) \|_{L^p(d\wt \mu_\dl)}
 \le C_{p, k, \dl} < \infty.
\end{align*}

\noi
In addition, the following uniform bound holds for $0 <  \dl \le 1$\textup{:}
\begin{align}
\begin{split}
 \sup_{N \in \N} \sup_{0<  \dl \le 1}
\| \wt G_{\dl, N}(\wt X_\dl)  \|_{L^p(\O)}
& = 
\sup_{N \in \N} \sup_{0 <  \dl \le 1}
\|\wt  G_{\dl, N}(v)  \|_{L^p(d\wt \mu_\dl)}\\
& 
 \le C_{p, k} < \infty.
 \end{split}
 \label{HHN1a}
\end{align}

Define $\wt G_\dl(\wt X_\dl)$  by 
\[
\wt G_\dl(\wt X_\dl) = e^{- \wt R_\dl(\wt X_\dl)}
\]

\noi
with $\wt R_\dl(\wt X_\dl)$   as in \eqref{HN3a}.
Then, 
$\wt G_{\dl, N}(\wt X_\dl)$ converges to $\wt G_{\dl}(\wt X_\dl)$
in $L^p(\O)$.
Namely, 
we have
\begin{align*} 
\lim_{N\to\infty} \| \wt G_{\dl, N}(\wt X_\dl) - \wt G_{\dl}(\wt X_\dl) \|_{L^p(\O)} & =0.
\end{align*}

\noi
Furthermore, 
the convergence is uniform in $0<  \dl \le 1$\textup{:}
\begin{align}
\lim_{N\to\infty}\sup_{0<  \dl \le1} \| \wt G_{\dl, N}(\wt X_\dl) - \wt G_{\dl}(\wt X_\dl) \|_{L^p(\O)} =0.
 \label{HHN1b}
\end{align}

\noi
As a consequence, the uniform bounds \eqref{HHN1a}
and \eqref{HHN1b} hold even if we replace the supremum in $N \in \N$
by the supremum in $N \in \N \cup\{\infty\}$.

\end{proposition}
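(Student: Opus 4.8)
\textbf{Proof of Proposition \ref{PROP:Gibbs2} (sketch).}
The plan is to transcribe, line by line, the two-step proof of Proposition \ref{PROP:Gibbs1}, replacing the variance parameters $K_\dl(n)$ by $L_\dl(n)$ throughout and invoking Proposition \ref{PROP:HN} wherever Proposition \ref{PROP:FN} was used. Concretely, recall $L_\dl(n) = \frac 3\dl K_\dl(n)\sim_\dl |n|$ by \eqref{Ld1}, and, crucially, the uniform lower bound $\inf_{0<\dl\le 1}L_\dl(n)\ges |n|$ from \eqref{Low_Ldl} in Lemma \ref{LEM:p1}\,(iii); this plays exactly the role that \eqref{Low_Kdl} played for $2\le \dl\le \infty$ in Section \ref{SEC:Gibbs1}, with the good parameter range now being $0<\dl\le 1$ (the range relevant to the shallow-water limit $\dl\to 0$).

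\emph{Step 1 (uniform $L^p$-bounds).} Since $k\in 2\N+1$, the Hermite polynomial $H_{k+1}$ attains a global minimum, so by \eqref{H1a} we have $H_{k+1}(x;\wt\s_{\dl,N})\ge -\wt\s_{\dl,N}^{(k+1)/2}a_{k+1}$, whence $-\wt R_{\dl,N}(\wt X_\dl)\le \tfrac{2\pi}{k+1}\wt\s_{\dl,N}^{(k+1)/2}a_{k+1}\le \wt A_{k,\dl}(\log(N+1))^{(k+1)/2}$, the analogue of \eqref{N2}. Here $\wt\s_{\dl,N}$ in \eqref{Wick11a} satisfies $\wt\s_{\dl,N}\les \log(N+1)$ \emph{uniformly in} $0<\dl\le 1$ thanks to \eqref{Low_Ldl}, so $\wt A_{k,\dl}$ may be chosen independent of $0<\dl\le 1$. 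Combining this deterministic bound with the tail estimate obtained by applying Chebyshev's inequality to the difference bound \eqref{HN3} from Proposition \ref{PROP:HN}\,(i) (the analogue of \eqref{N1}), one runs Nelson's argument exactly as in \eqref{N1a}--\eqref{N6}: given $\al>1$, one sets $\log\al = 2p\wt A_{k,\dl}(\log(N_0+1))^{(k+1)/2}$ and treats $N\ge N_0$ and $N<N_0$ separately. For $0<\dl\le 1$ all constants are $\dl$-independent, since both \eqref{HN3} (with $\dl$-independent $C_{k,\dl}$) and the bound on $\wt\s_{\dl,N}$ are uniform there; this gives \eqref{HHN1a}, and the fixed-$\dl$ statement likewise.

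\emph{Step 2 (convergence of truncated densities).} By Proposition \ref{PROP:HN}\,(i), $\wt R_{\dl,N}(v)\to \wt R_\dl(v)$ in probability, so by the continuous mapping theorem $\wt G_{\dl,N}(\wt X_\dl)=e^{-\wt R_{\dl,N}(\wt X_\dl)}$ converges in probability to $\wt G_\dl(\wt X_\dl)$; together with the uniform $L^p$-bound (applied with exponent $2p$) this upgrades to $L^p(\O)$-convergence for each fixed $\dl$. For the uniform statement \eqref{HHN1b} on $0<\dl\le 1$ one uses the uniform continuous mapping theorem (Lemma \ref{LEM:KFC}), whose tightness hypothesis \eqref{Z_tight} is verified by the uniform bound \eqref{HN3x} on $\{\wt R_\dl(\wt X_\dl)\}_{0<\dl\le 1}$; then, exactly as around \eqref{N7a}--\eqref{N7}, one splits $\|\wt G_\dl(\wt X_\dl)-\wt G_{\dl,N}(\wt X_\dl)\|_{L^p(\O)}$ over the event $A_{\dl,N,\eps}$ and its complement and uses Cauchy--Schwarz together with \eqref{HHN1a} (exponent $2p$) and $\sup_{0<\dl\le 1}\PP(A^c_{\dl,N,\eps})\too 0$.

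The only genuinely nontrivial input beyond bookkeeping is the uniform non-degeneracy of $L_\dl(n)$ as $\dl\to 0$, i.e.\ \eqref{Low_Ldl}; everything else is a faithful transcription of the argument for Proposition \ref{PROP:Gibbs1}, so the actual write-up can legitimately be kept brief.
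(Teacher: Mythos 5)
Your proposal is correct and follows essentially the same route as the paper: the defocusing lower bound on $-\wt R_{\dl,N}$ with a $\dl$-independent constant for $0<\dl\le 1$ (via the lower bound on $L_\dl(n)$ from Lemma \ref{LEM:p1}), Nelson's argument run on the tail estimate coming from \eqref{HN3}, and the uniform continuous mapping theorem (Lemma \ref{LEM:KFC}) with tightness supplied by \eqref{HN3x} are precisely the ingredients the paper uses when it says to repeat the proof of Proposition \ref{PROP:Gibbs1} with Proposition \ref{PROP:HN} in place of Proposition \ref{PROP:FN}. No gaps; your write-up just spells out the transcription that the paper leaves implicit.
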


Theorem \ref{THM:Gibbs2}\,(i)
follows as a direct corollary to  
 Proposition \ref{PROP:Gibbs2},
 allowing us to define the limiting
 Gibbs measure $\wt \rho_\dl$ in \eqref{THM2b}.
See the discussion right after Proposition \ref{PROP:Gibbs1}. 

For $0 < \dl < \infty$, 
the Gibbs measure $\wt \rho_\dl$ is equivalent to the base Gaussian measure $\wt \mu_\dl$.
Similarly, the Gibbs measure $\rho_\KDV$ in \eqref{Gibbs5b}
equivalent to the base Gaussian measure $\wt \mu_0$.
Recalling from 
Proposition \ref{PROP:equiv2} that the base Gaussian measures $\wt \mu_\dl$, $0 < \dl < \infty$, 
and $\wt \mu_0$ are mutually singular, 
we conclude that 
the Gibbs measures $\wt \rho_\dl$ in \eqref{THM2b}
and  $\rho_\KDV$ in \eqref{Gibbs5b}
are mutually singular.
This proves the first claim in 
Theorem \ref{THM:Gibbs2}\,(ii).

\begin{proof}[Proof of Proposition \ref{PROP:Gibbs2}]
From 
\eqref{GN1} with \eqref{Wick11a}, we have
\begin{align}
\begin{aligned}
- \wt R_{\dl,N}(\wt X_\dl) 
&=   -    \frac 1{k+1}\int_{\T} H_{k+1}(\wt  X_{\dl, N} ; \wt \s_{\dl, N})  dx   \\
&\leq  \frac{2\pi}{k+1} \wt  \s_{\dl,N}^{\frac{k+1}{2}}  a_{k+1} 
 \leq \wt  A_{k, \dl}    (\log (N+1))^{\frac{k+1}{2}}
\end{aligned}
\label{HHN2}
\end{align}

\noi
for some $\wt A_{k, \dl} > 0$,
uniformly in $N \in \N$.
Then, we can simply repeat the proof of Proposition~\ref{PROP:Gibbs1}, 
using Proposition~\ref{PROP:HN} in place of Proposition \ref{PROP:FN}.

For $0 < \dl \le 1$, it follows from~\eqref{Wick11a}
and Lemma \ref{LEM:p1} that the constant $\wt A_{k, \dl}$
in \eqref{HHN2} can be chosen to be independent of $0 < \dl \le 1$. 
Similarly, 
by restricting our attention to $0 < \dl \le 1$, 
we can choose the constant
 $c_{k, \dl}$ in an analogue of \eqref{N1}
 in the current setting to be 
independent of $0 <  \dl \le 1$
since the constant $C_{k, \dl}$ in \eqref{HN3} is independent of
$0 <  \dl \le 1$.
Moreover, in applying Lemma \ref{LEM:KFC}
in Step 2 of the proof of Proposition \ref{PROP:Gibbs1}, 
we need the uniform bound~\eqref{HN3x}, replacing
\eqref{FN3a}.
This observation yields the uniform bounds \eqref{HHN1a}
and~\eqref{HHN1b}.
 \end{proof}

\begin{remark}\label{REM:K}\rm

Given $N$,  define $\s_{\KDV, N}$ by 
\begin{align}
\s_{\KDV, N} = \E\big[ X_{\KDV, N}^2(x)\big] = \frac{1}{4\pi^2} \sum_{0 < |n|\le N} \frac{2\pi}{n^2}, 
\label{HX1}
\end{align}

\noi
which is uniformly bounded in $N \in \N$.
Here, $X_{\KDV, N}$ is as in \eqref{KK1}.
We then extend the definition of $L_\dl(n)$ and $\wt G_{\dl, N}$
to the $\dl = 0$ case by setting $L_0(n) = n^2$ and 
\begin{align}
\wt G_{0, N}(X_\KDV)
=  e^{-\frac 1{k+1} \int_\T \W(X_{\KDV, N}^{k+1}) dx}, 
\label{HX2}
\end{align}

\noi
where 
$\W(X_{\KDV, N}^{k+1}) = H_{k+1}(X_{\KDV, N}; \s_{\KDV, N})$.
We also set
\begin{align}
\wt G_{0}(X_\KDV)
=  e^{-\frac 1{k+1} \int_\T \W(X_\KDV^{k+1}) dx}, 
\label{HX3}
\end{align}

\noi
where 
$\W(X_\KDV^{k+1}) = H_{k+1}( X_\KDV; \s_{\KDV})$
as in \eqref{s2}.
Then, by setting 
$\wt X_0 = X_{\KDV}$, 
Proposition~\ref{PROP:Gibbs2}
extends to $\dl = 0$.
In particular, the uniform bounds \eqref{HHN1a}
and \eqref{HHN1b} hold for $0 \le \dl \le 1$.

\end{remark}

\subsection{Convergence of the Gibbs measures in the shallow-water limit}
\label{SUBSEC:conv2}

It remains to prove that 
the Gibbs measure $ \wt  \rho_\dl $ converges weakly to $\rho_\KDV$
as $\dl\to 0$. We first state an analogue of Lemma \ref{LEM:C1}.

\begin{lemma}\label{LEM:C2}
Let $k \in 2\N + 1$ and $1 \le p < \infty$. Then, given $N \in \N$, 
we have
\begin{align*}
\lim_{\dl \to 0} \| \wt G_{\dl,N}(\wt X_\dl) - \wt G_{0, N}(X_\KDV)\|_{L^p(\O)} = 0.
\end{align*}

\noi
As a corollary, we  have
\begin{align*}
   \lim_{\dl\to\infty}
                 \| \wt G_\dl(\wt X_\dl) - \wt G_0(X_\KDV)\|_{L^p(\O)} =0 .
\end{align*}

\noi
In particular, the partition function 
$Z_{\dl}$ of the Gibbs measure $\wt \rho_\dl$ in \eqref{THM2b}
converges to the partition function $Z_\KDV = Z_0$
of the Gibbs measure $\rho_\KDV = \wt \rho_0$
in \eqref{Gibbs5b}, as $\dl\to 0$.
\end{lemma}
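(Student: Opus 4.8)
The plan is to mimic the proof of Lemma~\ref{LEM:C1} in the unscaled setting, using Proposition~\ref{PROP:HN} in place of Proposition~\ref{PROP:FN} and Proposition~\ref{PROP:Gibbs2} in place of Proposition~\ref{PROP:Gibbs1}, together with the extension of these facts to $\dl = 0$ recorded in Remark~\ref{REM:K}. The key observation is that even though the base Gaussian measures $\wt\mu_\dl$ and $\wt\mu_0$ are mutually singular, the statement is about the densities evaluated on the Gaussian processes $\wt X_\dl$ and $X_\KDV$, which we realize on the common probability space $(\O,\F,\PP)$ via the single Gaussian sequence $\{g_n\}$; so we only ever work with $\PP$-integration and the singularity of the measures plays no role here.

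First I would fix $N \in \N$ and establish the pathwise convergence $\wt G_{\dl,N}(\wt X_\dl(\o)) \to \wt G_{0,N}(X_\KDV(\o))$ for every $\o \in \O$. This splits into two pieces. On one hand, by Lemma~\ref{LEM:p1}\,(ii), $L_\dl(n)$ increases to $n^2$ as $\dl \to 0$ for each fixed $n \in \Z^*$, hence from \eqref{Wick11a} and \eqref{HX1} the variance $\wt\s_{\dl,N} \to \s_{\KDV,N}$, and from \eqref{GH3} and \eqref{Gibbs7} (with \eqref{Low_Ldl} providing a $\dl$-uniform lower bound on $L_\dl(n)$, $0<\dl\le 1$) the truncated field $\wt X_{\dl,N}(x;\o)$ converges to $X_{\KDV,N}(x;\o)$ uniformly in $x \in \T$, with a $\dl$-uniform (in $0<\dl\le1$) and $x$-uniform pointwise bound $|\wt X_{\dl,N}(x;\o)| \le C_{N,\o} < \infty$. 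Combining these with the continuity of $(a,\s) \mapsto H_{k+1}(a;\s)$ and the dominated convergence theorem applied to the $x$-integral gives $\wt R_{\dl,N}(\wt X_\dl(\o)) \to \wt R_{0,N}(X_\KDV(\o))$, hence $\wt G_{\dl,N}(\wt X_\dl(\o)) \to \wt G_{0,N}(X_\KDV(\o))$ for every $\o$. Then the $\o$-uniform upper bound \eqref{HHN2} (valid uniformly in $0<\dl\le1$ by Remark~\ref{REM:K}) shows $\wt G_{\dl,N}(\wt X_\dl)$ is dominated, so a second application of dominated convergence in $\o$ upgrades this to $L^p(\O)$-convergence, proving the first display.

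The corollary then follows by the triangle-inequality scheme of \eqref{CC0}: write
\[
\| \wt G_\dl(\wt X_\dl) - \wt G_0(X_\KDV)\|_{L^p(\O)}
\le \| \wt G_\dl(\wt X_\dl) - \wt G_{\dl,N}(\wt X_\dl)\|_{L^p(\O)}
+ \| \wt G_{\dl,N}(\wt X_\dl) - \wt G_{0,N}(X_\KDV)\|_{L^p(\O)}
+ \| \wt G_{0,N}(X_\KDV) - \wt G_0(X_\KDV)\|_{L^p(\O)},
\]
take $\dl \to 0$ first (the middle term vanishes by the first display, the third is independent of $\dl$), and then take $N \to \infty$, using the uniform-in-$0\le\dl\le1$ convergence \eqref{HHN1b} (extended to $\dl=0$ via Remark~\ref{REM:K}) to send the first and third terms to $0$. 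Finally, taking $p = 1$ yields convergence of the partition functions $Z_\dl = \|\wt G_\dl(\wt X_\dl)\|_{L^1(\O)} \to \|\wt G_0(X_\KDV)\|_{L^1(\O)} = Z_\KDV = Z_0$, where positivity of $Z_0$ (needed for $\rho_\KDV$ in \eqref{Gibbs5b} to be well-defined) comes from $\wt G_0(X_\KDV) > 0$ almost surely. I do not expect a genuine obstacle: the only mild care needed is the $\dl$-uniform control as $\dl\to 0$, which is precisely what \eqref{Low_Ldl}, Remark~\ref{REM:K}, and \eqref{HHN1b} are designed to supply; the singularity of the base Gaussian measures, which will obstruct any \emph{total variation} convergence of the Gibbs measures, is invisible at this level since everything is phrased on $(\O,\F,\PP)$.
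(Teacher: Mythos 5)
Your proposal is correct and follows essentially the same route as the paper: the paper's proof simply notes that $\wt\s_{\dl,N}\to\s_{\KDV,N}$ (via Lemma \ref{LEM:p1}) and then repeats the argument of Lemma \ref{LEM:C1}, which is exactly the pathwise-convergence-plus-dominated-convergence step (with the uniform bound \eqref{HHN2}) and the triangle-inequality scheme of \eqref{CC0} with \eqref{HHN1b} and Remark \ref{REM:K} that you spell out. Your observation that the mutual singularity of $\wt\mu_\dl$ and $\wt\mu_0$ is irrelevant here, since everything is phrased on the common space $(\O,\F,\PP)$, is also exactly the point of working with $\wt X_\dl$ and $X_\KDV$ directly.
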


\begin{proof}
From Lemma \ref{LEM:p1}, 
we see  that $\wt \s_{\dl, N}$ in \eqref{Wick11a}
converges to $\s_{\KDV, N}$ in \eqref{HX1} as $\dl \to 0$.
With this observation, we can simply repeat the proof of Lemma \ref{LEM:C1}.
We omit details.
\end{proof}

We are now ready to prove weak convergence of $\wt \rho_\dl$ to $\rho_\KDV$ 
in the shallow-water limit ($\dl \to 0$).
Fix small $\eps > 0$.
 Let $A$ be any Borel subset of $H^{-\e}(\T)$ with
 $\wt \mu_0(\dd A) = 0$,
 where $\partial A$ denotes the boundary of the set $A$.
 Our goal is to show that 
\begin{align}
 \wt \rho_\dl(A) - \rho_\KDV(A) 
& \too 0
\label{CD4b}
\end{align}

\noi
as $\dl \to 0$, 
which, together with the portmanteau theorem, yields the desired weak convergence. 

By the triangle inequality, we have
\begin{align}
\begin{split}
| \wt \rho_\dl(A) - \rho_\KDV(A) |
& \le | \wt \rho_\dl(A) - \wt \rho_{\dl, N}(A) |\\
& \quad + | \wt \rho_{\dl, N}(A) - \rho_{\KDV, N}(A) |
+ |  \rho_{\KDV, N}(A) - \rho_\KDV(A) |, 
\end{split}
\label{CD5}
\end{align}

\noi
where $\rho_{\KDV, N}$ denotes the truncated Gibbs measure for $\dl = 0$
given by 
\begin{align*}
\rho_{\KDV, N} (A)
& = Z_{0, N}^{-1} \int_{H^{-\eps}} \ind_{\{v \in A\}} e^{-\frac 1{k+1} \int_\T \W(v_N^{k+1}) dx} d\wt \mu_0(v)\\
& = Z_{0, N}^{-1} \int_\O \ind_{\{X_\KDV(\o) \in A\}} \wt G_{0, N} (X_\KDV)  d\PP(\o)
\end{align*}

\noi
for any measurable set $A \subset H^{-\eps}(\T)$.
From Proposition \ref{PROP:Gibbs2}
and Remark \ref{REM:K}, we have
\begin{align}
\begin{split}
& \lim_{N \to \infty} \sup_{0 \le \dl \le 1}| \wt \rho_\dl(A) - \wt \rho_{\dl, N}(A) |\\
& \quad = \lim_{N \to \infty} \sup_{0 \le \dl \le 1}
\| \wt G_{\dl, N}(\wt X_\dl) - \wt G_{\dl}(\wt X_\dl)\|_{L^1(\O)}\\
& \quad = 0, 
\end{split}
\label{CD6}
\end{align}

\noi
with the identification $\wt X_0 = X_\KDV$, 
$\wt \rho_0 = \rho_\KDV$,
and $\wt \rho_{0, N} = \rho_{\KDV, N}$, 
where $\wt G_{0, N}(\wt X_0)$ 
and $\wt G_{0}(\wt X_0)$ are as in \eqref{HX2} and \eqref{HX3}, respectively.
Hence, in view of \eqref{CD4b}, \eqref{CD5}, 
and \eqref{CD6}, it suffices to prove
\begin{align}
\begin{split}
& \lim_{\dl \to 0}
| \wt \rho_{\dl, N}(A) - \rho_{\KDV, N}(A) |\\
&\quad  = 
\lim_{\dl \to 0 }\Big|
Z_{\dl, N}^{-1} \E[ \wt G_{\dl, N}(\wt X_\dl) \ind_A(\wt X_\dl) ]
- Z_{0, N}^{-1} \E[ \wt G_{0, N}( X_\KDV) \ind_A(X_\KDV) ]
\Big|\\
& \quad = 0
\end{split}
\label{CD7}
\end{align}

\noi
for some $N \in \N$.

First, 
note that it suffices to show that
 \begin{align}
 \E[ \wt G_{\dl, N}(\wt X_\dl) \ind_A(\wt X_\dl) ]
-  \E[ \wt G_{0, N}( X_\KDV) \ind_A(X_\KDV) ]
\too 0
 \label{CD8}
\end{align}

\noi
as $\dl \to 0$
since, by taking $A = H^{-\eps}(\T)$, 
\eqref{CD8} implies  $Z_{\dl, N} \to Z_{0, N}$ as $\dl\to0$.

By the triangle inequality, we have
\begin{align}
\begin{split}
\big| & \E[ \wt G_{\dl, N}(\wt X_\dl) \ind_A(\wt X_\dl) ]
-  \E[ \wt G_{0, N}( X_\KDV) \ind_A(X_\KDV) ]\big|\\
 & 
   \leq    \E\big[ |\wt G_{\dl, N}(\wt X_\dl) - \wt G_{0, N}( X_\KDV)|\big]\\
& \quad    
+  \E\big[ \wt G_{0, N}(X_\KDV)  |\ind_A(\wt X_\dl) -\ind_A(X_\KDV) | \big].
\end{split}
\label{CD9}
  \end{align}

\noi
From  Lemma \ref{LEM:C2}, 
we have
\begin{align}
   \E\big[ |\wt G_{\dl, N}(\wt X_\dl) - \wt G_{0, N}( X_\KDV)|\big]
   \too 0, 
\label{CD9b}
  \end{align}

\noi
as $\dl \to \infty$.
As for the second term 
on the right-hand side of \eqref{CD9}, 
we first note that 
$\s_{\KDV, N}$ defined in~\eqref{HX1} is uniformly bounded in $N \in \N$.
Then, together with  \eqref{GN1} and \eqref{HX2}, 
we conclude that 
\begin{align}
0 < \wt G_{0, N}(X_\KDV(\o)) \les 1, 
\label{CD9a}
\end{align}

\noi
uniformly in  $\o \in \O$ and $N \in \N$.
Hence, from \eqref{CD9a} and $\wt \mu_0(\dd A) = 0$
(which implies $\E[ \ind_{\dd A}(X_\KDV)] = 0$), we have
\begin{align}
\begin{split}
 \E & \big[ \wt G_{0, N}(X_\KDV)  |\ind_A(\wt X_\dl) -\ind_A(X_\KDV) | \big]\\
& \les 
 \E\big[  |\ind_A(\wt X_\dl) -\ind_A(X_\KDV) | \big]\\
& =  
 \E\big[  \ind_{\text{int} A} (X_\KDV) \cdot |\ind_A(\wt X_\dl) -\ind_A(X_\KDV) | \big]\\
& \quad +  \E\big[  \ind_{\text{int} A^c} (X_\KDV) \cdot |\ind_A(\wt X_\dl) -\ind_A(X_\KDV) | \big], 
\end{split}
\label{CD10}
\end{align}

\noi
where $\text{int} A$ denotes the interior of $A$
given by $\text{int} A = A \setminus \dd A$.
From 
Proposition \ref{PROP:equiv2}\,(i) 
and  the openness of $\text{int} A$ and $\text{int} A^c$, 
the integrands of the terms on the right-hand side
of \eqref{CD10} tend to $0$ as $\dl \to 0$.
Hence, by the bounded convergence theorem, 
we conclude that 
\begin{align}
 \E  \big[ \wt G_{0, N}(X_\KDV)  |\ind_A(\wt X_\dl) -\ind_A(X_\KDV) | \big]
\too 0, 
\label{CD11}
\end{align}

\noi
as $\dl \to 0$.
Therefore, 
putting \eqref{CD9}, \eqref{CD9b}, 
and \eqref{CD11} together, 
we conclude \eqref{CD8}, 
which in turn implies \eqref{CD7}.
Finally, from \eqref{CD5}, \eqref{CD6}, 
and \eqref{CD7}, 
we conclude \eqref{CD4b}, 
namely, weak convergence of $\wt \rho_\dl$ to $\rho_\KDV$
as $\dl \to 0$.
This concludes the proof of Theorem \ref{THM:Gibbs2}
when $ k \in 2\N+1$.

\subsection{Gibbs measures for the scaled ILW equation: variational approach}
\label{SUBSEC:ILW2}

We conclude this section by briefly going over the proof of Theorem \ref{THM:Gibbs2}
when $k = 2$, based on the variational approach as in Subsection \ref{SUBSEC:ILW1}.
The major part of the argument follows
exactly as  in Subsection \ref{SUBSEC:ILW1}
and thus we only describe necessary definitions and steps.

Fix $K > 0$ in the remaining part of this section.
Given $0 \le  \dl <  \infty$ and $N \in \N$, 
define
the truncated density $\wt G_{\dl, N}^K(v)$ by 
\begin{align*}
\wt G_{\dl, N}^K(v)
& =  
\chi_K\bigg( \int_\T \W(v_N^2) dx \bigg)e^{-\frac 1{3} \int_\T v_N^{3} dx} \\
& = 
\chi_K\bigg( \int_\T H_2(v_N; \wt \s_{\dl, N}) dx \bigg)e^{-\frac 1{3} \int_\T v_N^{3} dx} , 
\end{align*}

\noi
where $v_N = \P_N v$, 
$\wt \s_{\dl, N}$ is as in \eqref{Wick11a} when $0 < \dl < \infty$, 
and $\wt \s_{0, N} = \s_{\KDV, N}$.
As in the unscaled case discussed in Subsection \ref{SUBSEC:ILW1}, 
Theorem \ref{THM:Gibbs2} for $k = 2$ follows
once we prove the following uniform bounds.

\begin{proposition}\label{PROP:VX1}
Fix finite $p \ge 1$ and $K > 0$.
Then, given any $0 \le  \dl <  \infty$, we have
\begin{align*}
&\sup_{N\in \N} \|\wt  G_{\dl, N}^K(\wt X_\dl)  \|_{L^p(\O)} 
=
\sup_{N \in \N}\| \wt G_{\dl, N}^K(v) \|_{L^p(d\wt \mu_\dl)}
 \le C_{p, \dl, K} < \infty.
\end{align*}

\noi
In addition, the following uniform bound holds for $0  \le \dl \le 1$\textup{:}
\begin{align*}
 \sup_{N \in \N} \sup_{0 \le \dl \le 1}
\| \wt G_{\dl, N}^K(\wt X_\dl)  \|_{L^p(\O)}
& = 
\sup_{N \in \N} \sup_{0 \le \dl \le1}
\| \wt G_{\dl, N}^K(v)  \|_{L^p(d\wt \mu_\dl)}\\
& 
 \le C_{p, K} < \infty.
\end{align*}

\end{proposition}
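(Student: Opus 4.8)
The plan is to run the variational argument of Subsection~\ref{SUBSEC:ILW1} essentially verbatim, with $K_\dl$ replaced by $L_\dl$ throughout and with the uniformity tracked over $0 \le \dl \le 1$ rather than over $2 \le \dl \le \infty$. First, exactly as in the passage from Proposition~\ref{PROP:V1} to Proposition~\ref{PROP:V2}, the pointwise inequality \eqref{H6} on $\chi_K$ reduces Proposition~\ref{PROP:VX1} to an $L^p(\O)$ bound on the auxiliary density $\wt\GG_{\dl,N}^K(v) = e^{-\wt\RR_{\dl,N}(v)}$ with
\[
\wt\RR_{\dl,N}(v) = \tfrac13\int_\T v_N^3\,dx + A\,\Big| \int_\T \W(v_N^2)\,dx\Big|^2,
\]
for a large constant $A = A(p) > 0$, where the Wick power uses the variance $\wt\s_{\dl,N}$ in \eqref{Wick11a} (with $\wt\s_{0,N} = \s_{\KDV,N}$ when $\dl = 0$). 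For $\dl = 0$ this is a finite-variance object and the bound is immediate; the real content is the bound uniform over $0 < \dl \le 1$ and over $N \in \N$, which, together with Remark~\ref{REM:K}, also yields the $\dl = 0$ endpoint needed in the statement.

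Next, I would introduce the scaled Gaussian process $\wt Y_\dl(t) = (\tfrac3\dl\Gdl\dx)^{-\frac12}W(t) = \sqrt{\dl/3}\, Y_\dl(t)$, so that $\L(\wt Y_\dl(1)) = \wt\mu_\dl$, and apply the Bou\'e-Dupuis formula (Lemma~\ref{LEM:var3}) to $F = p\,\wt\RR_{\dl,N}$. It then suffices to bound the resulting functional $\wt\M_{\dl,N}(\dr)$ from below, uniformly in $N \in \N$, in $\dr \in \Ha$, and in $0 < \dl \le 1$. The two pathwise ingredients are: (a) the regularity bounds for $\wt Y_{\dl,N}(1)$, $\W(\wt Y_{\dl,N}^2(1))$, and $\W(\wt Y_{\dl,N}^3(1))$ in $W^{-\eps,\infty}(\T)$ with constants uniform in $N$ and $0 \le \dl \le 1$, which is precisely Proposition~\ref{PROP:HN} together with the $\dl = 0$ extension of Remark~\ref{REM:K}; and (b) the drift estimate $\|\wt I_\dl(\dr)(1)\|_{H^{1/2}}^2 \les \int_0^1 \|\dr(t)\|_{L^2_x}^2\,dt$ with an implicit constant independent of $0 < \dl \le 1$, which follows from Minkowski's and Cauchy-Schwarz's inequalities together with the uniform lower bound $L_\dl(n) \ges |n|$ for $0 < \dl \le 1$ provided by \eqref{Low_Ldl} in Lemma~\ref{LEM:p1}(iii) (so that $\jb{n}^{1/2}L_\dl(n)^{-1/2} \les 1$ uniformly); this is the shallow-water substitute for the role of \eqref{Low_Kdl} in Lemma~\ref{LEM:Dr}. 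With these in hand, the rest is identical to the proof of Proposition~\ref{PROP:V2}: expand $\wt\RR_{\dl,N}(\wt Y_\dl + \wt\Dr_\dl)$ via \eqref{Herm3} (the pure cubic Wick term vanishes under the expectation), bound the cross terms by Lemma~\ref{LEM:Dr2} — which is already stated uniformly in $0 < \dl \le \infty$, and whose bounds also hold at $\dl = 0$ since $\wt Y_{0,N} = X_{\KDV,N}$ is smoother — absorbing everything into the positive quantity $\wt\U_{\dl,N}(\dr) = \E\big[\tfrac{Ap}{4}\|\wt\Dr_{\dl,N}\|_{L^2}^4 + \tfrac12\int_0^1\|\dr(t)\|_{L^2_x}^2\,dt\big]$, and conclude $\inf_{N,\dr}\wt\M_{\dl,N}(\dr) \ge -C_{p,A} > -\infty$ once $A = A(p) \gg 1$. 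Reinserting the factor from \eqref{H6} then gives Proposition~\ref{PROP:VX1}.

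The step I expect to require the only genuine care — as opposed to a mechanical transcription of Subsection~\ref{SUBSEC:ILW1} — is ensuring that all the $\dl$-dependence can be made uniform \emph{down to} $\dl = 0$, despite the fact that the base Gaussian measures jump in regularity ($\wt\mu_\dl$ is supported on $H^{-\eps}(\T)\setminus L^2(\T)$ for $\dl > 0$, whereas $\wt\mu_0$ is supported on $H^{1/2-\eps}(\T)\setminus H^{1/2}(\T)$). This is exactly where Lemma~\ref{LEM:p1}(iii) and Remark~\ref{REM:K} do the work: the uniform lower bound $L_\dl(n)\ges|n|$ puts all of $0 \le \dl \le 1$ on the same footing at the level of the $W^{-\eps,\infty}$ stochastic objects and of the drift term, and the finiteness of $\s_{\KDV,N}$ covers the limiting variance — this being the mirror image, with $L_\dl$ in place of $K_\dl$ and the limit taken at the lower rather than the upper endpoint, of the remark made in Remark~\ref{REM:FN1}.
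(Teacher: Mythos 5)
Your proposal follows the paper's own route essentially verbatim: reduce $\wt G^K_{\dl,N}$ to the tamed density $\wt\GG^K_{\dl,N}$ via \eqref{H6}, apply the Bou\'e--Dupuis formula for the scaled process $\wt Y_\dl = (\tfrac3\dl\Gdl\dx)^{-\frac12}W$ (the paper's Lemma~\ref{LEM:var3x}), and bound the variational functional from below using Lemma~\ref{LEM:Dr2} together with the shallow-water analogue of Lemma~\ref{LEM:Dr} (the paper's Lemma~\ref{LEM:DrX}), whose uniformity in $0\le\dl\le1$ rests exactly on \eqref{Ld1} and \eqref{Low_Ldl} as you say. The only cosmetic caveat is that the $\dl=0$ case is not ``immediate'' in isolation (it is the classical focusing KdV bound), but since your uniform argument already covers $\dl=0$ within the range $0\le\dl\le1$, this does not affect the proof.
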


Once we have Proposition \ref{PROP:VX1}, 
we can argue exactly  as in Subsection \ref{SUBSEC:ILW1}
to conclude Theorem \ref{THM:Gibbs2}.
In particular,  \eqref{V5} and \eqref{V6}
provide a bound on the truncated density~$\wt G_{\dl, N}^K$, uniformly in $0\le \dl \le 1$, 
replacing the defocusing bound \eqref{HHN2}.
We omit details.

In order to prove Proposition \ref{PROP:VX1}, 
we consider the truncated density 
with a taming by a power of the Wick-ordered $L^2$-norm
as in Subsection \ref{SUBSEC:ILW1}.
Given $0 \le \dl <  \infty$ and $N \in \N$, set 
\begin{align*}
\wt \RR_{\dl, N} (v)
&=   \frac13  \int_{\T }  v_N^3    dx
+ A \, \bigg| \int_{\T } \W(  v_N^2 ) dx\bigg|^2 , 
\end{align*}

\noi
where 
$\W(  v_N^2 ) = \W_{\dl, N}(  v_N^2 ) = H_2(v_N; \wt \s_{\dl, N})$.
Then, we also define the truncated density
with a taming by a power of the Wick-ordered $L^2$-norm:
\begin{align*}
\wt \GG_{\dl, N}^K(v)
& = e^{-\wt \RR_{\dl, N}(v)}= 
 e^{-\frac 1{3} \int_\T v_N^3  dx
- A| \int_\T \W(v_N^2) dx |^2}
\end{align*}

\noi
for some suitable $A > 0$.
Then, from \eqref{H6}, we have
\begin{align*}
\wt G_{\dl, N}^K(v) \le C_{A, K} \cdot \wt \GG_{\dl, N}^K(v)
\end{align*}

\noi
and, hence, 
Proposition \ref{PROP:VX1} follows once we prove the following 
uniform bounds.

\begin{proposition}\label{PROP:VX2}

Fix finite $p \ge 1$.
Then, there exists  $A_0  = A_0(p)>0 $ such that
\begin{align*}
&\sup_{N\in \N} \|\wt  \GG_{\dl, N}^K(\wt X_\dl)  \|_{L^p(\O)} 
=
\sup_{N \in \N}\| \wt \GG_{\dl, N}^K(v) \|_{L^p(d\wt \mu_\dl)}
 \le C_{p, \dl, K, A} < \infty
\end{align*}

\noi
for any  $0 \le  \dl < \infty$,  $K > 0$, 
 and $A \ge A_0$.
In addition, the following uniform bound holds for $0 \le \dl \le 1$\textup{:}
\begin{align*}
 \sup_{N \in \N} \sup_{0 \le \dl \le 1}
\| \wt \GG_{\dl, N}^K(\wt X_\dl)  \|_{L^p(\O)}
& = 
\sup_{N \in \N} \sup_{0 \le \dl \le 1}
\| \wt \GG_{\dl, N}^K(u)  \|_{L^p(d\wt \mu_\dl)}\\
& 
 \le C_{p, K, A} < \infty
\end{align*}

\noi
for any    $K > 0$  and $A \ge A_0$.

\end{proposition}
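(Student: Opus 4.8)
\textbf{Proof plan for Proposition \ref{PROP:VX2}.}
The plan is to reduce the claim to the unscaled variational estimate already carried out in the proof of Proposition \ref{PROP:V2}, using the scaling identity $\wt X_\dl = \sqrt{\dl/3}\, X_\dl$ from \eqref{GH4a} for $0 < \dl < \infty$, and treating the boundary case $\dl = 0$ separately via the identification $\wt X_0 = X_\KDV$, $L_0(n) = n^2$ (as in Remark \ref{REM:K}). First I would set up the Bou\'e--Dupuis framework exactly as in Subsection \ref{SUBSEC:ILW1}: replace the cylindrical Brownian motion construction $Y_\dl(t) = (\Gdl \dx)^{-1/2} W(t)$ from \eqref{P2} by its scaled analogue $\wt Y_\dl(t) = (\tfrac 3\dl \Gdl \dx)^{-1/2} W(t)$, whose Fourier multiplier is $(L_\dl(n))^{-1/2}$ with $L_\dl(n) = \tfrac 3\dl K_\dl(n)$ as in \eqref{GH4}, so that $\L(\wt Y_\dl(1)) = \wt \mu_\dl$ and $\wt Y_{\dl, N}(1)$ has variance $\wt \s_{\dl, N}$. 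For $\dl = 0$ one takes the multiplier $|n|^{-1}$, giving the Brownian-loop measure $\wt \mu_0 = \mukdv$. Applying Lemma \ref{LEM:var3} to $\wt \GG_{\dl, N}^K(v) = e^{-\wt \RR_{\dl, N}(v)}$ reduces the uniform $L^p$ bound to a lower bound, uniform in $N \in \N$ and in $\dr \in \Ha$, on
\begin{equation*}
\wt \M_{\dl, N}(\dr) = \E\bigg[ p\, \wt \RR_{\dl, N}\big(\wt Y_\dl(1) + \wt I_\dl(\dr)(1)\big) + \frac 12 \int_0^1 \|\dr(t)\|_{L^2_x}^2 \, dt\bigg],
\end{equation*}
where $\wt I_\dl(\dr)(t) = \int_0^t (\tfrac 3\dl \Gdl \dx)^{-1/2} \dr(t')\, dt'$.

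The key inputs are the pathwise regularity bounds. I would prove the analogue of Lemma \ref{LEM:Dr}: part (i) (the $W^{-\eps, \infty}$ bounds on $\wt Y_{\dl, N}(1)$ and its Wick powers $\W(\wt Y_{\dl, N}^2(1))$, $\W(\wt Y_{\dl, N}^3(1))$) follows immediately from Proposition \ref{PROP:HN} since $\L(\wt Y_{\dl, N}(1)) = \L(\wt X_{\dl, N})$, and the constants can be taken independent of $\dl$ for $0 \le \dl \le 1$ thanks to the uniform lower bound $\inf_{0 < \dl \le 1} L_\dl(n) \gtrsim |n|$ from \eqref{Low_Ldl} in Lemma \ref{LEM:p1} (together with $L_0(n) = n^2 \ge |n|$); part (ii), the cameron--Martin bound $\|\wt I_\dl(\dr)(1)\|_{H^{1/2}}^2 \le C_\dl \int_0^1 \|\dr(t)\|_{L^2_x}^2\, dt$, again follows from Minkowski's and Cauchy--Schwarz's inequalities and \eqref{Ld1}, with $C_\dl$ uniform over $0 \le \dl \le 1$ by \eqref{Low_Ldl}. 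With these in hand, the expansion of $\wt \RR_{\dl, N}(\wt Y_\dl + \wt \Dr_\dl)$ via \eqref{Herm3} is formally identical to \eqref{P5a}--\eqref{P5b} (with $Y, \Dr$ replaced by $\wt Y, \wt \Dr$), and Lemma \ref{LEM:Dr2} applies verbatim since its proof only uses the pathwise $W^{-\eps, \infty}$ bounds and the $H^{1/2}$/$L^2$ structure of the drift term, not the specific dispersion symbol. Then the positive terms $\tfrac{Ap}{4}\|\wt \Dr_{\dl, N}\|_{L^2}^4 + \tfrac 12 \int_0^1 \|\dr\|_{L^2_x}^2$ absorb everything, yielding $\inf_N \inf_\dr \wt \M_{\dl, N}(\dr) \ge - C_{p, \dl, A} > -\infty$ for $A = A(p) \gg 1$, exactly as in \eqref{P8}.

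Finally, restricting to $0 \le \dl \le 1$ and invoking the $\dl$-independence of the constants in the analogue of Lemma \ref{LEM:Dr}(i) (from the uniform bound \eqref{Low_Ldl}) and of $C_\dl$ in the Cameron--Martin estimate, one obtains that $C_{p, \dl, A}$ in the lower bound is independent of $0 \le \dl \le 1$, which gives the second (uniform) claim. I do not expect a genuine obstacle here: the main point is simply the bookkeeping needed to verify that the uniform-in-$\dl$ constants survive, and the only substantive ingredient specific to the shallow-water regime is Lemma \ref{LEM:p1}, already established. The $\dl = 0$ endpoint is the one place requiring mild care, but it is covered by Remark \ref{REM:K} and the fact that $\s_{\KDV, N}$ is bounded uniformly in $N$, which in fact makes the $\dl = 0$ estimate strictly easier (no logarithmic divergence of the variance).
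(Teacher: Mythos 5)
Your proposal is correct and follows essentially the same route as the paper: set up the scaled Bou\'e--Dupuis formula with $\wt Y_\dl(t) = (\tfrac 3\dl \Gdl\dx)^{-1/2}W(t)$ (the paper's Lemma \ref{LEM:var3x}), prove the scaled analogue of Lemma \ref{LEM:Dr} using \eqref{Ld1} and \eqref{Low_Ldl} together with Proposition \ref{PROP:HN} (the paper's Lemma \ref{LEM:DrX}), apply Lemma \ref{LEM:Dr2} verbatim, and absorb via the positive terms exactly as in \eqref{P8}, with the $\dl=0$ endpoint handled through $\wt X_0 = X_\KDV$, $L_0(n)=n^2$. Your identification of where the $\dl$-uniformity for $0\le\dl\le 1$ comes from (the lower bound \eqref{Low_Ldl}) matches the paper's argument.
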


In order to set up the variational formulation, 
let us introduce some notations.
Define  $\wt Y_\dl (t)$
by 
\begin{align}
\wt Y_\dl  (t)
=  \Big(\frac 3\dl \Gdl \dx\Big)^{-\frac 12}    W(t), 
\label{PX2}
\end{align}

\noi
where $W(t)$ is as in \eqref{P1}
and $\big( \frac 3\dl \Gdl \dx\big)^{-\frac 12}$ is the Fourier multiplier operator
with the multiplier $(L_\dl(n))^{-\frac 12}$
with $L_\dl (n)$ as in \eqref{GH4}.
In view of \eqref{GH3}, we have 
$\L(\wt Y_\dl(1)) = \wt \mu_\dl$.
Given $N \in \N$, 
we set   $\wt Y_{\dl,N} = \P_N \wt Y_\dl $.
The variational formulation in the current problem is given by the following lemma.

\begin{lemma}\label{LEM:var3x}
Given $0 \le  \dl < \infty$, 
let $\wt Y_\dl$ be as in \eqref{PX2}.
Fix $N \in \N$.
Suppose that  $F:C^\infty(\T) \to \R$
is measurable such that $\E\big[|F(\wt Y_{\dl, N}(1))|^p\big] < \infty$
and $\E\big[|e^{-F(\wt Y_{\dl, N} (1))}|^q \big] < \infty$ for some $1 < p, q < \infty$ with $\frac 1p + \frac 1q = 1$.
Then, we have
\begin{align*}
- \log \E\Big[e^{-F( \wt Y_{\dl, N}(1))}\Big]
= \inf_{\dr \in \mathbb H_a}
\E\bigg[ F( \wt Y_{\dl,N}(1) +  \P_N \wt I_\dl(\dr)(1)) + \frac{1}{2} \int_0^1 \| \dr(t) \|_{L^2_x}^2 dt \bigg], 
\end{align*}

\noi
where  $\wt I_{\dl}(\dr)$ is  defined by 
\begin{align*}
\wt  I_{\dl}  (\dr)(t) = \int_0^t \big(\tfrac 3\dl \Gdl \dx\big)^{-\frac 12}     \dr(t') dt'.
\end{align*}

\end{lemma}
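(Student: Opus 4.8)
\textbf{Proof proposal for Lemma \ref{LEM:var3x} (the Bou\'e--Dupuis formula for the scaled process).}
The plan is to recognize Lemma \ref{LEM:var3x} as a direct instance of the abstract Bou\'e--Dupuis variational formula (Lemma \ref{LEM:var3}), applied to the scaled Gaussian process $\wt Y_\dl$ in \eqref{PX2} in place of $Y_\dl$ in \eqref{P2}. Concretely, $\wt Y_\dl(t) = (\tfrac 3\dl \Gdl \dx)^{-\frac 12} W(t)$ is of exactly the same form as $Y_\dl(t) = (\Gdl \dx)^{-\frac 12} W(t)$, the only change being that the Fourier multiplier $K_\dl(n)^{-\frac 12}$ is replaced by $L_\dl(n)^{-\frac 12} = (\tfrac 3\dl K_\dl(n))^{-\frac 12}$ (with the convention $L_0(n) = n^2$ when $\dl = 0$). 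First I would check that this replacement is harmless: by Lemma \ref{LEM:p1}\,(i) and (iii) (and $L_0(n) = n^2$), the multiplier $L_\dl(n)$ is a strictly positive, symmetric sequence with $L_\dl(n) \gtrsim_\dl |n|$ for every $0 \le \dl < \infty$, so $(\tfrac 3\dl \Gdl \dx)^{-\frac 12}$ is a well-defined bounded (indeed smoothing) operator on $L^2_0(\T)$, and $\wt Y_\dl(1)$ is a centered Gaussian measure on $C^\infty(\T)$ with $\L(\wt Y_\dl(1)) = \wt \mu_\dl$ by \eqref{GH3}. This places us precisely in the hypotheses under which Lemma \ref{LEM:var3} was stated.

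Next I would record that the drift term transforms correctly. With $\wt I_\dl(\dr)(t) = \int_0^t (\tfrac 3\dl \Gdl \dx)^{-\frac 12}\dr(t')\,dt'$, the identity to be proved,
\begin{align*}
- \log \E\Big[e^{-F(\wt Y_{\dl, N}(1))}\Big]
= \inf_{\dr \in \mathbb H_a}
\E\bigg[ F(\wt Y_{\dl,N}(1) + \P_N \wt I_\dl(\dr)(1)) + \frac{1}{2} \int_0^1 \| \dr(t) \|_{L^2_x}^2 dt \bigg],
\end{align*}
is literally \eqref{P3} with $(Y_\dl, I_\dl)$ replaced by $(\wt Y_\dl, \wt I_\dl)$; since $\wt Y_\dl$ and $\wt I_\dl$ arise from the same cylindrical Brownian motion $W$ through the same bounded multiplier, the structural proof of the Bou\'e--Dupuis formula (e.g.~Theorem 7 in \cite{Ust}) applies verbatim. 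The integrability hypotheses $\E[|F(\wt Y_{\dl,N}(1))|^p] < \infty$ and $\E[|e^{-F(\wt Y_{\dl,N}(1))}|^q] < \infty$ are exactly those assumed in the statement, so no additional verification is needed beyond noting that $\wt Y_{\dl, N}(1) = \P_N \wt Y_\dl(1)$ is a finite-dimensional Gaussian vector (with covariance governed by $\wt \s_{\dl, N}$ in \eqref{Wick11a}, or $\s_{\KDV, N}$ in \eqref{HX1} when $\dl = 0$), so that $F(\wt Y_{\dl,N}(1))$ is a genuine random variable.

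Since every ingredient is a substitution into a previously stated result, there is no real obstacle here: the mildest point to be careful about is the $\dl = 0$ endpoint, where one should check that $L_0(n) = n^2$ still yields a bounded smoothing multiplier (it does, being $n^{-2}$) so that $\wt Y_0 = X_{\KDV}$ and the formula specializes to the KdV Gibbs measure setting. I would therefore present the proof as a short remark: the statement follows from Lemma \ref{LEM:var3} applied with the positive, symmetric Fourier multiplier $L_\dl(n)$ (Lemma \ref{LEM:p1}) in place of $K_\dl(n)$, noting $\L(\wt Y_\dl(1)) = \wt \mu_\dl$ from \eqref{GH3} and $\wt I_\dl(\dr)(t) = \int_0^t (\tfrac 3\dl \Gdl \dx)^{-\frac 12}\dr(t')\,dt'$. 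We omit further details.
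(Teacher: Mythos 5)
Your proposal is correct and matches the paper's treatment: the paper states Lemma \ref{LEM:var3x} without proof, treating it exactly as you do, namely as the Bou\'e--Dupuis formula of Lemma \ref{LEM:var3} (Theorem 7 in \cite{Ust}) applied with the positive multiplier $L_\dl(n)$ (with $L_0(n) = n^2$) in place of $K_\dl(n)$, so that $\L(\wt Y_\dl(1)) = \wt\mu_\dl$ and the drift is transported by $\wt I_\dl$. Your extra checks (positivity and lower bound of $L_\dl(n)$ via Lemma \ref{LEM:p1}, the $\dl = 0$ endpoint) are sound and only make the substitution explicit.
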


With Lemma \ref{LEM:var3x} in hand, 
we can proceed as in Subsection \ref{SUBSEC:ILW1}
to prove 
Proposition \ref{PROP:VX2}
by using 
Lemma \ref{LEM:Dr2}
and the following lemma.

\begin{lemma}  \label{LEM:DrX}
	
\textup{(i)} 
Let $\eps > 0$ and fix  finite $p \ge 1$.
Then, given any $0 \le  \dl <  \infty$, 
we have 
\begin{align}
\begin{split}
\E 
\Big[  \|\wt Y_{\dl, N}(1) & \|_{W^{-\eps,\infty}}^p
 + \| \W(\wt Y_{\dl, N}^2(1))  \|_{W^{-\eps,\infty}}^p\\
& + 
\big\| \W( \wt Y_{\dl, N}^3(1)  )  \big\|_{W^{-\eps,\infty}}^p
\Big]
\leq C_{\eps, p, \dl} <\infty,
\end{split}
\label{PX4}
\end{align}

\noi
uniformly in $N \in \N$.
Furthermore, 
by restricting our attention to  $0 \le \dl \le 1$, 
we can choose the constant $C_{\eps, p, \dl}$ in \eqref{PX4}
to be independent of $\dl$.

\smallskip
	
\noi
\textup{(ii)} Let $0 \le \dl < \infty$. For any $\dr \in \Ha$, we have
\begin{align*}
\| \wt I_{\dl} (\dr)(1) \|_{H^{\frac 12}}^2 \les \int_0^1 \| \dr(t) \|_{L^2_x}^2dt, 
\end{align*}

\noi
where $\Ha$  denotes the collection of drifts, 
which are progressively measurable processes 
belonging to 
$L^2([0,1]; L^2_0(\T))$, $\PP$-almost surely, 
as in Subsection \ref{SUBSEC:ILW1}.

\end{lemma}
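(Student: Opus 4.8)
\textbf{Proof proposal for Lemma \ref{LEM:DrX}.} The plan is to reduce both assertions to facts already established in the unscaled setting, using the identity $\wt X_\dl = \sqrt{\dl/3}\, X_\dl$ from \eqref{GH4a} and the relation $L_\dl(n) = \frac 3\dl K_\dl(n)$ from \eqref{GH4}. For part (i), I would first note that $\L(\wt Y_{\dl, N}(1)) = \L(\wt X_{\dl, N})$ since both are centered Gaussian with the same covariance (this is exactly the shallow-water analogue of the observation $\L(Y_{\dl, N}(1)) = \L(X_{\dl, N})$ used in Lemma \ref{LEM:Dr}). Hence the moment bounds in \eqref{PX4} for $\wt Y_{\dl, N}(1)$, $\W(\wt Y_{\dl, N}^2(1))$, and $\W(\wt Y_{\dl, N}^3(1))$ are precisely the $L^p(\O)$-bounds for $\wt X_{\dl, N}$, $\W(\wt X_{\dl, N}^2)$, $\W(\wt X_{\dl, N}^3)$, which are supplied by Proposition \ref{PROP:HN} (applied with $k = 1, 2, 3$). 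For $0 < \dl \le 1$, Proposition \ref{PROP:HN} already gives constants uniform in $\dl$, and the $\dl = 0$ endpoint is covered by Remark \ref{REM:K} (with $\wt X_0 = X_\KDV$ and the uniformly bounded variance $\s_{\KDV, N}$ in \eqref{HX1}); this yields the claimed uniformity over $0 \le \dl \le 1$.

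For part (ii), the strategy mirrors the proof of the bound \eqref{CM} in Lemma \ref{LEM:Dr}. Writing out $\wt I_\dl(\dr)(1) = \int_0^1 (\frac 3\dl \Gdl \dx)^{-1/2} \dr(t')\, dt'$, I would apply Minkowski's and Cauchy-Schwarz's inequalities in $t$, which reduces the estimate to the operator bound
\begin{align*}
\big\| \jb{\nb}^{1/2} (\tfrac 3\dl \Gdl \dx)^{-1/2} f \big\|_{L^2_0}
= \Big\| \tfrac{\jb{n}^{1/2}}{L_\dl(n)^{1/2}} \ft f(n) \Big\|_{\l^2_n}
\les \| f \|_{L^2},
\end{align*}
which is immediate from the lower bound $L_\dl(n) \gtrsim |n| \sim \jb{n}$ for $n \in \Z^*$. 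For fixed $\dl > 0$ this follows from \eqref{Ld1}, and for the uniform statement over $0 \le \dl \le 1$ I would invoke \eqref{Low_Ldl} in Lemma \ref{LEM:p1}\,(iii), together with $L_0(n) = n^2 \geq |n|$ at the endpoint. So the implicit constant can be taken independent of $0 \le \dl \le 1$.

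I do not expect a genuine obstacle here: both parts are routine transcriptions of the corresponding unscaled lemmas, with $K_\dl(n) \sim |n|$ replaced throughout by $L_\dl(n) \sim |n|$, and the only point requiring a moment's care is the inclusion of the singular endpoint $\dl = 0$, which is handled by Remark \ref{REM:K} for the probabilistic bounds and by $L_0(n) = n^2$ for the deterministic commutator bound. The main (minor) bookkeeping task is simply to confirm that the constants extracted from Proposition \ref{PROP:HN} and Lemma \ref{LEM:p1} are indeed uniform down to $\dl = 0$, which the cited results already assert.
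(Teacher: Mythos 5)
Your proposal is correct and follows the same route as the paper, which simply notes that Lemma \ref{LEM:DrX} is proved exactly as Lemma \ref{LEM:Dr}, with the lower bounds \eqref{Ld1} and \eqref{Low_Ldl} for $L_\dl(n)$ replacing \eqref{HH1a} and \eqref{Low_Kdl}; your reduction of part (i) to Proposition \ref{PROP:HN} via $\L(\wt Y_{\dl,N}(1)) = \L(\wt X_{\dl,N})$ and your treatment of the $\dl = 0$ endpoint through Remark \ref{REM:K} and $L_0(n) = n^2$ supply precisely the details the paper omits.
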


The proof of Lemma \ref{LEM:DrX}
follows exactly as in the proof of Lemma \ref{LEM:Dr}, 
using the lower bounds
\eqref{Ld1}
and \eqref{Low_Ldl} of $L_\dl(n)$
(in place of \eqref{HH1a} and \eqref{Low_Kdl}).
We omit details.

We conclude this section
by recalling
Proposition \ref{PROP:VX2} implies 
Proposition \ref{PROP:VX1},
which in turn implies Theorem \ref{THM:Gibbs2}
for $k = 2$.

\section{Dynamical problem}
\label{SEC:5}

In this section, we study the dynamical problem
associated with the Gibbs measures constructions in the previous sections.
In the following, we only
consider the deep-water regime $0 < \dl \le \infty$ (namely, we work on the unscaled problem 
\eqref{ILW1})
and 
present the proof of Theorem~\ref{THM:6}
since Theorem~\ref{THM:5} in the shallow-water regime ($0 \le \dl < \infty$) follows
from a similar argument.
Our main strategy is to use a compactness
argument as in \cite{BTT1, OTh1, ORT}.
In fact, 
as mentioned in Section~\ref{SEC:1},
the proof of  Theorem~\ref{THM:6}\,(i)
follows from exactly the same argument as that presented in 
\cite[Section 5]{OTh1}.
As for  the dynamical convergence result in  Theorem~\ref{THM:6}\,(ii), 
we can repeat the same argument
but with one key additional ingredient: 
the uniform (in $\dl$ and $N$) 
integrability of the (truncated) densities
(Proposition~\ref{PROP:Gibbs1}).
For conciseness of the 
presentation, 
 we restrict our attention to $2 \le \dl \le \infty$ in the following
 and discuss the proof of Theorem \ref{THM:6}.
For each fixed $0 < \dl < 2$, 
the same argument (without uniformity in $\dl$) applies to yield Theorem \ref{THM:6}\,(i).

In the remaining part of this section, 
fix   $k \in 2\N+1$
and  $s < 0$.
The $k = 2$ case follows from exactly the same 
argument by replacing 
the truncated Gibbs measure $\rho_{\dl, N}$ in \eqref{GG6}
and the Gibbs measure $\rho_\dl$ in \eqref{THM1b}
by $\rho_{\dl, N}$ in \eqref{THM1c}
and $\rho_\dl$ in \eqref{THM1d}, respectively, 
and thus we omit details.
In Subsection~\ref{SUBSEC:5.1}, 
we first study the truncated gILW equation
\eqref{WILW2}
and 
construct global-in-time invariant Gibbs dynamics 
associated with the truncated Gibbs measure $\rho_{\dl, N}$ in \eqref{GG6}
for each $N \in \N$ and $2\le  \dl \le \infty$;
see Lemma \ref{LEM:global} below.
This allows us to construct  a probability measure
$\nu_{\dl, N} = \rho_{\dl, N} \circ\Phi_{\dl, N}^{-1}$
 on space-time functions
 as the pushforward of the truncated Gibbs measure $\rho_{\dl, N}$
 under the solution map $\Phi_{\dl, N}$ for the truncated gILW equation~\eqref{WILW2}.
Then, by using the uniform (in $\dl$ and $N$) bound 
on the (truncated) densities
(Proposition~\ref{PROP:Gibbs1}), 
we prove that 
$\{\nu_{\dl, N}\}_{2\le \dl \le \infty, N \in \N}$
is tight (Proposition~\ref{PROP:tight}).
The main new point in this work is that we prove tightness
{\it not only in the frequency cutoff parameter $N \in \N$
but also in the depth parameter $2 \le \dl \le \infty$.}
In Subsection~\ref{SUBSEC:5.3}, 
we then present the proof of Theorem \ref{THM:6}
by constructing the limiting dynamics.
For each fixed $2\le \dl \le \infty$, 
we can simply repeat the argument in \cite{BTT1, OTh1, ORT}, 
based on the Skorokhod representation theorem (Lemma \ref{LEM:Sk}), 
and construct the limiting invariant Gibbs dynamics (without uniqueness) as $N \to \infty$,
yielding Theorem \ref{THM:6}\.(i).
As for proving Theorem \ref{THM:6}\.(ii), 
by exploiting the tightness
of $\{\nu_{\dl, N}\}_{  2\le \dl \le \infty ,  N \in \N}$, 
we use a diagonal argument
together with the triangle inequality
for 
the L\'evy-Prokhorov metric, characterizing weak convergence, 
to show that there exists a sequence $\{\dl_m\}_{m \in \N}$, 
tending to $\infty$, 
such that $u_{\dl_m}$ converges almost surely to some limit $u$ in $C(\R; H^s(\T))$.
Here, in order to have the claimed almost sure convergence
of $u_{\dl_m}$ to $u$, 
we apply the Skorokhod representation theorem (Lemma \ref{LEM:Sk}).
Furthermore, in order to show that $u_{\dl_m}$, $m \in \N$, 
satisfies the renormalized gILW equation \eqref{WILW1}, 
we need to apply 
the Skorokhod representation theorem (Lemma \ref{LEM:Sk})
infinitely many times (i.e.~once for each $m \in \N$).

\subsection{Pushforward of the truncated Gibbs measure}\label{SUBSEC:5.1}

Given $ 2\le \dl \le \infty$ and $N \in \N$,
consider the truncated  gILW equation \eqref{WILW2}:
\begin{align}
\begin{split}
\dt u_{\dl, N} -
 \Gdl \partial_x^2 u_{\dl, N} 
 & = F_{N}(u_{\dl, N})\\
& =  \dx \P_N \W((\P_N u_{\dl, N})^k)\\
&= \dx \P_N   H_k( \P_N u_{\dl, N} ; \s_{\dl, N} ),
\end{split}
\label{AQ1}
\end{align}

\noi
where $\sigma_{\dl, N}$ is as in \eqref{Wick1a}
and $F_N$ is as in \eqref{FN4}.
We first prove global well-posedness of~\eqref{AQ1}
and invariance of  the truncated Gibbs measure $\rho_{\dl, N}$
defined in ~\eqref{GG6}.

\begin{lemma}\label{LEM:global}
Let $ 2\le \dl \le \infty$, $N \in \N$, and $s < 0$.
Then, 
the truncated gILW equation~\eqref{AQ1}
is globally well-posed in $H^s(\T)$.
Moreover, the truncated Gibbs measure $\rho_{\dl, N}$  
is invariant under the dynamics of 
\eqref{AQ1}.

\end{lemma}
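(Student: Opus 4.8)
\textbf{Proof proposal for Lemma \ref{LEM:global}.}
The plan is to treat \eqref{AQ1} as a perturbation of the linear flow generated by the (skew-adjoint) operator $\Gdl\dx^2$ by the smooth, finite-dimensional nonlinearity $F_N$, and then to invoke the finite-dimensional Liouville theorem together with conservation of the truncated Hamiltonian on the low-frequency modes. First I would decompose the solution into its low-frequency part $\P_N u_{\dl,N}$ and its high-frequency part $(\Id - \P_N) u_{\dl,N}$. Since $F_N(u) = \dx\P_N H_k(\P_N u;\s_{\dl,N})$ takes values in $\P_N L^2(\T)$ and depends only on $\P_N u$, the high-frequency part solves the \emph{linear} equation $\dt w = \Gdl\dx^2 w$, which is globally solved by the Fourier multiplier group $e^{t\Gdl\dx^2}$ and is an isometry on every $H^s(\T)$. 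For the low-frequency part, $v := \P_N u_{\dl,N}$ satisfies the ODE system $\dt v = \Gdl\dx^2 v + \dx\P_N H_k(v;\s_{\dl,N})$ on the finite-dimensional space $E_N := \P_N L^2(\T)$ (equivalently, a system in the $2N$ real Fourier coefficients). The right-hand side is a polynomial vector field, hence locally Lipschitz, so the Picard–Lindelöf theorem gives local existence and uniqueness in $E_N$; gluing the two pieces and using that the data $u_0 \in H^s(\T)$ decomposes as $\P_N u_0 \in E_N$ plus a high-frequency remainder yields local well-posedness of \eqref{AQ1} in $H^s(\T)$ with continuous dependence.

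For globalization, the key point is that the truncated Hamiltonian
\begin{align*}
E_{\dl,N}(v) = \frac 12\int_\T v\Gdl\dx v\, dx + \frac 1{k+1}\int_\T \W(v_N^{k+1})\, dx
\end{align*}
restricted to $E_N$ is conserved by the low-frequency ODE, since \eqref{AQ1} restricted to $E_N$ is exactly its Hamiltonian flow with respect to the symplectic structure inherited from $\dx$; I would verify this by a direct computation $\frac{d}{dt}E_{\dl,N}(v) = \langle \frac{dE_{\dl,N}}{dv}, \dt v\rangle = \langle \frac{dE_{\dl,N}}{dv}, \dx\frac{dE_{\dl,N}}{dv}\rangle = 0$ by skew-adjointness of $\dx$. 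The quadratic part $\frac 12\int_\T v\Gdl\dx v\,dx = \frac 1{4\pi}\sum_{0<|n|\le N} K_\dl(n)|\ft v(n)|^2$ is positive definite on $E_N$ (as $K_\dl(n)>0$ by Lemma \ref{LEM:p2}), and the Wick-ordered potential term $\frac 1{k+1}\int_\T H_{k+1}(v;\s_{\dl,N})\,dx$ is a polynomial bounded below by $-C_{k,N,\dl}$ (using \eqref{GN1} and the boundedness of $\s_{\dl,N}$ for fixed $N$), while also being controlled above by a power of $\|v\|_{L^\infty}\lesssim_N \|v\|_{E_N}$. Combining these, conservation of $E_{\dl,N}$ yields an a priori bound $\|v(t)\|_{E_N}^2 \lesssim_{k,N,\dl} E_{\dl,N}(v_0) + 1$ uniform in $t$, which prevents finite-time blowup of the ODE; together with the global linear flow on the high modes this gives global well-posedness in $H^s(\T)$.

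Finally, for invariance of $\rho_{\dl,N}$ I would argue as follows. On $E_N$, the vector field $v\mapsto \dx\frac{dE_{\dl,N}}{dv}$ is divergence-free with respect to Lebesgue measure $\prod_{0<|n|\le N} d\Re\ft v(n)\, d\Im\ft v(n)$ (this is the classical Liouville theorem for the Hamiltonian system \eqref{HR2}, which applies since $\dx$ is represented by a block-diagonal antisymmetric matrix in Fourier coordinates), so Lebesgue measure on $E_N$ is invariant; combined with conservation of $E_{\dl,N}$, the finite-dimensional Gibbs measure $Z^{-1}e^{-E_{\dl,N}(v)}\,dv$ on $E_N$ is invariant under the low-frequency flow. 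On the high modes, the linear flow $e^{t\Gdl\dx^2}$ acts by rotation on each Fourier mode and hence preserves the Gaussian measure $\bigotimes_{|n|>N}$ coming from $\mu_\dl$. Since $\rho_{\dl,N}$ in \eqref{GG6} is precisely the product of this finite-dimensional Gibbs measure on $E_N$ with the high-frequency Gaussian tail of $\mu_\dl$ (the density $G_{\dl,N}$ depends only on $v_N = \P_N u$), and the flow of \eqref{AQ1} splits as the product of these two flows, invariance of $\rho_{\dl,N}$ follows. The main obstacle here is largely bookkeeping: one must be careful that the Wick renormalization constant $\s_{\dl,N}$ (which appears inside $H_k$) is a fixed deterministic number for each $(\dl,N)$, so that the low-frequency equation is a genuine autonomous polynomial ODE and the matching between the product structure of $\rho_{\dl,N}$ and the split flow is exact; none of the estimates need to be uniform in $\dl$ or $N$ at this stage, which keeps the argument routine.
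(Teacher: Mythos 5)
Your argument is correct for the case at hand and follows the same architecture as the paper: the same low/high frequency splitting, Cauchy--Lipschitz for the finite-dimensional low-frequency ODE, the isometric linear rotation flow on the high modes, and invariance via the classical Liouville theorem (the vector field $\dx\nabla E_{\dl,N}$ on $E_N$ is divergence-free since $\dx|_{E_N}$ is a constant antisymmetric matrix) combined with conservation of $E_{\dl,N}$ and the product structure $\rho_{\dl,N}=\rho^{\low}_{\dl,N}\otimes(\P_N^\perp)_*\mu_\dl$. The one genuine difference is the globalization mechanism. The paper globalizes the low-frequency ODE by observing that the $L^2$-norm of $\P_N u_{\dl,N}$ is conserved (the dispersive term is skew-adjoint and $\int v\,\dx\P_N H_k(v;\s_{\dl,N})\,dx=0$ since the integrand is a total derivative on $E_N$), whereas you globalize via conservation of the truncated Hamiltonian together with the lower bound $H_{k+1}(x;\s)\ge -a_{k+1}\s^{(k+1)/2}$ and the positive definiteness of the quadratic form on $E_N$. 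Your route is perfectly valid when $k\in 2\N+1$, since $k+1$ is even and the Wick potential is bounded below; but note that it does not transfer verbatim to $k=2$, where the cubic potential is unbounded below and conservation of a non-coercive Hamiltonian on a finite-dimensional space does not by itself preclude blow-up. The paper's $L^2$-conservation argument is what allows Lemma~\ref{LEM:global} to be reused ``by exactly the same argument'' for the ILW case $k=2$ (with the cutoff Gibbs measure \eqref{THM1c}), so if you intend your proof to cover that case as well, you should either add the $L^2$-conservation observation or otherwise supply a separate a priori bound there; for the stated defocusing case, your proof is complete.
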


\begin{proof}
The proof of this lemma  follows from that of Lemma 5.1 in \cite{OTh1}
and thus we will be brief here.
We first decompose~\eqref{AQ1} into two parts:
\begin{align}
u_{\dl, N} = u_{\dl, N}^\low 
 + u_{\dl, N}^\high
= \P_N u_{\dl, N} +  \P_N^\perp u_{\dl, N}, 
\label{AQ2a}
\end{align}

\noi
where $\P_N^\perp = \Id - \P_N$.
Then,  $u_{\dl, N}^\low $ and $u_{\dl, N}^\high $
satisfy the following equations:

\begin{itemize}

\item[(i)] nonlinear dynamics on  the low-frequency part $\{ 0< |n| \leq N\}$:
\begin{align}
\dt u_{\dl, N}^\low  - \Gdl \dx^2   u_{\dl, N}^\low
= \dx \P_N H_k(   u_{\dl, N}^\low ; \sigma_{\dl,N} ).
\label{AQ2}
\end{align}

\noi

\smallskip

\item[(ii)]
linear dynamics
on the high frequency part $\{|n| > N\}$:
\begin{align}
 \dt u_{\dl, N}^\high   -  \Gdl \dx^2  u_{\dl, N}^\high=0.
\label{AQ3}
\end{align}

\end{itemize}

\noi
We now view the equations \eqref{AQ2} and \eqref{AQ3} 
on the Fourier side.
As a decoupled system of linear equation 
(for each frequency $|n| > N$), 
\eqref{AQ3} is globally well-posed.
As for \eqref{AQ2}, 
it is a system of finitely many ODEs
with a Lipschitz vector field
and thus by the Cauchy-Lipschitz theorem, it is locally well-posed.
Furthermore, a direct computation shows
that the $L^2$-norm of $u_{\dl, N}^\low$
is conserved under the flow of \eqref{AQ2}, 
which yields global well-posedness of~\eqref{AQ2}.
Putting together, we conclude that \eqref{AQ1} is globally well-posed.

Next, we prove  invariance of the truncated Gibbs measure $\rho_{\dl, N}$.
We first write 
 $\rho_{\dl, N}$ in~\eqref{GG6} as 
\begin{align}
\rho_{\dl, N} = \rho_{\dl, N}^\low \otimes \rho^\high_{\dl, N}, 
\label{AQ3a}
\end{align} 

\noi
where $\rho_{\dl, N}^\low$ and $\rho^\high_{\dl, N}$ are given as follows:

\begin{itemize}
\item[(i)]
the low-frequency component
$\rho_{\dl, N}^\low$ is the finite-dimensional Gibbs measure on $\P_N H^s(\T)$, 
defined by 
\begin{align*}
d \rho^\low_{\dl, N} (u)
=  Z_{\dl, N}^{-1} e^{-\frac 1{k+1} \int_{\T} H_{k+1}( u; \s_{\dl, N} )  dx} 
d \mu_{\dl, N}^\low(u) ,
\end{align*}

 \noi
 where 
$ \mu_{\dl, N}^\low =  (\P_N)_*\mu_\dl$
is the pushforward image measure under $\P_N$
of the 
base Gaussian measure $\mu_\dl$ in \eqref{Gibbs2}.
Namely, 
$ \mu_{\dl, N}^\low$ is the induced probability measure
under the map $\o \in  \O \mapsto X_{\dl, N}(\o) = \P_N X_\dl(\o)$, 
where $X_\dl$ is as in \eqref{GG3}.

\medskip

\item[(ii)] the high-frequency component
$\rho^\high_{\dl, N}$ is nothing but the Gaussian measure
$ (\P_N^\perp)_*\mu_\dl$ given as the (infinite) product 
of Gaussian measures at each frequency $|n| > N$:
\begin{align}
  (Z_{\dl, N}^\perp)^{-1} \bigotimes_{|n| > N} e^{-\frac 1{2\pi} K_\dl(n) |\ft u(n)|^2} d \ft u(n).
\label{AQ4}
\end{align}

\end{itemize}

\noi
By the classical Liouville theorem 
and the conservation of the (truncated) Hamiltonian
for~\eqref{AQ2}, 
we see that the Gibbs measure 
$\rho_{\dl, N}^\low$ is invariant under the flow of \eqref{AQ2}.
On the other hand, 
the linear dynamics \eqref{AQ3} acts as a rotation
on the Fourier coefficient at each frequency $|n| > N$, 
preserving the Gaussian measure 
at each frequency $|n| > N$ in \eqref{AQ4}.
As a result, the Gaussian measure 
$\rho^\high_{\dl, N} =  (\P_N^\perp)_*\mu_\dl$ 
is invariant under the linear dynamics \eqref{AQ3}.
In view of \eqref{AQ2a} and \eqref{AQ3a}, 
we conclude invariance of the truncated Gibbs measure $\rho_{\dl, N}$
under the flow of  the truncated gILW equation \eqref{AQ1}.
\end{proof}

As a consequence of  Lemma~\ref{LEM:global}, 
we can define the solution map $\Phi_{\dl, N}: H^s(\T)\to C(\R; H^s(\T))$ 
associated to~\eqref{AQ1}. 
More precisely, 
for $t \in \R$, 
we define  $\Phi_{\dl, N}(t): H^s(\T) \to H^s(\T)$
by 
\begin{align}
\phi \in H^s(\T) \longmapsto   \Phi_{\dl, N}(t)(\phi) =  u_{\dl, N}(t),
\label{CQ333}
\end{align}

 \noi
 where $u_{\dl, N}$ is the global-in-time solution to 
 the truncated gILW equation \eqref{AQ1}
 with initial data $u_{\dl, N}(0) = \phi$.

Next,  we introduce the pushforward image measure
$\nu_{\dl, N}$ of the truncated Gibbs measure $\rho_{\dl, N}$
under the solution map $\Phi_{\dl, N}$:
\begin{align}
\label{AQ5}
\nu_{\dl, N} = \rho_{\dl, N} \circ \Phi_{\dl, N}^{-1}.
\end{align}

\noi
Here, we view 
$\nu_{\dl, N}$ as a probability measure
on $ C(\R; H^s(\T))$ endowed with the compact-open topology, 
induced by   the following metric:
\[ 
\texttt{dist} (u, v) = \sum_{j = 1}^\infty 2^{-j}
\frac{\|u-v\|_{ C([-j, j]; H^s)}}{1+\|u-v\|_{ C([-j, j]; H^s)}}.
\]

\noi
Recall that, under this topology, a sequence $\{u_n\}_{n \in \N}\subset  C(\R; H^s(\T))$ 
converges
if and only if 
it converges uniformly on $[-K, K]$ for each finite $K >0$.
We also recall that the metric  space $\big( C(\R; H^s(\T)), \texttt{dist} \big)$ 
 is complete and separable.\footnote{Recall that 
 the space of continuous functions from 
a  separable metric space 
 $X$ to another separable metric space $Y$ 
 with the compact-open  topology is separable; see \cite{Mi}.
 See also the paper \cite[Corollary 3.3]{Khan86}. }
Then, it follows from  the local Lipschitz continuity of $\Phi_{\dl, N}$
that $\Phi_{\dl, N}$ is continuous
from $H^s (\T)$ into $C(\R; H^s(\T))  $,
which shows that $\nu_{\dl, N}$
is a well-defined probability measure 
on  $C(\R; H^s(\T))  $ endowed with the compact-open topology.
Note that we have
\begin{align}
 \int_{C(\R; H^s)}  F(u) d \nu_{\dl,N} (u) = \int_{H^s} F(\Phi_{\dl, N}(\phi))  d\rho_{\dl, N}(\phi)
\label{AQ6}
\end{align}

\noi
for any bounded  measurable function $F :C(\R; H^s(\T))\to  \R$.

\smallskip

Our main goal in this subsection is to 
prove the following tightness result
on $\{\nu_{\dl, N}\}_{2\le \dl \le \infty, N\in \N}$.
We point out that tightness holds not only over $N \in \N$
but also over $2\le \dl \le \infty$, 
which is the key new feature of this proposition.

\begin{proposition} \label{PROP:tight}

Let $s< 0$.
Then, 
the family $\{ \nu_{\dl, N}\}_{ 2\le \dl \le \infty, N \in \N}$
of   probability measures on 
$C(\R; H^s(\T))$
is tight, and hence is relatively compact.

\end{proposition}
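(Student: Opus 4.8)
The plan is to establish tightness via the standard route for compactness arguments in dispersive PDEs (as in \cite{BTT1, OTh1, ORT}), but being careful to track uniformity in the depth parameter $\dl$ as well as in the frequency cutoff $N$. The key structural input is the invariance of the truncated Gibbs measure $\rho_{\dl, N}$ under the truncated flow $\Phi_{\dl, N}$ (Lemma~\ref{LEM:global}), together with the uniform (in $2\le \dl \le \infty$ and $N \in \N$) bound on the truncated densities $G_{\dl, N}$ from Proposition~\ref{PROP:Gibbs1} (and Proposition~\ref{PROP:V1} in the $k = 2$ case). Since $\big(C(\R; H^s(\T)), \texttt{dist}\big)$ is a complete separable metric space, by the Prokhorov theorem (Lemma~\ref{LEM:Pro}) it suffices to prove tightness, which will then automatically yield relative compactness.

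First I would fix a large time $T > 0$ and a slightly higher regularity exponent $\sigma \in (s, 0)$, and aim to control, uniformly over $2 \le \dl \le \infty$ and $N \in \N$, a quantity of the form
\begin{align}
\E_{\nu_{\dl, N}}\Big[ \| u \|_{C([-T, T]; H^{\sigma})}
+ \| u \|_{C^{1/2}([-T, T]; H^{\sigma - 2})} \Big]
= \E_{\rho_{\dl, N}}\Big[ \| \Phi_{\dl, N}(\cdot)\phi \|_{C([-T, T]; H^{\sigma})}
+ \cdots \Big],
\label{tightbound}
\end{align}
where the second term measures H\"older-in-time regularity and the equality is \eqref{AQ6}. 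The $H^\sigma$-in-$x$ bound comes from invariance: for any fixed $t$, the law of $\Phi_{\dl, N}(t)\phi$ under $\rho_{\dl, N}$ is again $\rho_{\dl, N}$, so $\E_{\rho_{\dl, N}}[\|\Phi_{\dl, N}(t)\phi\|_{H^\sigma}^p]$ equals $\E_{\rho_{\dl, N}}[\|\phi\|_{H^\sigma}^p]$, and the latter is bounded uniformly in $\dl$ and $N$ by Cauchy--Schwarz together with the uniform $L^p(\O)$-bound on $G_{\dl, N}(X_\dl)$ (Proposition~\ref{PROP:Gibbs1}) and the uniform-in-$\dl$ Gaussian moment bound $\sup_{2\le\dl\le\infty}\E[\|X_\dl\|_{H^\sigma}^p] < \infty$ (which follows from $K_\dl(n) \gtrsim |n|$ uniformly in $2 \le \dl \le \infty$, i.e.~\eqref{Low_Kdl}, and the Wiener chaos estimate). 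To promote this pointwise-in-$t$ bound to a bound on the sup over $t \in [-T, T]$, I would use the H\"older-in-time estimate: from the Duhamel/equation form of \eqref{AQ1}, $\partial_t u_{\dl, N} = \Gdl \partial_x^2 u_{\dl, N} + F_N(u_{\dl, N})$, and since $\Gdl \partial_x^2$ maps $H^\sigma \to H^{\sigma - 2}$ with operator norm bounded uniformly in $\dl$ (as $|\ft\Gdl(n)| \le 1$), while $F_N(u_{\dl, N}) = \dx\P_N \W((\P_N u_{\dl,N})^k)$ is controlled in $H^{s'}$, $s' < -1$, uniformly in $\dl$ and $N$ by Proposition~\ref{PROP:FN}\,(ii) evaluated at the invariant measure, one obtains a uniform bound on $\|\partial_t u_{\dl, N}\|_{L^\infty_T H^{\sigma - 2}_x}$ in the appropriate $L^p(\O)$ sense. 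This gives equicontinuity in time with values in $H^{\sigma - 2}$, and interpolating with the uniform $H^{\sigma'}$ bound for $\sigma' > \sigma$ yields the H\"older-in-time control in $H^{\sigma}$ needed in \eqref{tightbound}.

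With \eqref{tightbound} in hand, tightness follows from a standard Arzel\`a--Ascoli argument: for $R > 0$, the set $B_R = \{ u : \|u\|_{C([-j,j]; H^\sigma)} \le R_j \text{ and } \|u\|_{C^{1/2}([-j,j]; H^{\sigma-2})} \le R_j \text{ for all } j \}$, with $R_j$ chosen so that Chebyshev's inequality and \eqref{tightbound} give $\nu_{\dl, N}(B_R^c) \le \eps$ uniformly in $\dl$ and $N$, is a bounded, uniformly equicontinuous family of functions taking values in a ball of $H^\sigma$, which (since the embedding $H^\sigma \embeds H^s$ is compact for $\sigma > s$ and the time interval can be exhausted by compacts) is relatively compact in $C(\R; H^s(\T))$ with the compact-open topology. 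Taking a countable intersection over $j$ and a diagonal choice of radii, one gets a compact set $K_\eps \subset C(\R; H^s(\T))$ with $\sup_{2\le\dl\le\infty, N\in\N}\nu_{\dl,N}(K_\eps^c) \le \eps$. Relative compactness then follows from Lemma~\ref{LEM:Pro}.

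The main obstacle I anticipate is not any single estimate but rather making sure \emph{every} constant along the way is genuinely uniform in $\dl$ over $2 \le \dl \le \infty$: this requires consistently invoking the uniform lower bound $K_\dl(n) \ge |n| - \tfrac12$ (and its consequence $K_\dl(n) \sim |n|$) from \eqref{Low_Kdl}, the fact that $|\ft\Gdl(n)| \le 1$ so that the dispersion operator is bounded $H^\sigma \to H^{\sigma - 2}$ uniformly in $\dl$, the uniform density bounds of Proposition~\ref{PROP:Gibbs1}, and the uniform stochastic estimates of Proposition~\ref{PROP:FN}. A secondary technical point is handling the $\dl = \infty$ endpoint (the gBO equation) on the same footing, which is covered by the identification $K_\infty(n) = |n|$ in \eqref{GX2} and the fact that all the cited propositions include the $\dl = \infty$ case. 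Once uniformity in $\dl$ is secured, the argument is essentially the same as the fixed-$\dl$ compactness argument already carried out in \cite{OTh1}, and so I would only present the new uniform-in-$\dl$ ingredients in detail and refer to \cite{OTh1, ORT} for the remainder.
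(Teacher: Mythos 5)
Your overall architecture (invariance of $\rho_{\dl, N}$, the uniform-in-$\dl$ density bounds from Proposition~\ref{PROP:Gibbs1}, Chebyshev, Arzel\`a--Ascoli on dyadic time intervals, and Prokhorov) is the same as the paper's, and your list of uniform-in-$\dl$ ingredients (\eqref{Low_Kdl}, the bound $|\ft\Gdl(n)|\le 1$, Propositions~\ref{PROP:FN} and~\ref{PROP:Gibbs1}) is correct. However, there is a genuine gap in the middle step, where you pass from fixed-time bounds to sup-in-time control. Invariance of $\rho_{\dl,N}$ only tells you that $\E_{\nu_{\dl,N}}\big[\|u(t)\|_{H^\sigma}^p\big]$ and $\E_{\nu_{\dl,N}}\big[\|F_N(u(t))\|_{H^{s'}}^p\big]$ are constant in $t$; it does not give you $\E\big[\sup_{|t|\le T}\|\partial_t u_{\dl,N}(t)\|_{H^{\sigma-2}}^p\big]$, which is what you claim when you assert a ``uniform bound on $\|\partial_t u_{\dl,N}\|_{L^\infty_T H^{\sigma-2}_x}$ in the appropriate $L^p(\O)$ sense.'' Indeed, for the linear part $\Gdl\partial_x^2 u_{\dl,N}$ the sup-in-time of its $H^{\sigma-2}$ norm is controlled by $\|u_{\dl,N}\|_{L^\infty_T H^\sigma_x}$, which is exactly the quantity you are trying to bound, so the argument as written is circular; and for $F_N(u_{\dl,N})$, Proposition~\ref{PROP:FN}\,(ii) evaluated at the invariant measure yields only fixed-time (hence, via Fubini, $L^p$-in-time) moment bounds, not $L^\infty$-in-time ones.

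The paper closes precisely this gap differently: it only claims what invariance plus Minkowski/Fubini actually gives, namely the uniform bounds
$\big\|\|u\|_{L^p_T H^s_x}\big\|_{L^p(d\nu_{\dl,N})}\lesssim T^{1/p}$ and
$\big\|\|u\|_{W^{1,p}_T H^{s-2}_x}\big\|_{L^p(d\nu_{\dl,N})}\lesssim T^{1/p}$ (Lemma~\ref{LEM:bound1}), and then upgrades these \emph{finite-$p$-in-time} bounds to H\"older-in-time control $\CC^\al_T H^{s_1}_x$ via the interpolation lemma of Burq--Thomann--Tzvetkov (Lemma~\ref{LEM:BTT1}), at the cost of a small loss of spatial regularity which is harmless since $s<0$ is arbitrary; one takes $p$ large so that the loss $\sim (s_1-s_2)/p$ is small. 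If you replace your $L^\infty_T$ claim by this two-step scheme (uniform $L^p_T$ bounds from invariance, then Lemma~\ref{LEM:BTT1} with $p\gg1$), the rest of your argument --- the Chebyshev/Borel set $K_\eps$ built from growing bounds on dyadic windows $[-2^j,2^j]$, compactness of $K_\eps$ by Arzel\`a--Ascoli and a diagonal extraction, and relative compactness via Lemma~\ref{LEM:Pro} --- goes through exactly as in the paper, with all constants uniform in $2\le\dl\le\infty$ and $N\in\N$.
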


Before proceeding to the proof of Proposition \ref{PROP:tight}, 
we state two auxiliary lemmas.
The first lemma 
establishes
uniform (in $\dl$ and $N$) space-time bounds
on the solutions to the truncated gILW equation \eqref{AQ1}.
We postpone its proof to the end of this subsection.

Given $1\le p \le \infty$ and $s \in \R$, 
we define the space $W^{1, p}_T H^{s}_x
= W^{1, p}([-T, T];  H^{s}(\T)) $ by the norm:
\[\| u \|_{W^{1, p}_T H^{s}_x} = \| u \|_{L^{ p}_T H^{s}_x}
+ \| \dt u \|_{L^{ p}_T H^{s}_x}. \]

\begin{lemma}\label{LEM:bound1}
Let $ s< 0$,
and fix finite $p \geq 1$.
Then, there exists $C_p > 0$ such that 
\begin{align}
\sup_{N \in \N} \sup_{2 \le \dl \le \infty}
\big\| \| u\|_{L^p_T H^s_x} \big\|_{L^p(d\nu_{\dl, N})}  & \leq C_p T^\frac{1}{p}, 
\label{BQ1}\\
\sup_{N \in \N} \sup_{2 \le \dl \le \infty}
\big\| \| u\|_{ W^{1, p}_T H^{s-2}_x} \big\|_{L^p(d\nu_{\dl, N})} &  \leq C_pT^\frac{1}{p},
\label{BQ2}
\end{align}

\end{lemma}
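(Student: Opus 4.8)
The plan is to transport everything to the Gibbs measure side by means of \eqref{AQ6} and the invariance of $\rho_{\dl, N}$ (Lemma~\ref{LEM:global}), and then to invoke the uniform-in-$\dl$ bounds already established in Propositions~\ref{PROP:FN} and~\ref{PROP:Gibbs1}. First I would prove \eqref{BQ1}. Applying \eqref{AQ6} to $F(u)=\min\big(\|u(t)\|_{H^s}^p,M\big)$, letting $M\to\infty$ by monotone convergence, using Tonelli's theorem, and using the invariance of $\rho_{\dl,N}$ under the time-$t$ map $\Phi_{\dl,N}(t)$ in \eqref{CQ333}, one gets
\begin{align*}
\big\| \| u\|_{L^p_T H^s_x} \big\|_{L^p(d\nu_{\dl, N})}^p
= \int_{-T}^{T}\int_{H^s} \big\|\Phi_{\dl,N}(t)\phi\big\|_{H^s}^p\, d\rho_{\dl,N}(\phi)\, dt
= 2T\int_{H^s}\|\phi\|_{H^s}^p\, d\rho_{\dl,N}(\phi),
\end{align*}
so it suffices to bound the last integral uniformly in $2\le\dl\le\infty$ and $N\in\N$. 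Writing $\rho_{\dl,N}=Z_{\dl,N}^{-1}G_{\dl,N}\,d\mu_\dl$ and applying Cauchy--Schwarz, this is at most $Z_{\dl,N}^{-1}\,\big\|\,\|\phi\|_{H^s}^p\big\|_{L^2(d\mu_\dl)}\,\|G_{\dl,N}\|_{L^2(d\mu_\dl)}$; the middle factor is finite and uniformly bounded for $2\le\dl\le\infty$ by \eqref{GX3} together with \eqref{Low_Kdl} (recalling $\L(X_\dl)=\mu_\dl$ and $s<0$), and the last factor is uniformly bounded by \eqref{GN0b} in Proposition~\ref{PROP:Gibbs1}. For the partition function, Jensen's inequality and the identity $\E\big[H_{k+1}(X_{\dl,N}(x);\s_{\dl,N})\big]=0$ give
\begin{align*}
Z_{\dl,N}=\E\big[e^{-R_{\dl,N}(X_\dl)}\big]\ge e^{-\E[R_{\dl,N}(X_\dl)]}=1,
\end{align*}
which is the desired uniform lower bound, and \eqref{BQ1} follows.

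For \eqref{BQ2} it remains to control the time derivative. Using the equation \eqref{AQ1}, for any $\phi\in H^s(\T)$ the orbit $t\mapsto\Phi_{\dl,N}(t)\phi$ lies in $C^1(\R;H^{s-2}(\T))$ and satisfies $\dt\big[\Phi_{\dl,N}(t)\phi\big]=\Gdl\dx^2\Phi_{\dl,N}(t)\phi+F_N(\Phi_{\dl,N}(t)\phi)$. The operator $\Gdl\dx^2$ has Fourier multiplier $iK_\dl(n)n$ by \eqref{GG4}, whose modulus is bounded by $n^2$ uniformly in $\dl$ by Lemma~\ref{LEM:p2}; hence $\Gdl\dx^2:H^s\to H^{s-2}$ with operator norm $\le 1$ for every $\dl$. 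Since $s-2<-1$, Proposition~\ref{PROP:FN}\,(ii) (i.e.~\eqref{FN4a}) controls $\big\|\,\|F_N(\phi)\|_{H^{s-2}}\big\|_{L^p(d\mu_\dl)}$ uniformly in $2\le\dl\le\infty$ and $N\in\N$. Arguing exactly as above (the triangle inequality, invariance of $\rho_{\dl,N}$ under $\Phi_{\dl,N}(t)$, Cauchy--Schwarz against $G_{\dl,N}$, and the lower bound $Z_{\dl,N}\ge1$), one obtains
\begin{align*}
\int_{H^s}\big\|\dt[\Phi_{\dl,N}(t)\phi]\big\|_{H^{s-2}}^p\,d\rho_{\dl,N}(\phi)
\les_p\int_{H^s}\Big(\|\phi\|_{H^s}^p+\|F_N(\phi)\|_{H^{s-2}}^p\Big)\,d\rho_{\dl,N}(\phi)\le C_p,
\end{align*}
uniformly in $2\le\dl\le\infty$ and $N\in\N$. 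Integrating this in $t\in[-T,T]$ and combining with \eqref{BQ1} (and the embedding $H^s\hookrightarrow H^{s-2}$) yields \eqref{BQ2}.

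There is no serious analytic difficulty here, because the heavy lifting — the uniform-in-$\dl$ integrability of the truncated densities and of the stochastic objects — has already been carried out in Propositions~\ref{PROP:FN} and~\ref{PROP:Gibbs1}. The points that require a little care are: (i) the uniform-in-$(\dl,N)$ lower bound $Z_{\dl,N}\ge1$, obtained via Jensen's inequality and the vanishing of Wick powers under expectation; (ii) the uniform-in-$\dl$ boundedness of $\Gdl\dx^2$ from $H^s$ to $H^{s-2}$, which follows from $K_\dl(n)\le|n|$ in Lemma~\ref{LEM:p2}; and (iii) the measure-theoretic bookkeeping needed to apply \eqref{AQ6} to the a priori unbounded functionals $\phi\mapsto\|\Phi_{\dl,N}(\cdot)\phi\|_{H^s}$ and $\phi\mapsto\|\dt\Phi_{\dl,N}(\cdot)\phi\|_{H^{s-2}}$, handled by truncation and monotone convergence together with the continuity of $\Phi_{\dl,N}$. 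Finally, the $k=2$ case follows by the same argument, using Proposition~\ref{PROP:V1} in place of Proposition~\ref{PROP:Gibbs1} and noting that the Wick-ordered $L^2$-cutoff in \eqref{THM1c} again keeps the partition functions bounded away from $0$ uniformly in $2\le\dl\le\infty$ and $N\in\N$.
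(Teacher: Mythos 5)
Your argument is correct and follows essentially the same route as the paper: transfer to the truncated Gibbs measure via \eqref{AQ6} and the invariance from Lemma~\ref{LEM:global}, apply Cauchy--Schwarz against the density $G_{\dl,N}$, and invoke the uniform-in-$(\dl,N)$ bounds of Propositions~\ref{PROP:FN} and~\ref{PROP:Gibbs1} together with the uniform operator bound $\|\Gdl\dx^2 f\|_{H^{s-2}}\le\|f\|_{H^s}$ from Lemma~\ref{LEM:p2}. The only differences are cosmetic (working at regularity $H^{s-2}$ rather than $H^{s-1}$ for $F_N$, citing \eqref{GX3} in place of \eqref{FN1a} with $k=1$), and you usefully make explicit the uniform lower bound $Z_{\dl,N}\ge1$ via Jensen's inequality, which the paper uses implicitly.
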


The following interpolation lemma
 allows us to control the H\"older regularity (in time)
 by the two quantities controlled in Lemma \ref{LEM:bound1} above.
   For $\al \in (0, 1)$ and $s \in \R$, define  the   space
 $\CC^\al_TH^{s}_x = \CC^\al([-T, T]; H^{s}(\T))$ by   the norm
\begin{align}
 \| u \|_{\CC^\al_T H^{s}_x} = \sup_{\substack{t_1, t_2 \in [-T, T]\\t_1 \ne t_2}}
\frac{\| u(t_1) - u(t_2) \|_{H^{s}}}{|t_1 - t_2|^\al} + \|u \|_{L^\infty_T H^{s}_x}.
\label{BTT0}
\end{align}

\begin{lemma}[{\cite[Lemma 3.3]{BTT1}}]\label{LEM:BTT1}
Let $T > 0$ and $1\leq p \leq \infty$.
 Suppose that 
 $u \in L^p_T H^{s_1}_x$ and $\dt u \in L^p_T H^{s_2}_x$
for some $s_2 \leq s_1$.
Then, for $ \dl > p^{-1}(s_1 - s_2)$, we have 
\begin{align} 
\| u \|_{L^\infty_TH^{s_1 - \dl}_x} \les \| u \|_{L^p_T H^{s_1}}^{1-\frac 1p}
\| u \|_{W^{1, p}_T H^{s_2}_x}^{ \frac 1p}.
\label{BTT1}
\end{align}
	
\noi
Moreover, there exist $\al > 0$ and $\theta \in [0, 1]$
such that for all $t_1, t_2 \in [-T, T]$, we have

\noi
\begin{align}
 \| u(t_2) - u(t_1)  \|_{H^{s_1 - 2\dl}} \les |t_2 - t_1|^\al  \| u \|_{L^p_T H^{s_1}_x}^{1-\theta}
\| u \|_{W^{1, p}_T H^{s_2}_x}^{ \theta}.
\label{BTT2}
\end{align}

\noi
As a consequence, we have
\begin{align}
 \| u \|_{\CC^\al_T H^{s_1 - 2\dl }_x} 
 \les
   \| u \|_{L^p_T H^{s_1}_x} + 
\| u \|_{W^{1, p}_T H^{s_2}_x}.
\label{BTT3}
\end{align}

\end{lemma}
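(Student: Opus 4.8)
This is \cite[Lemma 3.3]{BTT1}; for completeness, let me indicate the argument I would follow. The plan is to reduce everything to the one-dimensional Gagliardo--Nirenberg and Morrey inequalities applied to each Fourier mode $\ft u(n, \cdot)$, viewed as a scalar function on the interval $[-T, T]$, and then to resum in $n \in \Z^\ast$. The gain of $\dl$ derivatives in \eqref{BTT1} (respectively $2\dl$ in \eqref{BTT2}) is precisely what allows one to absorb the mismatch in the order of summation between $\| u\|_{L^p_T H^{s_1}_x}$, $\| u\|_{W^{1, p}_T H^{s_2}_x}$ and the quantity to be estimated, and the threshold $\dl > p^{-1}(s_1 - s_2)$ will come out of the one-dimensional Sobolev embedding exponent.

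First I would prove \eqref{BTT1}, assuming $2 \le p < \infty$ (the case $p = \infty$ is immediate from Sobolev embedding pointwise in $t$, and the case $1 \le p < 2$ is handled similarly; see \cite{BTT1}). The one-dimensional Gagliardo--Nirenberg inequality on an interval of length $2T$ gives, uniformly in $n \in \Z^\ast$,
\[
\| \ft u(n, \cdot)\|_{L^\infty_t} \les_T \| \ft u(n, \cdot)\|_{L^p_t}^{1 - \frac 1p} \| \ft u(n, \cdot)\|_{W^{1, p}_t}^{\frac 1p} .
\]
Setting $\s = (1 - \tfrac1p) s_1 + \tfrac1p s_2 = s_1 - p^{-1}(s_1 - s_2)$, multiplying by $\jb{n}^{\s}$, squaring, summing in $n$, and applying Hölder's inequality in $n$ with exponents $\tfrac{p}{p-1}$ and $p$, I would obtain
\[
\| u\|_{L^\infty_T H^{\s}_x}^2 \le \sum_n \jb{n}^{2\s} \| \ft u(n, \cdot)\|_{L^\infty_t}^2
\les_T \Big( \sum_n \jb{n}^{2 s_1} \| \ft u(n, \cdot)\|_{L^p_t}^2 \Big)^{1 - \frac1p} \Big( \sum_n \jb{n}^{2 s_2} \| \ft u(n, \cdot)\|_{W^{1, p}_t}^2 \Big)^{\frac1p}.
\]
By Minkowski's integral inequality (valid here since $p \ge 2$), the first factor is $\les \| u\|_{L^p_T H^{s_1}_x}^{2(1 - 1/p)}$ and the second is $\les \| u\|_{W^{1, p}_T H^{s_2}_x}^{2/p}$. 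Since $\dl > p^{-1}(s_1 - s_2) = s_1 - \s$, we have $H^{\s}(\T) \embeds H^{s_1 - \dl}(\T)$, which yields \eqref{BTT1}; in particular $u$ has a representative in $C(\R; H^{s_1 - \dl}(\T))$.

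For \eqref{BTT2} I would interpolate, mode by mode, the two pointwise bounds $| \ft u(n, t_2) - \ft u(n, t_1) | \le |t_2 - t_1|^{1 - \frac1p} \| \dt \ft u(n, \cdot)\|_{L^p_t}$ (Morrey) and $| \ft u(n, t_2) - \ft u(n, t_1)| \le 2 \| \ft u(n, \cdot)\|_{L^\infty_t}$, then insert the Gagliardo--Nirenberg bound above; choosing the interpolation weight so that the time-exponent is a small positive $\al$ and the surviving spatial-frequency budget after resummation is $s_1 - 2\dl$ — which is affordable exactly because $2\dl > 2 p^{-1}(s_1 - s_2) \ge \theta(s_1 - s_2)$ for $\theta = \tfrac1p + \eps'$ with $\eps' > 0$ small — gives \eqref{BTT2} for some $\al > 0$ and $\theta \in [0, 1]$. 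Finally, \eqref{BTT3} follows at once from \eqref{BTT1} and \eqref{BTT2} together with $a^{1 - \theta} b^{\theta} \le a + b$. The only point demanding real care is this frequency-summation bookkeeping — in particular the direction of Minkowski's inequality and the distribution of the weights $\jb{n}$ among the interpolated factors — but once the exponents are tracked correctly everything is routine, and I do not anticipate a genuine obstacle.
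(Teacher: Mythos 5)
Your derivation of \eqref{BTT3} from \eqref{BTT0}, \eqref{BTT1}, \eqref{BTT2} and Young's inequality is exactly what the paper does; but for \eqref{BTT1} and \eqref{BTT2} themselves the paper offers no argument at all -- it simply cites \cite[Lemma 3.3]{BTT1} -- whereas you supply a self-contained proof, and a correct one in the range of exponents that actually matters. Your route (one-dimensional Gagliardo--Nirenberg/Morrey in time applied to each Fourier mode $\ft u(n,\cdot)$, then H\"older in $n$ with exponents $\tfrac{p}{p-1}$ and $p$, then Minkowski to pass to $\| u\|_{L^p_T H^{s_1}_x}$ and $\| u\|_{W^{1,p}_T H^{s_2}_x}$) has its exponent bookkeeping in order: you land at $\s = s_1 - p^{-1}(s_1-s_2)$, so the strict inequality $\dl > p^{-1}(s_1-s_2)$ is used precisely to embed $H^{\s}(\T) \embeds H^{s_1-\dl}(\T)$, and in \eqref{BTT2} the choice $\theta = \tfrac1p + \al$ with $\al>0$ small indeed gives $\theta(s_1-s_2) < 2\dl$. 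This is genuinely different from the argument in \cite{BTT1}, which works with the $H^s$-valued function $t \mapsto u(t)$ (fundamental theorem of calculus and frequency truncations) rather than mode by mode; your version is more elementary in that it only uses scalar interpolation on an interval, but it pays for this with the Minkowski step.

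That step is also where your one inaccuracy lies: the parenthetical claim that $1 \le p < 2$ is ``handled similarly'' is not right as written, since for $p<2$ the inequality $\big\| \|\jb{n}^{s}\ft u(n,\cdot)\|_{L^p_t} \big\|_{\l^2_n} \les \|u\|_{L^p_T H^{s}_x}$ goes the wrong way and the mode-wise resummation breaks down. To cover small $p$ one should instead run the same Gagliardo--Nirenberg interpolation in time directly on the Banach-valued function $t\mapsto u(t)\in H^{s}(\T)$ (or on Littlewood--Paley blocks), which is essentially the argument of \cite{BTT1}. Since the paper invokes the lemma only with $p \gg 1$ (in the proof of Proposition \ref{PROP:tight}), nothing downstream is affected, but the restriction $p \ge 2$ in your argument should be stated rather than waved away.
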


\begin{proof}
As for \eqref{BTT1} and \eqref{BTT2}, 
see the proof of Lemma 3.3 in \cite{BTT1}.
The bound \eqref{BTT3} follows from \eqref{BTT0}, \eqref{BTT1}, 
and \eqref{BTT2} with Young's inequality.
\end{proof}

We now 
 present the proof of Proposition~\ref{PROP:tight}.

\begin{proof}[Proof of Proposition~\ref{PROP:tight}]
Let $s < s_1 < s_2 < 0$
and  $\al \in (0, 1)$.
By  the Arzel\`a-Ascoli theorem, 
 the embedding 
$\CC^\al([-T, T]; H^{s_1}(\T)) \subset 
C([-T, T];  H^{s}(\T))$ is compact
for each $T>0$.
From 
Lemma~\ref{LEM:BTT1} 
(with large $p\gg1$)
and 
Lemma~\ref{LEM:bound1}, we have
\begin{align}
\begin{split}
& \sup_{N \in \N} \sup_{2 \le \dl \le \infty}
 \big\| \| u \|_{C^\al_T H^{s_1}_x}\big\|_{L^p(d\nu_{\dl, N} )}\\
& \quad \les \sup_{N \in \N} \sup_{2 \le \dl \le \infty}  \big\| \| u \|_{L^p_T H^{s_2}_x}\big\|_{L^p(d\nu_{\dl, N} )} + 
\sup_{N \in \N} \sup_{2 \le \dl \le \infty} \big\|\| u \|_{W^{1, p}_T H^{s_2 - 2}_x}\big\|_{L^p(d\nu_{\dl, N} )}\\
& \quad \leq C_p T^\frac{1}{p}.
\end{split}
\label{BQ3}
\end{align}

Given  $j \in \N$ and $\eps \in(0,1)$,
define 
 $K_\eps$ by setting
\begin{align}
K_\eps 
:= \big\{ u \in C(\R; H^s(\T)):\, \| u \|_{C^\al_{T_j} H^{s_1}_x} 
\leq C_0 \eps^{-\frac 1p } T_j^{1+ \frac{1}{p}} 
\hspace{2mm} \text{ for all } j \in \N \big\}, 
\label{BQ4}
\end{align}

\noi
where  $T_j = 2^j$.
Then, by Chebyshev's inequality 
and  \eqref{BQ3}, we have
\begin{align*} 
\sup_{N \in \N} \sup_{2 \le \dl \le \infty}\nu_{\dl, N} (K_\eps^c) 
&\leq
\sum_{j=1}^\infty \nu_{\dl, N} \Big( \| u \|_{C^\al_{T_j} H^{s_1}_x} 
> C_0 \eps^{-\frac 1p } T_j^{1+ \frac{1}{p}}  \Big) \\
&\leq C_0^{-p}   \eps  \sum_{j=1}^\infty T_{j}^{-p -1  }   
\big\|\| u \|_{C^\al_{T_j} H^{s_1}_x}\big\|_{L^p(d\nu_{\dl, N} )}^p
\\
&\leq \bigg( C_0^{-p} C_p^p     \sum_{j=1}^\infty T_{j}^{-p  }   \bigg)\eps
< \eps,
\end{align*}

\noi
where the last step follows from choosing 
 $C_0 > 0$ sufficiently large
 in the definition \eqref{BQ4} of~$K_\eps$.

It remains to show that $K_\eps$ is compact in 
$\big( C(\R; H^s(\T)), \texttt{dist} \big)$, 
namely,  endowed with the compact-open topology.
While the proof of this fact was presented in 
the proof of Proposition~5.4 in \cite{OTh1}, 
we present the argument for readers' convenience.
Let $\{u_n \}_{n \in \N} \subset K_\eps$.
It follows from  \eqref{BQ4}
that 
$\{u_n \}_{n \in \N}$ is bounded in 
$\CC^\al([-T_j, T_j]; H^{s_1}(\T))$ for each $j \in \N$
and hence is compact 
in $C([-T_j, T_j]; H^{s}(\T))$ for each $j \in \N$.
Then, by a diagonal argument, 
we can extract a subsequence $\{u_{n_\l} \}_{\l \in \N}$
that is  convergent in $C([-T_j, T_j];  H^s(\T))$ for each $j \in \N$.
Hence,  $\{u_{n_\l} \}_{\l \in \N}$ 
is convergent in$\big( C(\R; H^s(\T)), \texttt{dist} \big)$.
This proves that $K_\eps$ is  relatively 
compact in $\big( C(\R; H^s(\T)), \texttt{dist} \big)$.
It is clear that $K_\eps$ is closed as well, 
and hence we  conclude the proof.
\end{proof}

We conclude this subsection by presenting the proof of Lemma \ref{LEM:bound1}.

\begin{proof}[Proof of Lemma \ref{LEM:bound1}]

The proof   essentially follows  the same lines  in the proof of Lemma~5.5
in~\cite{OTh1}.
From  \eqref{AQ6},   the invariance of $\rho_{\dl, N}$ under the
truncated gILW dynamics~\eqref{AQ1}, 
Cauchy-Schwarz's inequality, 
 Proposition \ref{PROP:FN} (see \eqref{FN1a} with $k = 1$), 
 and 
 Proposition \ref{PROP:Gibbs1} (see \eqref{GN0b}), 
 we have 
 \begin{align}
 \begin{split}
 \big\| \| u\|_{L^p_T H^s_x} \big\|_{L^p(d\nu_{\dl, N})}
 &=  \big\| \| \Phi_{\dl, N}(t) \phi\|_{L^p_T H^s} \big\|_{L^p(d\rho_{\dl, N})} \\
 &= \big\| \| \Phi_{\dl, N}(t) \phi\|_{L^p(d\rho_{\dl, N}) H^s_x} \big\|_{L^p_T } \\
 &\les T^{\frac{1}{p}}  \|  \phi\|_{L^p(d\rho_{\dl, N}) H^s_x}   \\
 &\les T^{\frac{1}{p}}  \big\| \|  u \|_{H^s_x} \big\|_{L^{2p}(d\mu_{\dl,  N})}
 Z_{\dl, N}^{-\frac 1p } \big\| G_{\dl, N}(u) \big\|_{L^{2p}(d\mu_{\dl, N}) } \\
&  \les   T^{\frac{1}{p}}, 
\end{split}
\label{BQ5a}
 \end{align}

\noi
uniformly in $N \in \N$ and $2 \le \dl \le \infty$.
This proves \eqref{BQ1}.

Next, we prove the second bound \eqref{BQ2}.
By writing $\Gdl\dx^2 = (\Gdl\dx )\dx $, 
it follows from~\eqref{GG4} and Lemma \ref{LEM:p2}
that 
\begin{align}
\sup_{2 \le \dl \le \infty} \| \Gdl\dx^2 f \|_{H^{s-2}}
\le \|  f \|_{H^{s}}.
\label{BQ5}
\end{align}

\noi
Then, 
from  \eqref{AQ1}
and   \eqref{BQ5}, 
we have
\begin{align*}
\big\| \| &  u\|_{W^{1,p}_T H^{s-2}_x} \big\|_{L^p(d\nu_{\dl, N})} 
=  \big\| \| \dt  u\|_{L^p_T H^{s-2}_x} \big\|_{L^p(d\nu_{\dl, N})} \\
&\leq  \big\| \| \Gdl\dx^2  u\|_{L^p_T H^{s-2}_x} \big\|_{L^p(d\nu_{\dl, N})}  
+  \big\| \| F_N(u)    \|_{L^p_T H^{s-2}_x} \big\|_{L^p(d\nu_{\dl, N})}
\\
&\le  
\big\| \|    u\|_{L^p_T H^{s}_x} \big\|_{L^p(d\nu_{\dl, N})}  
+ \big\| \| F_N(u)   \|_{L^p_T H^{s-1}_x} \big\|_{L^p(d\nu_{\dl, N})}, 
\end{align*}

\noi
uniformly in $2 \le \dl \le \infty$
and $N \in \N$, 
where $F_N(u)$ is as in~\eqref{FN4}.
Then, the rest follows as in~\eqref{BQ5a} from Cauchy-Schwarz's inequality, 
 Proposition \ref{PROP:Gibbs1}, 
 and 
 Proposition \ref{PROP:FN} (see \eqref{FN1a} and~\eqref{FN4a}). 
\end{proof}

\subsection{Proof of Theorem~\ref{THM:6}}
\label{SUBSEC:5.3}

In this subsection, we present the proof of Theorem \ref{THM:6}.
We first work with fixed $2 \le \dl \le \infty$
and construct invariant Gibbs dynamics
to the renormalized gILW equation 
\eqref{WILW1}:
\begin{align}
\begin{split}
\dt u_\dl -
 \Gdl \partial_x^2 u_\dl 
& =  F(u_\dl) \\
& = \dx \W(u_\dl^k)    
\end{split}
\label{CQ0}
\end{align}

\noi
with the understanding that it corresponds to the renormalized
gBO equation \eqref{BO1} when $\dl = \infty$, 
where $F(u)$ is the limit of $F_N(u)$ in \eqref{FN4}
constructed in 
Proposition \ref{PROP:FN}\,(ii).
In view of Proposition \ref{PROP:tight},
 the family $\{ \nu_{\dl, N}\}_{N\in\N}$
is tight.
Hence, by the Prokhorov theorem (Lemma~\ref{LEM:Pro}), 
there exists a 
subsequence
$\{\nu_{\dl, N_j}\}_{j \in \N}$
converging 
weakly to some limit,\footnote{The space $\M = C(\R; H^s(\T))$ endowed with the compact-open
topology is complete and separable, 
and thus $\mathcal{P}(M) =$ the set of all the
probability measures on $\M$ is complete;
see, for example,  \cite[Theorem 6.8 on p.\,73]{Billingsley}.}
 denoted by $\nu_\dl$.
Namely, we have
\begin{align}
d_{\rm LP}\big( \nu_{\dl, N_j},  \nu_\dl \big) 
\too 0
\label{CQ0a}
\end{align}

\noi
as $j \to \infty$, 
where $d_{\rm LP}$ denotes the   L\'evy-Prokhorov metric
defined in \eqref{LP1}.

By the Skorokhod representation theorem
(Lemma~\ref{LEM:Sk}), 
there exist  some probability space $(\wt \O_\dl, \wt \F_\dl, \wt\PP_\dl)$
and $C(\R; H^s(\T))$-valued random variables $u_{\dl, N_j}$ and $u_\dl$,
such that 
\begin{align}
\L(u_{\dl, N_j}) = \nu_{\dl, N_j}
\qquad \text{and}\qquad 
 \L(u_{\dl}) = \nu_{\dl},  
\label{CQ1}
\end{align}

\noi
 and $u_{\dl, N_j}$ converges $\wt\PP_\dl$-almost surely to $u_\dl$ 
in  $C(\R; H^s(\T))$ as $j \to \infty$.
By repeating the argument in \cite{BTT1, OTh1, ORT}
(see,  in particular, Subsection 5.3 in \cite{OTh1}), 
we obtain the following global existence result
for the  gILW equation \eqref{CQ0}
with the Gibbsian initial data
(Theorem~\ref{THM:6}\,(i)).

\begin{proposition}\label{PROP:end}

Let $  u_{\dl, N_j}$, $j \in \N$,  and $u_\dl$ be  as above.
Then, 
$u_{\dl, N_j}$ and $u_\dl$
are global-in-time distributional 
solutions to 
the truncated gILW equation~\eqref{AQ1}
and  the renormalized gILW equation~\eqref{CQ0}, respectively.
Moreover,  we have
\begin{align}
\L\big( u_{\dl, N_j}(t) \big) = \rho_{\dl, N_j}
\qquad 
\text{and}
 \qquad
\L\big( u_\dl(t) \big) = \Pk 
\label{CQ2}
\end{align}

\noi
for any $t \in \R$.

\end{proposition}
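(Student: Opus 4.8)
The plan is to follow the standard Burq--Thomann--Tzvetkov compactness scheme, as carried out in \cite[Subsection 5.3]{OTh1}, adapting it to the ILW dispersion. First I would fix $2 \le \dl \le \infty$ and pass to the subsequence $\{N_j\}$ along which $\nu_{\dl, N_j}$ converges weakly to $\nu_\dl$, and invoke the Skorokhod representation theorem (Lemma~\ref{LEM:Sk}) to obtain, on a common probability space $(\wt\O_\dl, \wt\F_\dl, \wt\PP_\dl)$, random variables $u_{\dl, N_j} \to u_\dl$ almost surely in $C(\R; H^s(\T))$, with $\L(u_{\dl, N_j}) = \nu_{\dl, N_j}$ and $\L(u_\dl) = \nu_\dl$. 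Since $u_{\dl, N_j}$ has law $\nu_{\dl, N_j} = \rho_{\dl, N_j} \circ \Phi_{\dl, N_j}^{-1}$, which is the pushforward of the invariant truncated Gibbs measure under the solution map, each $u_{\dl, N_j}$ is (the law of) a global solution of the truncated gILW equation \eqref{AQ1}; in particular, by invariance of $\rho_{\dl, N_j}$ (Lemma~\ref{LEM:global}), $\L(u_{\dl, N_j}(t)) = \rho_{\dl, N_j}$ for every $t \in \R$, which is the first identity in \eqref{CQ2}. Taking $j \to \infty$ in this identity and using that $u_{\dl, N_j}(t) \to u_\dl(t)$ almost surely (hence in distribution) together with the convergence in total variation $\rho_{\dl, N_j} \to \rho_\dl$ from Theorem~\ref{THM:Gibbs1}\,(i) (uniform over $2 \le \dl \le \infty$, but we only need it for fixed $\dl$ here) yields $\L(u_\dl(t)) = \rho_\dl$ for all $t$, the second identity in \eqref{CQ2}.

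The remaining point is that $u_\dl$ solves the renormalized gILW equation \eqref{CQ0} in the distributional sense. Each $u_{\dl, N_j}$ satisfies \eqref{AQ1}, i.e.\ $\dt u_{\dl, N_j} - \Gdl \dx^2 u_{\dl, N_j} = F_{N_j}(u_{\dl, N_j}) = \dx \P_{N_j} \W((\P_{N_j} u_{\dl, N_j})^k)$ tested against a space-time test function. To pass to the limit I would: (i) use almost sure convergence $u_{\dl, N_j} \to u_\dl$ in $C(\R; H^s(\T))$ to handle the linear terms $\dt u_{\dl, N_j} \to \dt u_\dl$ and $\Gdl \dx^2 u_{\dl, N_j} \to \Gdl \dx^2 u_\dl$ in the appropriate negative-regularity space (here one uses the uniform bound $\|\Gdl \dx^2 f\|_{H^{s-2}} \le \|f\|_{H^s}$ from \eqref{BQ5}); and (ii) show $F_{N_j}(u_{\dl, N_j}) \to F(u_\dl) = \dx \W(u_\dl^k)$ in, say, $L^p(\wt\O_\dl; H^{s-1}_{\loc,t} H^{s'}_x)$ for suitable $s' < -1$. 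Step (ii) is the crux: one writes $F_{N_j}(u_{\dl, N_j}) - F(u_\dl) = \big(F_{N_j}(u_{\dl, N_j}) - F(u_{\dl, N_j})\big) + \big(F(u_{\dl, N_j}) - F(u_\dl)\big)$; the first difference is controlled in expectation by the identity in law $\L(u_{\dl, N_j}) = \nu_{\dl, N_j}$, the identity \eqref{AQ6}, invariance of $\rho_{\dl, N_j}$, and the Cauchy estimate \eqref{FN5} in Proposition~\ref{PROP:FN}\,(ii) together with the uniform density bound \eqref{GN0b} in Proposition~\ref{PROP:Gibbs1} via Cauchy--Schwarz (exactly as in the proof of Lemma~\ref{LEM:bound1}); the second difference uses continuity of $u \mapsto F(u)$ as a map between the relevant spaces on the support of the Gibbs measure, again combined with the almost sure convergence $u_{\dl, N_j} \to u_\dl$ and uniform integrability from Propositions~\ref{PROP:FN} and~\ref{PROP:Gibbs1}. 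When $\dl = \infty$ the same argument produces a solution to the renormalized gBO equation \eqref{BO1}.

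The main obstacle is making sense of the nonlinearity $F(u) = \dx \W(u^k)$ on the (distributional) support of the Gibbs measure and verifying the requisite continuity/convergence of $F_{N_j}(u_{\dl, N_j})$: the Wick power $\W(u^k)$ is not a continuous function of $u$ in $H^s(\T)$ in any naive sense, so one genuinely needs the probabilistic input---the Cauchy-in-$N$ bounds \eqref{FN5} and the uniform-in-$N$ density bounds \eqref{GN0b}---to control the difference $F_{N_j}(u_{\dl,N_j}) - F(u_\dl)$ in the mean, and then upgrade to almost sure convergence along a further subsequence via Borel--Cantelli. All of this is already carried out in \cite[Subsection 5.3]{OTh1} for the $\Phi^{k+1}_2$/gBO model, and the only modifications here are notational (replacing $\H \dx^2$ by $\Gdl \dx^2$ and using $K_\dl(n) \sim_\dl |n|$ from Lemma~\ref{LEM:p2} in place of the exact homogeneity of the BO symbol, which is why \eqref{BQ5} and Proposition~\ref{PROP:FN} are stated uniformly in $2 \le \dl \le \infty$); hence I would state Proposition~\ref{PROP:end} and refer to \cite{OTh1} for the routine details, giving only the parts where the ILW dispersion enters.
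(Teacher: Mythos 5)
Your overall scheme is the same as the paper's (the Burq--Thomann--Tzvetkov compactness argument via Skorokhod, invariance of $\rho_{\dl, N_j}$, and passage to the limit in the weak formulation), and the measure-theoretic part--deriving $\L(u_{\dl, N_j}(t)) = \rho_{\dl, N_j}$ from the pushforward identity and then $\L(u_\dl(t)) = \rho_\dl$ from almost sure convergence plus total-variation convergence of $\rho_{\dl, N_j}$ to $\rho_\dl$--matches the paper exactly. However, your treatment of the nonlinearity contains a genuine gap. You decompose $F_{N_j}(u_{\dl, N_j}) - F(u_\dl)$ into only two pieces and claim that the piece $F(u_{\dl, N_j}) - F(u_\dl)$ is handled by ``continuity of $u \mapsto F(u)$ on the support of the Gibbs measure.'' No such continuity exists: $F(u) = \dx \W(u^k)$ is defined only as an $L^p$/almost sure limit with respect to the relevant Gaussian (or Gibbs) measure, i.e.\ it is a measurable functional defined up to null sets, and the almost sure convergence $u_{\dl, N_j} \to u_\dl$ in $C(\R; H^s(\T))$ gives no information about $F(u_{\dl, N_j}) - F(u_\dl)$ pathwise; nor can this difference be estimated ``in the mean'' from the marginal laws alone, since it involves the joint law of $(u_{\dl,N_j}, u_\dl)$. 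Your closing paragraph acknowledges the non-continuity of the Wick power but then proposes exactly the estimate that this non-continuity blocks.

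The correct repair--and what the paper does--is to insert a fixed finite truncation $F_M$ as an intermediary, writing $F_{N_j}(u_{\dl, N_j}) - F(u_\dl)$ as the sum of $F_{N_j}(u_{\dl, N_j}) - F_M(u_{\dl, N_j})$, $F_M(u_{\dl, N_j}) - F_M(u_\dl)$, and $F_M(u_\dl) - F(u_\dl)$. Only the middle term uses continuity, and $F_M$ (a polynomial in finitely many Fourier modes) \emph{is} continuous on $C(\R; H^s(\T))$, so the middle term converges almost surely as $j \to \infty$ for each fixed $M$. The two tail terms are estimated in $L^2_\o L^2_T H^{s-1}_x$, uniformly in $j$, via \eqref{AQ6}, invariance of $\rho_{\dl, N_j}$ (resp.\ the identity $\L(u_\dl(t)) = \rho_\dl$), Cauchy--Schwarz with the density bound \eqref{GN0b}, and the Cauchy estimate \eqref{FN5}; one then sends $j \to \infty$ first and $M \to \infty$ afterwards, extracting subsequences and using a diagonal argument over the time intervals $[-2^m, 2^m]$ to get almost sure convergence in $\mathcal{D}'(\R \times \T)$. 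A secondary point you gloss over: after the Skorokhod representation, the fact that the new random variable $u_{\dl, N_j}$ (equal in law to $\nu_{\dl, N_j}$) is itself a distributional solution of \eqref{AQ1} is not automatic from the law alone; the paper verifies it by testing against a countable dense family of test functions and showing the corresponding continuous functionals $V_{\varphi, j}$ vanish $\nu_{\dl, N_j}$-almost surely via \eqref{AQ6}. This step is routine (and is in the reference you cite), but it should be recorded, since it is part of the statement being proved.
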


\begin{proof}
While the proof of Proposition \ref{PROP:end} follows
exactly 
the same lines in Subsection 5.3 of~\cite{OTh1}, 
we present details (with some modifications from \cite{OTh1}) for readers' convenience.
We also point out that Proposition \ref{PROP:end}
will be applied iteratively in the proof of Theorem \ref{THM:6}\,(ii)
presented below.

Fix $t \in \R$.
Let $R_t:C(\R; H^s(\T)) \to H^s(\T)$ be the evaluation map defined by $R_t(v) = v(t)$. 
Note that 
$R_t$ is a continuous function.
Then, from 
\eqref{AQ5} and the invariance of the truncated Gibbs measure $\rho_{\dl, N}$
(Lemma \ref{LEM:global}), we have
\begin{align}
\begin{aligned}
\nu_{\dl, N}\circ R_t^{-1} 
&= \rho_{\dl, N}\circ \Phi_{\dl, N}^{-1} \circ R_t^{-1}
 = \rho_{\dl, N}\circ  \big( R_t\circ \Phi_{\dl, N})^{-1} \\
& =    \big( R_t\circ \Phi_{\dl, N})_* \, \rho_{\dl, N} 
 =   \big( \Phi_{\dl, N}(t)\big)_* \, \rho_{\dl, N} \\ 
& =\rho_{\dl, N}. 
\end{aligned}
\label{CQ3a}
\end{align}

\noi
 Then, 
it follows from~\eqref{CQ1} and~\eqref{CQ3a} that 
\begin{align}
\L\big( u_{\dl, N_j}(t) \big) = \nu_{\dl, N_j}\circ R_t^{-1} = \rho_{\dl, N_j} .
\label{CQ3}
\end{align}

\noi
By the construction, 
 $ u_{\dl, N_j}$ converges to $u_\dl$ in $C(\R; H^s(\T))$ almost surely
with respect to $\wt\PP_\dl$.
Thus, we have
\[
 u_{\dl, N_j}(t) = R_t\big(  u_{\dl, N_j} \big) 
 \too
u_\dl(t) =  R_t(u_\dl)
\]

\noi
almost surely as $j \to \infty$,
which in particular implies
$ u_{\dl, N_j}(t)$ converges in law to 
$u_\dl(t)$ as $j \to \infty$.
Namely, 
$\L\big( u_{\dl, N_j}(t) \big)$
converges weakly to 
$\L\big( u_{\dl}(t) \big)$
as $j \to \infty$.
On the other hand, recall from 
Theorem \ref{THM:Gibbs1}\,(i)
that 
$\rho_{\dl, N_j}$ converges to $\rho_\dl$
in total variation
as $j\to\infty$,
which in particular implies 
that   $\rho_{\dl, N_j}$ converges weakly to $\rho_\dl$.
Hence, in view of \eqref{CQ3}
and the uniqueness of the limit, 
we conclude 
 $\L \big( u_\dl(t) \big) = \rho_\dl$.
This proves \eqref{CQ2}.

 Next, we show that the random variable $u_{\dl, N_j}$ is indeed a 
 global-in-time distributional solution to \eqref{AQ1}.
Given a test function $\varphi \in \D (\R \times \T) = C^\infty_c(\R \times \T)$, 
define $V_{\varphi, j}:C(\R; H^s(\R))\to \R$ by 
\begin{align} V_{\varphi, j}(u) = \big|\jbb{\varphi, \dt u -
 \Gdl \partial_x^2 u
-  F_{N_j}(u)}\big|, 
\label{CQ33}
\end{align}

\noi
where $\jb{\cdot, \cdot}$ denotes the $\D_{t, x}$-$\D'_{t, x}$
pairing.
It is easy to see that $V_{\varphi, j}$ is continuous.
In view of the separability of $\D (\R \times \T)$, 
let $\{\varphi_m\}_{m \in  \N}$ be a countable dense
subset of $\D (\R \times \T)$.
Then, in view of 
\eqref{AQ6}, \eqref{CQ33},  and the definition \eqref{CQ333} of $\Phi_{\dl, N_j}$, we have
\begin{align}
\| V_{\varphi_m, j}\|_{L^1(d \nu_{\dl, N_j})} 
= 
 \int_{H^s} |V_{\varphi_m, j}(\Phi_{\dl, N_j}(\phi))| d \rho_{\dl, N_j}(\phi)
 = 0
 \label{CQ4}
\end{align}

\noi
for any $m \in \N$.
Namely, there exists a set $\Si_m \subset 
 C(\R; H^s(\T))$ such that 
 $\nu_{\dl, N_j}(\Si_m) = 1$
and $V_{\varphi_m, j} (u) = 0$ for any $u \in \Si_m$.
Now, set $\Si = \bigcap_{m \in \N} \Si_m$.
Then, we have 
 $\nu_{\dl, N_j}(\Si) = 1$
 and, moreover, $V_{\varphi, j}(u) = 0$
for any $u \in \Si$
and $\varphi \in \D(\R\times \T)$, 
where the latter claim follows from~\eqref{CQ4} and the density of $\{\varphi_m\}_{m \in \N}$.

Finally, we prove  that the random variable $u_\dl$ is   a 
 global-in-time distributional solution to~\eqref{CQ0}.
 It follows from 
 the almost sure convergence
of   $u_{\dl, N_j}$   to $u_\dl$ in $C(\R; H^s(\T))$
that 
\begin{align}
\dt  u_{\dl, N_j}  - \Gdl \dx^2   u_{\dl, N_j} 
\too  
   \dt u_\dl - \Gdl \dx^2 u_\dl
\label{CQ4b}
\end{align}

\noi
in $\D'(\R\times \T)$, 
$\wt \PP_\dl$-almost surely, as $j \to \infty$.

Next, we show   almost sure convergence  
of the truncated nonlinearity 
$F_{N_j}(u_{\dl, N_j})$
to $F(u_\dl) = \dx \W(u_\dl)$.
Given $M \in \N$, write 
\begin{align}
\begin{aligned}
F_{N_j}(u_{\dl, N_j}) - F(u_\dl)
& = \big(F_{N_j}(u_{\dl, N_j}) - F_M(u_{\dl, N_j})\big)
+  \big( F_M (u_{\dl, N_j }) - F_M(u_\dl ) \big) \\
&\quad +  \big( F_M (u_\dl ) -F(u_\dl) \big).
\end{aligned}
\label{CQ5}
\end{align}

\noi
Noting that 
$u\in C(\R; H^s(\T)) \mapsto 
F_M(u)  \in C(\R; H^{s-1}(\T))$
 is continuous, 
it follows from 
 the almost sure convergence
of   $u_{\dl, N_j}$   to $u_\dl$ in $C(\R; H^s(\T))$
that 
\[
F_M( u_{\dl, N_j}) \too F_M( u_\dl)
\]

\noi
in $C(\R; H^{s-1}(\T))$, 
$\wt{\mathbb{P}}_\dl$-almost surely, as $j\to\infty$.
As for
the first term on the right-hand side of~\eqref{CQ5}, 
 for fixed $T > 0$, 
it follows from  
 \eqref{AQ6}, 
the invariance of the truncated Gibbs measure $\rho_{\dl, N}$,
and 
 Proposition~\ref{PROP:Gibbs1}
 that 
\begin{align*}
\big\|& 
\| F_{N_j}(u_{\dl, N_j})   - F_M(u_{\dl, N_j}) \|_{L^2_T H^{s-1}_x}
\big\|_{L^2(\wt \O_\dl )} \\
&
= \big\|
\| F_{N_j}(u)   - F_M(u) \|_{L^2_T H^{s-1}_x}
\big\|_{L^2(d\nu_{\dl, N_j})} 
\\
& = 
\big\|
\| F_{N_j}(\phi)    - F_M(\phi )\|_{L^2(d\rho_{\dl, N_j}) H^{s-1}_x}
\big\|_{L^2_T}  
\\
&
\les T^\frac{1}{2} Z_{\dl, N_j}^{-\frac 12 } \|G_{\dl, N_j}\|_{L^4(\O)}
\| F_{N_j}(\phi)    - F_M(\phi ) 
\|_{L^4(d\mu_\dl) H^{s-1}_x}\\
&
\les T^\frac{1}{2} 
\| F_{N_j}(\phi)    - F_M(\phi ) 
\|_{L^4(d\mu_\dl) H^{s-1}_x}, 
\end{align*}
 
\noi
where the implicit constants are independent of $N_j$.
By applying
Proposition \ref{PROP:FN}\,(ii), 
we conclude that 
the first term on the right-hand side of~\eqref{CQ5} converges to $0$
 in $L^2(\wt \O_\dl; L^2([-T, T]; H^{s-1}(\T)))$
 as $j, M \to \infty$.
Hence, by extracting a subsequence, 
the first term on the right-hand side of~\eqref{CQ5} converges to $0$
 in $L^2([-T, T]; H^{s-1}(\T))$, 
 $\wt \PP_\dl$-almost surely, as $j, M \to \infty$.
A similar argument shows that, by extracting a subsequence,  
the third term on the right-hand side of~\eqref{CQ5} converges to $0$
 in $L^2([-T, T]; H^{s-1}(\T))$, 
 $\wt \PP_\dl$-almost surely, as $M \to \infty$.

Putting all together with \eqref{CQ5}, 
we conclude that, up to a subsequence,
$F_{N_j}(u_{\dl, N_j})$ 
converges to $F(u_\dl)$
 in $L^2([-T, T]; H^{s-1}(\T))$, 
 $\wt \PP_\dl$-almost surely, as $j \to \infty$.
Since the choice of $T>0$ was arbitrary, 
we  can apply this argument  for $T_m = 2^m$, $m\in \N$.
Thus, with $m = 1$, 
there exists a subsequence
$F_{N_{j_1}}(u_{\dl, N_{j_1}})$ 
and a set $\Si_1$ of full $\wt \PP_\dl$-probability
such that 
$F_{N_{j_1}}(u_{\dl, N_{j_1}})(\o)$ 
converges to $F(u_\dl)(\o)$
 in $L^2([-T_1, T_1]; H^{s-1}(\T))$
for each $\o \in \Si_1$ as $j_1 \to \infty$.
For each $m \geq 2$, 
we can extract a further subsequence
$F_{N_{j_m}}(u_{\dl, N_{j_m}})$ 
of $F_{N_{j_{m-1}}}(u_{\dl, N_{j_{m-1}}})$ 
and a subset $\Si_m \subset \Si_{m-1}$
of full $\wt \PP_\dl$-probability
such that 
$F_{N_{j_m}}(u_{\dl, N_{j_m}})(\o)$ 
converges to $F(u_\dl)(\o)$
 in $L^2([-T_m, T_m]; H^{s-1}(\T))$
for each $\o \in \Si_m$ as $j_m \to \infty$.
By a diagonal argument, 
we conclude that, 
passing to a  subsequence, we have
$F_{N_{j}}(u_{\dl, N_{j}} )$
converges to $F(u_\dl)$
in $L^2_{t, \text{loc}}H^{s-1}(\T)$, 
$\wt \PP_\dl$-almost surely, 
which in particular implies
that this subsequence converges
to $F(u_\dl)$
in $\mathcal D'(\R\times \T)$, 
$\wt \PP_\dl$-almost surely.
Therefore, 
together with \eqref{CQ4b}, 
we conclude that $u_\dl$ is a global-in-time distributional solution to~\eqref{CQ0}.
\end{proof}

Finally, we present the proof of Theorem \ref{THM:6}\,(ii).
In the discussion at the beginning of this subsection, 
we used Proposition \ref{PROP:tight}
and 
 the Prokhorov theorem (Lemma~\ref{LEM:Pro})
 to conclude that, 
for each fixed $2 \le \dl \le \infty$, 
 there exists a sequence $N_j  \to \infty$
such that
\eqref{CQ0a} holds. 
In the following,  we iteratively apply this argument
for integers $\dl \ge 2$
and apply  a diagonal argument.

\smallskip

\begin{itemize}

\item[(i)]
Let $\dl  = 2$.
Then, 
it follows from  Proposition \ref{PROP:tight} that 
 the family $\{ \nu_{2, N}\}_{N\in\N}$
is tight.
Hence, by the Prokhorov theorem (Lemma~\ref{LEM:Pro}), 
there exists
a weakly convergent subsequence 
 $\{ \nu_{2, N_j^{(2)}}\}_{j \in \N}$.
Namely, 
there exists 
a probability measure $\nu_2$ on $C(\R; H^s(\T))$ such that 
$\dlp(\nu_{2, N_j^{(2)}}, \nu_2 )\to 0$
as $j \to \infty$.

\smallskip

\item[(ii)]

For  $\dl  = 3$, 
we apply the same argument to 
$\{ \nu_{3, N_j^{(2)}}\}_{j \in\N}$
to conclude that 
there exists a weakly convergent subsequence 
 $\{ \nu_{3, N_j^{(3)}}\}_{j \in \N}$ with $\{ N_j^{(3)}\}_{j \in \N} \subset \{ N_j^{(2)}\}_{j \in \N}$.
Namely, 
there exists 
a probability measure $\nu_3$ on $C(\R; H^s(\T))$ such that 
$\dlp(\nu_{3, N_j^{(3)}}, \nu_3 )\to 0$
as $j \to \infty$.

\smallskip

\item[(iii)] 
We iterate this procedure for each integer $\dl \ge 4$
and construct a 
weakly convergent subsequence 
 $\{ \nu_{\dl , N_j^{(\dl)}}\}_{j \in \N}$ with $\{ N_j^{(\dl)}\}_{j \in \N} \subset \{ N_j^{(\dl-1)}\}_{j \in \N}$.
Namely, 
there exists 
a probability measure $\nu_\dl $ on $C(\R; H^s(\T))$ such that 
\begin{align}
\dlp(\nu_{\dl, N_j^{(\dl)}}, \nu_\dl )\too 0, 
\label{CQ7}
\end{align}
as $j \to \infty$.

\smallskip

\item[(iv)] 

Let  $\N_{\ge 2} = \N \cap [2, \infty)$. 
We  take a diagonal sequence $\{\nu_{\dl, N^{\dl}_{j(\dl)}}  \}_{\dl  \in \N_{\ge 2}}$, 
where $j(\dl)$ is chosen such that $j(\dl)$ is increasing in $\dl$
and 
\begin{align}
\dlp( \nu_{\dl, N^{\dl}_{j(\dl)}}, \nu_\dl) \leq \frac{1}{\dl}.
\label{CQ7a}
\end{align}

\end{itemize}

By Proposition \ref{PROP:tight} and the Prokhorov theorem (Lemma \ref{LEM:Pro}),
the family  $\big\{ \nu_{\dl , N_{j(\dl)}^{(\dl)}}\big\}_{\dl \in \N_{\ge2}}$
is tight and thus admits a weakly convergent subsequence
$\big\{ \nu_{\dl_m, N_{j(\dl_m)}^{(\dl_m)}}\big\}_{m \in \N}$
to some limit, which we denote by $\nu_\infty$.
Namely, we have 
\begin{align}
\dlp
(\nu_{\dl_m, N_{j(\dl_m)}^{(\dl_m)}}, \nu_\infty) \too 0, 
\label{CQ8}
\end{align}

\noi
as $m \to \infty$.
By the
triangle inequality for the L\'evy-Prokhorov metric $\dlp$
with \eqref{CQ7a} and~\eqref{CQ8}, 
we have
\begin{align}
\begin{split}
\dlp( \nu_{\dl_m}, \nu_{\infty} ) 
&\leq \dlp(\nu_{\dl_m}, \nu_{\dl_m, N_{j(\dl_m)}^{(\dl_m)}})
 + \dlp(\nu_{\dl_m, N_{j(\dl_m)}^{(\dl_m)}},  \nu_\infty  )\\
&\leq \frac 1{\dl_m}
 + \dlp(\nu_{\dl_m, N_{j(\dl_m)}^{(\dl_m)}},  \nu_\infty  )\\
 & \too 0, 
\end{split}
\label{CQQ}
\end{align}

\noi
as $m \to \infty$ (and hence $\dl _m\to \infty$).
Hence, 
$ \nu_{\dl_m}$  converges weakly 
 to $\nu_\infty$ as $m \to \infty$.

By  the 
Skorokhod representation theorem (Lemma \ref{LEM:Sk}), 
there exist  a probability space $(\wt \O, \wt \F, \wt\PP)$
and  $C(\R; H^s(\T))$-valued random variables $u_{\dl_m}$ and 
$u$ such that
\begin{align}
\L(u_{\dl_m} ) = \nu_{\dl_m}\qquad \text{and}
\qquad \L(u) = \nu_\infty
\label{CQ9}
\end{align}

\noi
and $u_{\dl_m}$
 converges $\wt \PP$-almost surely to  
$u$ in $C(\R; H^s(\T))$  as $m \to \infty$.

Next, we show that 
$u_{\dl_m}$ is a global-in-time distributional solution to
the renormalized gILW equation~\eqref{CQ0} (with $\dl = \dl_m$).
It follows from   \eqref{CQ7}
and 
 the 
Skorokhod representation theorem (Lemma \ref{LEM:Sk})
that 
there exist  a probability space $(\wt \O_m, \wt \F_m, \wt\PP_m)$
and  $C(\R; H^s(\T))$-valued random variables $\wt u_{\dl_m, N_j^{(\dl_m)}}$ and 
$\wt u_{\dl_m}$ such that
\begin{align}
\L(\wt u_{\dl_m, N_j^{(\dl_m)}} ) = \nu_{\dl_m, N_j^{(\dl_m)}}\qquad \text{and}
\qquad \L(\wt u_{\dl_m}) = \nu_{\dl_m}
\label{CQ10}
\end{align}

\noi
and $\wt u_{\dl_m, N_j^{(\dl_m)}}$
 converges $\wt \PP_m$-almost surely to  
$\wt u_{\dl_m}$ in $C(\R; H^s(\T))$  as $j \to \infty$.
Arguing as in the proof of Proposition \ref{PROP:end}, 
we see that 
$\wt u_{\dl_m}$
is a  global-in-time distributional 
solution to 
  the renormalized gILW equation~\eqref{CQ0}.
Hence, from \eqref{CQ9}
and \eqref{CQ10}, 
we conclude that 
$ u_{\dl_m}$
is a  global-in-time distributional 
solution to 
  the renormalized gILW equation~\eqref{CQ0}.

It remains  to  show that $ u$ satisfies 
the renormalized gBO equation \eqref{BO1} in the
distributional sense.
The almost sure convergence of $u_{\dl_m}$
to $u$ implies that 
\begin{align}
 \dt {u_{\dl_m} } - \Gdl \dx^2   {u_{\dl_m}} 
\too   \dt u- \mathcal H \dx^2 u
\label{CQ10a}
\end{align}

\noi
in $\mathcal D'(\R\times \T)$ as $m  \to \infty$.
Next, we discuss convergence of the nonlinearity.
Let $F(u_{\dl})=\dx \W (u_{\dl  }^{k})$
be as in Proposition \ref{PROP:FN}\,(ii).
Given $M \in \N$, write 
\begin{align}
\begin{aligned}
F(u_{\dl_m}) - F(u) 
& =
\big(F(u_{\dl_m}) - F_M(u_{\dl_m})  \big)
 + \big( F_M(u_{\dl_m}) - F_M(u)  \big) \\
&\quad + \big(F_M(u) - F(u)   \big), 
\end{aligned}
\label{CQ11}
\end{align}

\noi

\noi
From 
the continuity of $F_M$
and
the almost sure convergence of $u_{\dl_m}$
to $u$, 
we see that the second term on the right-hand side of \eqref{CQ11}
tends to $0$ 
in $C(\R; H^{s-1}(\T))$, 
$\wt \PP$-almost surely, 
as $m \to \infty$.
As for the first and third terms on the right-hand side of~\eqref{CQ5}, 
we need to exploit the uniform (in $\dl$ and $N$) bounds, 
which is the main difference from the proof of Proposition~\ref{PROP:end}
presented above.
Let $T > 0$.
Then, from 
 \eqref{AQ6}
and the invariance of the truncated Gibbs measure $\rho_{\dl, N}$,
we have 
 \begin{align}
\begin{aligned}
\big\|& 
\| F(u_{\dl_m})   - F_M(u_{\dl_m}) \|_{L^2_T H^{s-1}_x}
\big\|_{L^2(\wt \O)} \\
&
= \big\|
\| F(u)   - F_M(u) \|_{L^2_T H^{s-1}_x}
\big\|_{L^2(d\nu_{\dl_m})} 
\\
& = 
\big\|
\| F(\phi)    - F_M(\phi )\|_{L^2(d\rho_{\dl_m})H^{s-1}_x}
\big\|_{L^2_T}  
\\
&
\les T^\frac{1}{2} Z_{\dl_m}^{-\frac 12 } \|G_{\dl_m}\|_{L^4(\O)}
\| F(\phi)    - F_M(\phi ) 
\|_{L^4(d\mu_{\dl_m}) H^{s-1}_x}
\end{aligned}
\label{CQ12}
\end{align}

\noi
with the understanding that
$u_{\dl_\infty}  = u$
 when $m = \infty$.
From  Proposition~\ref{PROP:Gibbs1}, we have
\begin{align}
\begin{split}
\sup_{m \in \N \cup\{\infty\}} Z_{\dl_m}^{-\frac 12 } +
\sup_{m \in \N \cup\{\infty\}}\|G_{\dl_m}\|_{L^4(\O)}& \les 1.
\end{split}
\label{CQ13}
\end{align}

\noi
Then, from \eqref{CQ12}, \eqref{CQ13}, and Proposition \ref{PROP:FN}\,(ii)
(see \eqref{FN5} with $(M, N) = (\infty, N)$), 
we conclude that 
the first and third terms on the right-hand side of~\eqref{CQ11} converge to $0$
 in $L^2(\wt \O_\dl; L^2([-T, T]; H^s(\T)))$
 as $M \to \infty$.
Then, by first taking $m \to \infty$ and then $M \to \infty$ in \eqref{CQ11}, 
we conclude that, by extracting a subsequence,  
$F(u_{\dl_m})$ converges to $F(u)$ 
in $L^2([-T, T]; H^{s-1}(\T))$, $\wt \PP$-almost surely, 
as $m \to \infty$.
By repeating the argument at the end of the proof of Proposition 
\ref{PROP:end}, 
we see that 
up to a further subsequence, 
 $F(u_{\dl_m})$ converges
to $F(u)$
in $\mathcal D'(\R\times \T)$, 
$\wt \PP$-almost surely, as $m \to \infty$.
Therefore, 
together with \eqref{CQ10a}, 
we conclude that $u$ is a global-in-time distributional solution to
the renormalized gBO equation~\eqref{CQ0}.
This concludes the proof of Theorem \ref{THM:6}\,(ii).

\begin{remark}\rm \label{REM:END}
In this paper, 
we considered probability measures on $H^{-\eps}(\T)$ for fixed small $\eps >0$. 
In the following, we briefly explain how to remove the dependence on $\eps$.
First, we set
\[
H^{0-}(\T) : = \bigcap_{s > 0}  H^{-s}(\T) =  \bigcap_{j\in\N }  H^{-s_j}(\T),
\]
with $s_j = \frac{1}{j}$.
Then,  we equip $H^{0-}(\T)$ with the following distance:
\[
\textbf{d}(f, g) 
= \sum_{j=1}^\infty 2^{-j}
 \frac{  \| f - g\|_{H^{-s_j}}    }{1 +  \| f - g\|_{  H^{-s_j} }  }.
\]

\noi
By definition, we have $\textbf{d}(f_n, f)\to 0$ if and only if
$f_n$ converges to $f$ in $H^{-s_j}(\T)$ for each $j\in\N$.
Let $\mathbf{D}$ be the set of smooth functions $Q\in C^\infty(\T)$ of the form
\[
Q(x) =\sum_{|n|\leq N} q_n e_n(x),  
\]

\noi
with $q_n\in\mathbb{Q}$ and $N\in\N$.
Then,  $\mathbf{D}$ is a countable
dense subset of $H^{-s_j}(\T)$ for any $j\in\N$.
Let $f\in H^{0-}(\T)$.
Then, for each $j \in \N$, 
there exists $Q_{j, N}\in\mathbf{D}$
such that 
\[
\| Q_{j,N} - f \|_{H^{-s_j}} \leq 2^{-N}.
\]

\noi
Now, set 
 $Q_N = Q_{N,N}\in\mathbf{D}$, $N \in \N$.
Then,   given $\eps > 0$, by choosing $N \ge \frac{1}{\eps}$,
we have
\[
\| Q_{N} - f \|_{H^{-\eps}} \leq \| Q_{N} - f \|_{H^{-\frac{1}{N}}} 
=  \| Q_{N, N} - f \|_{H^{-\frac{1}{N}}} \leq  2^{-N}.
\]

\noi
Hence, we have
\begin{align*}
\textbf{d}(Q_N, f) 
& \le  \sum_{j=1}^N 2^{-j}
 \| Q_N - f\|_{H^{-\frac 1j}} 
 + 
  \sum_{j=N+1}^\infty 2^{-j}\\
& \le  2^{-N}
+  2^{-N}
\too 0, 
\end{align*}

\noi
as $N \to \infty$.
In other words, we just proved that
$\mathbf{D}$ is also a countable dense subset of $H^{0-}(\T)$ with respect to the
metric $\textbf{d}$.  
Hence, from \cite{Khan86}, we see  that $C(\R; H^{0-}(\T))$ is separable.\footnote{Note that we have 
\[
C(\R; H^{0-}(\T)) = \bigcap_{j=1}^\infty C\big(\R; H^{-\frac{1}{j}}(\T) \big).
\]}
This 
 allows us to repeat the entire paper by replacing 
 $C(\R; H^{-\eps}(\T))$ with 
$C(\R; H^{0-}(\T))$.

\end{remark}

\begin{ackno}\rm
 The authors would like to thank an anonymous referee for the helpful comments.
G.L., T.O., and~G.Z.~were supported by the European Research Council
(grant no.~864138 ``SingStochDispDyn'').
G.L.~was also supported 
 by the Maxwell Institute Graduate School in Analysis and its Applications, a Centre for Doctoral Training funded by the UK Engineering and Physical Sciences Research Council (Grant EP/L016508/01), the Scottish Funding Council, Heriot-Watt University 
and the University of Edinburgh, 
and
by the EPSRC New Investigator Award (grant no.~EP/S033157/1).

\end{ackno}

\end{document}